\documentclass[12pt,a4paper]{amsart}
\usepackage[czech,english]{babel}
\usepackage{amssymb,amscd,eucal}
\usepackage{tikz}
\usepackage{hyperref}

\pagestyle{plain}
\raggedbottom

\textwidth=36pc
\calclayout

\emergencystretch=2em

\newcommand{\+}{\nobreakdash-}
\renewcommand{\:}{\colon}
\renewcommand{\.}{\mskip.5\thinmuskip\relax}

\newcommand{\rarrow}{\longrightarrow}
\newcommand{\ot}{\otimes}
\newcommand{\ocn}{\odot}
\newcommand{\st}{\star}
\newcommand{\tim}{\rightthreetimes}

\newcommand{\A}{{\mathsf A}}
\newcommand{\B}{{\mathsf B}}
\newcommand{\C}{{\mathsf C}}
\newcommand{\D}{{\mathsf D}}
\newcommand{\E}{{\mathsf E}}
\newcommand{\F}{{\mathsf F}}
\newcommand{\K}{{\mathsf K}}
\renewcommand{\L}{{\mathsf L}}
\newcommand{\R}{{\mathsf R}}

\newcommand{\cC}{{\mathcal C}}
\newcommand{\cE}{{\mathcal E}}
\newcommand{\cF}{{\mathcal F}}

\newcommand{\cM}{{\mathcal M}}
\newcommand{\cN}{{\mathcal N}}

\newcommand{\cR}{{\mathsf R}}

\newcommand{\bcK}{{\boldsymbol{\mathcal K}}}
\newcommand{\bcL}{{\boldsymbol{\mathcal L}}}
\newcommand{\bcM}{{\boldsymbol{\mathcal M}}}

\newcommand{\bcS}{{\boldsymbol{\mathcal S}}}

\newcommand{\Z}{{\mathbb Z}}
\newcommand{\boL}{{\mathbb L}}
\newcommand{\boR}{{\mathbb R}}
\newcommand{\boT}{{\mathbb T}}
\newcommand{\boQ}{{\mathbb Q}}

\newcommand{\Sets}{\mathsf{Sets}}

\newcommand{\fA}{{\mathfrak A}}
\newcommand{\fC}{{\mathfrak C}}
\newcommand{\fD}{{\mathfrak D}}
\newcommand{\fI}{{\mathfrak I}}
\newcommand{\fJ}{{\mathfrak J}}

\newcommand{\fP}{{\mathfrak P}}
\newcommand{\fQ}{{\mathfrak Q}}
\newcommand{\fR}{{\mathfrak R}}
\newcommand{\fS}{{\mathfrak S}}
\newcommand{\fU}{{\mathfrak U}}
\newcommand{\fV}{{\mathfrak V}}

\newcommand{\fp}{{\mathfrak p}}

\DeclareMathOperator{\Hom}{Hom}
\DeclareMathOperator{\Ext}{Ext}
\DeclareMathOperator{\Tor}{Tor}
\DeclareMathOperator{\Ann}{Ann}
\DeclareMathOperator{\coker}{coker}
\DeclareMathOperator{\colim}{colim}

\DeclareMathOperator{\Hot}{\mathsf{Hot}}
\DeclareMathOperator{\Acycl}{\mathsf{Acycl}}
\DeclareMathOperator{\Add}{\mathsf{Add}}
\DeclareMathOperator{\Prod}{\mathsf{Prod}}
\DeclareMathOperator{\Spec}{\mathsf{Spec}}

\newcommand{\modl}{{\operatorname{\mathsf{--mod}}}}
\newcommand{\comodl}{{\operatorname{\mathsf{--comod}}}}

\newcommand{\simodl}{{\operatorname{\mathsf{--simod}}}}

\newcommand{\contra}{{\operatorname{\mathsf{--contra}}}}
\newcommand{\discr}{{\operatorname{\mathsf{discr--}}}}
\newcommand{\discrl}{{\operatorname{\mathsf{--discr}}}}
\newcommand{\smooth}{{\operatorname{\mathsf{--smooth}}}}
\newcommand{\sicntr}{{\operatorname{\mathsf{--sicntr}}}}

\newcommand{\tors}{{\operatorname{\mathsf{-tors}}}}
\newcommand{\ctra}{{\operatorname{\mathsf{-ctra}}}}

\renewcommand{\b}{{\mathsf{b}}}
\newcommand{\co}{{\mathsf{co}}}
\newcommand{\ctr}{{\mathsf{ctr}}}
\newcommand{\si}{{\mathsf{si}}}
\newcommand{\abs}{{\mathsf{abs}}}
\newcommand{\proj}{{\mathsf{proj}}}
\newcommand{\inj}{{\mathsf{inj}}}
\newcommand{\fdim}{{\mathsf{fdim}}}
\newcommand{\cs}{{\mathsf{cs}}}

\newcommand{\rop}{{\mathrm{op}}}

\newcommand{\bu}{{\text{\smaller\smaller$\scriptstyle\bullet$}}}
\newcommand{\lrarrow}{\.\relbar\joinrel\relbar\joinrel\rightarrow\.}

\newcommand{\oc}{\mathbin{\text{\smaller$\square$}}}

\theoremstyle{plain}
\newtheorem{thm}{Theorem}[section]

\newtheorem{lem}[thm]{Lemma}
\newtheorem{prop}[thm]{Proposition}
\newtheorem{cor}[thm]{Corollary}
\theoremstyle{definition}
\newtheorem{ex}[thm]{Example}
\newtheorem{exs}[thm]{Examples}
\newtheorem{rem}[thm]{Remark}

\newcommand{\Section}[1]{\bigskip\section{#1}\medskip}

\begin{document}

\title{The tilting-cotilting correspondence}

\author{Leonid Positselski and Jan \v S\v tov\'\i\v cek}

\address{Leonid Positselski, Institute of Mathematics, Czech Academy
of Sciences, \v Zitn\'a~25, 115~67 Prague~1, Czech Republic; and
\newline\indent Laboratory of Algebraic Geometry, National Research
University Higher School of Economics, Moscow 119048, Russia; and
\newline\indent Sector of Algebra and Number Theory, Institute for
Information Transmission Problems, Moscow 127051, Russia; and
\newline\indent Department of Mathematics, Faculty of Natural Sciences,
University of Haifa, Mount Carmel, Haifa 31905, Israel}

\email{posic@mccme.ru}

\address{Jan {\v S}{\v{t}}ov{\'{\i}}{\v{c}}ek, Charles University
in Prague, Faculty of Mathematics and Physics, Department of Algebra,
Sokolovsk\'a 83, 186 75 Praha, Czech Republic}

\email{stovicek@karlin.mff.cuni.cz}

\begin{abstract}
 To a big $n$\+tilting object in a complete, cocomplete abelian
category $\A$ with an injective cogenerator we assign a big
$n$\+cotilting object in a complete, cocomplete abelian category $\B$
with a projective generator, and vice versa.
 Then we construct an equivalence between the (conventional or
absolute) derived categories of $\A$ and~$\B$.
 Under various assumptions on $\A$, which cover a wide range of examples
(for instance, if $\A$ is a module category or, more generally,
a locally finitely presentable Grothendieck abelian category),
we show that $\B$ is the abelian category of contramodules
over a topological ring and that the derived equivalences
are realized by a~contramodule-valued variant of the usual
derived Hom-functor.
\end{abstract}

\maketitle

\setcounter{tocdepth}{2} 
\tableofcontents

\section{Introduction}
\medskip

Tilting theory has its roots in representation theory of finite-dimensional algebras and has evolved into a powerful derived Morita theory with numerous applications in various fields of algebra and algebraic geometry~\cite{Handbook}. Our motivation in this paper stems from a beautiful correspondence between finite-dimensional tilting and finite-dimensional cotilting modules which goes back to Brenner and Butler~\cite{BB79} and Miyashita~\cite{Miy}.

We exhibit an equally symmetric and easy-to-state correspondence (which we call the tilting-cotilting correspondence) in the context of very general abelian categories. This puts several recent generalizations of the Brenner--Butler correspondence (see for instance \cite{Ba-eq,BMT,NSZ,PV,St}) into a unified framework.

As we illustrate on various classes of examples, the resulting derived equivalences which we obtain are often as concrete as in the classical finite-dimensional situation. They are often induced by a tilting bimodule ${_\fA T_\fR}$ over two topological rings $\fA$ and $\fR$ which is discrete as a module over either of the rings. In this situation, one side of our tilting equivalence is the Grothendieck category $\A$ of all discrete left $\fA$\+modules, while the other side $\B$ is the category of so-called left $\fR$\+contramodules (a concept related to but different from a topological module). Roughly speaking, contramodules are $\fR$\+modules where well-behaved infinite linear combinations of elements are defined, as long as the coefficients from the ring converge to zero.


The results which we present here are also strongly inspired by the fact that examples of categories of discrete modules and contramodules over topological rings naturally arise as the categories of comodules and contramodules over coalgebras, respectively.
In this case, the derived equivalences induced by tilting and cotilting 
objects sometimes coincide with a comodule-contramodule correspondence 
introduced by the first-named author
in~\cite[\S0.2--3 and Chapters~5--6]{Psemi}.
This direction is further pursued
in \cite{Pperp,PStinfty}.

\medskip

Let us explain our results and their context more in detail. The nowadays classical theorem of Brenner and Butler~\cite{BB79}, viewed from the perspective of tilting derived equivalences by Happel~\cite{Hap87} and Cline, Parshall and Scott \cite{CPS86}, can be stated as follows. If $A$ is a finite-dimensional algebra, $T\in A\modl_\fdim$ is a finite-dimensional tilting module in the sense of Miyashita~\cite{Miy} and $B=\Hom_A(T,T)^\rop$ is its (finite-dimensional) endomorphism algebra, there is a triangle equivalence $\D^\b(A\modl_\fdim) \simeq \D^\b(B\modl_\fdim)$ which sends $T \in A\modl_\fdim$ to the projective generator $B \in B\modl_\fdim$. Moreover, the vector-space dual $W = A^* \in A\modl_\fdim$, which is an injective cogenerator there, is sent to a cotilting module in $B\modl_\fdim$ (which is defined formally dually to tilting modules). Hence the situation is completely self-dual and is illustrated in Figure~\ref{the-correspondence-fig}.

\begin{figure}
  \begin{center}
    \begin{tikzpicture}
      \draw (0,0) rectangle (5,3.7);
      \draw (1.5,3.3) node {$\D^\st(\A) \simeq \D^\st(\B)$};
    
      \fill[gray!20!white, draw=black] (1.75, 1.5) ellipse (1.25 and 1);
      \draw (.8,2.55) node {$\A$};
      \fill[gray!20!white, draw=black] (3.25, 1.5) ellipse (1.25 and 1);
      \draw (4.25,2.55) node {$\B$};
      \begin{scope}
      \clip (3.25, 1.5) ellipse (1.25 and 1);
      \fill[gray!40!white, draw=black] (1.75, 1.5) ellipse (1.25 and 1);
      \end{scope}
    
      \draw (2.1,1.5) node{$\bullet$};
      \draw (1.82,1.53) node{$\scriptstyle{T}$};
      \draw (2.9,1.5) node{$\bullet$};
      \draw (3.23,1.53) node{$\scriptstyle{W}$};
    \end{tikzpicture}
  \end{center}
  \caption{The tilting-cotilting correspondence: $T\in\A$ is a tilting
	object and $W\in\A$ an injective cogenerator, while $T\in\B$ is
	a projective generator and $W\in\B$ is a cotilting object.}
  \label{the-correspondence-fig}
\end{figure}
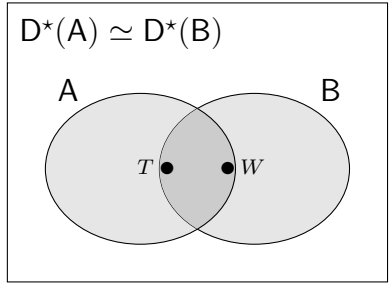

 In the last decade, there have been several attempts to recover a part
of this picture outside of the realm of finite-dimensional algebras.
Based on an existing theory of infinitely generated (co)tilting modules,
similar results were obtained in~\cite{St} in the case where $\A$ is a 
Grothendieck abelian category and $W \in \B$ is a big (i.~e., infinitely
generated) $n$\+cotilting module. The heart of a t\+structure associated with
a big $n$\+tilting module is considered in the recent paper~\cite{Ba}.
 A general discussion of (co)tilting objects in triangulated categories
can be found in the papers by Psaroudakis and Vit\'{o}ria~\cite{PV}
and Nicol\'{a}s, Saor\'{\i}n and Zvonareva~\cite{NSZ}, and of big $n$\+(co)tilting
objects in abelian categories, in~\cite[Section~6]{NSZ}.

 In this paper, we construct a one-to-one correspondence between
the two dual settings of complete, cocomplete abelian categories $\A$
with an injective cogenerator $W$ and a big $n$\+tilting object $T$,
and complete, cocomplete abelian categories $\B$ with a projective
generator $T$ and a big $n$\+cotilting object~$W$.
 The correspondence assigns to an abelian category $\A$ with
a tilting object $T$ the heart $\B={}^T\D^{\b,\le0}\cap{}^T\D^{\b,\ge0}$
of the tilting t\+structure on the derived category $\D^\b(\A)$, and
to an abelian category $\B$ with a cotilting object $W$ the heart
$\A={}^W\D^{\b,\le0}\cap {}^W\D^{\b,\ge0}$ of the cotilting t\+structure
on $\D^\b(\B)$.
 In addition, we proceed to construct triangulated equivalences
$\D^\st(\A)\simeq\D^\st(\B)$ between the (bounded or unbounded,
conventional or absolute) derived categories of the abelian
categories $\A$ and~$\B$. See again Figure~\ref{the-correspondence-fig}.

At this point, we have encountered a certain lack of symmetry in the available
literature on abelian categories, however. The abelian categories $\A$ in
the one-to-one correspondence above are often Grothendieck abelian categories and
as such well-understood. The other end of the correspondence involves
cocomplete abelian categories $\B$ with a projective generator, and their
structure does not seem to be so widely known. Here we show that in general, they
are described as the categories of modules over additive monads
on the category of sets.

If $\B$ is locally presentable (which always happens if $\A$ is such
in the context of the tilting-cotilting correspondence),
we turn out to land in the realm of models of infinitary algebraic
theories in the sense of \cite{Wr} (see also \cite[Introduction]{PR}).
Such categories $\B$ relate to module categories in a formally dual way as compared
to the Popescu--Gabriel Theorem for Grothendieck abelian categories.
 Indeed, Grothendieck abelian categories can be described as reflective full
subcategories in the categories of modules over associative rings
with exact reflection functors.
 On the other hand, the categories of models of additive $\kappa$\+ary
algebraic theories can be presented as reflective full subcategories in
the categories of modules over associative rings with
exact embedding functors.


In a wide range of algebraic examples, however, the infinitary aspect is simply
captured by a topology on a ring and the models of the corresponding algebraic
theory are so-called \emph{contramodules} over that topological ring.
It turns out that, with the language of contramodules, the tilting
derived equivalences become even more concrete and transparent.

In many cases, the situation is even as follows: The category $\A$ is equivalent to the category
$\fA\discrl$ of discrete left modules (also known as a pretorsion class
in $\fA\modl$, \cite[\S VI.4]{Sten}) over a left topological ring~$\fA$.
The other side of the derived equivalence is occupied by the category
$\B$ of left contramodules over a right topological ring~$\fR$. A tilting
object in $\A$ is then in fact an $\fA$\+$\fR$\+bimodule ${_\fA T_\fR}$
which is discrete from either side, and the derived equivalences
are obtained by deriving the adjunction
\[ T\ocn_\fR{-}\:\fR\contra \;\rightleftarrows\; \fA\discrl \;\:\!\!\Hom_\fA(T,{-}). \]
Here, $T\ocn_\fR{-}$ is the so-called \emph{contratensor product} functor,
which is left adjoint to the contramodule valued Hom\+functor.

Upon developing basic Morita theory for contramodule categories, we obtain
that, up to this kind of Morita equivalence of $\fR$,
the exact forgetful functor $\fR\contra\rarrow\fR\modl$ is fully
faithful; so the category $\B=\fR\contra$ is a full subcategory in $\fR\modl$. The latter
adjunction is then simply a restriction of the usual adjunction
\[ T\ot_\fR{-}\:\fR\modl \;\rightleftarrows\; \fA\modl \;\:\!\!\Hom_\fA(T,{-}). \]
to corresponding full subcategories of the module categories.
%
%
This phenomenon was already observed in~\cite{Ba-eq,BMT} in the special case
where $\A=A\modl$ for a discrete ring $A$, when
$T\in A\modl$ is a ``good'' $n$\+tilting module in the sense of~\cite{Ba-eq,BMT}
(every infinitely generated tilting module is Morita equivalent to one such).
When $n=1$, the $\fR$\+modules in the image of the forgetful functor $\fR\contra\rarrow\fR\modl$ were called
``costatic'' modules in the paper~\cite{GT} (specifically, in
the context of~\cite[Theorems~5.7 and~6.3]{GT}).


We conclude the paper with developing general tools to recognize the situation
where the tilting-cotilting correspondence is encoded in a discrete
tilting bimodule ${_\fA T_\fR}$ as above, even when the categories $\A$ and $\B$ are not \emph{a priori}
given in this form. We illustrate the tools on various classes of Gorenstein
or relative Gorenstein Grothendieck categories, which is the case closely
related to above-mentioned comodule-contramodule
correspondence~\cite[\S0.2--3 and Chapters~5--6]{Psemi}.
In particular, we characterize the situation where an injective cogenerator
of a locally Noetherian category is a tilting object, generalizing results
of Angeleri H\"ugel, Herbera and Trlifaj~\cite{AHT}.

\medskip

\textbf{Acknowledgement.}
 The authors are grateful to Luisa Fiorot, Jorge Vit\'oria, and
Jan Trlifaj for helpful discussions and communications.
 Leonid Positselski's research is supported by the Israel Science
Foundation grant~\#\,446/15 and research plan RVO:~67985840.
 Jan \v{S}\v{t}ov\'{\i}\v{c}ek's research is supported by
the Neuron Fund for Support of Science.

\Section{\texorpdfstring{Tilting $\mathrm{t}$-Structures}{Tilting t-Structures}}
\label{tilting-t-structure-secn}

 Given an abelian category~$\C$, we denote by $\D^\b(\C)$ the derived
category of bounded complexes over~$\C$.
 Let $(\D^{\b,\le0}(\C),\.\D^{\b,\ge0}(\C))$ denote the standard
t\+structure on $\D^\b(\C)$, i.~e., $\D^{\b,\le0}(\C)\subset\D^\b(\C)$
is the full subcategory of complexes with the cohomology objects
concentrated in the nonpositive cohomological degrees and
$\D^{\b,\ge0}(\C)\subset\D^\b(\C)$ is the full subcategory of complexes
with the cohomology objects concentrated in the nonnegative
cohomological degrees. We use the notation $\tau_{\le 0}\:\D^\b(\C)\rarrow \D^{\b,\le0}(\C)$ and $\tau_{\ge 0}\:\D^\b(\C)\rarrow \D^{\b,\ge0}(\C)$ for the corresponding truncation functors, which are the right and the left adjoints to the inclusions $\D^{\b,\le0}(\C)\subset\D^\b(\C)$ and $\D^{\b,\ge0}(\C)\subset\D^\b(\C)$, respectively.
 A similar notation is used for the standard t\+structures on
the derived categories of bounded below, bounded above, and unbounded
complexes $\D^+(\C)$, \,$\D^-(\C)$, and $\D(\C)$
\,\cite[n$^\circ$\,1.3]{BBD}.

 The constructions of derived categories (and, more generally,
Verdier quotient categories) involve a well-known difficulty in
that the collection of all morphisms between a fixed pair of objects
in the derived category may turn out to be a proper class rather
than a set (see~\cite{CN} for an example).
 We will say that a certain derived category $\D^\st(\C)$
\emph{has Hom sets} if this complication does not arise, that is
morphisms between any two fixed objects in $\D^\st(\C)$ form a set.
 Even when this is not the case, one can still work with $\D^\st(\C)$
as a ``very large category'', but one has to be cautious.
 In any event, all the abelian categories in this paper will be
presumed or proved to have Hom sets.

 The following useful lemma can be found in~\cite[Lemma~10]{NSZ}.
We include a proof for the reader's convenience.

\begin{lem} \label{high-ext-long-complex}
 Let\/ $\C$ be an abelian category and $T\in\C$ an object of
projective dimension~$\le n$.
 Then for every complex $X^\bu\in\D^{\le -n-1}(\C)$ one has\/
$\Hom_{\D(\C)}(T,X^\bu)=0$.
\end{lem}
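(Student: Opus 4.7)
The approach is to compute $\Hom_{\D(\C)}(T,X^\bu)$ via a projective resolution of $T$ and then conclude by degree-counting. Since $T$ has projective dimension $\le n$, we pick a projective resolution $P^\bu\to T$ of length $\le n$: a complex $P^\bu$ of projective objects with $P^i=0$ for $i\notin[-n,0]$, quasi-isomorphic to $T$ viewed as a complex concentrated in degree~$0$. On the other side, the canonical morphism $\tau_{\le -n-1}X^\bu\to X^\bu$ is a quasi-isomorphism (because $X^\bu\in\D^{\le -n-1}(\C)$), and the complex $\tau_{\le -n-1}X^\bu$ has zero components in all degrees ${>-n-1}$.

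The key observation is that the bounded-above complex of projectives $P^\bu$ is K-projective in the homotopy category $\K(\C)$: every chain map from $P^\bu$ to an acyclic complex is null-homotopic, as the requisite lifting problems can be solved one projective at a time, starting from the top degree of $P^\bu$ and descending. It follows that $\Hom_{\D(\C)}(P^\bu,Y^\bu)=\Hom_{\K(\C)}(P^\bu,Y^\bu)$ for every complex $Y^\bu$; substituting $T\simeq P^\bu$ and $X^\bu\simeq\tau_{\le -n-1}X^\bu$ in the derived category then yields
\[
 \Hom_{\D(\C)}(T,X^\bu) \;=\; \Hom_{\K(\C)}(P^\bu,\tau_{\le -n-1}X^\bu).
\]

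A chain map on the right-hand side is given by components $P^i\to(\tau_{\le -n-1}X^\bu)^i$, but $P^i=0$ unless $i\in[-n,0]$, while $(\tau_{\le -n-1}X^\bu)^i=0$ for $i\ge -n$. The two supports are therefore disjoint, every component of any such chain map vanishes, and not even a nonzero strict chain map exists, let alone a nonzero homotopy class; hence the right-hand Hom group is zero, and the lemma follows. The only nontrivial step is the K-projectivity identification, which is classical for bounded-above complexes of projectives and requires no extra hypothesis on $\C$ beyond the existence of the finite projective resolution built into the assumption on~$T$.
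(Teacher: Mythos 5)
There is a genuine gap, and it lies in the very first step: you ``pick a projective resolution $P^\bu\to T$ of length $\le n$.'' The lemma is stated for an arbitrary abelian category $\C$, and in this paper ``projective dimension $\le n$'' is \emph{defined} by the Ext-vanishing condition $\Ext^i_\C(T,X)=0$ for all $i>n$ (condition~(i) in Section~1), not by the existence of a projective resolution. The categories $\A$ to which the lemma is actually applied are complete abelian categories with an injective cogenerator --- Grothendieck categories, comodule categories over coalgebras, and the like --- which in general have no nonzero projective objects at all, so no projective resolution of $T$ of any length exists. The equivalence ``$\Ext^{>n}(T,-)=0$ iff $T$ admits a length-$n$ projective resolution'' requires enough projectives, which is exactly the hypothesis you cannot assume here. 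Your closing remark that the argument ``requires no extra hypothesis on $\C$ beyond the existence of the finite projective resolution built into the assumption on $T$'' is therefore the crux of the problem: that existence is \emph{not} built into the assumption.

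Granting enough projectives, the rest of your argument is fine (a bounded-above complex of projectives is indeed K-projective, and the disjointness of supports after the smart truncation $\tau_{\le-n-1}X^\bu$ kills every chain map), so the proof would be valid for module categories. The paper's proof works on the other side of the Hom instead: it represents a morphism $T\rarrow X^\bu$ in $\D(\C)$ by a roof $T\rarrow Y^\bu\llarrow X^\bu$ with $Y^\bu$ concentrated in degrees $\le0$, observes that the chain map $T\rarrow Y^\bu$ factors through the silly truncation $\sigma_{\ge-n-1}Y^\bu$, which is quasi-isomorphic to $Z[n+1]$ for $Z=\ker(Y^{-n-1}\to Y^{-n})$, and identifies the resulting morphism with a class in $\Ext^{n+1}_\C(T,Z)=\Hom_{\D(\C)}(T,Z[n+1])$, which vanishes by hypothesis. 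That route uses only the Yoneda/derived-category description of Ext, valid in any abelian category, and never needs a resolution of~$T$. To repair your proof you would have to replace the resolution of $T$ by an argument of this kind, or restrict the lemma to categories with enough projectives --- which would not suffice for the applications in the paper.
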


\begin{proof}
 Any morphism $T\rarrow X^\bu$ in $\D(\C)$ can be represented as
a fraction formed by a morphism $T\rarrow Y^\bu$ and
a quasi-isomorphism $X^\bu\rarrow Y^\bu$ of complexes over~$\C$.
 Applying the canonical truncation, we can assume that the terms
of the complex $Y^\bu$ are concentrated in the degrees~$\le0$.
 Let $Z$ denote the kernel of the differential $Y^{-n-1}\rarrow Y^{-n}$
and let $\sigma_{\ge -n-1}Y^\bu\subset Y$ be the silly truncation
of the complex~$Y^\bu$; so one has $H^i(\sigma_{\ge-n-1}Y^\bu)=0$ for
$i\ne-n-1$ and $H^{-n-1}(\sigma_{\ge-n-1}Y^\bu)=Z$.
 Then the morphism of complexes $T\rarrow Y^\bu$ factorizes as
$T\rarrow\sigma_{\ge -n-1}Y^\bu\rarrow Y^\bu$ and the morphism
$T\rarrow\sigma_{\ge -n-1}Y^\bu$ represents an extension class in
$\Ext^{n+1}_\C(T,Z)$, which vanishes by the assumption.
\end{proof}

 Let $\A$ be an abelian category with set-indexed products and
an injective cogenerator.
 It follows from Freyd's adjoint functor existence theorem that
any complete abelian category with a cogenerator is
cocomplete~\cite[Proposition~6.4]{Fa}.
 Furthermore, in any abelian category with enough injective
objects the coproducts are exact~\cite[Exercise~III.2]{Mit}.
 Thus set-indexed coproducts exist and are exact in~$\A$.

 It follows that both the unbounded derived category $\D(\A)$ and
the unbounded homotopy category $\Hot(\A)$ of complexes over $\A$
have set-indexed coproducts, and the canonical Verdier localization
functor $\Hot(\A)\rarrow\D(\A)$ preserves set-indexed coproducts.
 Therefore, so do the cohomology functors $H^i\:\D(\A)\rarrow\A$.
 Notice also that the bounded below derived category $\D^+(\A)$ has Hom
sets, because it is equivalent to the bounded below homotopy category
$\Hot^+(\A_\inj)$ of complexes of injective objects in~$\A$
(see e.~g.~\cite[Proposition I.4.7]{Hart66}).

 Let $n\ge0$ be an integer.
 Let us say that an object $T\in\A$ is (\emph{big}) \emph{$n$\+tilting}
if the following three conditions are satisfied:
\begin{enumerate}
\renewcommand{\theenumi}{\roman{enumi}}
\item the projective dimension of $T$ in $\A$ does not exceed~$n$,
that is $\Ext^i_\A(T,X)=0$ for all $i>n$ and all $X\in\A$;
\item $\Ext^i_\A(T,T^{(I)})=0$ for all $i>0$ and all sets $I$, where
$T^{(I)}$ denotes the coproduct of $I$~copies of $T$ in~$\A$;
\item every complex $X^\bu\in\D(\A)$ such that $\Hom_{\D(\A)}
(T,X^\bu[i])=0$ for all $i\in\Z$ is acyclic.
\end{enumerate}

 Denote by $\Add(T)\subset\A$ the full subcategory formed by
the direct summands of infinite coproducts of copies of the object
$T\in\A$.
 Condition~(ii) of the above definition allows us to show that
the triangulated subcategory of $\D^\b(\A)$ generated by $\Add(T)$
is equivalent to a full subcategory of $\Hot(\A)$.

\begin{lem} \label{tria-Add-T-via-complexes}
 Suppose that $T\in\A$ satisfies $\Ext^i_\A(T,T^{(I)})=0$ for all $i>0$
and all sets $I$. Then the composition
$$
 \Hot^\b(\Add(T))\stackrel{\subset}\rarrow\Hot^\b(\A)\rarrow\D^\b(\A)
$$
is fully faithful and the essential image is the triangulated
subcategory of $\D^\b(\A)$ generated by $\Add(T)$.
\end{lem}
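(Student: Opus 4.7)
The plan is to first upgrade the Ext vanishing hypothesis from $T$ and $T^{(I)}$ to all of $\Add(T)$, then establish full faithfulness by truncating the source and target complexes separately, and finally identify the essential image from triangulated formalities. Since $\A$ has enough injective objects, the groups $\Ext^i_\A(Y,X)$ may be computed as $H^i\Hom_\A(Y,I^\bu)$ for an injective resolution $I^\bu$ of $X$. For $Y=T^{(J)}$ and $X=T^{(I)}$ one has $\Hom_\A(T^{(J)},I^\bu)=\prod_J\Hom_\A(T,I^\bu)$, and cohomology commutes with products of complexes of abelian groups, so $\Ext^i_\A(T^{(J)},T^{(I)})=\prod_J\Ext^i_\A(T,T^{(I)})=0$ for $i>0$. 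Passing to direct summands in both arguments extends this to $\Ext^i_\A(P,Q)=0$ for all $i>0$ and all $P,Q\in\Add(T)$.

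Using this, I would prove that for $P^\bu,Q^\bu\in\Hot^\b(\Add(T))$ the canonical map
\[ \Hom_{\Hot^\b(\A)}(P^\bu,Q^\bu)\rarrow\Hom_{\D^\b(\A)}(P^\bu,Q^\bu) \]
is an isomorphism by a two-variable induction on the number of nonzero terms. The base case is $P^\bu=P[k]$ and $Q^\bu=Q[m]$ with $P,Q\in\Add(T)$; both sides reduce to $\Hom_\A(P,Q)$ when $k=m$ and vanish otherwise by the Ext vanishing just established. For the induction, I would use the short exact sequence of stupid truncations $0\rarrow\sigma_{\ge b}Q^\bu\rarrow Q^\bu\rarrow\sigma_{\le b-1}Q^\bu\rarrow0$ (and the analogous one for $P^\bu$), which is a degreewise split short exact sequence of complexes, hence yields a distinguished triangle both in $\Hot^\b(\A)$ and in $\D^\b(\A)$; applying $\Hom(P^\bu,-)$ on both sides produces two long exact sequences whose terms agree by the inductive hypothesis, and the five lemma gives the inductive step.

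For the essential image, let $\cT\subset\D^\b(\A)$ denote the image of $\Hot^\b(\Add(T))$. It contains $\Add(T)$ (via single-term complexes), is closed under shifts, and is closed under cones: given $f\:P^\bu\rarrow Q^\bu$ in $\D^\b(\A)$ between objects of $\cT$, full faithfulness promotes $f$ to an honest chain map $f_0$ in $\Hot^\b(\Add(T))$, whose mapping cone lies in $\Hot^\b(\Add(T))$ and represents a cone of $f$ in $\D^\b(\A)$. Thus $\cT$ is a triangulated subcategory of $\D^\b(\A)$ containing $\Add(T)$, hence contains the triangulated subcategory generated by $\Add(T)$; the reverse inclusion is immediate by iterating the stupid truncation triangles above. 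The step demanding the most care is the Ext extension to $\Add(T)$, as closure under summands in the \emph{first} argument of $\Ext^i_\A$ is not automatic and relies on going through injective resolutions in the second argument together with exactness of products of abelian groups; once that is in place, the truncation induction is routine.
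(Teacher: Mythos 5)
Your proof is correct and follows essentially the same route as the paper's: establish $\Ext^i_\A(P,Q)=0$ for $P,Q\in\Add(T)$, then induct on the widths of the two complexes using stupid-truncation triangles and the five lemma, and identify the essential image by lifting morphisms to honest chain maps. (Only your closing remark is slightly off: what requires the injective-resolution computation is the passage to \emph{coproducts} in the first argument of $\Ext$; closure under direct summands there is automatic by additivity.)
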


\begin{proof}
 This is completely analogous to~\cite[Lemma 1.1]{Hap87}.
 Since $\A$ has exact coproducts, we have
$\Ext^i_\A(T^{(I)},T^{(J)})=0$ for all $i>0$ and all sets $I,J$.

 If now $X^\bu$, $Y^\bu \in \Hot^\b(\Add(T))$, then
$\Hom_{\Hot^\b(\A)}(X^\bu,Y^\bu) \cong\Hom_{\D^\b(\A)}(X^\bu,Y^\bu)$
is shown by induction on the sum of the widths of $X^\bu$ and $Y^\bu$.
 If $X^\bu$, $Y^\bu$ have widths one, then $X^\bu \cong X'[i]$ and
$Y^\bu \cong Y'[j]$ for some $X',Y'\in\Add(T)$ and $i,j \in \Z$.
 If $i\ne j$, then $\Hom_{\Hot^\b(\A)}(X^\bu,Y^\bu) = 0
= \Hom_{\D^\b(\A)}(X^\bu,Y^\bu)$ by the assumption on $T$,
and if $i=j$, then $\Hom_{\Hot^\b(\A)}(X^\bu,Y^\bu)
\cong \Hom_\A(X',Y') \cong \Hom_{\D^\b(\A)}(X^\bu,Y^\bu)$.

 If $X^\bu$ has width greater than one, we use the silly truncation
to find a triangle $X_1^\bu \rarrow X^\bu \rarrow X_2^\bu
\rarrow X_1^\bu[1]$ such that $X_1^\bu,X_2^\bu\in\Hot^\b(\Add(T))$
have widths smaller than the width of $X^\bu$.
 Applying $\Hom(-,Y^\bu)$ to this triangle and using the 5-lemma,
we deduce that $\Hom_{\Hot^\b(\A)}(X^\bu,Y^\bu) \cong
\Hom_{\D^\b(\A)}(X^\bu,Y^\bu)$.
 If the width of $Y^\bu$ is greater than one, we proceed similarly.
\end{proof}

The following theorem is the main result of this section.

\begin{thm} \label{tilting-t-structure-thm}
 Let $T\in\A$ be an $n$\+tilting object.
 Then the pair of full subcategories
\begin{align*}
 {}^T\D^{\le 0}&=\{\,X^\bu\in\D(\A)\mid \Hom_{\D(\A)}(T,X^\bu[i])=0
 \text{ for all $i>0$}\,\}, \\
 {}^T\D^{\ge 0}&=\{\,X^\bu\in\D(\A)\mid \Hom_{\D(\A)}(T,X^\bu[i])=0
 \text{ for all $i<0$}\,\}
\end{align*}
is a t\+structure on the unbounded derived category\/ $\D(\A)$.
\end{thm}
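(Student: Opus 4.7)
The plan is to verify the three defining axioms of a t\+structure in sequence. Axiom (a), the shift inclusions ${}^T\D^{\le 0}[1]\subseteq{}^T\D^{\le 0}$ and ${}^T\D^{\ge 0}[-1]\subseteq{}^T\D^{\ge 0}$, is immediate by reindexing $i\mapsto i\pm 1$ in the defining Hom\+vanishing conditions.

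For axiom (b), the existence of truncation triangles, I would build for each $X\in\D(\A)$ a triangle $U\to X\to V\to U[1]$ by a Postnikov-type iteration. At each stage $k\le 0$, the set $I_k=\Hom_{\D(\A)}(T,X[k])$ indexes a universal morphism $T^{(I_k)}[-k]\to X$; one takes the cone, proceeds to the next degree, and iterates. The projective-dimension bound (i), applied via Lemma~\ref{high-ext-long-complex}, confines the degrees contributing to any fixed $\Hom_{\D(\A)}(T,{-}[i])$ to a finite range; the Ext\+vanishing (ii), applied via Lemma~\ref{tria-Add-T-via-complexes}, ensures that each finite stage can be represented by an honest complex in $\Hot^\b(\Add(T))$. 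For unbounded $X$, the construction is assembled into a homotopy colimit in $\D(\A)$, which is available because $\A$ has exact set\+indexed coproducts. The resulting $U$ lies in the \emph{strict aisle} $\cU$, the smallest full subcategory of $\D(\A)$ containing $T$ and closed under set\+indexed coproducts, extensions and $[1]$; while $V$ satisfies $\Hom_{\D(\A)}(T,V[k])=0$ for every $k\le 0$ by construction, so $V\in{}^T\D^{\ge 1}$.

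Axiom (c), the Hom\+orthogonality $\Hom_{\D(\A)}({}^T\D^{\le 0},{}^T\D^{\ge 1})=0$, will then follow in two steps. First, for any $X\in{}^T\D^{\le 0}$ apply the truncation triangle from (b) and inspect the long exact sequence obtained by $\Hom_{\D(\A)}(T,{-}[i])$: for $i>0$ both $\Hom_{\D(\A)}(T,U[i])$ and $\Hom_{\D(\A)}(T,X[i])$ vanish, forcing $\Hom_{\D(\A)}(T,V[i])=0$, and this combined with the $i\le 0$ vanishing from $V\in{}^T\D^{\ge 1}$ yields $V=0$ via condition~(iii); hence $X\cong U\in\cU$, so ${}^T\D^{\le 0}=\cU$. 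Second, for any $Y\in{}^T\D^{\ge 1}$ and any generator $T[k]$ of $\cU$ with $k\ge 0$, one has $\Hom_{\D(\A)}(T[k],Y)=\Hom_{\D(\A)}(T,Y[-k])=0$; this vanishing is preserved by coproducts, extensions and positive shifts, so $\Hom_{\D(\A)}(X,Y)=0$ for every $X\in\cU={}^T\D^{\le 0}$.

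The principal obstacle I expect is the Postnikov construction of the truncation triangle for unbounded $X$ in~(b). Because $T$ is only a big $n$\+tilting object and is not assumed compact in $\D(\A)$, the functor $\Hom_{\D(\A)}(T,{-})$ does not a~priori commute with set\+indexed coproducts, so the iterative construction has to exploit conditions (i)--(iii) in concert to ensure convergence of the homotopy colimit and to confirm that the resulting $U$ really lies in the strict aisle~$\cU$.
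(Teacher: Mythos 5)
Your proposal follows essentially the same route as the paper's proof: the truncation triangle is built by iteratively attaching coproducts of nonnegative shifts of $T$ (universal morphisms indexed by $\Hom_{\D(\A)}(T[i],X^\bu_i)$), assembling the coaisle part as a homotopy colimit, using Lemma~\ref{high-ext-long-complex} to control degrees and Lemma~\ref{tria-Add-T-via-complexes} to represent the finite stages by complexes with terms in $\Add(T)$, and then invoking condition~(iii) to identify ${}^T\D^{\le0}$ with the suspended subcategory generated by $T$, from which the Hom-orthogonality follows. The obstacle you flag (non-compactness of $T$) is exactly the one the paper resolves by observing that the cone of $X^\bu_k\rarrow\operatorname{hocolim}X^\bu_i$ lies in $\D^{\le-k-1}(\A)$, so your plan is correct and matches the paper's argument.
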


\begin{proof}
 Let $X^\bu\in\D(\A)$ be a complex.
 We start with constructing an approximation triangle
$$
 \tau_{\le0}^TX^\bu\lrarrow X^\bu\lrarrow\tau_{\ge1}^TX^\bu\lrarrow
 (\tau_{\le0}^TX^\bu)[1]
$$
with $\tau_{\le0}^TX^\bu\in{}^T\D^{\le0}$ and
$\tau_{\ge1}^TX^\bu\in{}^T\D^{\ge1}$.
It in fact follows automatically that these triangles, where $X^\bu$ runs over
the objects of $\D(\A)$, define
the truncation functors $\tau_{\le0}^T\:\D(\A)\rarrow{}^T\D^{\le0}$ and 
$\tau_{\ge0}^T\:\D(\A)\rarrow{}^T\D^{\ge0}$ as the corresponding
adjoints to the inclusions $\D(\A)\subset{}^T\D^{\le0}$ and
$\D(\A)\subset{}^T\D^{\ge0}$, respectively;
see~\cite[Proposition 1.3.3]{BBD}.

 Notice that for every complex $Y^\bu\in\D(\A)$ the collection of
all morphisms $T\rarrow Y^\bu$ in $\D(\A)$ is a set, because
$\Hom_{\D(\A)}(T,Y^\bu)=\Hom_{\D^+(\A)}(T,\tau_{\ge-n}Y^\bu)$ by
Lemma~\ref{high-ext-long-complex} and the category $\D^+(\A)$ has
Hom sets.
 Proceeding by induction, put $X^\bu_0=X^\bu$ and for every $i\ge0$
consider a distinguished triangle
$$
 T[i]^{(H_i)}\lrarrow X^\bu_i\lrarrow X^\bu_{i+1}\lrarrow T[i+1]^{(H_i)},
$$
where $H_i=\Hom_{\D(\A)}(T[i],X^\bu_i)$ and the first map in the triangle
is the canonical one.
 Using the condition~(ii), one easily proves by induction on~$i$ that
\begin{equation} \label{inductive-approximation-contstruction-hom}
 \Hom_{\D(\A)}(T[j],X^\bu_{i+1})=0 \qquad\text{for $j=0$,~\dots,~$i$}.
\end{equation}

 The complexes $X^\bu_i$ form an inductive system $X^\bu_0\rarrow
X^\bu_1\rarrow X^\bu_2 \rarrow\dotsb$.
 Since countable coproducts exist in $\D(\A)$, we can construct
a homotopy colimit of this system, defined as the third object in
a distinguished triangle
$$
 \coprod_{i=0}^\infty X^\bu_i\lrarrow\coprod_{i=0}^\infty X^\bu_i
 \lrarrow\operatorname{hocolim}\nolimits_{i\ge0} X^\bu_i
 \lrarrow\coprod_{i=0}^\infty X^\bu_i[1],
$$
where the first map in the triangle is $\mathrm{id}-\mathit{shift}\:
\coprod_i X^\bu_i\rarrow\coprod_i X^\bu_i$.
 Put
$$
 \tau_{\ge1}^TX^\bu=\operatorname{hocolim}\nolimits_{i\ge0} X^\bu_i.
$$

 For any $0\le k\le i$, denote by $S^\bu_{ki}$ a cone of the morphism
$X^\bu_k\rarrow X^\bu_i$.
 Then $S^\bu_{ki}$ is an iterated extension of the objects
$T[k+1]^{(H_k)}$,~\dots, $T[i]^{(H_{i-1})}$ in $\D(\A)$.
By~\cite[Lemma 1.7.1]{Nee01} we have
$$
 \operatorname{hocolim}\nolimits_{i\ge0}X^\bu_i=
 \operatorname{hocolim}\nolimits_{i\ge k}X^\bu_i.
$$
 By the octahedron axiom (or more precisely,
by~\cite[Proposition~1.1.11]{BBD}), a cone of the natural morphism
$X^\bu_k\rarrow\operatorname{hocolim}_{i\ge k} X^\bu_i$ is at the same
time a cone of a certain morphism $\coprod_{i\ge k}S^\bu_{ki}\rarrow
\coprod_{i\ge k}S^\bu_{ki}$.

 Since the cohomology functors $\D(\A)\rarrow\A$ preserve
countable coproducts, we can conclude that a cone of the morphism
$X_k^\bu\rarrow\tau_{\ge1}^TX^\bu$ belongs to $\D^{\le -k-1}(\A)$.
 Taking $j\ge0$ and $k>n+j$, by Lemma~\ref{high-ext-long-complex}
and~\eqref{inductive-approximation-contstruction-hom} we have
$$
 \Hom_{\D(\A)}(T[j],\tau_{\ge1}^TX^\bu)=\Hom_{\D(\A)}(T[j],X^\bu_k)=0.
$$
 Hence $\Hom_{\D(\A)}(T[j],\tau_{\ge1}^TX^\bu)=0$ for all $j\ge0$
and $\tau_{\ge1}^TX^\bu\in{}^T\D^{\ge1}$.

 On the other hand, a cocone $\tau_{\le0}^TX^\bu$ of the morphism
$X^\bu\rarrow\tau_{\ge1}^TX^\bu$ is at the same time a cocone of
a morphism $\coprod_{i\ge 0}S^\bu_{0,i}\rarrow\coprod_{i\ge 0}S^\bu_{0,i}$.
Using Lemma~\ref{tria-Add-T-via-complexes}, one shows that 
the object $S^\bu_{0,i}\in\D(\A)$ can be represented by a complex
of the form
$$
 T^{(H_{i-1})}\lrarrow\dotsb\lrarrow T^{(H_1)}\lrarrow T^{(H_0)}
$$
with terms in the abelian category~$\A$, sitting in the cohomological gradings
from $-i$ to~$-1$.
 Hence the object $\coprod_{i\ge0}S^\bu_{0,i}$ can be represented by
a complex sitting in the cohomological degrees~$\le-1$ whose terms
are copowers of the object~$T$.
 Applying again the condition~(ii) and
Lemma~\ref{high-ext-long-complex}, we conclude that
$$
 \Hom_{\D(\A)}(T[j],\coprod_{i\ge0}S^\bu_{0,i})=0
 \qquad\text{for $j\le0$},
$$
hence $\Hom_{\D(\A)}(T[j],\tau_{\le0}^TX^\bu)=0$ for $j\le -1$ and
$\tau_{\le0}^TX^\bu\in{}^T\D^{\le0}$.

 Now let $X^\bu$ be a complex belonging to ${}^T\D^{\le0}$, so
$\Hom_{\D(\A)}(T[j],X^\bu)=0$ for $j\le-1$.
 Then, similarly to~\eqref{inductive-approximation-contstruction-hom},
we have
$$
 \Hom_{\D(\A)}(T[j],X^\bu_{i+1})=0
 \qquad\text{for all $j\le i$, \,$j\in\Z$}.
$$
 Arguing as above, we can conclude that
$\Hom_{\D(\A)}(T[j],\tau_{\ge1}^TX^\bu)=0$ for all $j\in\Z$.
 By the condition~(iii), it follows that $\tau_{\ge1}^TX^\bu=0$ and
$X^\bu\cong\tau_{\le0}^TX^\bu$ in $\D(\A)$.

 Therefore, every object $X^\bu\in{}^T\D^{\le0}$ can be obtained from
the objects $T$, $T[1]$, $T[2]$, $T[3]$,~\dots\ using extensions and
coproducts in $\D(\A)$, or in other words, ${}^T\D^{\le0}\subset
\D(\A)$ is the suspended subcategory generated by~$T$.
 Hence $\Hom_{\D(\A)}(X^\bu,Y^\bu)=0$ for all $X^\bu\in{}^T\D^{\le0}$
and $Y^\bu\in{}^T\D^{\ge1}$.
 The theorem is proved.
\end{proof}

\begin{cor} \label{bounded-tilting-t-structures}
 Let $T\in\A$ be an $n$\+tilting object.  Then\par
\textup{(a)} the pair of full subcategories
\begin{align*}
 {}^T\D^{-,\le 0}&=\{\,X^\bu\in\D^-(\A)\mid \Hom_{\D^-(\A)}(T,X^\bu[i])=0
 \text{ for all $i>0$}\,\}, \\
 {}^T\D^{-,\ge 0}&=\{\,X^\bu\in\D^-(\A)\mid \Hom_{\D^-(\A)}(T,X^\bu[i])=0
 \text{ for all $i<0$}\,\}
\end{align*}
is a t\+structure on the bounded above derived category\/ $\D^-(\A)$;
\par
\textup{(b)} the pair of full subcategories
\begin{align*}
 {}^T\D^{+,\le 0}&=\{\,X^\bu\in\D^+(\A)\mid \Hom_{\D^+(\A)}(T,X^\bu[i])=0
 \text{ for all $i>0$}\,\}, \\
 {}^T\D^{+,\ge 0}&=\{\,X^\bu\in\D^+(\A)\mid \Hom_{\D^+(\A)}(T,X^\bu[i])=0
 \text{ for all $i<0$}\,\}
\end{align*}
is a t\+structure on the bounded below derived category\/ $\D^+(\A)$;
\par
\textup{(c)} the pair of full subcategories
\begin{align*}
 {}^T\D^{\b,\le 0}&=\{\,X^\bu\in\D^\b(\A)\mid \Hom_{\D^\b(\A)}(T,X^\bu[i])=0
 \text{ for all $i>0$}\,\}, \\
 {}^T\D^{\b,\ge 0}&=\{\,X^\bu\in\D^\b(\A)\mid \Hom_{\D^\b(\A)}(T,X^\bu[i])=0
 \text{ for all $i<0$}\,\}
\end{align*}
is a t\+structure on the bounded derived category\/ $\D^\b(\A)$.
\end{cor}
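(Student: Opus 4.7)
The three parts are parallel, so I focus on (c) and sketch a uniform reduction for (a), (b). For each $\ast\in\{+,-,\b\}$, the inclusion $\D^\ast(\A)\subset\D(\A)$ is fully faithful, so the full subcategories ${}^T\D^{\ast,\le 0}$ and ${}^T\D^{\ast,\ge 0}$ in the statement coincide with ${}^T\D^{\le 0}\cap\D^\ast(\A)$ and ${}^T\D^{\ge 0}\cap\D^\ast(\A)$. The Hom vanishing and shift invariance axioms of a t\+structure are therefore inherited from Theorem~\ref{tilting-t-structure-thm}. The only remaining content of the corollary is that each truncation functor $\tau_{\le 0}^T$ and $\tau_{\ge 1}^T$ preserves $\D^\ast(\A)$.

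The plan rests on two sandwich inclusions between the tilting t\+structure and the standard one. First, $\D^{\ge 0}(\A)\subset{}^T\D^{\ge 0}$: for $X^\bu\in\D^{\ge 0}(\A)$ and $i<0$, the hyper-Ext spectral sequence $\Ext^p_\A(T,H^q(X^\bu))\Rightarrow\Hom_{\D(\A)}(T,X^\bu[p+q])$ has all contributions satisfying $p\ge 0$ and $q\ge 0$, hence nothing in total degree $i<0$. Second, $\D^{\le -n}(\A)\subset{}^T\D^{\le 0}$, which is immediate from Lemma~\ref{high-ext-long-complex} applied to shifts $X^\bu[i]$ with $i>0$. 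Passing to orthogonals within the t\+structure yields the dual inclusions ${}^T\D^{\le 0}\subset\D^{\le 0}$ and ${}^T\D^{\ge 0}\subset\D^{\ge -n}$, equivalently ${}^T\D^{\ge 1}\subset\D^{\ge 1-n}$.

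With these in hand, each case follows from a cohomology chase on the truncation triangle $\tau_{\le 0}^TX^\bu\to X^\bu\to\tau_{\ge 1}^TX^\bu$ of Theorem~\ref{tilting-t-structure-thm}. For (a), if $X^\bu\in\D^{\le b}(\A)$ then $\tau_{\le 0}^TX^\bu\in\D^{\le 0}(\A)$, so both lie in $\D^{\le\max(b,0)}$, and the long exact sequence (with $H^j(X^\bu)=H^j(\tau_{\le 0}^TX^\bu)=H^{j+1}(\tau_{\le 0}^TX^\bu)=0$ for $j>\max(b,0)$) forces $\tau_{\ge 1}^TX^\bu\in\D^{\le\max(b,0)}$ as well. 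For (b), if $X^\bu\in\D^{\ge a}(\A)$ then $\tau_{\ge 1}^TX^\bu\in\D^{\ge 1-n}(\A)$, so both lie in $\D^{\ge\min(a,1-n)}$, and the dual chase gives $\tau_{\le 0}^TX^\bu\in\D^{\ge\min(a,1-n)}$. Combining the two bounds yields (c): for $X^\bu\in\D^\b(\A)$ with cohomology in $[a,b]$, both truncations are cohomologically supported in the finite window $[\min(a,1-n),\max(b,0)]$, hence in $\D^\b(\A)$.

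The main obstacle is thus the two sandwich inclusions, i.e., establishing that the tilting t\+structure has amplitude at most $n$ relative to the standard one. This is where the finite projective dimension of~$T$ is crucial; once the inclusions are verified, everything else is a routine exactness argument in $\D(\A)$.
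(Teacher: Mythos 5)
Your proof is correct and follows essentially the same route as the paper: establish the sandwich inclusions $\D^{\le-n}(\A)\subset{}^T\D^{\le0}\subset\D^{\le0}(\A)$ and $\D^{\ge0}(\A)\subset{}^T\D^{\ge0}\subset\D^{\ge-n}(\A)$ (the paper gets the first of these ``by definition'' via orthogonality in the standard t\+structure, where you invoke a hypercohomology spectral sequence, but this is cosmetic), and then check that the truncation triangles from Theorem~\ref{tilting-t-structure-thm} stay within the relevant bounded subcategory. The cohomology chase you spell out is exactly what the paper compresses into ``it follows that the complexes $\tau^T_{\le0}X^\bu$, $\tau^T_{\ge1}X^\bu$ belong to $\D^\b(\A)$.''
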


\begin{proof}
 By the definition of ${}^T\D^{\ge0}$, we have $\D^{\ge0}(\A)\subset
{}^T\D^{\ge0}$.
 Since $({}^T\D^{\le0},\.{}^T\D^{\ge0})$ is a t\+structure on $\D(\A)$
by Theorem~\ref{tilting-t-structure-thm} and
$(\D^{\le0}(\A),\.\D^{\ge0}(\A))$ is also a t\+structure on $\D(\A)$,
it follows that $\D^{\le0}(\A)\supset{}^T\D^{\le0}$.
 By Lemma~\ref{high-ext-long-complex}, we have $\D^{\le -n}(\A)\subset
{}^T\D^{\le0}$, and consequently $\D^{\ge-n}(\A)\supset{}^T\D^{\ge0}$.
 To sum up, the inclusions of full subcategories
\begin{alignat}{2}
\D^{\le-n}(\A)&\subset{}^T\D^{\le0}&&\subset
\D^{\le0}(\A), \label{d-unbounded-le-inclusions} \\
\D^{\ge0}(\A)&\subset{}^T\D^{\ge0}&&\subset
\D^{\ge-n}(\A) \label{d-unbounded-ge-inclusions}
\end{alignat}
hold in the derived category $\D(\A)$.

 Now the assertions~(a--c) are easily deduced.
 For example, let us prove~(c).
 Notice that we have ${}^T\D^{\b,\le0}=\D^\b(\A)\cap{}^T\D^{\le0}
\subset{}^T\D^{\le0}$ and ${}^T\D^{\b,\ge0}=\D^\b(\A)\cap{}^T\D^{\ge0}
\subset{}^T\D^{\ge0}$, so
$\Hom_{\D^\b(\A)}(X^\bu,Y^\bu)=\Hom_{\D(\A)}(X^\bu,Y^\bu)=0$ for all
$X^\bu\in{}^T\D^{\b,\le0}$ and $Y^\bu\in{}^T\D^{\b,\ge1}$.

 Furthermore, let $X^\bu\in\D^\b(\A)\subset\D(\A)$ be a bounded complex
and $\tau_{\le0}^TX^\bu$, \,$\tau_{\ge1}^TX^\bu$ be its truncations with
respect to the t\+structure $({}^T\D^{\le0},\.{}^T\D^{\ge0})$
on $\D(\A)$.
 Then $\tau^T_{\le0}X^\bu\in{}^T\D^{\le0}\subset\D^{\le0}(\A)\subset
\D^-(\A)$ by~\eqref{d-unbounded-le-inclusions}
and $\tau^T_{\ge1}X^\bu\in{}^T\D^{\ge1}\subset
\D^{\ge-n+1}(\A)\subset\D^+(\A)$
by~\eqref{d-unbounded-ge-inclusions}.
 It follows that the complexes $\tau_{\le0}^TX^\bu$, \,$\tau_{\ge1}^TX^\bu$
belong to $\D^\b(\A)$.
\end{proof}

 The following proposition provides the converse implication to
Corollary~\ref{bounded-tilting-t-structures}(c).
 In fact, it shows that an object $T \in \A$ of finite projective
dimension is tilting in our sense if and only if it is a tilting object
in $\D^\b(\A)$ in the sense of~\cite[Definition 4.1]{PV}.

\begin{prop} \label{bounded-tilting-t-structure-implies-generation}
 Let $T\in\A$ be an object satisfying the conditions~\textup{(i--ii)}.
 Suppose that the pair of full subcategories $({}^T\D^{\b,\le0},\.
{}^T\D^{\b,\ge0})$ is a t\+structure on the bounded derived category\/
$\D^\b(\A)$.
 Then the condition~\textup{(iii)} is satisfied.
\end{prop}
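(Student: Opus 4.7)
The plan is to establish (iii) first for bounded complexes, and then to bootstrap to the unbounded case.

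For the bounded case, suppose $Y^\bu \in \D^\b(\A)$ satisfies $\Hom_{\D^\b(\A)}(T, Y^\bu[i]) = 0$ for all $i$. From the very definitions, $Y^\bu$ lies in ${}^T\D^{\b, \le 0} \cap {}^T\D^{\b, \ge 1}$; the t\+structure axiom on $\D^\b(\A)$ then forces $\Hom_{\D^\b(\A)}(Y^\bu, Y^\bu) = 0$, hence $\mathrm{id}_{Y^\bu} = 0$ and $Y^\bu = 0$. Specialising to $M \in \A$ regarded as a stalk complex, one obtains the key subclaim: every $M \in \A$ with $\Ext^k_\A(T, M) = 0$ for all $k$ must be zero.

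For the unbounded case, let $X^\bu \in \D(\A)$ satisfy $\Hom_{\D(\A)}(T, X^\bu[i]) = 0$ for all $i$. The goal is to show that each cohomology $H^j(X^\bu) \in \A$ has $\Ext^k_\A(T, H^j(X^\bu)) = 0$ for all $k$; the subclaim will then give $H^j(X^\bu) = 0$ for every $j$, and $X^\bu$ will be acyclic. The starting observation is a window-localisation identity: combining Lemma~\ref{high-ext-long-complex} (which kills $\Hom_{\D(\A)}(T,{-})$ on complexes in $\D^{\le -n-1}(\A)$) with the elementary fact that $\Hom_{\D(\A)}(T, Z^\bu) = 0$ for $Z^\bu \in \D^{\ge 1}(\A)$ (provable by a bounded-below injective resolution, which exists since $\A$ has enough injectives), one obtains for every $m$
\[
\Hom_{\D(\A)}(T, X^\bu[m]) \;\cong\; \Hom_{\D^\b(\A)}\bigl(T,\ (\tau_{\le m}\tau_{\ge m-n}X^\bu)[m]\bigr).
\]
In particular the bounded complex $Y_m := \tau_{\le m}\tau_{\ge m-n}X^\bu \in \D^\b(\A)$ satisfies $\Hom_{\D^\b(\A)}(T, Y_m[m]) = 0$ for every $m \in \Z$.

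The hyper-Ext spectral sequence $E_2^{p, q} = \Ext^p_\A(T, H^q(X^\bu)) \Rightarrow \Hom_{\D^\b(\A)}(T, Y_m[p+q])$ is available for each bounded $Y_m$; condition~(i) concentrates $E_2$ in the strip $0 \le p \le n$, so every anti-diagonal is finite and convergence is strong. Running the vanishing $\Hom(T, Y_m[m]) = 0$ through $m \in \Z$ forces every $E_\infty^{p,q}$ to vanish. The hardest part of the proof will be the passage from $E_\infty = 0$ to $E_2 = 0$: the intermediate differentials $d_r$ for $2 \le r \le n$ are not a priori trivial, so an extra inductive argument is needed to rule out nonzero $\Ext^p_\A(T, H^q(X^\bu))$ whose classes would be killed only at a later page. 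I expect to close this gap by feeding each putative nonzero $\Ext^p_\A(T, H^q(X^\bu))$ back into the subclaim of the bounded case via auxiliary bounded complexes built from the $H^q(X^\bu)$, and using the t-structure on $\D^\b(\A)$ to force the differentials to vanish inductively on~$r$. Once $E_2 = 0$ is established, the subclaim applied to each $H^j(X^\bu)$ completes the proof.
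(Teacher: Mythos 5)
Your bounded subclaim and the window identity are both correct, but the proposal breaks down exactly where you flag it, and the gap is not one that a routine induction will close. There are two separate problems. First, for a fixed $m$ the hypothesis controls only the single group $\Hom_{\D^\b(\A)}(T,Y_m[m])$: for $i\ne m$ the group $\Hom_{\D^\b(\A)}(T,Y_m[i])$ is \emph{not} identified with $\Hom_{\D(\A)}(T,X^\bu[i])$, because that identification requires the truncation window to sit at $[i-n,i]$, i.e.\ it relates $\Hom(T,X^\bu[i])$ to $\Hom(T,Y_i[i])$, a group attached to a \emph{different} complex. So within the spectral sequence of any one $Y_m$ you know the vanishing of exactly one anti-diagonal of $E_\infty$, and ``running $m$ through $\Z$'' ranges over distinct spectral sequences whose differentials you have no way to compare. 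Second, even if you had $E_\infty=0$ on every anti-diagonal of a single spectral sequence, the implication $E_\infty=0\Rightarrow E_2=0$ is false in general: two $E_2$-entries can cancel against each other under some $d_r$ and leave no trace at $E_\infty$. Nothing in conditions (i)--(ii) excludes this, and the proposed fix---feeding a putative nonzero $\Ext^p_\A(T,H^q(X^\bu))$ back into the bounded subclaim---has no mechanism for producing from such an Ext group a bounded complex \emph{all} of whose groups $\Hom(T,{-}[i])$ vanish, which is what the subclaim needs.

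The paper's proof sidesteps all of this by never attempting to prove $\Ext^*_\A(T,H^j(X^\bu))=0$. It performs the same reduction to the bounded complex $\tau_{\le n}\tau_{\ge -n}X^\bu$, which satisfies $\Hom(T,{-}[i])=0$ only for $0\le i\le n$, and then invokes the \emph{assumed} tilting truncations on $\D^\b(\A)$: the natural maps $\Hom_{\D^\b(\A)}(T,X^\bu[i])\to\Hom_{\D^\b(\A)}(T,(\tau^T_{\ge0}X^\bu)[i])$ are isomorphisms for $i\ge0$, while $\Hom(T,(\tau^T_{\ge0}X^\bu)[i])=0$ for $i<0$ holds by definition of ${}^T\D^{\b,\ge0}$; hence $\tau^T_{\ge0}X^\bu\in{}^T\D^{\b,\ge n+1}\subset\D^{\b,\ge1}(\A)$, and together with $\tau^T_{\le-1}X^\bu\in\D^{\b,\le-1}(\A)$ this forces $H^0(X^\bu)=0$. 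The moral is that a vanishing window of length $n+1$ already suffices, and it is the t-structure hypothesis, not a spectral sequence, that converts it into a statement about cohomology objects. If you replace your spectral-sequence step by this truncation argument, the rest of your outline goes through.
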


\begin{proof}
 Consider an unbounded complex $X^\bu\in\D(\A)$ such that
$\Hom_{\D(\A)}(T,X^\bu[i])=0$ for $0\le i\le n$.
 We will show that $H^0(X^\bu)=0$.

 If we denote by $\tau_{\le 0}$, \,$\tau_{\ge0}$ the truncation functors
in the standard t\+structure on $\D(\A)$,
 we have by Lemma~\ref{high-ext-long-complex} that
$\Hom_{\D(\A)}(T,(\tau_{\le-n-1}X^\bu)[i])=0$. Therefore
$$
 \Hom_{\D(\A)}(T,(\tau_{\ge-n}X^\bu)[i])=\Hom_{\D(\A)}(T,X^\bu[i])
$$
for $i\ge0$, hence $\Hom_{\D(\A)}(T,(\tau_{\ge-n}X^\bu)[i])=0$ for
$0\le i\le n$. 
 Similarly, we have
$$
 \Hom_{\D(\A)}(T,(\tau_{\le n}\tau_{\ge-n}X^\bu)[i])=
 \Hom_{\D(\A)}(T,(\tau_{\ge-n}X^\bu)[i])=0
$$
for $0\le i\le n$.
 Clearly, $H^0(X^\bu)=H^0(\tau_{\le n}\tau_{\ge-n}X^\bu)$, so it
suffices to show that $H^0(\tau_{\le n}\tau_{\ge-n}X^\bu)=0$.

 This reduces the question to the case of a bounded complex
$\tau_{\le n}\tau_{\ge-n}X^\bu$.
 Thus it will be sufficient if we assume that
$X^\bu\in\D^\b(\A)$ is a bounded complex satisfying
$\Hom_{\D^\b(\A)}(T,X^\bu[i])=0$ for $0\le i\le n$, and
deduce that $H^0(X^\bu)=0$.

 By the definition of ${}^T\D^{\b,\ge0}$, we have $\D^{\b,\ge0}(\A)\subset
{}^T\D^{\b,\ge0}$.
 By the condition~(i), we have $\D^{\b,\le-n}(\A)\subset
{}^T\D^{\b,\le0}$.
 Since $({}^T\D^{\b,\le0},\.{}^T\D^{\b,\ge0})$ is presumed to be
a t\+structure on $\D^\b(\A)$, we come to the inclusions of
full subcategories
\begin{alignat}{2}
\D^{\b,\le-n}(\A)&\subset{}^T\D^{\b,\le0}&&\subset
\D^{\b,\le0}(\A), \label{d-le-inclusions} \\
\D^{\b,\ge0}(\A)&\subset{}^T\D^{\b,\ge0}&&\subset
\D^{\b,\ge-n}(\A) \label{d-ge-inclusions}
\end{alignat}
in the bounded derived category $\D^\b(\A)$.

 Let $\tau^T_{\le0}$, \,$\tau^T_{\ge0}$ denote the truncation functors
with respect to the t\+structure $({}^T\D^{\b,\le0},\.{}^T\D^{\b,\ge0})$
on $\D^\b(\A)$.
 Then for any complex $X^\bu\in\D^\b(\A)$ the natural maps
$$
 \Hom_{\D^\b(\A)}(T,(\tau^T_{\le0}X^\bu)[i])\lrarrow
 \Hom_{\D^\b(\A)}(T,X^\bu[i])
$$
are isomorphisms for all $i\le0$, while the natural maps
$$
 \Hom_{\D^\b(\A)}(T,X^\bu[i])\lrarrow
 \Hom_{\D^\b(\A)}(T,(\tau^T_{\ge0}X^\bu)[i])
$$
are isomorphisms for all $i\ge0$.

 Let $X^\bu\in\D^\b(\A)$ be a complex such that $\Hom_{\D^\b(\A)}
(T,X^\bu[i])=0$ for $0\le i\le n$.
 Then, on the one hand,
$$
 \Hom_{\D^\b(\A)}(T,(\tau^T_{\ge0}X^\bu)[i])=\Hom_{\D^\b(\A)}(T,X^\bu[i])=0
 \qquad\text{for $0\le i\le n$}
$$
and, on the other hand,
$$
 \Hom_{\D^\b(\A)}(T,(\tau^T_{\ge0}X^\bu)[i])=0
 \qquad\text{for $i<0$},
$$
since $\tau^T_{\ge0}X^\bu\in{}^T\D^{\b,\ge0}$.
 Therefore, $\Hom_{\D^\b(\A)}(T,(\tau^T_{\ge0}X^\bu)[i])=0$ for all $i\le n$,
and it follows that
$$
 \tau^T_{\ge0}X^\bu\in{}^T\D^{\b,\ge n+1}\subset\D^{\b,\ge1}(\A).
$$
 Besides, $\tau^T_{\le-1}X^\bu\in{}^T\D^{\b,\le-1}\subset\D^{\b,\le-1}(\A)$.
 Thus $H^0(\tau^T_{\le-1}X^\bu)=0$ and $H^0(\tau^T_{\ge0}X^\bu)=0$,
implying that $H^0(X^\bu)=0$.
\end{proof}

 The t\+structures on the derived categories $\D(\A)$, \,$\D^+(\A)$,
\,$\D^-(\A)$, and $\D^\b(\A)$ provided by
Theorem~\ref{tilting-t-structure-thm}
and Corollary~\ref{bounded-tilting-t-structures} are called
the \emph{tilting t\+structures} associated with an $n$\+tilting
object $T\in\A$.

 We denote by $\B={}^T\D^{\b,\le0}\cap{}^T\D^{\b,\ge0}$ the heart of
the related tilting t\+structure on $\D^\b(\A)$.
 According to~(\ref{d-unbounded-le-inclusions}\+%
\ref{d-unbounded-ge-inclusions}), \,$\B$ coincides with the heart
${}^T\D^{\le0}\cap{}^T\D^{\ge0}$ of the tilting t\+structure on $\D(\A)$.
 The category $\B$ has Hom sets, since $\B\subset\D^\b(\A)\subset
\D^+(\A)$.
 By the definition, $\B$ is an abelian category.

 The following proposition shows that the category $\B$ has some
properties dual to those of the category~$\A$.

\begin{prop} \label{tilting-heart}
 The object $T\in\A\subset\D(\A)$ belongs to\/ $\B$ and is a projective
generator of the abelian category\/~$\B$.
 Coproducts of copies of $T$ in\/ $\B$ coincide with such coproducts
in\/ $\D(\A)$ and in\/~$\A$.
 The projective objects of\/ $\B$ are precisely the direct summands of
these coproducts.
 Set-indexed coproducts of arbitrary objects exist in\/~$\B$.
\end{prop}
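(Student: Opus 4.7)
The plan is to verify the four assertions in the natural order, throughout exploiting that $\B$ is a full subcategory of $\D(\A)$ closed under the cohomology functor of the tilting t\+structure.

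First, $T\in\B$ amounts to $\Hom_{\D(\A)}(T,T[i])=\Ext^i_\A(T,T)=0$ for $i\ne0$; this vanishes trivially for $i<0$ and follows from condition~(ii) (with $I$ a singleton) for $i>0$. Since $\A$ has exact coproducts, the termwise coproduct of complexes represents coproducts in $\D(\A)$, so for any set $I$ the object $T^{(I)}\in\A$ is also the coproduct of $T$'s in $\D(\A)$. Applying~(ii) again gives $T^{(I)}\in\B$, and fullness of $\B\subset\D(\A)$ shows that $T^{(I)}$ represents the coproduct in $\B$ as well. This handles the first and third of the listed properties.

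Projectivity of $T^{(I)}$ in $\B$ will be obtained by taking a short exact sequence $0\rarrow X\rarrow Y\rarrow Z\rarrow 0$ in $\B$, converting it to a distinguished triangle in $\D(\A)$, and applying $\Hom_{\D(\A)}(T^{(I)},-)$. The key vanishing $\Hom_{\D(\A)}(T^{(I)},X[1])=0$ follows from $X\in\B\subset{}^T\D^{\ge0}$ for $T$ itself, and extends to $T^{(I)}$ since $T^{(I)}$ represents a coproduct in $\D(\A)$; the long exact sequence then yields surjectivity of $\Hom_\B(T^{(I)},Y)\rarrow\Hom_\B(T^{(I)},Z)$. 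The generator property of $T$ is a direct consequence of condition~(iii): if $Y\in\B$ satisfies $\Hom_\B(T,Y)=0$, then, combined with the defining vanishing of $\B$, one gets $\Hom_{\D(\A)}(T,Y[i])=0$ for all $i\in\Z$, forcing $Y=0$. Consequently, for every $Y\in\B$ the canonical morphism $T^{(\Hom_\B(T,Y))}\rarrow Y$ is an epimorphism in $\B$; any projective $P\in\B$ is therefore a direct summand of some $T^{(I)}$, and conversely every such summand is projective by what was just established.

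Existence of set-indexed coproducts in $\B$ is the only step requiring more than a translation across $\B\subset\D(\A)$. The plan is to use that every $Y\in\B$ admits a two-step presentation $T^{(J)}\rarrow T^{(I)}\rarrow Y\rarrow 0$ in $\B$, obtained by applying the epi-from-a-copower construction to $Y$ and then to the kernel of that epimorphism. For a family $(Y_\alpha)$, one assembles such presentations into a single morphism $T^{(\coprod_\alpha J_\alpha)}\rarrow T^{(\coprod_\alpha I_\alpha)}$ between copowers of $T$, and defines $\coprod_\alpha Y_\alpha$ as its cokernel in~$\B$. The universal property is then verified by applying $\Hom_\B(-,Z)$ to this cokernel, reducing to coproducts of copowers of $T$, which were already handled. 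The main delicacy here is the bookkeeping in the cokernel presentation rather than any conceptual obstacle, since the projective-generator structure supplies all needed adjointness.
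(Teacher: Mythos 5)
Your argument is correct and follows essentially the same route as the paper's proof: condition~(ii) gives $T^{(I)}\in\B$ and identifies coproducts, condition~(iii) gives the generator property, and general coproducts are built as cokernels of morphisms between copowers of $T$ (a construction the paper delegates to a citation of \cite[Proposition IV.1.2]{ARS96}). One minor slip: the vanishing $\Hom_{\D(\A)}(T^{(I)},X[1])=0$ for $X\in\B$ comes from the inclusion $\B\subset{}^T\D^{\le0}$, not from $\B\subset{}^T\D^{\ge0}$ as you wrote; the conclusion is unaffected since $X$ lies in both subcategories.
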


\begin{proof}
 The coproduct $T^{(I)}$ of $I$ copies of $T$ in $\A$ is also
the coproduct of $I$ copies of $T$ in $\D(\A)$.
 The object $T^{(I)}$ belongs to ${}^T\D^{\ge0}$ by definition and
to ${}^T\D^{\le0}$ by the condition~(ii).
 Thus $T^{(I)}\in\B\subset\D(\A)$.
 Being the coproduct of $I$ copies of $T$ in $\D(\A)$, this object is
also the coproduct of $I$ copies of $T$ in $\B\subset\D^\b(\A)
\subset\D(\A)$.

 The object $T$ is projective in $\B$, because $\Ext^1_\B(T,X)=
\Hom_{\D^\b(\A)}(T,X[1])=0$ for every $X\in\B$.
 The object $T$ is a projective generator of $\B$, since
$\Hom_\B(T,X)=0$ for some $X\in\B$ implies $\Hom_{\D^\b(\A)}(T,X[i])=0$
for all $i\in\Z$, so $X=0$ by the condition~(iii).
 It follows that the projective objects of $\B$ are precisely
the direct summands of the objects~$T^{(I)}$.

 Finally, we have shown that set-indexed coproducts of projective
objects exist in~$\B$, and that there are enough projective objects.
 Set-indexed coproducts of arbitrary objects can be constructed in
terms of the coproducts of projective objects.
 More explicitly, the category $\B$ is equivalent to the additive
quotient of the category of morphisms in $\Add(T)$ modulo
an ideal which is closed under coproducts of morphisms;
see~\cite[Proposition IV.1.2]{ARS96}.
\end{proof}

\Section{Tilting Classes and Tilting Cotorsion Pairs}
\label{tilting-cotorsion-pair-secn}

 The aim of this section is to work out elementary homological algebra related to tilting and cotilting objects. This in particular allows us in Theorem~\ref{tilting-cotorsion-pair-thm} to characterize tilting objects $T\in\A$ directly in the category $\A$, without using the unbounded derived category as in condition~\textup{(iii)} in the previous section. The arguments are of purely homological nature in the spirit of~\cite{AR91} and so they easily dualize to the setting of cotilting modules.
 Our exposition is also informed by that in~\cite[Section~6]{NSZ}.

 Let $\A$ be a complete, cocomplete abelian category with an injective
cogenerator $W$, and let $T\in\A$ be an object of projective dimension
not exceeding~$n$.
 Denote by $\E$ the following full subcategory in~$\A$:
\[
 \E = \{ E\in\A \mid\Ext^i_\A(T,E)=0 \textrm{ for all } i>0 \}.
\]

\begin{lem} \label{tilting-class-coresolving-finite-dim}
\textup{(a)} The full subcategory\/ $\E\subset\A$ is closed under
direct summands, extensions, and cokernels of monomorphisms in
the abelian category\/~$\A$.
 In addition, $\E\subset\A$ contains the full subcategory of injective
objects\/ $\A_\inj\subset\A$. \par
\textup{(b)} Each object $X\in\A$ admits an exact sequence in\/ $\A$
of the form
\[
 0 \lrarrow X \lrarrow E^0 \lrarrow E^1 \lrarrow \dotsb
 \lrarrow E^n \lrarrow 0,
\]
where $E^0$, $E^1$,~\dots, $E^n \in \E$.
\end{lem}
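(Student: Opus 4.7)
My plan is to prove part~(a) by routine long exact sequence manipulation, and to deduce part~(b) by truncating an injective coresolution at the $n$\+th syzygy.

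For part~(a), closure of $\E$ under direct summands is immediate since $\Ext^i_\A(T,{-})$ is additive. For an extension $0\rarrow E'\rarrow E\rarrow E''\rarrow 0$ with $E',E''\in\E$, the long exact sequence of $\Ext_\A(T,{-})$ sandwiches $\Ext^i_\A(T,E)$ between two zeros for every $i>0$. For a cokernel of a monomorphism $0\rarrow E'\rarrow E\rarrow E''\rarrow 0$ with $E',E\in\E$, the same long exact sequence places $\Ext^i_\A(T,E'')$ between $\Ext^i_\A(T,E)=0$ and $\Ext^{i+1}_\A(T,E')=0$ for all $i>0$. Finally, any injective object $I\in\A_\inj$ lies in $\E$ because $\Ext^i_\A({-},I)$ vanishes for $i>0$.

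For part~(b), I first observe that the injective cogenerator $W$ supplies enough injectives: any object embeds into a power $W^S$, and products of copies of an injective object remain injective. Pick an injective coresolution $0\rarrow X\rarrow I^0\rarrow I^1\rarrow\dotsb$, and set $Z^n=\ker(I^n\rarrow I^{n+1})$; exactness at $I^n$ identifies $Z^n$ with the image of $I^{n-1}\rarrow I^n$, yielding the truncated exact sequence
\[
 0\lrarrow X\lrarrow I^0\lrarrow\dotsb\lrarrow I^{n-1}\lrarrow Z^n\lrarrow 0.
\]
The terms $I^k$ lie in $\E$ by part~(a). For $Z^n$, split the coresolution into short exact sequences $0\rarrow Z^k\rarrow I^k\rarrow Z^{k+1}\rarrow 0$ (with $Z^0=X$); from each such sequence and the injectivity of $I^k$, the long exact sequence of $\Ext_\A(T,{-})$ furnishes dimension-shifting isomorphisms $\Ext^i_\A(T,Z^{k+1})\cong\Ext^{i+1}_\A(T,Z^k)$ for every $i\ge1$. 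Iterating $n$ times yields $\Ext^i_\A(T,Z^n)\cong\Ext^{i+n}_\A(T,X)=0$ for every $i\ge1$, the final vanishing being the projective-dimension hypothesis on~$T$.

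Neither part presents a genuine obstacle --- both are formal $\Ext$\+calculus given the standing hypotheses. The one small point to mind is that the truncated coresolution in~(b) really is exact at its last nontrivial spot, which is simply the definition of $Z^n$ as a kernel combined with exactness of the original coresolution at~$I^n$.
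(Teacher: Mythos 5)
Your proof is correct and follows essentially the same route as the paper's: part~(a) by the long exact sequence of $\Ext_\A^*(T,{-})$, and part~(b) by coresolving $X$ with powers of the injective cogenerator $W$, truncating at the $n$\+th cosyzygy, and showing that cosyzygy lies in $\E$ by dimension shifting against the hypothesis that $T$ has projective dimension~$\le n$. No gaps.
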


\begin{proof}
 Part~(a): the closedness under direct summands is obvious, as is
the assertion that $\A_\inj\subset\E$.
 Furthermore, given a short exact sequence $0\rarrow E\rarrow F
\rarrow G\rarrow0$ in $\A$ with $E\in\E$, one immediately concludes
from the corresponding long exact sequence of $\Ext_\A^*(T,{-})$ that
$F\in\E$ if and only if $G\in\E$.

 Part~(b): given an object $X\in\A$, we use the fact that $\A$ has
an injective cogenerator $W$ and consider a short exact sequence
$0 \rarrow X \rarrow W^I \rarrow X' \rarrow 0$.
 Applying the same to $X'$, etc., we obtain an exact sequence
$0\rarrow X\rarrow E^0\rarrow\dotsb\rarrow E^n\rarrow0$, where
$E^1$,~\dots, $E^{n-1}$ are direct powers of~$W$.

 Denoting by $X^k$ the image of the morphism $E^{k-1}\rarrow E^k$,
we have short exact sequences $0\rarrow X^k\rarrow E^k\rarrow X^{k+1}
\rarrow 0$, \ $0\le k\le n-1$, where $X^0=X$ and $X^n=E^n$.
 Then $\Ext^i_\A(T,E^n)=\Ext_\A^{i+1}(T,X^{n-1})=\dotsb=
\Ext^{i+n}_\A(T,X)=0$ for $i>0$.
 Hence $E^n\in\E$.
\end{proof}

 The assertion of Lemma~\ref{tilting-class-coresolving-finite-dim}(a)
can be rephrased by saying that the full subcategory $\E\subset\A$
is \emph{coresolving} in the sense of~\cite[Section~2]{St}.
 Then Lemma~\ref{tilting-class-coresolving-finite-dim}(b) says that
the \emph{coresolution dimension} of $\A$ with respect to $\E$
is bounded by the number~$n$.

 For each integer $m\ge0$, we denote by $\L_m$ the full subcategory
of all objects $L\in\A$ for which there exists an exact sequence in $\A$
of the form
$$
 0\lrarrow L\lrarrow T^0\lrarrow T^1\lrarrow\dotsb\lrarrow T^m\lrarrow0,
$$
where $T^0$, $T^1$,~\dots, $T^m\in\Add(T)$.
 Clearly, one has $\Add(T)=\L_0\subset\L_1\subset\L_2\subset\dotsb
\subset\A$.

\begin{lem} \label{left-class-tilting-cotorsion-pair}
 Assume that the object $T\in\A$ satisfies
the conditions~\textup{(i--ii)}.
 Then \par
\textup{(a)} for any objects $L\in\L_m$ and $E\in\E$, one has\/
$\Ext_\A^i(L,E)=0$ for all $i>0$; \par
\textup{(b)} the intersection\/ $\L_m\cap\E$ coincides with the full
subcategory\/ $\Add(T)\subset\A$; \par
\textup{(c)} for each integer $m\ge n$, one has\/ $\L_m=\L_{m+1}$.
\end{lem}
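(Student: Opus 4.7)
My plan is to handle the three parts in order, using (a) and (b) as stepping stones to (c). For (a) I would induct on~$m$. The base case $L\in\L_0=\Add(T)$ reduces, via passage to direct summands, to the standard isomorphism $\Ext^i_\A(T^{(I)},E)\cong\prod_I\Ext^i_\A(T,E)$, whose right-hand side vanishes for $i>0$ by the definition of~$\E$. For the inductive step, given $L\in\L_m$ with coresolution $0\rarrow L\rarrow T^0\rarrow T^1\rarrow\dotsb\rarrow T^m\rarrow 0$, set $L'=\coker(L\rarrow T^0)$; then $L'\in\L_{m-1}$, and the long exact sequence of $\Ext^*_\A({-},E)$ applied to $0\rarrow L\rarrow T^0\rarrow L'\rarrow 0$ pinches $\Ext^i_\A(L,E)$ between $\Ext^i_\A(T^0,E)=0$ (base case) and $\Ext^{i+1}_\A(L',E)=0$ (induction).

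For (b), the inclusion $\Add(T)\subseteq\L_m\cap\E$ is immediate from condition~(ii). Conversely, if $L\in\L_m\cap\E$ and $m\ge 1$, then the same short exact sequence $0\rarrow L\rarrow T^0\rarrow L'\rarrow 0$ with $L'\in\L_{m-1}$ satisfies $\Ext^1_\A(L',L)=0$ by part~(a) (using $L\in\E$), so it splits and exhibits $L$ as a direct summand of $T^0\in\Add(T)$. The case $m=0$ is trivial since $\L_0=\Add(T)$.

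For (c), the heart of the lemma, I would prove the stronger statement that $\L_{m+1}\subseteq\L_n$ whenever $m\ge n$. Given $L\in\L_{m+1}$ with coresolution $0\rarrow L\rarrow T^0\rarrow\dotsb\rarrow T^{m+1}\rarrow 0$, set $K_k=\ker(T^k\rarrow T^{k+1})$ for $0\le k\le m$ and $K_{m+1}=T^{m+1}$; these fit into short exact sequences $0\rarrow K_k\rarrow T^k\rarrow K_{k+1}\rarrow 0$ for $0\le k\le m$. Since condition~(ii) kills $\Ext^{\ge 1}_\A(T,T^k)$, dimension shifting gives isomorphisms $\Ext^j_\A(T,K_k)\cong\Ext^{j+1}_\A(T,K_{k-1})$ for every $j\ge 1$. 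Iterating $n$ times down to $K_0=L$ yields
\[
 \Ext^j_\A(T,K_n)\cong\Ext^{j+n}_\A(T,L)\qquad\text{for all $j\ge 1$,}
\]
which vanishes by the projective-dimension hypothesis~(i) since $j+n>n$. Hence $K_n\in\E$, and the tail $0\rarrow K_n\rarrow T^n\rarrow\dotsb\rarrow T^{m+1}\rarrow 0$ gives $K_n\in\L_{m-n+1}$, so part~(b) forces $K_n\in\Add(T)$. The truncated coresolution $0\rarrow L\rarrow T^0\rarrow\dotsb\rarrow T^{n-1}\rarrow K_n\rarrow 0$ then places $L$ into $\L_n\subseteq\L_m$.

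The main obstacle is choosing the correct direction for the dimension shift in (c). One must iterate so as to terminate at $L$ itself, where the projective-dimension bound~(i) produces the needed vanishing; pushing the other way toward $K_{m+1}=T^{m+1}$ yields no useful information about $\Ext^1_\A(T,K_n)$ a priori, since condition~(ii) controls only $\Ext_\A$ out of $T$, not out of arbitrary subobjects of $\Add(T)$-objects. Once the shift is organized in this direction, parts (a) and (b) feed into (c) almost automatically.
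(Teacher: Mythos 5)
Your proof is correct and follows essentially the same route as the paper: the same induction via $0\rarrow L\rarrow T^0\rarrow L'\rarrow 0$ for (a), the same splitting argument via part (a) for (b), and the same dimension shift $\Ext^1_\A(T,{-})\cong\Ext^{k+1}_\A(T,L)$ killed by condition (i) for (c). The only (harmless) difference is that in (c) you truncate the coresolution at position $n$ and invoke part (b) to get the stronger inclusion $\L_{m+1}\subseteq\L_n$, whereas the paper truncates at position $m$ and splits off the last term directly to get the one-step inclusion $\L_{m+1}\subseteq\L_m$.
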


\begin{proof}
 Part~(a): by the definition, for any object $L\in\L_m$ there exists
a short exact sequence $0\rarrow L\rarrow T'\rarrow M\rarrow 0$ in $\A$
with $T'\in\Add(T)$ and $M\in\L_{m-1}$.
 Since coproducts are exact in $\A$, we have
$\Ext^i_\A(T',E)=0$ for all $E\in\E$ and $i>0$.
 Arguing by induction on~$m$, we can assume that $\Ext_\A^i(M,E)=0$
for $i>0$.
 Applying the long exact sequence of $\Ext_\A^*({-},E)$, we obtain
the desired Ext vanishing.

 Part~(b): by the definition we have $\Add(T)\subset\L_m$, and
 by the condition~(ii) we have \,$\Add(T)\subset\E$.
 Conversely, given an object $K\in\L_m$, there exists a short exact
sequence $0\rarrow K\rarrow T'\rarrow M\rarrow0$ with
$T'\in\Add(T)$ and $M\in\L_{m-1}$.
 Now if $K\in\E$, then by part~(a) we have
$\Ext_\A^1(M,K)=0$, hence $K$ is a direct summand of~$T'$.

 Part~(c): let $L\in\L_{m+1}$, where $m\ge n$.
 Then there exists an exact sequence
$0\rarrow L\rarrow T^0\rarrow\dotsb\rarrow T^m\rarrow T^{m+1} \rarrow0$
in $\A$ with $T^k\in\Add(T)$.
 Denoting the image of the morphism $T^{k-1}\rarrow T^k$ by $M^k$ and
using the condition~(ii), we compute that
$\Ext^1_\A(T,M^m)=\Ext^2_\A(T,M^{m-1})=\dotsb=\Ext^{m+1}_\A(T,L)$.
 The latter Ext group vanishes by the condition~(i), so we have
$\Ext^1_\A(T,M^m)=0$.
 It follows that $\Ext^1_\A(T^{m+1},M^m)=0$, hence the short exact
sequence $0\rarrow M^m\rarrow T^m\rarrow T^{m+1}\rarrow0$ splits and
$M^m\in\Add(T)$.
 Now the exact sequence $0\rarrow L\rarrow T^0\rarrow\dotsb\rarrow
T^{m-1}\rarrow M^m\rarrow 0$ shows that $L\in\L_m$.
\end{proof}

 Assuming that the object $T\in\A$ satisfies the conditions~(i--ii),
we will denote the full subcategory $\L_n=\L_{n+1}=\L_{n+2}=\dotsb$
simply by $\L\subset\A$.

 Now let us discuss the situation when the object $T\in\A$ is big
$n$\+tilting.
 In this case, using the notation $\B={}^T\D^{\b,\le0}\cap
{}^T\D^{\b,\ge0}\subset\D^\b(\A)$ for the heart of the tilting
t\+structure on $\D^\b(\A)$, the full subcategory $\E\subset\A$ can be
described as the intersection $\E=\A\cap\B$ of the hearts of the standard
and tilting t\+structures on $\D^\b(\A)$.
 Hence the same category $\E$ can be also considered as a full
subcategory in the abelian category~$\B$.

 It turns out that $\E$ as a subcategory of $\B$ satisfies dual properties to those which $\E$ has as a subcategory of $\A$.
 In the terminology of~\cite[Section~2]{St}, the next lemma says that
the full subcategory $\E\subset\B$ is \emph{resolving} and
the \emph{resolution dimension} of (the objects of) $\B$ with respect
to $\E$ does not exceed~$n$.
 
\begin{lem} \label{cotilting-class-resolving-finite-dim}
\textup{(a)} The full subcategory\/ $\E\subset\B$ is closed under
direct summands, extensions, and kernels of epimorphisms in
the abelian category\/~$\B$.
 In addition, $\E\subset\B$ contains the full subcategory of projective
objects\/ $\B_\proj\subset\B$. \par
\textup{(b)} Each object $Y\in\B$ admits an exact sequence in\/ $\B$
of the form
\[
 0 \lrarrow E_n \lrarrow \dotsb \lrarrow E_1\lrarrow E_0
 \lrarrow Y \lrarrow 0,
\]
where $E_0$, $E_1$,~\dots, $E_n \in \E$.
\end{lem}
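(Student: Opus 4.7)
The strategy is to exploit the identification $\E=\A\cap\B$ inside $\D^\b(\A)$, where $\A$ is the heart of the standard t\+structure and $\B$ is the heart of the tilting t\+structure. Part~(a) will then follow from long exact sequences of standard cohomology and of $\Ext^*_\A(T,{-})$, while part~(b) will be a formal dualization of the proof of Lemma~\ref{tilting-class-coresolving-finite-dim}(b), using the projective generator $T$ of $\B$ provided by Proposition~\ref{tilting-heart} in place of the injective cogenerator $W$ of $\A$.

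For part~(a), the closure under direct summands is immediate from additivity of $\Ext^i_\A(T,{-})$, and the inclusion $\B_\proj\subset\E$ follows from Proposition~\ref{tilting-heart} (which identifies projectives of $\B$ with summands of the coproducts $T^{(I)}$) combined with condition~(ii). The key point for closure under extensions and kernels of epimorphisms in $\B$ is the following: any short exact sequence $0\to X'\to X\to X''\to 0$ in $\B$ with two of the three terms in $\A$ automatically has the third term in $\A$, and is then exact in $\A$. Indeed, the associated distinguished triangle in $\D^\b(\A)$ and the long exact sequence of standard cohomology constrain the missing term to have standard cohomology supported in $\{-1,0\}$ or $\{0,1\}$, but the term lies in $\B\subset\D^{\le0}(\A)\cap\D^{\ge-n}(\A)$ by the inclusions~\eqref{d-unbounded-le-inclusions}--\eqref{d-unbounded-ge-inclusions}, which forces it into degree~$0$. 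Once exactness in $\A$ is established, the long exact sequence of $\Ext^*_\A(T,{-})$ yields the required vanishing; the only subtle point is that $\Ext^1_\A(T,K)=0$ in the kernel case demands surjectivity of $\Hom_\A(T,E)\to\Hom_\A(T,E'')$, which holds because Hom-groups in $\A$, $\B$, and $\D^\b(\A)$ agree on objects of $\E$ and $T$ is projective in~$\B$.

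For part~(b), given $Y\in\B$ iteratively build short exact sequences $0\to K_{i+1}\to P_i\to K_i\to 0$ in $\B$ with $K_0=Y$ and $P_i\in\Add(T)$, which is possible because $T$ is a projective generator of $\B$. The claim is that $K_n\in\E$. Since each $P_i$ lies in $\A$, the standard cohomology long exact sequence associated to the triangle $K_{i+1}\to P_i\to K_i\to K_{i+1}[1]$ in $\D^\b(\A)$, combined with $K_{i+1}\in\B\subset\D^{\le0}(\A)$ (which kills the potential $H^1$), gives $H^j(K_{i+1})\cong H^{j-1}(K_i)$ for $j\le-1$ and leaves $H^0(K_{i+1})$ as a subobject of $P_i$ in $\A$. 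Starting from $Y\in\D^{\ge-n}(\A)\cap\D^{\le0}(\A)$ (by~\eqref{d-unbounded-le-inclusions}--\eqref{d-unbounded-ge-inclusions}) and inducting on $i$, the standard cohomological support of $K_i$ shifts to $\{-n+i,\dots,0\}$; hence $K_n\in\A$, and together with $K_n\in\B$ this gives $K_n\in\A\cap\B=\E$. The sequence $0\to K_n\to P_{n-1}\to\dotsb\to P_0\to Y\to 0$ is then the desired resolution. The main piece of care required is tracking cohomological supports in two distinct t\+structures simultaneously, but the sandwich~\eqref{d-unbounded-le-inclusions}--\eqref{d-unbounded-ge-inclusions} reduces this to a routine induction.
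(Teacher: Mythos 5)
Your proof is correct and follows essentially the same route as the paper's: both rest on the identification $\E=\A\cap\B$, the sandwich inclusions (\ref{d-unbounded-le-inclusions}--\ref{d-unbounded-ge-inclusions}), and, for part~(b), syzygies with respect to the projective generator $T$ of $\B$, your direct tracking of standard cohomological support being equivalent to the paper's bookkeeping via $\Hom_{\D^\b(\A)}({-},W[i])$ and the classes $\E_j=\B\cap\D^{\b,\ge-j}(\A)$ (the extra verification of $\Ext^i_\A(T,G)=0$ in part~(a) is redundant once $G\in\A\cap\B=\E$ is known). One caution: your blanket claim that a short exact sequence in $\B$ with two of its three terms in $\A$ has its third term in $\A$ fails in the cokernel direction, since there the missing term has support in $\{-1,0\}$ and $\B\subset\D^{\ge-n}(\A)$ does not kill $H^{-1}$ when $n\ge1$ --- it is valid only for the extension and kernel cases, which are the ones you actually use.
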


\begin{proof}
 Part~(a): the full subcategory $\E$ is closed under direct summands and extensions in $\B$, since both the full subcategories $\A$ and $\B\subset
\D^\b(\A)$ are closed under direct summands and extensions (in
the triangulated category sense) in $\D^\b(\A)$.
 According to Proposition~\ref{tilting-heart}, we have $\B_\proj=
\Add(T)\subset\A\cap\B$.

 To show that $\E$ is closed under kernels of epimorphisms in $\B$,
consider a short exact sequence $0\rarrow G\rarrow F\rarrow E\rarrow0$
in $\B$ with $E$ and $F\in\E$.
 Then there is a distinguished triangle $G\rarrow F\rarrow E\rarrow
G[1]$ in $\D^\b(\A)$.
 Now we have $E,F\in\E\subset\A$, hence $G\in\D^{\b,\ge0}(\A)$.
 On the other hand, $G\in\B\subset{}^T\D^{\b,\le0}\subset
\D^{\b,\le0}(\A)$ according to~\eqref{d-le-inclusions}.
 Hence $G\in\A\cap\B$.

 For the proof of part~(b) we use essentially the same argument
as in~\cite[Proposition~5.20]{St}.
 We denote for this proof for each $j\in\{0,1,\dotsc,n\}$ by $\E_j$
the class $\E_j=\B\cap\D^{\b,\ge-j}(\A)\subset\D^\b(\A)$.
 By~(\ref{d-le-inclusions}\+\ref{d-ge-inclusions}), we have
$\E=\E_0\subset\E_1\subset\dotsb\subset\E_n=\B$.

 For any $j\in\{1,\dotsc,n\}$ and $Y\in\E_j$, by
Proposition~\ref{tilting-heart} there is a short exact
sequence $0\rarrow Y'\rarrow T^{(I)}\rarrow Y\rarrow0$ in $\B$;
hence a related distinguished triangle $Y'\rarrow T^{(I)}\rarrow Y
\rarrow Y'[1]$ in $\D^\b(\A)$.
 Then $\Hom_{\D^\b(\A)}(Y',W[i-1])\cong
\Hom_{\D^\b(\A)}(Y,W[i])$ for all $i>1$, and therefore
$Y'\in\E_{j-1}$.

 Starting from an arbitrary $Y\in\B$, we construct by induction
the desired exact sequence, even with
$E_0$,~\dots, $E_{n-1}$ being copowers of~$T$.
\end{proof}

 The following definitions are standard; see for instance~\cite{SaSt11}, \cite[\S5]{St-model} or \cite[\S\S3 and 4]{PR} and the references there (the concept of a cotorsion pair goes back to~\cite{Sal77} and have been extensively used in representation and module theory~\cite{KrSo,GTbook}). 
 A pair of full subcategories $\K$ and $\F\subset\A$ in an abelian
category $\A$ is called a \emph{cotorsion pair} if $\K$ consists
precisely of all the objects $K\in\A$ such that $\Ext_\A^1(K,F)=0$
for all $F\in\F$, and $\F$ consists precisely of all the objects
$F\in\A$ such that $\Ext^1_\A(K,F)=0$ for all $K\in\K$.

 A cotorsion pair $(\K,\F)$ in $\A$ is called \emph{hereditary} if
$\Ext^i_\A(K,F)=0$ for all $K\in\K$, $F\in\F$, and $i\ge1$.
In a hereditary cotorsion pair, the class $\K$ is also closed under
the kernels of epimorphisms, and the class $\F$ is closed under
the cokernels of monomorphisms. Under mild conditions, these closure properties in fact characterize hereditary cotorsion pairs, see~\cite[Lemma 6.17]{St-model} or~\cite[Lemma 4.25]{SaSt11}.

 A cotorsion pair $(\K,\F)$ is called \emph{complete} if for every
object $X\in\A$ there exist short exact sequences in $\A$
(called the \emph{approximation sequences}) of the form
\begin{gather}
0\lrarrow F'\lrarrow K\lrarrow X\lrarrow 0 
\label{special-precover-sequence}\\
0\lrarrow X\lrarrow F\lrarrow K'\lrarrow 0
\label{special-preenvelope-sequence}
\end{gather}
with $K$, $K'\in\K$ and $F$, $F'\in\F$.

 For any cotorsion pair $(\K,\F)$ in $\A$, the full subcategory
$\K\subset\A$ is closed under coproducts, direct summands, and
extensions, while the full subcategory $\F\subset\A$ is closed under
products, direct summands, and extensions.
 This is true even if products or coproducts are not exact, see~\cite[Proposition 8.1]{CF07}.
 The following partial converse assertion to these observations holds:
if $(\K,\F)\subset\A$ is a pair of full subcategories such that
$\Ext_\A^1(K,F)=0$ for all $K\in\K$ and $F\in\F$,
the approximation sequences~(\ref{special-precover-sequence}\+%
\ref{special-preenvelope-sequence}) exist for all objects $X\in\A$,
and the full subcategories $\K$, $\F\subset\A$ are closed under
direct summands, then $(\K,\F)$ is a (complete) cotorsion pair in~$\A$.

 The following theorem, which provides the promised characterization of tilting objects without using the unbounded derived category, is the main result of this section. It among other things shows that the tilting objects in our sense agree with the definition of Nicol\'{a}s, Saor\'{\i}n and Zvonareva~\cite[Section 6]{NSZ}.

\begin{thm} \label{tilting-cotorsion-pair-thm}
 Let\/ $\A$ be an abelian category with set-indexed products and
an injective cogenerator, and let $T\in\A$ be an object satisfying
the conditions~\textup{(i--ii)} of Section~\ref{tilting-t-structure-secn}.
 Then the following three conditions are equivalent:

\begin{enumerate}
\item[(1)] the object $T\in\A$ satisfies
the condition~\textup{(iii)} as well;
\item[(2)] for every object
$E\in\E=\bigcap_{i>0}\ker\Ext^i_\A(T,-)$ there exists an object
$T'\in\Add(T)$ together with an epimorphism $T'\rarrow E$ in
the category\/~$\A$;
\item[(3)] for every object $X\in\A$ there exists an object
$L\in\L$ (i.~e., $L$ has a finite $\Add(T)$-coresolution)
together with an epimorphism $L\rarrow X$ in the category\/~$\A$.
\end{enumerate}
 If one of the conditions~\textup{(1--3)} is satisfied, then the pair
of full subcategories $(\L,\E)$ is a hereditary complete cotorsion pair
in the abelian category\/~$\A$.
\end{thm}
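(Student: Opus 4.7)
My plan is to establish the cycle $(1)\Rightarrow(2)\Rightarrow(3)\Rightarrow(1)$ and, along the way, construct both approximation sequences for the cotorsion pair $(\L,\E)$. For $(1)\Rightarrow(2)$, I use the identification $\E=\A\cap\B$ noted just before Lemma~\ref{cotilting-class-resolving-finite-dim}. Given $E\in\E\subset\B$, Proposition~\ref{tilting-heart} supplies an epimorphism $T^{(I)}\twoheadrightarrow E$ in~$\B$, whose kernel lies in $\B\subset{}^T\D^{\le0}\subset\D^{\le0}(\A)$ by~\eqref{d-unbounded-le-inclusions} and hence has vanishing $H^1$ in~$\A$; the long cohomology sequence of the associated distinguished triangle in $\D^\b(\A)$ then shows $T^{(I)}\rarrow E$ is epimorphic in~$\A$.

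For $(2)\Rightarrow(3)$, I first strengthen~(2) to: every $E\in\E$ admits a short exact sequence $0\rarrow E'\rarrow T'\rarrow E\rarrow 0$ with $T'\in\Add(T)$ and $E'\in\E$. Taking $T'=T^{(J)}\oplus T^{(\Hom_\A(T,E))}$---the first summand surjecting onto $E$ by~(2), the second via the canonical evaluation map---makes $T'\rarrow E$ both epic in~$\A$ and surjective on $\Hom_\A(T,-)$; combined with condition~(ii) and the long Ext sequence this forces $\Ext^i_\A(T,E')=0$ for all $i>0$. I then induct on the length~$m$ of the $\E$-coresolution of $X\in\A$ provided by Lemma~\ref{tilting-class-coresolving-finite-dim}(b) to produce a special $\L$-precover $0\rarrow F'\rarrow L\rarrow X\rarrow 0$ with $L\in\L_m$. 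The base $m=0$ is strengthened~(2). For the step, given $0\rarrow X\rarrow E^0\rarrow X_1\rarrow 0$ (with $X_1$ of coresolution length $m-1$) and the inductive precover $0\rarrow E'_1\rarrow L_1\rarrow X_1\rarrow 0$, the pullback $P=L_1\times_{X_1}E^0$ fits into $0\rarrow X\rarrow P\rarrow L_1\rarrow 0$ (an envelope of~$X$, with $P\in\E$ by Lemma~\ref{tilting-class-coresolving-finite-dim}(a) applied to $0\rarrow E'_1\rarrow P\rarrow E^0\rarrow 0$). Applying strengthened~(2) to~$P$ and pulling back along $X\hookrightarrow P$ (Salce's trick) yields the desired precover, with $L$ fitting in $0\rarrow L\rarrow T'\rarrow L_1\rarrow 0$, so $L\in\L_m$ by splicing the $\Add(T)$-coresolution of $L_1$ onto $T'$. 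This proves~(3).

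For $(3)\Rightarrow(1)$, suppose $X^\bu\in\D(\A)$ satisfies $\Hom_{\D(\A)}(T,X^\bu[i])=0$ for all $i$ but is not acyclic; let $k$ be minimal with $Y:=H^k(X^\bu)\ne 0$, replace $X^\bu$ by $\tau_{\ge k}X^\bu$, and pick $L\twoheadrightarrow Y$ from~(3) with $L\in\L$. Applying $\Hom_{\D(\A)}(L,-)$ to the truncation triangle $Y\rarrow X^\bu[k]\rarrow\tau_{\ge k+1}X^\bu[k]\rarrow Y[1]$ yields an isomorphism $\Hom_\A(L,Y)\cong\Hom_{\D(\A)}(L,X^\bu[k])$, since $\tau_{\ge k+1}X^\bu[k]$ and its $[-1]$-shift lie in $\D^{\ge1}(\A)$, to which $L\in\A$ sends no nonzero maps (choose a bounded-below injective resolution concentrated in positive degrees). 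Hence $\Hom_{\D(\A)}(L,X^\bu[k])\ne0$. On the other hand, the $\Add(T)$-coresolution of $L$ exhibits $L\in\D^\b(\A)$ as an iterated extension of shifts $T^i[-i]$, $T^i\in\Add(T)$ (via the stupid filtration, cf.~Lemma~\ref{tria-Add-T-via-complexes}); since the hypothesis gives $\Hom_{\D(\A)}(T^i,X^\bu[k+i])=0$ for every $i$, an induction over the extension triangles forces $\Hom_{\D(\A)}(L,X^\bu[k])=0$---a contradiction.

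Finally, the cotorsion pair structure: hereditariness and $\Ext^{\ge1}$-orthogonality are Lemma~\ref{left-class-tilting-cotorsion-pair}(a), and both approximation sequences were produced above. To conclude $\L={}^\perp\E$ and $\E=\L^\perp$, I first verify closure of $\L$ under direct summands: if $X\oplus Y\in\L$, summing the envelopes of $X$ and $Y$ gives $0\rarrow X\oplus Y\rarrow P_X\oplus P_Y\rarrow L_X\oplus L_Y\rarrow 0$ with the outer terms in $\L$, hence $P_X\oplus P_Y\in\L$ as well (the class $\L$ being closed under extensions by the horseshoe lemma using Lemma~\ref{left-class-tilting-cotorsion-pair}(a)); by Lemma~\ref{left-class-tilting-cotorsion-pair}(b) this places $P_X\oplus P_Y\in\L\cap\E=\Add(T)$, so $P_X\in\Add(T)$ and splicing with the $\Add(T)$-coresolution of $L_X$ puts $X\in\L_{n+1}=\L$. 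Then for $X\in{}^\perp\E$ the precover sequence $0\rarrow F'\rarrow L\rarrow X\rarrow 0$ splits (as $\Ext^1_\A(X,F')=0$), exhibiting $X$ as a summand of $L\in\L$, hence in $\L$; dually, for $Y\in\L^\perp$ the envelope sequence splits, exhibiting $Y$ as a summand of an object of $\E$ and thus in $\E$ by Lemma~\ref{tilting-class-coresolving-finite-dim}(a). The principal technical obstacle is the inductive precover construction in~$(2)\Rightarrow(3)$: carefully verifying that the pullbacks land in the intended classes and that Salce's trick yields $\L$-objects of the correct coresolution length.
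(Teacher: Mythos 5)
Your steps $(1)\Rightarrow(2)$ and $(2)\Rightarrow(3)$ are correct: the first is a minor variation of the paper's argument, the second carries out by hand the dual Auslander--Buchweitz construction that the paper merely cites, and your summand-closure and splitting arguments at the end are a sound repackaging of Lemma~\ref{Add(T)-E-preenvelope}. The gap is in $(3)\Rightarrow(1)$. First, for an unbounded complex $X^\bu\in\D(\A)$ there need not exist a \emph{minimal} $k$ with $H^k(X^\bu)\ne0$; the cohomology can be nonzero in arbitrarily negative degrees. If instead you fix an arbitrary such $k$ and pass to $\tau_{\ge k}X^\bu$, the hypothesis does not transfer: the triangle $\tau_{\le k-1}X^\bu\rarrow X^\bu\rarrow\tau_{\ge k}X^\bu\rarrow(\tau_{\le k-1}X^\bu)[1]$ only gives an injection of $\Hom_{\D(\A)}(T,(\tau_{\ge k}X^\bu)[k+i])$ into $\Hom_{\D(\A)}(T,(\tau_{\le k-1}X^\bu)[k+i+1])$, and the latter object lies in $\D^{\le-i-2}(\A)$, so Lemma~\ref{high-ext-long-complex} forces it to vanish only for $i\ge n-1$. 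For $i\le n-2$ it can receive contributions from $\Ext^{i+2}_\A(T,H^{k-1}(X^\bu))$, \dots, $\Ext^{n}_\A(T,H^{k+i+1-n}(X^\bu))$, so the vanishing $\Hom_{\D(\A)}(T^i,(\tau_{\ge k}X^\bu)[k+i])=0$ required by your final induction over the stupid filtration of $L$ is not established. As written, the two halves of your contradiction concern two different complexes (the original one for the vanishing, the truncation for the nonvanishing), and they agree only when a minimal $k$ actually exists, e.g.\ for bounded-below complexes or when $n\le1$.

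The paper sidesteps this entirely: the epimorphism $L\twoheadrightarrow Z^k=\ker(X^k\to X^{k+1})$ composed with the inclusion $Z^k\hookrightarrow X^k$ is an honest morphism of complexes $L\rarrow X^\bu[k]$; it vanishes in $\D(\A)$ because $\Hom_{\D(\A)}(L,X^\bu[k])=0$, which follows from the \emph{untruncated} hypothesis and the $\Add(T)$\+coresolution of $L$ exactly as in your final step; and a morphism of complexes that vanishes in the derived category induces zero on cohomology. Since the induced map $L\rarrow H^k(X^\bu)$ is an epimorphism by construction, $H^k(X^\bu)=0$. Substituting this device for your truncation argument repairs the proof; the rest of your proposal stands.
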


\begin{proof}
 (1)~$\Longrightarrow$~(2): according to
Proposition~\ref{tilting-heart}, for every object $Y\in\B$ there exists
an object $T'\in\B_\proj=\Add(T)$ together with an epimorphism
$T'\rarrow Y$ in the category~$\B$.
 In particular, this applies to the object $Y=E\in\E$.
 By Lemma~\ref{cotilting-class-resolving-finite-dim}(a),
the kernel $E'$ of the epimorphism $T'\rarrow E$ belongs
to $\E\subset\B$.
 Now the short exact sequence $0\rarrow E'\rarrow T'\rarrow E
\rarrow 0$ in $\B$ corresponds to a distinguished triangle
$E'\rarrow T'\rarrow E\rarrow E'[1]$ in $\D^\b(\A)$.
 Since $E'\in\E$ and $E\in\E\subset\A$, it follows that
the short sequence $0\rarrow E'\rarrow T'\rarrow E\rarrow0$
is exact in~$\A$.

 (2)~$\Longrightarrow$~(3): first of all, let us show that
the epimorphism $T'\rarrow E$ in~(2) can be chosen in such
a way that its kernel belongs to~$\E$.
 Indeed, set $T''=T^{(I)}$ to be the coproduct of copies of $T$ indexed
by the set $I=\Hom_\A(T,E)$ of all morphisms $T\rarrow E$, and
let $T''\rarrow\E$ be the natural morphism.
 The existence of an epimorphism $T'\rarrow E$ with $T'\in\Add(T)$
implies that the morphism $T''\rarrow E$ is an epimorphism, and
surjectivity of the map $\Hom_\A(T,T'')\rarrow\Hom_\A(T,E)$ together with~\textup{(ii)} implies
that the kernel $E''$ of the morphism $T''\rarrow E$ satisfies
$\Ext^i_\A(T,E'')=0$ for all $i>0$.

 Now we apply the dual version of~\cite[Theorem~1.1]{AB}, using
the facts that the full subcategory $\E\subset\A$ is closed under
extensions (Lemma~\ref{tilting-class-coresolving-finite-dim}(a))
and every object of $\A$ has a coresolution of length at most~$n$
by objects from~$\E$
(Lemma~\ref{tilting-class-coresolving-finite-dim}(b)).
 Proceeding by induction on the minimal length of such a coresolution
for a given object $X\in\A$, one constructs for it the approximation
exact sequences $0\rarrow X\rarrow E\rarrow L'\rarrow0$ and
$0\rarrow E'\rarrow L\rarrow X\rarrow0$ with $E$, $E'\in\E$ and
$L$, $L'\in\L$ (\ref{special-precover-sequence}\+%
\ref{special-preenvelope-sequence}).
 In particular, this proves~(3).
 (Cf.~\cite[proof of Theorem~3]{NSZ}.)

 (3)~$\Longrightarrow$~(1): let $X^\bu\in\D(\A)$ be a complex
such that $\Hom_{\D(\A)}(T,X^\bu[i])=0$ for all $i\in\Z$.
 Since infinite coproducts are exact in $\A$, it follows that
$\Hom_{\D(\A)}(T',X^\bu[*])=0$ for all $T'\in\Add(T)$.
 Consequently, one has $\Hom_{\D(\A)}(L,X^\bu[i])=0$ for
all $L\in\L$.

 Now let $Z^i\in\A$ denote the kernel of the differential $X^i\rarrow
X^{i+1}$.
 According to~(3), there exists an object $L\in\L$ together with
an epimorphism $L\rarrow Z^i$.
 Since the related morphism of complexes $L\rarrow X^\bu[i]$
vanishes in $\D(\A)$, it must induce a zero morphism on
the cohomology objects.
 But the induced morphism $L\rarrow H^i(X^\bu)$ is an epimorphism
by construction, so the object $H^i(X^\bu)$ has to vanish.
 
 In addition to the equivalence of the three conditions~(1--3), we have
already shown that the approximation
sequences~(\ref{special-precover-sequence}\+%
\ref{special-preenvelope-sequence}) exist under the assumption of
these conditions.
 In view of Lemmas~\ref{left-class-tilting-cotorsion-pair}(a)
and~\ref{tilting-class-coresolving-finite-dim}(a), in order to prove
that $(\L,\E)$ is a hereditary complete cotorsion pair in $\A$,
it only remains to check that the full subcategory $\L\subset\A$
is closed under direct summands.
 The following Lemma~\ref{Add(T)-E-preenvelope} implies that.
\end{proof}

In order to state and prove Lemma~\ref{Add(T)-E-preenvelope}, we need some terminology from~\cite[\S5.1]{GTbook} (the same concepts were studied in \cite{AR91}, even in the context of tilting theory, but using different terminology).
 A morphism $f\:X\rarrow F$ from an object $X\in\A$ to an object
$F\in\E$ is said to be an \emph{$\E$\+preenvelope} if the map of
abelian groups $\Hom_\A(f,E)\:\Hom_\A(F,E)\rarrow\Hom_\A(X,E)$ is
surjective for all $E\in\E$.
 Since any object of $\A$ is a subobject of an object of
$\A_\inj\subset\E$, any $\E$\+preenvelope in $\A$ is a monomorphism.
 A monomorphism~$f$ is said to be a \emph{special\/ $\E$\+preenvelope}
if its cokernel $M$ has the property that $\Ext^1_\A(M,E)=0$ for
all $E\in\E$.
 It is clear from the long exact sequence of $\Ext_\A^*({-},E)$ that
any special $\E$\+preenvelope is an $\E$\+preenvelope.
 Conversely, any $\E$\+preenvelope of the form $f\:X\rarrow T'$ with
the object $T'$ belonging to $\Add(T)$ is special, because
$\Ext^1_\A(T',E)=0$ for all $E\in\E$.

\begin{lem} \label{Add(T)-E-preenvelope}
 Let $T\in\A$ be a big $n$\+tilting object. Then the following conditions are
equivalent for $L\in\A$:
 
\begin{enumerate}
\item[(1)] $L \in \L$;
\item[(2)] $L$ is a direct summand of an object from $\L$;
\item[(3)] there exists an\/ $\E$\+preenvelope
 $L\rarrow T'$ with $T'\in\Add(T)$.
\end{enumerate}
\end{lem}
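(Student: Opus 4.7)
The proof will follow the cycle (1) $\Rightarrow$ (2) $\Rightarrow$ (3) $\Rightarrow$ (1). The first is immediate. For (2) $\Rightarrow$ (3), suppose $L$ is a direct summand of some $L' \in \L$ and take an $\Add(T)$-coresolution $0 \to L' \to T^0 \to \dotsb \to T^n \to 0$ of $L'$. I will observe that $L' \hookrightarrow T^0$ is already a special $\E$-preenvelope: its cokernel is the first cosyzygy of $L'$, which lies in $\L_{n-1} \subseteq \L$, so by Lemma~\ref{left-class-tilting-cotorsion-pair}(a) its $\Ext^1_\A({-},E)$ vanishes on all of $\E$. Composing with $L \hookrightarrow L'$ and exploiting a splitting of that inclusion will then yield an $\E$-preenvelope $L \hookrightarrow T^0$ into $\Add(T)$.

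For the key implication (3) $\Rightarrow$ (1), the plan is to reuse the approximation sequences constructed in the proof of Theorem~\ref{tilting-cotorsion-pair-thm} so far, whose production did not invoke closure of $\L$ under direct summands. Given the $\E$-preenvelope $f\: L \hookrightarrow T^0$ with cokernel $M_0$, the preenvelope property together with the long exact sequence of $\Ext^*_\A({-},E)$ immediately give $\Ext^1_\A(M_0, E) = 0$ for every $E \in \E$. The approximation sequence $0 \to E' \to L'' \to M_0 \to 0$ with $L'' \in \L$, $E' \in \E$ will then split, exhibiting $M_0$ as a direct summand of $L'' \in \L$. Applying the just-proved (2) $\Rightarrow$ (3), $M_0$ admits an $\E$-preenvelope into $\Add(T)$. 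Iterating, I will build a long exact sequence $0 \to L \to T^0 \to T^1 \to T^2 \to \dotsb$ with all $T^i \in \Add(T)$ and every syzygy $M_i$ a direct summand of some object of $\L$.

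To truncate at level $n$, dimension shifting through $0 \to M_{i-1} \to T^i \to M_i \to 0$ combined with condition (ii) for $T$ yields $\Ext^k_\A(T, M_i) \cong \Ext^{k+i+1}_\A(T, L)$ for $k \ge 1$; taking $i = n-1$, condition (i) forces $M_{n-1} \in \E$. The next step of the iteration $0 \to M_{n-1} \to T^n \to M_n \to 0$ therefore has $\Ext^1_\A(M_n, M_{n-1}) = 0$, so it splits and $M_{n-1}$ is a direct summand of $T^n \in \Add(T)$; hence $M_{n-1} \in \Add(T)$. Redefining $T^n := M_{n-1}$ then produces a genuine $\Add(T)$-coresolution $0 \to L \to T^0 \to \dotsb \to T^n \to 0$, proving $L \in \L_n = \L$. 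The main subtlety will be the bootstrap: each step of the iteration needs (2) $\Rightarrow$ (3) to upgrade ``direct summand of an object of $\L$'' into the existence of an $\E$-preenvelope into $\Add(T)$, which is precisely the reason the three conditions must be proved in this cyclic order rather than separately.
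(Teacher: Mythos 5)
Your proposal is correct and follows essentially the same route as the paper: (1)$\Rightarrow$(2) is trivial, (2)$\Rightarrow$(3) uses the first term of an $\Add(T)$\+coresolution together with the splitting of $L\hookrightarrow L'$, and (3)$\Rightarrow$(1) iterates the special-preenvelope/approximation-sequence bootstrap and truncates at step~$n$ by dimension shifting with conditions (i--ii). The only cosmetic difference is in the final splitting: you invoke $\Ext^1_\A(M_n,M_{n-1})=0$, whereas the paper uses surjectivity of $\Hom_\A(g,K)$ for the preenvelope $g\:K\rarrow T^n$ with $K\in\E$ -- these are the same observation.
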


\begin{proof}
(1)~$\Longrightarrow$~(2): This is obvious.

(2)~$\Longrightarrow$~(3): Suppose that $L$ is a summand of $N\in\L$.
  Let $0\rarrow N\rarrow T^0\rarrow\dotsb \rarrow T^n\rarrow0$ be
an exact sequence in $\A$ with $T^k\in\Add(T)$.
 Set $T'=T^0$, and denote by $M$ the cokernel of the morphism
$N\rarrow T'$.
 Then $M\in\L$, hence by
Lemma~\ref{left-class-tilting-cotorsion-pair}(a) we have
$\Ext^1_\A(M,E)=0$ for each $E\in\E$, and therefore $N\rarrow T'$ is an (even special)
$\E$\+preenvelope.
The composition $L\rarrow N\rarrow T'$ is clearly an $\E$\+preenvelope too.

(3)~$\Longrightarrow$~(1): 
If $f\:L\rarrow T'$ is an $\E$\+preenvelope with
$T'\in\Add(T)$ and $\coker(f)=M$, then, as mentioned above, $\Ext^1_\A(M,E)=0$ for all $E\in\E$.
 From an approximation sequence $0\rarrow E\rarrow L'\rarrow M
\rarrow 0$ \eqref{special-precover-sequence} with $E\in\E$ and
$L'\in\L$, whose existence was established in the proof of (2)~$\Longrightarrow$~(3) in Theorem~\ref{tilting-cotorsion-pair-thm}, we see that $M$ is a direct summand of~$L'$.
Hence $M$ has an $\E$\+preenvelope $M\rarrow T''$ with $T''\in\Add(T)$
by the previous implication.

 Setting $T^0=T'$, \,$T^1=T''$, and proceeding further in this way,
we construct an exact sequence
$0\rarrow L\rarrow T^0\rarrow T^1\rarrow\dotsb\rarrow T^{n-1}
\rarrow K\rarrow0$ and an $\E$\+preenvelope $g\:K\rarrow T^n$ with
$T^j\in\Add(T)$ for all $0\le j\le n$.
 Denoting by $M^j$ the image of the morphism $T^{j-1}\rarrow T^j$
and using the conditions~(i--ii), we have $\Ext^i_\A(T,K)=
\Ext^{i+1}_\A(T,M^{n-1})=\dotsb=\Ext^{i+n}_\A(T,L)=0$ for $i>0$.
 So $K\in\E$.
 Then the map $\Hom_\A(g,K)$ surjective, and hence $K$ is a direct
summand of~$T^n$.
 Thus $K\in\Add(T)$ and $L\in\L$.
\end{proof}

 Using Theorem~\ref{tilting-cotorsion-pair-thm}, we can compare our
definition of a (big) $n$\+tilting object with the traditional
definition of an (infinitely generated) $n$\+tilting module.

 Let $A$ be an associative ring and $\A=A\modl$ be the category of
left $A$\+modules.
 According to the definition going back to the papers~\cite{AC,Ba0},
a left $A$\+module $T$ is called \emph{$n$\+tilting} if it satisfies
the conditions~(i--ii) from Section~\ref{tilting-t-structure-secn}
as an object of the category $A\modl$, and in addition,
the following condition holds:
\begin{enumerate}
\renewcommand{\theenumi}{\roman{enumi}$_\mathrm{m}$}
\setcounter{enumi}{2}
\item the free left $A$\+module $A$ has a finite coresolution
$$
 0\lrarrow A\lrarrow T^0\lrarrow\dotsb\lrarrow T^r\lrarrow0
$$
by $A$\+modules $T^i$ belonging to the subcategory
$\Add(T)\subset A\modl$.
\end{enumerate}

 According to~\cite[Proposition~3.5]{Ba0}, when the conditions~(i--ii)
and~(iii$_\mathrm{m}$) are satisfied, one can have $r=n$, but
$r$~cannot be made smaller than that (or more precisely, than
the projective dimension of~$T$).

\begin{cor} \label{tilting-objects-and-tilting-modules}
 Let $T$ be a left $A$\+module satisfying, as an object of
the category\/ $\A=A\modl$, the conditions~\textup{(i)}
and~\textup{(ii)}.
 Then $T$ satisfies the condition~\textup{(iii)} if and only if it
satisfies~\textup{(iii$_\mathrm{m}$)}.
\end{cor}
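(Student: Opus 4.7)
The plan is to derive both implications from Theorem~\ref{tilting-cotorsion-pair-thm}, specifically the equivalence of conditions~(1) and~(3) there (condition~(1) is literally our~(iii)). The additional input supplied by the module setting is the existence of a projective generator, namely $A\in\A=A\modl$, so that every module is a quotient of a free module.

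For (iii$_\mathrm{m}$)\,$\Rightarrow$\,(iii), I would first observe that $A\in\L$: a finite $\Add(T)$-coresolution of $A$ places $A$ in some~$\L_r$, and by Lemma~\ref{left-class-tilting-cotorsion-pair}(c) together with $\L=\L_n$ this gives $A\in\L$ regardless of whether $r\le n$ or $r>n$. Next I would argue that $\L$ is closed under arbitrary coproducts: coproducts in $A\modl$ are exact and $\Add(T)$ is plainly closed under coproducts (a summand of $T^{(J)}$, taken in copies over $I$, is a summand of $T^{(I\times J)}$), so termwise coproducts of finite $\Add(T)$-coresolutions remain finite $\Add(T)$-coresolutions. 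Hence $A^{(I)}\in\L$ for every set~$I$, and since every $A$-module $X$ is a quotient of some $A^{(I)}$, condition~(3) of Theorem~\ref{tilting-cotorsion-pair-thm} holds, yielding~(iii).

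Conversely, for (iii)\,$\Rightarrow$\,(iii$_\mathrm{m}$), I would apply condition~(3) of Theorem~\ref{tilting-cotorsion-pair-thm} to $X=A$, obtaining an epimorphism $L\rarrow A$ with $L\in\L$. Since $A$ is projective in $A\modl$, this epimorphism splits, exhibiting $A$ as a direct summand of~$L$. Invoking Lemma~\ref{Add(T)-E-preenvelope}, implication (2)\,$\Rightarrow$\,(1), then upgrades this to $A\in\L$, i.e., $A$ admits a finite $\Add(T)$-coresolution, which is exactly~(iii$_\mathrm{m}$). The whole argument is short and presents no serious obstacle; the only subtle point worth flagging is that the closure of $\L$ under direct summands --- crucial in the converse direction --- is \emph{not} immediate from the definition of $\L$ and has to be extracted from Lemma~\ref{Add(T)-E-preenvelope}.
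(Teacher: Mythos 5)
Your proof is correct and takes essentially the same route as the paper: both directions are extracted from Theorem~\ref{tilting-cotorsion-pair-thm} together with the projectivity of the free module $A$. The only cosmetic difference is in the converse, where the paper concludes $A\in\L$ at once from the description of $\L$ as the left class of the cotorsion pair $(\L,\E)$ (since $\Ext^1_A(A,E)=0$ for all $E$), whereas you reach the same point by splitting the epimorphism supplied by condition~(3) and then invoking the summand-closure of $\L$ from Lemma~\ref{Add(T)-E-preenvelope} --- which is the same underlying fact the paper needs to establish the cotorsion pair in the first place.
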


\begin{proof}
 It is obvious that (iii$_\mathrm{m}$) implies~(iii)
(cf.\ the proof of (3)~$\Longrightarrow$~(1) in
Theorem~\ref{tilting-cotorsion-pair-thm}).


To prove the converse implication, use the assertion of
Theorem~\ref{tilting-cotorsion-pair-thm} claiming that $(\L,\E)$
is a cotorsion pair in~$\A$.
 Since $\Ext_A^i(A,E)=0$ for all $E\in A\modl$ and $i>0$,
the free left $A$\+module $A$ belongs to~$\L$.
This provides the exact sequence~(iii$_\mathrm{m}$) with
$r=n$.
\end{proof}

\begin{rem} \label{rem:tilting-class}
 In the literature, the full subcategory $\E\subset\A$ is known as
the \emph{$n$\+tilting class} associated with an $n$\+tilting object
$T\in\A$ \cite[Section~13.1]{GTbook}.
 The cotorsion pair $(\L,\E)$ in $\A$ is called the \emph{$n$\+tilting
cotorsion pair}.
\end{rem}

\Section{The Tilting-Cotilting Correspondence}
\label{tilting-cotilting-secn}

 The aim of this section is to show that the assignment of the tilting
heart $\B$ to an abelian category $\A$ with an $n$\+tilting object $T$
extends to a bijective correspondence between certain natural classes
of abelian categories $\A$ with $n$\+tilting objects $T$ and abelian
categories $\B$ with $n$\+cotilting objects~$W$.
 
 We start with some standard observations on tilting t\+structures along the lines of~\cite[Remarque 3.1.17]{BBD},
 \cite[Corollary~A.17]{Partin}.
 For any t\+structure $(\D^{\le0},\D^{\ge0})$ on a triangulated category
$\D$ with an abelian heart $\C=\D^{\le0}\cap\D^{\ge0}$, there are
natural maps from the $\Ext$ groups in the abelian category $\C$ to
the $\Hom$ groups in the triangulated category~$\D$
\begin{equation} \label{maps-theta}
 \theta^i_{\C,\D}=\theta_{\C,\D}^i(X,Y)\:\Ext^i_\C(X,Y)
 \to\Hom_\D(X,Y[i]),
 \quad \text{for all $X$, $Y\in\C$, \,$i\ge0$}.
\end{equation}
 The maps $\theta^i_{\C,D}(X,Y)$ are functorial in $X$ and $Y$ and
transform the Yoneda multiplication of $\Ext$ classes into
the composition of morphisms in~$\D$.
 The maps~$\theta_{\C,\D}^0$ and $\theta_{\C,\D}^1$ are always
isomorphisms, and the maps $\theta_{\C,\D}^2$ are monomorphisms.
 The maps $\theta_{\C,\D}^{i+1}$ are monomorphisms whenever the maps
$\theta_{\C,\D}^i$ are isomorphisms.

 A t\+structure $(\D^{\le0},\D^{\ge0})$ on $\D$ is said to be
\emph{of the derived type} if the maps $\theta_{\C,\D}^i$ are
isomorphisms for all~$i\ge0$.
 A t\+structure is of the derived type if and only if for every
morphism $\xi\:X\rarrow Y[i]$ in $\D$ with $X$, $Y\in\C$ and $i>0$
there exists an epimorphism $\pi\:X'\rarrow X$ in the abelian
category $\C$ such that $\xi\pi=0$ in $\D$, and if and only if
for every~$\xi$ there exists a monomorphism $\iota\:Y\rarrow Y'$
in $\C$ such that $\iota[i]\circ\xi=0$.

\medskip

 Let $\A$ be an abelian category with set-indexed products and
an injective cogenerator, and let $T\in\A$ be an $n$\+tilting object.
 Let $({}^T\D^{\b,\le0},\.{}^T\D^{\b,\ge0})$ be the tilting t\+structure
on $\D^\b(\A)$, and let $\B$ be the heart of this t\+structure.

\begin{lem} \label{tilting-derived-type}
 The tilting t\+structure $({}^T\D^{\b,\le0},\.{}^T\D^{\b,\ge0})$ on\/
$\D^\b(\A)$ associated with any $n$\+tilting object $T\in\A$ is
a t\+structure of the derived type.
 Equivalently, the tilting t\+structure
$({}^T\D^{\le0},\.{}^T\D^{\ge0})$ on\/ $\D(\A)$ is of the derived type.
\end{lem}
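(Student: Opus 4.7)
The plan is to verify the equivalent form of the derived-type property stated just above the lemma: given any morphism $\xi\:X\rarrow Y[i]$ in $\D^\b(\A)$ with $X$, $Y\in\B$ and $i>0$, I must produce an epimorphism $\pi\:X'\rarrow X$ in $\B$ such that $\xi\pi=0$. The natural candidate is furnished by the projective generator $T$ of $\B$ supplied by Proposition~\ref{tilting-heart}: take $\pi\:T^{(I)}\rarrow X$ to be any epimorphism in $\B$ (one exists since $T$ is a generator). The whole argument then reduces to showing the vanishing $\Hom_{\D^\b(\A)}(T^{(I)},Y[i])=0$ for $i>0$ and $Y\in\B$.

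To establish this vanishing I would pass to the unbounded derived category $\D(\A)$ (the Hom-groups between bounded complexes agree, via the fully faithful $\D^\b(\A)\hookrightarrow\D^+(\A)\simeq\Hot^+(\A_\inj)\hookrightarrow\D(\A)$ recalled in the setup of Section~\ref{tilting-t-structure-secn}) and exploit that the coproduct $T^{(I)}$ formed in $\A$ coincides with the coproduct of $I$ copies of $T$ taken in $\D(\A)$. This compatibility was essentially recorded in the same setup: set-indexed coproducts in $\A$ exist and are exact, and the Verdier localization $\Hot(\A)\rarrow\D(\A)$ preserves them, so for an object concentrated in cohomological degree~$0$ the two coproducts agree. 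Consequently
\[
 \Hom_{\D(\A)}(T^{(I)},Y[i])\;\cong\;\prod_I\Hom_{\D(\A)}(T,Y[i]),
\]
and the right-hand side vanishes for $i>0$ directly from the definition of ${}^T\D^{\le0}$ applied to $Y\in\B\subset{}^T\D^{\le0}$. Hence $\xi\pi=0$, as required.

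The equivalence of the two formulations (on $\D^\b(\A)$ versus on $\D(\A)$) is then a formality: the two t\+structures share the same heart $\B$, and the fully faithful embedding $\D^\b(\A)\hookrightarrow\D(\A)$ identifies the Hom-groups $\Hom(X,Y[i])$ for $X$, $Y\in\B$ and $i\ge0$, so the natural maps $\theta^i$ for the two t\+structures coincide. I do not anticipate any serious obstacle; the only item requiring a moment's care is the identification of coproducts between $\A$ and $\D(\A)$, and that is standard and already implicitly invoked in the homotopy-colimit construction used to prove Theorem~\ref{tilting-t-structure-thm}.
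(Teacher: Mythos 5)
Your proposal is correct and follows essentially the same route as the paper: the paper also takes the canonical epimorphism $\pi\:T^{(I)}\rarrow X$ with $I=\Hom_\B(T,X)$ and concludes from $\Hom_{\D^\b(\A)}(T^{(I)},Y[i])=\Hom_{\D^\b(\A)}(T,Y[i])^I=0$ for $i>0$. Your extra care about identifying the coproduct $T^{(I)}$ in $\A$ with the coproduct in $\D(\A)$ is exactly the point the paper leaves implicit (it is recorded in the proof of Proposition~\ref{tilting-heart}), so there is nothing to add.
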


\begin{proof}
 Let us check that for every pair of objects $X$, $Y\in\B$ and
a morphism $\xi\:X\rarrow Y[i]$, \ $i>0$ in $\D^\b(\A)$ there exists
an epimorphism $\pi\:X'\rarrow X$ in $\B$ such that $\xi\pi=0$.
 Set $I=\Hom_\B(T,X)$ and $X'=T^{(I)}$; let $\pi\:X'\rarrow X$
be the natural epimorphism.
 Then $\xi\pi=0$ in $\D^\b(\A)$ because $\Hom_{\D^\b(\A)}(T,Y[i])=0$
by the definition of ${}^T\D^{\b,\le0}$ and
$\Hom_{\D^\b(\A)}(T^{(I)},Y[i])=\Hom_{\D^\b(\A)}(T,Y[i])^I$.
\end{proof}

This allows us to prove that the categories $\A$ and $\B$ are derived equivalent (at the bounded level, see Section~\ref{derived-equivalences-secn} for a more thorough discussion of derived equivalences). In fact, we provide two proofs: the first one is short and elegant, but it relies on a nontrivial technical construction of a so-called realization functor, while the second one is elementary and only uses the results of our Section~\ref{tilting-cotorsion-pair-secn}.

\begin{prop} \label{bounded-derived-equivalence}
 There is an equivalence of triangulated categories\/
$\D^\b(\B)\simeq\D^\b(\A)$ identifying the standard t\+structure
$(\D^{\b,\le0}(\B),\.\D^{\b,\ge0}(\B))$ on\/ $\D^\b(\B)$ with
the tilting t\+structure $({}^T\D^{\b,\le0},\.{}^T\D^{\b,\ge0})$ on\/
$\D^\b(\A)$.
 The restriction of this triangulated equivalence to
the full subcategory\/ $\B\subset\D^\b(\B)$ is the identity
embedding\/ $\B\rarrow\D^\b(\A)$.
\end{prop}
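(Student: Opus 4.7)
The plan is to follow the two strategies suggested in the remark preceding the statement.

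For the short, elegant route, I appeal to Beilinson's realization functor theorem. Lemma~\ref{tilting-derived-type} already establishes that the tilting t\+structure on $\D^\b(\A)$ is of the derived type. I would invoke the standard construction of the realization functor (as formulated in \cite[n$^\circ$\,3.1.10]{BBD}, or refined via filtered derived categories in \cite[Appendix~A]{Partin}) to produce a triangulated functor $\mathrm{real}\:\D^\b(\B)\rarrow\D^\b(\A)$ which is t\+exact with respect to the standard t\+structure on the source and the tilting t\+structure on the target, and which restricts to the canonical inclusion $\B\hookrightarrow\D^\b(\A)$ on hearts. The derived-type property means exactly that the natural maps $\theta^i_{\B,\D^\b(\A)}\:\Ext^i_\B(X,Y)\rarrow\Hom_{\D^\b(\A)}(X,Y[i])$ are isomorphisms for all $X$, $Y\in\B$ and $i\ge0$, and this at once yields that $\mathrm{real}$ is fully faithful on objects of $\B$; fullness and faithfulness on all of $\D^\b(\B)$ then follow by induction on the cohomological amplitude using the five-lemma and t\+exactness. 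Essential surjectivity uses that the tilting t\+structure on $\D^\b(\A)$ is bounded (by the sandwich inclusions \eqref{d-unbounded-le-inclusions}\+\eqref{d-unbounded-ge-inclusions} together with Corollary~\ref{bounded-tilting-t-structures}(c)), so that every object of $\D^\b(\A)$ is a finite iterated extension of shifts of objects of $\B$, all of which manifestly lie in the image of $\mathrm{real}$.

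For the elementary proof, I exploit the full subcategory $\E=\A\cap\B$, which is an exact category closed under extensions both in $\A$ and in $\B$. By Lemma~\ref{tilting-class-coresolving-finite-dim} it is coresolving in $\A$ with coresolution dimension $\le n$, and by Lemma~\ref{cotilting-class-resolving-finite-dim} it is resolving in $\B$ with resolution dimension $\le n$. A standard resolution-dimension argument then yields triangulated equivalences $\D^\b(\E)\simeq\D^\b(\A)$ and $\D^\b(\E)\simeq\D^\b(\B)$ induced by the evident inclusions; composing the second with the inverse of the first gives the required equivalence $\D^\b(\B)\simeq\D^\b(\A)$. To see that the composite restricts to the identity on the heart $\B$, one picks for $X\in\B$ a finite left $\E$\+resolution $E^\bu\rarrow X$ in $\B$ (Lemma~\ref{cotilting-class-resolving-finite-dim}(b)); the image of $X$ under the equivalence is represented by the very same complex $E^\bu$ viewed in $\D^\b(\A)$, and I then check using the Ext-vanishing built into the cotorsion pair $(\L,\E)$ that this complex is canonically quasi-isomorphic in $\D^\b(\A)$ to $X$ regarded as an object of the tilting heart $\B\subset\D^\b(\A)$.

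The t\+structure identification is then a direct consequence: both t\+structures have the same heart $\B$, and the equivalence restricts to the identity on $\B$, so the uniqueness of a bounded t\+structure generated by a given heart (under the t\+exactness of the equivalence, which is automatic from the construction via resolutions) forces the identification. The main obstacle, in both approaches, is technical and concerns the realization functor itself: in the elegant proof it is imported as a black box, whereas in the elementary approach one must carefully track zig-zags of $\E$-quasi-isomorphisms on both sides to verify that the composite equivalence is the identity on $\B$ rather than merely an auto-equivalence of $\B$ distinct from the identity.
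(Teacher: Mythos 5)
Your proposal is correct and reproduces both of the paper's proofs in essentially the same form: the first via the realization functor, the derived-type property (Lemma~\ref{tilting-derived-type}), and boundedness of the tilting t\+structure; the second via the exact subcategory $\E=\A\cap\B$ and the finite-resolution-dimension equivalences $\D^\b(\E)\simeq\D^\b(\A)$ and $\D^\b(\E)\simeq\D^\b(\B)$, followed by the check that a finite $\E$\+resolution $E^\bu$ of $Y\in\B$ represents $Y$ in $\D^\b(\A)$ (which the paper carries out by producing the natural morphism $E^\bu\rarrow Y$ and testing it against $\Hom_{\D^\b(\A)}(T,{-}[i])$ using condition~(iii)). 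No gaps.
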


\begin{proof}[First proof]
 According to~\cite[n$^\circ$\,3.1]{BBD} and~\cite[Appendix~A]{PV}
(for another approach, see~\cite[\S3]{KV}), for any t\+structure on
the derived category $\D^\b(\A)$ with the abelian heart $\B$ there
exists a triangulated ``realization'' functor $\D^\b(\B)\rarrow
\D^\b(\A)$ whose restriction to~$\B$ is the identity embedding
$\B\rarrow\D^\b(\A)$.
 Furthermore, the inclusions~(\ref{d-le-inclusions}\+%
\ref{d-ge-inclusions}) show that the tilting t\+structure on
$\D^\b(\A)$ is bounded, so the triangulated category $\D^\b(\A)$ is
classically generated by its full subcategory~$\B$. That is,
the smallest triangulated subcategory of $\D^\b(\A)$ containing $\B$
is $\D^\b(\A)$ itself.
 As the tilting t\+structure is also of the derived type by
Lemma~\ref{tilting-derived-type}, the functor
$\D^\b(\B)\rarrow\D^\b(\A)$ is a triangulated equivalence
(cf.~\cite[Proposition~3.1.16]{BBD}).
 Since a bounded t\+structure on a triangulated category is determined
by its heart, the equivalence of categories $\D^\b(\B)\simeq\D^\b(\A)$
identifies the standard t\+structure on $\D^\b(\B)$ with
the tilting t\+structure on $\D^\b(\A)$. 
\end{proof}

\begin{proof}[Second proof]
 According to Lemmas~\ref{tilting-class-coresolving-finite-dim}
and~\ref{cotilting-class-resolving-finite-dim}, the full subcategory
$\E=\A\cap\B\subset\D^\b(\A)$ is coresolving in $\A$ and
resolving in~$\B$.
 The (co)resolution dimension is bounded by~$n$ in both cases.
 The exact category structures inherited by $\E$ from the abelian categories
$\A$ and $\B$ coincide (e.~g., because they can be inherited directly
from the triangulated category structure on $\D^\b(\A)$;
see~\cite[Section~A.8]{Partin}).
 Hence it follows that both the triangulated functors $\D^\b(\E)\rarrow
\D^\b(\A)$ and $\D^\b(\E)\rarrow\D^\b(\B)$ induced by the identity
embeddings $\E\rarrow\A$ and $\E\rarrow\B$ are triangulated equivalences
between the bounded derived category $\D^\b(\E)$ of the exact category
$\E$ and the bounded derived categories of the abelian categories
$\A$ and~$\B$ (see~\cite[Proposition~13.2.2(ii)]{KS}
or~\cite[Proposition~A.5.6]{Pcosh}; cf.\ the discussion in
Section~\ref{derived-equivalences-secn} and in the proof of
Theorem~\ref{exotic-derived-resolution-equivalences} below). 
 Hence we obtain the triangulated equivalence $\D^\b(\B)\simeq\D^\b(\A)$.

 It remains to show that the identity embeddings $\B\rarrow\D^\b(\B)$
and $\B\rarrow\D^\b(\A)$ form a commutative diagram with
the equivalence $\D^\b(\B)\simeq\D^\b(\A)$.
 Indeed, let $Y\in\B$ be an object and $0\rarrow E^{-n}\rarrow\dotsb
\rarrow E^{-1}\rarrow E^0\rarrow Y\rarrow0$ be an exact sequence in
$\B$ with $E^{-k}\in\E$, as in
Lemma~\ref{cotilting-class-resolving-finite-dim}(b).
 We have to construct a natural isomorphism in $\D^\b(\A)$ between the object
represented by the complex $E^\bu$ in $\D^\b(\A)$ and
the object $Y\in\B\subset\D^\b(\A)$.
%
%
Since $\E\subset{}^T\D^{\b,\le0}$ and ${}^T\D^{\b,\le0}$ is closed under cones, one has that $E^\bu\in{}^T\D^{\b,\le0}$.
More generally, we have $\sigma_{\le i}E^\bu\in{}^T\D^{\b,\le i}$ for each $i\le 0$, where $\sigma_{\le i}$ is the silly truncation of $E^\bu$.
Now consider the triangles
$(\sigma_{\le-1}E^\bu)[-1] \rarrow E^0\rarrow E^\bu\rarrow \sigma_{\le-1}E^\bu$
and
$E^{-1}\rarrow (\sigma_{\le-1}E^\bu)[-1]\rarrow(\sigma_{\le-2}E^\bu)[-1]\rarrow E^{-1}[1]$
in $\D^\b(\A)$.
If we apply $\Hom_{\D^\b(\A)}(-,Y)$ to these triangles and use that $\B\subset{}^T\D^{\b,\ge0}$,
it follows that the group $\Hom_{\D^\b(\A)}(E^\bu,Y)$ is
isomorphic to the group of all morphisms $E^0\rarrow Y$ in $\B$
for which the composition $E^{-1}\rarrow E^0\rarrow Y$ vanishes.
 In particular, from the exact sequence $0\rarrow E^{-n}\rarrow
\dotsb\rarrow E^0\rarrow Y\rarrow 0$ in $\B$ we get a natural morphism
$E^\bu\rarrow Y$ in $\D^\b(\A)$.
 To check that this morphism is an isomorphism, one computes that
it induces an isomorphism $\Hom_{\D^\b(\A)}(T,E^\bu[i])\cong
\Hom_{\D^\b(\A)}(T,Y[i])$ for all $i\in\Z$.
 Indeed, one has $\Hom_{\D^\b(\A)}(T,E^\bu[i])=
H^i\Hom_\A(T,E^\bu)=H^i\Hom_\B(T,E^\bu)$, since
$\Ext^i_\A(T,E^{-k})=0$ for all $0\le k\le n$ and $i>0$,
and applying $\Hom_\B(T,{-})$ preserves exactness of
the sequence $0\rarrow E^{-n}\rarrow\dotsb\rarrow E^0\rarrow
Y\rarrow0$, because the object $T$ is projective in~$\B$.
\end{proof}

 Now we proceed to define the cotilting objects.
 The setting for these is completely dual to the tilting one.
 Let $\B$ be an abelian category with set-indexed coproducts
and a projective generator.
 It follows from these conditions that set-indexed products exist and
are exact in~$\B$.
 
 Let us say that an object $W\in\B$ is (\emph{big})
\emph{$n$\+cotilting} if the following three conditions are satisfied:
\begin{enumerate}
\renewcommand{\theenumi}{\roman{enumi}*}
\item the injective dimension of $W$ in $\B$ does not exceed~$n$,
that is $\Ext^i_\B(Y,W)=0$ for all $i>n$ and all $Y\in\B$;
\item $\Ext^i_\A(W^I,W)=0$ for all $i>0$ and all sets $I$, where
$W^I$ denotes the product of $I$~copies of $W$ in~$\B$;
\item every complex $Y^\bu\in\D(\B)$ such that $\Hom_{\D(\A)}
(Y^\bu,W[i])=0$ for all $i\in\Z$ is acyclic.
\end{enumerate}

\begin{thm} \label{cotilting-t-structure-thm}
 Let $W\in\B$ be an $n$\+cotilting object.
 Then the pair of full subcategories
\begin{align*}
 {}^W\D^{\le 0}&=\{\,Y^\bu\in\D(\B)\mid \Hom_{\D(\B)}(Y^\bu,W[i])=0
 \text{ for all $i<0$}\,\}, \\
 {}^W\D^{\ge 0}&=\{\,Y^\bu\in\D(\B)\mid \Hom_{\D(\B)}(Y^\bu,W[i])=0
 \text{ for all $i>0$}\,\}
\end{align*}
is a t\+structure on the unbounded derived category\/ $\D(\B)$.
\end{thm}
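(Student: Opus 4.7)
The plan is to deduce the result from Theorem~\ref{tilting-t-structure-thm} by passing to the opposite category. Set $\A := \B^\rop$. The hypotheses on $\B$ ensure that $\A$ has set-indexed products (the coproducts of $\B$) and an injective cogenerator (the projective generator of~$\B$), so $\A$ lies in the setting of Section~\ref{tilting-t-structure-secn}. I claim that $W$, viewed as an object of $\A$, is a big $n$-tilting object.

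Verifying the three tilting conditions is a matter of translation. Since $\Ext^i_\A(W, X) = \Ext^i_\B(X, W)$, condition~(i*) on the injective dimension of $W$ in $\B$ becomes condition~(i) on the projective dimension of $W$ in $\A$. The coproduct $W^{(I)}$ computed in $\A$ is the product $W^I$ computed in $\B$, so $\Ext^i_\A(W, W^{(I)}) = \Ext^i_\B(W^I, W) = 0$ for $i>0$ by~(ii*), giving~(ii). Finally, under the anti-equivalence $\D(\A) \simeq \D(\B)^\rop$, a morphism $W \lrarrow X^\bu[i]$ in $\D(\A)$ is identified with a morphism $X^\bu \lrarrow W[-i]$ in $\D(\B)$, so the vanishing of $\Hom_{\D(\A)}(W, X^\bu[i])$ for all $i \in \Z$ is the same as the vanishing of $\Hom_{\D(\B)}(X^\bu, W[j])$ for all $j \in \Z$; hence~(iii*) translates into~(iii) and detection of acyclicity is preserved.

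Theorem~\ref{tilting-t-structure-thm} then produces a t-structure $({}^W\D^{\le 0}_\A,\.{}^W\D^{\ge 0}_\A)$ on $\D(\A) \simeq \D(\B)^\rop$. Any t-structure on a triangulated category transports under an anti-equivalence to a t-structure on the target with the two aisles interchanged; so $({}^W\D^{\ge 0}_\A,\.{}^W\D^{\le 0}_\A)$, re-interpreted inside $\D(\B)$, is a t-structure on $\D(\B)$. Substituting the definitions and using that the shift $[1]$ in $\D(\A)$ corresponds to $[-1]$ in $\D(\B)$, the aisle ${}^W\D^{\le 0}_\A$ pulls back to $\{Y^\bu \in \D(\B) \mid \Hom_{\D(\B)}(Y^\bu, W[i]) = 0 \text{ for all } i<0\}$ and ${}^W\D^{\ge 0}_\A$ pulls back to $\{Y^\bu \in \D(\B) \mid \Hom_{\D(\B)}(Y^\bu, W[i]) = 0 \text{ for all } i>0\}$, which are precisely the full subcategories in the statement.

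The only point requiring care, and thus the main obstacle, is the bookkeeping when transporting t-structures across the anti-equivalence: one must track both the reversal of the shift and the swap of the two aisles, and check that the inequality conventions in the definitions of ${}^W\D^{\le 0}$ and ${}^W\D^{\ge 0}$ emerge correctly. Should one prefer a direct argument, the proof of Theorem~\ref{tilting-t-structure-thm} dualizes line by line, replacing coproducts, homotopy colimits, and $\Add(T)$ by products, homotopy limits, and $\Prod(W)$, and building an inverse system $\dotsb \rarrow Y^\bu_1 \rarrow Y^\bu_0 = Y^\bu$ by means of triangles $Y^\bu_{i+1} \lrarrow Y^\bu_i \lrarrow W[-i]^{H_i} \lrarrow Y^\bu_{i+1}[1]$ with $H_i = \Hom_{\D(\B)}(Y^\bu_i, W[-i])$; the dual of Lemma~\ref{high-ext-long-complex} (replacing finite projective dimension by finite injective dimension of~$W$) and the dual of Lemma~\ref{tria-Add-T-via-complexes} then furnish the required vanishings for the homotopy-limit $\tau^W_{\le 0}Y^\bu$.
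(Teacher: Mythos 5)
Your approach is exactly the paper's: the published proof consists of the single line ``Dual to Theorem~\ref{tilting-t-structure-thm},'' and your argument simply makes that dualization explicit via $\A=\B^\rop$, with the condition checks (i*)--(iii*)~$\leftrightarrow$~(i)--(iii) carried out correctly. One bookkeeping slip in your final identification: since $[1]$ in $\D(\A)$ corresponds to $[-1]$ in $\D(\B)$ \emph{and} the Hom-sets are reversed, one gets $\Hom_{\D(\A)}(W,X^\bu[i])=\Hom_{\D(\B)}(Y^\bu,W[i])$ with the index $i$ \emph{preserved}, so ${}^W\D^{\le0}_\A$ (vanishing for $i>0$) pulls back to the class $\{Y^\bu\mid\Hom_{\D(\B)}(Y^\bu,W[i])=0 \text{ for } i>0\}={}^W\D^{\ge0}$, not to the $i<0$ class as you wrote, and dually for ${}^W\D^{\ge0}_\A$. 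With the correct identification, the transported pair $(F({}^W\D^{\ge0}_\A),F({}^W\D^{\le0}_\A))$ is precisely $({}^W\D^{\le0},{}^W\D^{\ge0})$ as in the statement; as written, your labels would put the aisle and coaisle in the wrong order, so this is worth fixing even though the method is sound.
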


\begin{proof}
 Dual to Theorem~\ref{tilting-t-structure-thm}.
\end{proof}

\begin{cor} \label{bounded-cotilting-t-structure}
\textup{(a)} Let $W\in\B$ be an $n$\+cotilting object.  Then
the pair of full subcategories
\begin{align*}
 {}^W\D^{\b,\le 0}&=\{\,Y^\bu\in\D^\b(\B)\mid \Hom_{\D^\b(\B)}
 (Y^\bu,W[i])=0 \text{ for all $i<0$}\,\}, \\
 {}^W\D^{\b,\ge 0}&=\{\,Y^\bu\in\D^\b(\B)\mid \Hom_{\D^\b(\B)}
 (Y^\bu,W[i])=0 \text{ for all $i>0$}\,\}
\end{align*}
is a t\+structure on the bounded derived category\/ $\D^\b(\B)$. \par
\textup{(b)} Conversely, if $W\in\B$ is an object
satisfying~\textup{(i*)} and~\textup{(ii*)}, and the pair of
full subcategories\/ $({}^W\D^{\b,\le0},\.{}^W\D^{\b,\ge0})$ is
a t\+structure on\/ $\D^\b(\B)$, then the object $W$ also satisfies
the condition~\textup{(iii*)}.
\end{cor}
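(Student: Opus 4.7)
The plan is to deduce both parts via passage to the opposite category, invoking the tilting-side results of this section applied to $\B^{\rop}$.

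First I would observe that the hypotheses on $\B$ dualize correctly. The abelian category $\B^{\rop}$ has set-indexed products (namely the coproducts in $\B$) and the projective generator of $\B$ becomes an injective cogenerator in $\B^{\rop}$; these are precisely the standing hypotheses imposed on $\A$ at the beginning of Section~\ref{tilting-t-structure-secn}. Under the canonical identification $\D^\b(\B^{\rop})\simeq\D^\b(\B)^{\rop}$, the conditions (i*), (ii*), (iii*) defining an $n$-cotilting object $W\in\B$ translate verbatim into the conditions (i), (ii), (iii) defining an $n$-tilting object $W\in\B^{\rop}$, since $\Ext$ swaps its arguments, products and coproducts are exchanged, and the directions of Hom maps and shifts are reversed.

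For part~(a), I apply Corollary~\ref{bounded-tilting-t-structures}(c) to the $n$-tilting object $W$ in $\B^{\rop}$, producing a t-structure on $\D^\b(\B^{\rop})$. Reversing arrows yields a t-structure on $\D^\b(\B)$, and a direct check of definitions identifies its two aisles with the full subcategories ${}^W\D^{\b,\le0}$ and ${}^W\D^{\b,\ge0}$ described in the statement. For part~(b), the hypothesis that $({}^W\D^{\b,\le0},\,{}^W\D^{\b,\ge0})$ is a t-structure on $\D^\b(\B)$ dualizes to the corresponding hypothesis of Proposition~\ref{bounded-tilting-t-structure-implies-generation} applied in $\B^{\rop}$; its conclusion, condition (iii) there, is exactly the desired condition (iii*) back in $\B$.

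I do not expect any real obstacle: the only item requiring care is the bookkeeping of how ${\le}/{\ge}$ and the directions of arrows in the Hom-vanishing conditions flip when passing between $\B$ and $\B^{\rop}$, which is purely formal. In particular, since both (a) and (b) are obtained by this single duality trick, one could even state the proof simply as ``dual to Corollary~\ref{bounded-tilting-t-structures}(c) and Proposition~\ref{bounded-tilting-t-structure-implies-generation}'' without losing any substance.
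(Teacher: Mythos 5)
Your proposal is correct and is exactly the paper's argument: the paper's proof of this corollary reads simply ``Part~(a) is dual to Corollary~\ref{bounded-tilting-t-structures}(c). Part~(b) is dual to Proposition~\ref{bounded-tilting-t-structure-implies-generation}.'' Your more detailed bookkeeping of how the hypotheses on $\B$ and the conditions (i*)--(iii*) dualize to those on $\A$ and (i)--(iii) under $\D^\b(\B^{\rop})\simeq\D^\b(\B)^{\rop}$ is precisely what the paper leaves implicit.
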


\begin{proof}
 Part~(a) is dual to Corollary~\ref{bounded-tilting-t-structures}(c).
 Part~(b) is dual to
Proposition~\ref{bounded-tilting-t-structure-implies-generation}.
\end{proof}

 The t\+structures on the derived categories $\D(\B)$ and $\D^\b(\B)$
provided by Theorem~\ref{cotilting-t-structure-thm}
and Corollary~\ref{bounded-cotilting-t-structure}(a), as well as
the similar t\+structures on $\D^+(\B)$ and $\D^-(\B)$, are called
the \emph{cotilting t\+structures} associated with an $n$\+cotilting
object $W\in\B$.

 More generally, let $W\in\B$ be an object of injective dimension not
exceeding~$n$.
 Denote by $\E$ the following full subcategory in~$\B$:
\[
 \E = \{ E\in\B \mid\Ext^i_\B(E,W)=0 \textrm{ for all } i>0 \}.
\]
 In the way dual to Lemma~\ref{tilting-class-coresolving-finite-dim},
one shows that the full subcategory $\E\subset\B$ is resolving and
the corresponding resolution dimension of $\B$ is bounded by~$n$.

 Denote by $\Prod(W)\subset\B$ the full subcategory formed by
the direct summands of infinite products of copies of the object
$W\in\B$.
 For any integer $m\ge0$, we denote by $\R_m$ the full subcategory of
all objects $R\in\B$ for which there exists an exact sequence in $\B$
of the form
$$
 0\lrarrow W_m\lrarrow\dotsb\lrarrow W_1\lrarrow W_0\lrarrow R
 \lrarrow0,
$$
where $W_0$, $W_1$,~\dots, $W_m\in\Prod(W)$.
 Clearly, one has $\Prod(W)=\R_0\subset\R_1\subset\dotsb\subset\B$.

\begin{lem}
 Assume that the object $W\in\B$ satisfies
the conditions~\textup{(i*--ii*)}.
 Then \par
\textup{(a)} for any objects $E\in\E$ and $R\in\R_m$, one has\/
$\Ext_\B^i(E,R)=0$ for all $i>0$; \par
\textup{(b)} the intersection\/ $\R_m\cap\E$ coincides with the full
subcategory\/ $\Prod(W)\subset\B$; \par
\textup{(c)} for each integer $m\ge n$, one has\/ $\R_m=\R_{m+1}$.
\end{lem}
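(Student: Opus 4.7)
The lemma is the formal dual of Lemma~\ref{left-class-tilting-cotorsion-pair}, obtained by swapping the roles of coproducts and products and replacing $T\in\A$ by $W\in\B$; my plan is to mirror that proof line for line. The one structural input is that products are exact in $\B$ (which the text establishes just before the definition of cotilting), so $\Ext^i_\B(E,W^I)\cong\prod_I\Ext^i_\B(E,W)$, and in particular $\Ext^i_\B(E,W_0)=0$ for every $i>0$, $E\in\E$ and $W_0\in\Prod(W)$.

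For part~(a) I would induct on $m$. Any $R\in\R_m$ sits in a short exact sequence $0\rarrow R'\rarrow W_0\rarrow R\rarrow 0$ with $W_0\in\Prod(W)$ and $R'\in\R_{m-1}$, and the long exact sequence of $\Ext^*_\B(E,-)$ combined with the inductive vanishing of $\Ext^i_\B(E,R')$ yields the claim. For part~(b), the inclusions $\Prod(W)\subseteq\R_m$ and $\Prod(W)\subseteq\E$ are immediate from the definition of $\R_0$ and from~(ii*) together with closure of $\E$ under summands. Conversely, given $R\in\R_m\cap\E$, the same short exact sequence splits by~(a) applied with $E=R$, so $R$ is a summand of $W_0\in\Prod(W)$ and hence lies in $\Prod(W)$.

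The only part with any content is~(c). Given $R\in\R_{m+1}$ with $m\ge n$ and a witnessing sequence $0\rarrow W_{m+1}\rarrow W_m\rarrow\dotsb\rarrow W_0\rarrow R\rarrow 0$, I would set $K^0=R$ and $K^k$ to be the image of $W_k\rarrow W_{k-1}$ for $1\le k\le m+1$ (with the convention $W_{-1}=R$), so that $K^{m+1}=W_{m+1}$ and there are short exact sequences $0\rarrow K^{k+1}\rarrow W_k\rarrow K^k\rarrow 0$ for $0\le k\le m$. Since $\Ext^j_\B(W_k,W)=0$ for $j>0$ by~(ii*), dimension shifting in $\Ext^*_\B(-,W)$ yields
\[
 \Ext^1_\B(K^m,W)\cong\Ext^2_\B(K^{m-1},W)\cong\dotsb\cong\Ext^{m+1}_\B(R,W),
\]
which vanishes by~(i*) because $m+1>n$. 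Since $W_{m+1}$ is a summand of some $W^I$ and products are exact, it follows that $\Ext^1_\B(K^m,W_{m+1})=0$, so the sequence $0\rarrow W_{m+1}\rarrow W_m\rarrow K^m\rarrow 0$ splits and $K^m\in\Prod(W)$. The truncated sequence $0\rarrow K^m\rarrow W_{m-1}\rarrow\dotsb\rarrow W_0\rarrow R\rarrow 0$ then witnesses $R\in\R_m$. The main (minor) obstacle is purely bookkeeping: arranging the indices so that the hypothesis $m\ge n$ lines up exactly with $m+1>n$ to kill the final Ext group.
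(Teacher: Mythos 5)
Your proof is correct and is exactly what the paper intends: its own proof of this lemma consists of the single line ``Dual to Lemma~\ref{left-class-tilting-cotorsion-pair},'' and your argument is a faithful, correctly indexed dualization of that lemma's proof (with the exactness of products in $\B$ supplying the needed vanishing $\Ext^i_\B(E,W_0)=0$ for $W_0\in\Prod(W)$). No gaps.
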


\begin{proof}
 Dual to Lemma~\ref{left-class-tilting-cotorsion-pair}.
\end{proof}

 Assuming that the object $W\in\B$ satisfies the conditions~(i*--ii*),
we will denote the full subcategory $\R_n=\R_{n+1}=\R_{n+2}=\dotsb$
simply by $\R\subset\B$.

\begin{thm} \label{cotilting-cotorsion-pair-thm}
 Let\/ $\B$ be an abelian category with set-indexed coproducts and
a projective generator, and let $W\in\B$ be an object satisfying
the conditions~\textup{(i*--ii*)}.
 Then the following three conditions are equivalent:
\begin{enumerate}
\item[(1*)]  the object $W\in\B$ satisfies
the condition~\textup{(iii*)};
\item[(2*)] for every object $E\in\E$ there exists an object
$W'\in\Prod(W)$ together with a monomorphism $E\rarrow W'$
in the category\/~$\B$;
\item[(3*)] for every object $Y\in\B$ there exists an object
$R\in\R$ together with a monomorphism $Y\rarrow R$ in
the category\/~$\B$.
\end{enumerate}
 If one of the conditions~\textup{(1*--3*)} is satisfied, then the pair
of full subcategories $(\E,\R)$ is a hereditary complete cotorsion pair
in the abelian category\/~$\B$.
\end{thm}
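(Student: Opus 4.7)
The entire statement is formally dual to Theorem~\ref{tilting-cotorsion-pair-thm}, so the plan is to carry out precisely the same argument after systematically interchanging: coproducts with products, epimorphisms with monomorphisms, $\Add(T)$ with $\Prod(W)$, and the subcategories $\L$ with $\R$. The role that the given abelian category $\A$ played in Section~\ref{tilting-cotorsion-pair-secn} is now played by $\B$, while the role of the tilting heart $\B$ of $\D^\b(\A)$ is now played by the cotilting heart $\C={}^W\D^{\b,\le0}\cap{}^W\D^{\b,\ge0}$ of $\D^\b(\B)$, produced by Corollary~\ref{bounded-cotilting-t-structure}(a) once condition~(1*) is assumed. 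By the dual of Proposition~\ref{tilting-heart} (which has to be stated and verified in the same dual manner), $W\in\C$ is an injective cogenerator, set-indexed products exist in $\C$ and coincide with those in $\B$ and $\D(\B)$, and $\Prod(W)$ is precisely the class of injective objects in $\C$. The dual versions of Lemmas~\ref{tilting-class-coresolving-finite-dim}, \ref{cotilting-class-resolving-finite-dim}, \ref{left-class-tilting-cotorsion-pair} and~\ref{Add(T)-E-preenvelope} follow by the same token.

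For (1*)$\Rightarrow$(2*), any $E\in\E$ lies in $\B\cap\C$ by the dual of Lemma~\ref{cotilting-class-resolving-finite-dim}(a), so the dual of Proposition~\ref{tilting-heart} supplies a monomorphism $E\rarrow W^I$ in $\C$, and the corresponding distinguished triangle in $\D^\b(\B)$ translates into a short exact sequence $0\rarrow E\rarrow W^I\rarrow E''\rarrow 0$ in $\B$, with $E''\in\E$. For (2*)$\Rightarrow$(3*), one first strengthens~(2*) by taking $W''=W^{\Hom_\B(E,W)}$: condition~(ii*), together with the surjectivity of $\Hom_\B(W'',W)\rarrow\Hom_\B(E,W)$, guarantees that the cokernel of the canonical $E\rarrow W''$ again lies in~$\E$. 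The (undualized) Auslander--Buchweitz approximation theorem~\cite[Theorem~1.1]{AB} then applies to the resolving subcategory $\E\subset\B$ of bounded resolution dimension, producing for every $Y\in\B$ the approximation sequences $0\rarrow R'\rarrow E'\rarrow Y\rarrow 0$ and $0\rarrow Y\rarrow R\rarrow E\rarrow 0$ with $E,E'\in\E$ and $R,R'\in\R$, which in particular yields~(3*).

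For (3*)$\Rightarrow$(1*), let $Y^\bu\in\D(\B)$ satisfy $\Hom_{\D(\B)}(Y^\bu,W[i])=0$ for all $i\in\Z$. Since products are exact in $\B$ and $\R$ is built from $W$ by extensions, one obtains $\Hom_{\D(\B)}(Y^\bu,R[i])=0$ for every $R\in\R$ and every $i$. For each $i\in\Z$, apply (3*) to the cokernel $C^i=\coker(Y^{i-1}\rarrow Y^i)$ to obtain a monomorphism $C^i\hookrightarrow R$ with $R\in\R$; composing with the projection $Y^i\rarrow C^i$ yields a chain map $Y^\bu[-i]\rarrow R$, which must vanish in $\D(\B)$ by the hypothesis. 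Hence the induced composite monomorphism $H^i(Y^\bu)\hookrightarrow C^i\rarrow R$ is zero, forcing $H^i(Y^\bu)=0$.

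Given the equivalence of (1*)--(3*), the cotorsion pair claim for $(\E,\R)$ follows by combining the duals of Lemmas~\ref{left-class-tilting-cotorsion-pair}(a) and~\ref{tilting-class-coresolving-finite-dim}(a) with the approximation sequences constructed above, while closure of $\R$ under direct summands is supplied by the dual of Lemma~\ref{Add(T)-E-preenvelope}. The only genuinely non-routine point in the whole argument is to establish the dual of Proposition~\ref{tilting-heart}, on which the treatment of $\C$ rests; modulo this piece of formal duality, no additional ideas are required, and every step transcribes verbatim from Section~\ref{tilting-cotorsion-pair-secn}.
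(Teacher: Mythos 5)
Your proposal is correct and coincides with the paper's own treatment: the paper proves Theorem~\ref{cotilting-cotorsion-pair-thm} precisely by dualizing Theorem~\ref{tilting-cotorsion-pair-thm}, with the dual of Proposition~\ref{tilting-heart} (stated in the paper as Proposition~\ref{cotilting-heart}) supplying the monomorphism into $W^I$ for the implication (1*)$\Rightarrow$(2*), the undualized Auslander--Buchweitz theorem handling (2*)$\Rightarrow$(3*), and the cokernel-of-differential argument handling (3*)$\Rightarrow$(1*), exactly as you describe. All the dualizations you carry out (including the strengthening of (2*) via $W''=W^{\Hom_\B(E,W)}$ and the use of the dual of Lemma~\ref{Add(T)-E-preenvelope} for closure of $\R$ under direct summands) are accurate.
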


\begin{proof}
 Dual to Theorem~\ref{tilting-cotorsion-pair-thm}.
\end{proof}

\begin{rem} \label{cotilting-modules-remark}
 Similarly to Corollary~\ref{tilting-objects-and-tilting-modules},
one can show, using Theorem~\ref{cotilting-cotorsion-pair-thm}
(or alternatively, using Lemma~\ref{tilting-cotilting-classes-lemma}(b)
below), that the specialization of our definition of an $n$\+cotilting
object in an abelian category to the case of the category of
modules over an associative ring $\B=B\modl$ is equivalent to
the definition of an $n$\+cotilting module studied in
the papers~\cite{AC,Ba0}.
\end{rem}

 The full subcategory $\E\subset\B$ is known as the \emph{$n$\+cotilting
class} associated with an $n$\+cotilting object $W\in\B$
\cite[Section~15.1]{GTbook}.
 The cotorsion pair $(\E,\R)$ in $\B$ is called the \emph{$n$\+cotilting
cotorsion pair}.

 Let $W\in\B$ be an $n$\+cotilting object.
 Set $\A={}^W\D^{\b,\le0}\cap{}^W\D^{\b,\ge0}$ to be the heart of
the cotilting t\+structure on $\D^\b(\B)$.
 By the definition, $\A$ is an abelian category.

\begin{prop} \label{cotilting-heart}
 The object $W\in\B\subset\D(\B)$ belongs to\/ $\A$ and is an injective
cogenerator of the abelian category\/~$\A$.
 Products of copies of $W$ in\/ $\A$ coincide with such products
in\/ $\D(\B)$ and in\/~$\B$.
 The injective objects of\/ $\A$ are precisely the direct summands of
these products.
 Set-indexed products of arbitrary objects exist in\/~$\A$.
\end{prop}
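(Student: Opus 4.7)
The plan is to dualize the proof of Proposition~\ref{tilting-heart} line by line. The setup is completely symmetric: the cotilting t\+structure on $\D^\b(\B)$ and the abelian category $\A$ play the roles formerly played by the tilting t\+structure on $\D^\b(\A)$ and the abelian category $\B$, with $W$ replacing~$T$, products replacing coproducts, and injectivity replacing projectivity. Since products are exact in $\B$ by hypothesis, all the required exactness properties needed to run the dual argument are available.

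First I would verify that the product $W^I$ of $I$ copies of $W$ formed in $\B$ is also the product of $I$ copies of $W$ in $\D(\B)$. The object $W^I$ lies in ${}^W\D^{\le 0}$ by the definition of the cotilting t\+structure and in ${}^W\D^{\ge 0}$ by the condition~\textup{(ii*)}, hence $W^I \in \A \subset \D^\b(\B)$. Being the product in $\D(\B)$, it is automatically the product in the full subcategory~$\A$.

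Next I would establish that $W$ is an injective cogenerator of~$\A$. For any $X \in \A$ one has
$$
\Ext^1_\A(X, W) = \Hom_{\D^\b(\B)}(X, W[1]) = 0
$$
by the definition of ${}^W\D^{\b,\le 0}$, so $W$ is injective in~$\A$. If $X \in \A$ satisfies $\Hom_\A(X, W) = 0$, then the membership $X \in \A \subset {}^W\D^{\b,\le 0}\cap {}^W\D^{\b,\ge 0}$ together with this vanishing gives $\Hom_{\D^\b(\B)}(X, W[i]) = 0$ for all $i \in \Z$, and condition~\textup{(iii*)} forces $X = 0$. This is also enough to conclude that the injective objects of $\A$ are exactly the direct summands of the products~$W^I$: any injective object in $\A$ embeds into some $W^I$ via the cogenerator property, and the embedding splits.

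Finally, the existence of arbitrary set-indexed products in $\A$ follows formally, since $\A$ has products of injectives (namely the $W^I$) and enough injectives; the dual of \cite[Proposition~IV.1.2]{ARS96} presents $\A$ as an additive quotient of a morphism category inside $\Prod(W)$ by a product-closed ideal, which produces all products. I do not expect any genuine obstacle in the argument: every step is a mechanical dualization, and the only point deserving slight care is the interchange of products computed in $\B$, in $\D(\B)$, and in $\A$, all of which coincide precisely because products in $\B$ are exact.
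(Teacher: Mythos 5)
Your proposal is correct and takes exactly the paper's approach: the paper's proof of Proposition~\ref{cotilting-heart} consists of the single line ``Dual to Proposition~\ref{tilting-heart},'' and you have carried out that dualization faithfully step by step. One cosmetic slip: the vanishing $\Hom_{\D^\b(\B)}(X,W[1])=0$ for $X\in\A$ follows from the membership $X\in{}^W\D^{\b,\ge 0}$ (whose definition demands $\Hom_{\D^\b(\B)}(X,W[i])=0$ for all $i>0$), not from the definition of ${}^W\D^{\b,\le 0}$ as you wrote.
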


\begin{proof}
 Dual to Proposition~\ref{tilting-heart}.
\end{proof}

\begin{lem} \label{cotilting-derived-type}
 The cotilting t\+structure $({}^W\D^{\b,\le0},\.{}^W\D^{\b,\ge0})$ on\/
$\D^\b(\B)$ associated with any $n$\+cotilting object $W\in\B$ is
a t\+structure of the derived type.
 Equivalently, the cotilting t\+structure
$({}^W\D^{\le0},\.{}^W\D^{\ge0})$ on\/ $\D(\B)$ is a t\+structure of
the derived type.
\end{lem}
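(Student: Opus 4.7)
The plan is to dualize the argument given for Lemma~\ref{tilting-derived-type}. By the criterion recalled just before that lemma (which is self-dual), a t\+structure on a triangulated category with abelian heart is of the derived type if and only if for every pair of heart objects $X$, $Y$ and every morphism $\xi\:X\rarrow Y[i]$ with $i>0$ there exists a monomorphism $\iota\:Y\rarrow Y'$ in the heart with $\iota[i]\circ\xi=0$. So I would verify this second form of the criterion for the cotilting t\+structure on $\D^\b(\B)$.

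Fix $X$, $Y\in\A={}^W\D^{\b,\le0}\cap{}^W\D^{\b,\ge0}$ and a morphism $\xi\:X\rarrow Y[i]$ in $\D^\b(\B)$ with $i>0$. By Proposition~\ref{cotilting-heart}, $W$ is an injective cogenerator of $\A$ and set-indexed products of copies of $W$ in $\A$ agree with those in $\B$ and in $\D(\B)$. Set $I=\Hom_\A(Y,W)$ and let $\iota\:Y\rarrow W^I$ be the canonical monomorphism in~$\A$ into the product indexed by all morphisms $Y\rarrow W$; since $W$ is a cogenerator of $\A$, this $\iota$ is indeed a monomorphism.

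To see that $\iota[i]\circ\xi=0$, observe that the composition lives in
\[
 \Hom_{\D^\b(\B)}(X,\.W^I[i])=\Hom_{\D^\b(\B)}(X,W[i])^I,
\]
and $\Hom_{\D^\b(\B)}(X,W[i])=0$ by the very definition of ${}^W\D^{\b,\ge0}$ applied to $X\in\A\subset{}^W\D^{\b,\ge0}$, since $i>0$. This settles the derived-type property for the t\+structure on $\D^\b(\B)$.

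Finally, the equivalence with the derived-type property of the cotilting t\+structure on $\D(\B)$ is a formal matter: for objects $X$, $Y$ of the common heart $\A$, the $\Hom$ groups $\Hom_{\D(\B)}(X,Y[i])$ and $\Hom_{\D^\b(\B)}(X,Y[i])$ agree (one can repeat the same product construction in $\D(\B)$, where $W^I$ is again the heart product), so the two formulations of the derived-type criterion reduce to each other. No serious obstacle is expected; the only point requiring minor care is ensuring that the product $W^I$ behaves identically in $\A$, $\B$, and the ambient derived category, which is exactly what Proposition~\ref{cotilting-heart} guarantees.
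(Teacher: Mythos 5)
Your proposal is correct and is essentially the paper's own argument: the paper proves this lemma simply by the phrase ``Dual to Lemma~\ref{tilting-derived-type},'' and what you have written is exactly that dualization, replacing the epimorphism $T^{(I)}\rarrow X$ from copies of the projective generator by the monomorphism $Y\rarrow W^I$ into a power of the injective cogenerator and invoking Proposition~\ref{cotilting-heart} to identify the products in $\A$, $\B$, and $\D(\B)$. No gaps.
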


\begin{proof}
 Dual to Lemma~\ref{tilting-derived-type}.
\end{proof}

 Let $\A$ be an abelian category with set-indexed products and
an injective cogenerator, and let $T\in\A$ be an $n$\+tilting object.
 Choose an injective cogenerator $W\in\A$.

 Let $({}^T\D^{\b,\le0},\.{}^T\D^{\b,\ge0})$ be the tilting t\+structure
on $\D^\b(\A)$ corresponding to the tilting object~$T$, and let
$\B$ be the heart of this t\+structure.
 For any set $I$, we have $W^I\in\B\subset\D^\b(\A)$.
 Furthermore, one has
$$
 \Hom_{\D(\A)}(X^\bu,W^I)=\Hom_{\Hot(\A)}(X^\bu,W^I)
$$
for every complex $X^\bu\in\Hot(\A)$.
 It follows that $W^I\in\A\subset\D(\A)$ is the product of $I$
copies of $W$ in $\D(\A)$, and therefore also in $\D^\b(\A)\subset
\D(\A)$ and in $\B\subset\D^\b(\A)$.

 The following theorem is the main result of this section.

\begin{thm} \label{tilting-cotilting-thm}
 Let $\A$ be a complete, cocomplete abelian category with an injective
cogenerator $W\in\A$, let $T\in\A$ be an $n$\+tilting object, and
let\/ $\B={}^T\D^{\b,\le0}\cap{}^T\D^{\b,\ge0}$ be the heart of
the tilting t\+structure on\/ $\D^\b(\A)$.
 Then $W\in\B\subset\D^\b(\A)$ is an $n$\+cotilting object in
the abelian category\/~$\B$.
\end{thm}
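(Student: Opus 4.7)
The plan is to verify the three defining conditions~(i*), (ii*), (iii*) for $W\in\B$ to be $n$\+cotilting. The principal tool is the bounded derived equivalence $\D^\b(\B)\simeq\D^\b(\A)$ of Proposition~\ref{bounded-derived-equivalence}, which identifies the standard heart $\B\subset\D^\b(\B)$ with the tilting heart $\B\subset\D^\b(\A)$; in particular $W\in\B\subset\D^\b(\B)$ corresponds to $W\in\A\cap\B\subset\D^\b(\A)$. One first checks that $W^I$ is injective in $\A$ (being a product of injectives), so $\Ext^i_\A(T,W^I)=0$ for $i>0$ and hence $W^I\in{}^T\D^{\b,\le0}\cap\D^{\b,\ge0}(\A)=\B$; the product in $\B$ then agrees with the product formed in $\A$ and in $\D^\b(\A)$.

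The computational engine is the isomorphism
\[
\Hom_{\D^\b(\A)}(X^\bu,W[i])\cong\Hom_\A(H^{-i}(X^\bu),W),
\]
valid for every bounded complex $X^\bu$ over $\A$ and every $i\in\Z$. This holds because $W$ is injective in $\A$ (so morphisms into $W[i]$ can be computed in the homotopy category) and because $\Hom_\A(-,W)$ is exact; since $W$ is a cogenerator, these Hom groups vanish iff $H^{-i}(X^\bu)=0$. Conditions (i*) and (ii*) then fall out at once: for $Y\in\B$, the inclusions~(\ref{d-unbounded-le-inclusions}\+\ref{d-unbounded-ge-inclusions}) force $H^k(Y)=0$ for $k\notin[-n,0]$, so $\Ext^i_\B(Y,W)\cong\Hom_{\D^\b(\A)}(Y,W[i])=0$ for all $i>n$; and $W^I\in\A$ is concentrated in degree zero, so $\Ext^i_\B(W^I,W)\cong\Hom_\A(H^{-i}(W^I),W)=0$ for all $i>0$.

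For condition (iii*), rather than wrestling with unbounded complexes directly, I would invoke Corollary~\ref{bounded-cotilting-t-structure}(b), which reduces the task to showing that the pair $({}^W\D^{\b,\le0},\.{}^W\D^{\b,\ge0})$ is a t\+structure on $\D^\b(\B)$. Applying the same Hom-formula on the $\A$\+side, an object $Y\in\D^\b(\B)$ lies in ${}^W\D^{\b,\le0}$ iff $H^k(Y)=0$ for $k>0$, that is, iff $Y$ corresponds under the derived equivalence to an object of $\D^{\b,\le0}(\A)$; and ${}^W\D^{\b,\ge0}$ likewise corresponds to $\D^{\b,\ge0}(\A)$. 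Hence the cotilting pair on $\D^\b(\B)$ is simply the transport of the standard t\+structure on $\D^\b(\A)$ through the equivalence, and is therefore a t\+structure.

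There is no single hard step: once the bounded derived equivalence is in place, everything is essentially formal. The only point requiring real care is the matching of the cotilting aisles, defined by vanishing of Hom into $W$, with the standard truncations on the $\A$\+side; this matching succeeds precisely because $W$ is an injective cogenerator of $\A$.
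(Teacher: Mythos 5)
Your proposal is correct and follows essentially the same route as the paper's first proof: both rest on the bounded derived equivalence of Proposition~\ref{bounded-derived-equivalence}, verify (i*) and (ii*) by computing $\Ext_\B$ as $\Hom$ groups in $\D^\b(\A)$ (your explicit formula $\Hom_{\D^\b(\A)}(X^\bu,W[i])\cong\Hom_\A(H^{-i}(X^\bu),W)$ is just an unpacking of the paper's observation that the standard t\+structure on $\D^\b(\A)$ is cut out by Hom-vanishing into shifts of the injective cogenerator $W$), and obtain (iii*) by transporting the standard t\+structure and invoking Corollary~\ref{bounded-cotilting-t-structure}(b). No gaps.
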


\begin{proof}[First proof]
 According to Proposition~\ref{tilting-heart}, \,$\B$ is a complete,
cocomplete abelian category with a projective generator~$T$.
 We have explained that the objects
$W^I\in\A\subset\D^\b(\A)$ belong to $\B\subset\D^\b(\A)$ and are
the products of $I$ copies of $W$ in~$\B$.
 By Proposition~\ref{bounded-derived-equivalence}, we
have an equivalence of triangulated categories $\D^\b(\A)\simeq
\D^\b(\B)$ which agrees with the identity embedding $\B\rarrow
\D^\b(\A)$ and transforms the tilting t\+structure on $\D^\b(\A)$
into the standard t\+structure on $\D^\b(\B)$.

 Let us check that the conditions~(i*--iii*) hold for the object
$W\in\B$.
 We have
$$
 \Ext^i_\B(W^I,W)=\Hom_{\D^\b(\B)}(W^I,W[i])=
 \Hom_{\D^\b(\A)}(W^I,W[i])=\Ext^i_\A(W^I,W)=0
$$
for $i>0$, so (ii*)~is satisfied.

 We have
\begin{align*}
 \D^{\b,\le 0}(\A)&=\{\,X^\bu\in\D^\b(\A)\mid \Hom_{\D^\b(\A)}(X^\bu,W[i])=0
 \text{ for all $i<0$}\,\}, \\
 \D^{\b,\ge 0}(\A)&=\{\,X^\bu\in\D^\b(\A)\mid \Hom_{\D^\b(\A)}(X^\bu,W[i])=0
 \text{ for all $i>0$}\,\},
\end{align*}
since $W$ is a injective cogenerator of~$\A$.
 So the pair of full subcategories $(\D^{\b,\le0}(\A),\.\allowbreak
\D^{\b,\ge0}(\A))$ in $\D^\b(\A)$ is transformed by the triangulated
equivalence $\D^\b(\A)\simeq\D^\b(\B)$ into the pair of full
subcategories $({}^W\D^{\b,\le0},\.{}^W\D^{\b,\ge0})$ in $\D^\b(\B)$.
 The former is a t\+structure on $\D^\b(\A)$, hence it follows that
the latter is a t\+structure on $\D^\b(\B)$.

 Given our identifications of the t\+structures on
$\D^\b(\A)$ and $\D^\b(\B)$,
the inclusions~(\ref{d-le-inclusions}\+\ref{d-ge-inclusions})
of full subcategories in $\D^\b(\A)$ imply the inclusions
\begin{alignat}{2}
\D^{\b,\le0}(\B)&\subset{}^W\D^{\b,\le0}&&\subset
\D^{\b,\le n}(\B), \label{cotilting-d-le-inclusions} \\
\D^{\b,\ge n}(\B)&\subset{}^W\D^{\b,\ge0}&&\subset
\D^{\b,\ge0}(\B) \label{cotilting-d-ge-inclusions}
\end{alignat}
of full subcategories in $\D^\b(\B)$.
 In particular, according to~\eqref{cotilting-d-ge-inclusions}
for every object $Y\in\B$ we have $Y\in\D^{\b,\ge0}(\B)\subset
{}^W\D^{\b,\ge-n}$, implying that
$$
 \Ext^i_\B(Y,W)=\Hom_{\D^\b(\B)}(Y,W[i])=0
 \quad\text{for $i>n$}.
$$
 Thus (i*)~is satisfied.
 Finally, it remains to apply
Corollary~\ref{bounded-cotilting-t-structure}(b)
in order to deduce the condition~(iii*).
\end{proof}

\begin{proof}[Second proof]
 The following argument avoids the use of
Proposition~\ref{bounded-derived-equivalence},
using Lemma~\ref{tilting-derived-type} and
Theorem~\ref{cotilting-cotorsion-pair-thm} instead.

 By Lemma~\ref{tilting-derived-type}, we have
$$
 \Ext^i_\B(W^I,W)=\Hom_{\D^\b(\A)}(W^I,W[i])=\Ext^i_\A(W^I,W)=0
$$
for $i>0$, so the condition~(ii*) holds.
 To prove~(i*), we notice that
$$
 \Ext^i_\B(Y,W)=\Hom_{\D^\b(\A)}(Y,W[i])=0
 \quad\text{for all $Y\in\B$ and $i>n$},
$$
since $Y\in{}^T\D^{\b,\ge0}\subset\D^{\b,\ge-n}(\A)$
by~(\ref{d-ge-inclusions}) and $W\in\A_\inj$.

 Finally, let us check the condition~(2*) of
Theorem~\ref{cotilting-cotorsion-pair-thm}.
 Since $\Ext^i_\B(E,W)=\Hom_{\D^\b(\A)}(E,W[i])$ for all $E\in\B$
and $i\ge0$ by Lemma~\ref{tilting-derived-type}, the full
subcategory $\E\subset\B$ consisting of all objects $E\in\B$
such that $\Ext^i_\B(E,W)=0$ for $i>0$ can be described as
the intersection $\B\cap\A\subset\D^\b(\A)$.
 As a full subcategory of $\A$, this intersection coincides with
the $n$\+tilting class $\E=\A\cap\B\subset\A$ discussed in
Section~\ref{tilting-cotorsion-pair-secn}.

 Given an object $E\in\E\subset\B$, we first consider it as
an object of~$\A$.
 Since $W$ is an injective cogenerator of~$\A$, there is
a short exact sequence $0\rarrow E\rarrow W^I\rarrow E'\rarrow0$
in~$\A$.
 According to Lemma~\ref{tilting-class-coresolving-finite-dim}(a),
we have $E'\in\E$.
 Now there is a distinguished triangle $E\rarrow W^I\rarrow E'
\rarrow E[1]$ in $\D^\b(\A)$ with the objects $E$, $W^I$, and $E'$
belonging to $\B$, hence a short exact sequence $0\rarrow E
\rarrow W^I\rarrow E'\rarrow0$ in~$\B$.
 So $E\rarrow W'=W^I$ is a monomorphism in~$\B$, as desired.

 Alternatively, now that we have seen that the definition of
the full subcategory $\E\subset\B$ discussed in
Section~\ref{tilting-cotorsion-pair-secn} agrees with the one
in the present section, we can deduce~(i*) from
Lemma~\ref{cotilting-class-resolving-finite-dim}(b).
 Given an object $Y\in\B$, we have an exact sequence
$0\rarrow E_n\rarrow\dotsb\rarrow E_0\rarrow Y\rarrow 0$
with $E_k\in\E$.
 Then $\Ext^i_\B(E_k,W)=0$ for all $0\le k\le n$ and $i>0$,
hence $\Ext^i_\B(Y,W)=0$ for all $i>n$.
\end{proof}

\begin{thm} \label{cotilting-tilting-thm}
 Let $\B$ be a complete, cocomplete abelian category with a projective
generator $T\in\B$, let $W\in\B$ be an $n$\+cotilting object, and
let\/ $\A={}^W\D^{\b,\le0}\cap{}^W\D^{\b,\ge0}$ be the heart of
the cotilting t\+structure on\/ $\D^\b(\B)$.
 Then $T\in\A\subset\D^\b(\B)$ is an $n$\+tilting object in
the abelian category\/~$\A$.
\end{thm}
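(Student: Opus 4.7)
The result is formally dual to Theorem~\ref{tilting-cotilting-thm}, with the roles of $\A$ and $\B$ exchanged, $T$ swapped for $W$, and projective generator swapped for injective cogenerator; my plan is therefore to mirror the first proof of that theorem. By Proposition~\ref{cotilting-heart}, $\A$ has an injective cogenerator $W$ and set-indexed products, and Freyd's theorem supplies cocompleteness. I would next establish the cotilting analogue of Proposition~\ref{bounded-derived-equivalence}: a triangulated equivalence $\D^\b(\A) \simeq \D^\b(\B)$ that restricts to the identity on $\A\subset\D^\b(\B)$ and identifies the standard t-structure on $\D^\b(\A)$ with the cotilting t-structure on $\D^\b(\B)$. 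Either of the two proofs given for that proposition dualizes directly: the first uses only that the t-structure is bounded and of derived type (Lemma~\ref{cotilting-derived-type}), while the second uses the common resolving/coresolving subcategory $\E = \A \cap \B$, which plays a symmetric role.

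With the equivalence in hand, the tilting conditions become routine calculations. The coproduct $T^{(I)}$ formed in $\B$ lies in $\A$ (one checks $T \in {}^W\D^{\b,\le 0} \cap {}^W\D^{\b,\ge 0}$ using $\Ext^{>0}_\B(T,W)=0$ from projectivity of $T$, and propagates the same to coproducts) and is also the coproduct in $\A$. Then Lemma~\ref{cotilting-derived-type} and projectivity of $T$ give
\begin{equation*}
\Ext^i_\A(T,T^{(I)}) = \Hom_{\D^\b(\B)}(T,T^{(I)}[i]) = \Ext^i_\B(T,T^{(I)}) = 0 \qquad \text{for $i>0$},
\end{equation*}
verifying condition~(ii). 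For condition~(i), use the dual of~\eqref{d-le-inclusions}, namely ${}^W\D^{\b,\le 0} \subset \D^{\b,\le n}(\B)$: any $X \in \A$ is a bounded complex in $\B$ with cohomology concentrated in $[0,n]$, and projectivity of $T$ then yields $\Ext^i_\A(T,X) = \Hom_\B(T,H^i(X^\bu)) = 0$ for $i>n$.

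For condition~(iii), I would invoke Proposition~\ref{bounded-tilting-t-structure-implies-generation}. Under the derived equivalence, the candidate pair $({}^T\D^{\b,\le 0}, {}^T\D^{\b,\ge 0})$ on $\D^\b(\A)$ is transported to the analogous pair on $\D^\b(\B)$, which collapses to the standard t-structure there: since $T$ is a projective generator of $\B$, the vanishing $\Hom_{\D^\b(\B)}(T,X[i]) = \Hom_\B(T,H^i(X^\bu)) = 0$ for all $i>0$ is equivalent to $X \in \D^{\b,\le 0}(\B)$. Hence the candidate pair is indeed a t-structure on $\D^\b(\A)$, and condition~(iii) follows. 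The main technical input is the cotilting analogue of Proposition~\ref{bounded-derived-equivalence}, but its dualization is unproblematic, in the same spirit as the duals already given throughout this section (Theorem~\ref{cotilting-t-structure-thm}, Corollary~\ref{bounded-cotilting-t-structure}, Lemma~\ref{cotilting-derived-type}).
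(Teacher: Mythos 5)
Your proposal is correct and matches the paper's approach: the paper's entire proof of this theorem is the single line ``Dual to Theorem~\ref{tilting-cotilting-thm}'', and what you have written is precisely a faithful dualization of the first proof of that theorem (via the cotilting analogue of Proposition~\ref{bounded-derived-equivalence}, the transported inclusions, and Proposition~\ref{bounded-tilting-t-structure-implies-generation} for condition~(iii)).
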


\begin{proof}
 Dual to Theorem~\ref{tilting-cotilting-thm}.
\end{proof}

\begin{cor} \label{tilting-cotilting-correspondence-cor}
The constructions of Theorems\/~\textup{\ref{tilting-cotilting-thm}}
and\/~\textup{\ref{cotilting-tilting-thm}} establish a one-to-one
correspondence between the equivalence classes of
\begin{enumerate}
\renewcommand{\theenumi}{\alph{enumi}}
\item complete, cocomplete abelian categories\/ $\A$
with an injective cogenerator $W\in\A$ and an $n$\+tilting object
$T\in\A$, and
\item complete, cocomplete abelian categories\/ $\B$ with
a projective generator $T\in\B$ and an $n$\+cotilting object $W\in\B$.
\qed
\end{enumerate}
\end{cor}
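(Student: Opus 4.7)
The assertion amounts to verifying that the two constructions of Theorems~\ref{tilting-cotilting-thm} and~\ref{cotilting-tilting-thm} are mutually inverse up to equivalence of abelian categories preserving the distinguished objects $T$ and $W$. The plan is to check that the composition (a)$\to$(b)$\to$(a) recovers the original data; the reverse composition (b)$\to$(a)$\to$(b) is then handled by a formally dual argument.

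Starting from $(\A, W, T)$ of type~(a), Theorem~\ref{tilting-cotilting-thm} produces $(\B, T, W)$ of type~(b) with $\B={}^T\D^{\b,\le0}\cap{}^T\D^{\b,\ge0}\subset\D^\b(\A)$; note in particular that $W=W^1\in\B$, so both distinguished objects live inside $\B$. Applying Theorem~\ref{cotilting-tilting-thm} to $(\B, T, W)$ produces a new triple $(\A', W, T)$ with $\A'={}^W\D^{\b,\le0}\cap{}^W\D^{\b,\ge0}\subset\D^\b(\B)$. What must be shown is that there is an equivalence of abelian categories $\A'\simeq\A$ identifying the two copies of $T$ and of $W$.

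The main tool is the derived equivalence $\Phi\:\D^\b(\B)\rarrow\D^\b(\A)$ from Proposition~\ref{bounded-derived-equivalence}, which restricts to the identity embedding on $\B$ and sends the standard t\+structure on $\D^\b(\B)$ to the tilting t\+structure on $\D^\b(\A)$. The key observation is that $\Phi$ must then also send the cotilting t\+structure on $\D^\b(\B)$ (defined by Hom-vanishing against $W\in\B$) to the standard t\+structure on $\D^\b(\A)$: indeed, $\Phi(W)=W\in\A$, so the transported t\+structure on $\D^\b(\A)$ is the one described by Hom-vanishing against $W\in\A$, and because $W$ is an injective cogenerator of $\A$ these conditions characterize the standard t\+structure on $\D^\b(\A)$. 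Passing to hearts, $\Phi$ restricts to an equivalence $\A'\simeq\A$; since $\Phi|_\B=\mathrm{id}_\B$ and $T,W\in\B$, this equivalence fixes both distinguished objects.

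The opposite composition (b)$\to$(a)$\to$(b) is handled by the same argument in its dual form, using the cotilting analogue of Proposition~\ref{bounded-derived-equivalence}, which produces an equivalence $\D^\b(\A)\rarrow\D^\b(\B)$ that is the identity on $\A$ and matches the standard t\+structure on $\D^\b(\A)$ with the cotilting t\+structure on $\D^\b(\B)$; the tilting t\+structure on $\D^\b(\A)$ (defined by Hom-vanishing from $T\in\A$) then goes to the standard t\+structure on $\D^\b(\B)$, since $T$ is a projective generator of $\B$. The only point requiring any care is the recognition of the transported t\+structure as the standard one on the target, which is immediate from the Hom-vanishing characterization of the standard t\+structure via an injective cogenerator or a projective generator, so no genuine obstacle arises.
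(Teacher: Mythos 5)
Your argument is correct and is exactly the one the paper leaves implicit behind the \qed: the composition (a)$\to$(b)$\to$(a) is the identity up to equivalence because the triangulated equivalence of Proposition~\ref{bounded-derived-equivalence} restricts to the identity on $\B$ and carries the cotilting t\+structure on $\D^\b(\B)$ to the standard t\+structure on $\D^\b(\A)$ (the Hom\+vanishing characterization via the injective cogenerator $W$, established in the first proof of Theorem~\ref{tilting-cotilting-thm} and restated at the start of Section~\ref{derived-equivalences-secn}), so passing to hearts gives $\A'\simeq\A$ fixing $T$ and $W$; the reverse composition is dual. No gaps.
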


\medskip\noindent
\textit{Note added in proofs.}
 As of April~2019, we were informed about results similar to
the previous corollary, which were obtained by Apostolos Beligiannis
and are expected to appear in his paper called ``Tilting theory in
abelian categories and related homological and homotopical
structures''.

\Section{Derived Equivalences} \label{derived-equivalences-secn}

 Let $\A$ be a complete, cocomplete abelian category with an injective
cogenerator $W$ and an $n$\+tilting object $T$, and let $\B$ be
the corresponding complete, cocomplete abelian category with
a projective generator $T$ and an $n$\+cotilting object~$W$.
 The aim of this section is to construct, for any conventional or
absolute derived category symbol $\st=\b$, $+$, $-$, $\varnothing$,
$\abs+$, $\abs-$, or~$\abs$, an equivalence of derived categories
\begin{equation} \label{derived-equivalence}
 \D^\st(\A)\simeq\D^\st(\B).
\end{equation}

 Here the \emph{absolute derived category} $\D^\abs(\C)$ of an abelian
category $\C$ is defined as the Verdier quotient of $\Hot(\C)$ by
the thick subcategory generated by all totalizations of short
exact sequences of complexes.
 In other words, one forces short exact sequences of complexes to induce 
triangles and nothing more.
 The bounded versions of the absolute derived category are defined
similarly (we refer to~\cite[Section~A.1]{Pcosh}
or~\cite[Appendix~A]{Pmgm} and the references therein for a detailed
discussion and the definitions of other exotic derived categories).

 In particular, it will follow that there are two t\+structures on
both $\D^\st(\A)$ on $\D^\st(\B)$: the standard t\+structure on
$\D^\st(\B)$ is transformed by
the equivalence~\eqref{derived-equivalence}
into (what will be called) the tilting t\+structure on $\D^\st(\A)$,
while the standard t\+structure on $\D^\st(\A)$ is transformed into
the cotilting t\+structure on $\D^\st(\B)$.

 Notice that for the bounded derived categories $\D^\b(\A)$ and
$\D^\b(\B)$ we already have the desired picture.
 According to Proposition~\ref{bounded-derived-equivalence}, there
is a triangulated equivalence $\D^\b(\A)\simeq\D^\b(\B)$ transforming
the tilting t\+structure $({}^T\D^{\b,\le0},\.{}^T\D^{\b,\ge0})$ on
$\D^\b(\A)$ into the standard t\+structure on $\D^\b(\B)$.
 As it was explained in the first proof of
Theorem~\ref{tilting-cotilting-thm}, the same triangulated equivalence
also transforms the cotilting t\+structure 
$({}^W\D^{\b,\le0},\.{}^W\D^{\b,\ge0})$ on $\D^\b(\B)$ into
the standard t\+structure on $\D^\b(\A)$.

 As above, we denote by $\E=\A\cap\B$ the intersection of the two
t\+structure hearts in the triangulated category
$\D^\b(\A)\simeq\D^\b(\B)$.
 The full subcategory $\E$ is coresolving in $\A$ and resolving in~$\B$.
 Being, in particular, closed under extensions in both $\A$ and $\B$,
the additive category $\E$ inherits the exact category structure from
either of the abelian categories $\A$ and $\B$; one can easily
see that the two exact category structures on $\E$ obtained in this
way coincide (as it was mentioned in the first paragraph of the second
proof of Proposition~\ref{bounded-derived-equivalence}).

 Next we will give an alternative characterization of
the $n$\+tilting class $\E\subset\A$ and the $n$\+cotilting class
$\E\subset\B$, but it is convenient to introduce some notation first.
 We consider the restriction $\Psi = {}^TH^0|_\A\: \A \rarrow \B$ of
the zero cohomology functor with respect to the tilting
t\+structure $({}^T\D^{\b,\le0},\.{}^T\D^{\b,\ge0})$
on the derived category $\D^\b(\A)$.
 Since $\A \subset {}^T\D^{\b,\ge 0}$ by~\eqref{d-ge-inclusions},
we in fact have $\Psi = \tau^T_{\le0}|_\A$ and the functor
$\Psi\: \A \rarrow \B$ is left exact.
 Moreover, the restriction of $\Psi$ to the full subcategory
$\E = \A \cap \B$ coincides with the inclusion $\E\subset\B$.

 Similarly, we consider an extension of the embedding $\E\subset\A$
to a right exact functor $\Phi\:\B\rarrow\A$ defined as
$\Phi = {}^WH^0|_\B = \tau^W_{\ge0}|_\B$.
 Here $\tau^W_{\ge0}$ is the truncation functor and ${}^WH^0$ is
the zero cohomology functor, respectively, with respect to
the cotilting t\+structure $({}^W\D^{\b,\le0},\.{}^W\D^{\b,\ge0})$ on
the derived category $\D^\b(\B)$.

 The functor $\Psi\:\A\rarrow\B$ is right adjoint to the functor
$\Phi\:\B\rarrow\A$.
 Indeed, we may in view of Proposition~\ref{bounded-derived-equivalence}
express $\Psi$ as the restriction to $\A$ of the following composition
of right adjoint functors
\[
\D^{\b,\ge 0}(\A) \overset{\subset}\lrarrow
\D^{\b}(\A) \simeq
\D^\b(\B) \overset{\tau_{\le0}}\lrarrow
\D^{\b,\le 0}(\B).
\]
 Dually, the restriction to $\B$ of the composition of the
corresponding left adjoints coincides with $\Phi$.

 As above, we denote by $\Add(T)\subset\A$ the full subcategory formed
by the direct summands of infinite coproducts of copies of the object
$T\in\A$.
 Similarly, $\Prod(W)\subset\B$ denotes the full subcategory formed
by the direct summands of infinite products of copies of the object
$W\in\B$.
 Then $\Psi$ restricts to a category equivalence $\A_\inj\simeq\Prod(W)$, where
$\A_\inj\subset\A$ is the full subcategory of injective objects in~$\A$.
 One can in fact construct the functor $\Psi\:\A\rarrow\B$ as the unique
left exact extension of the additive embedding functor
$\A_\inj\simeq\Prod(W)\rarrow\B$.
Similarly, $\Phi\:\B\rarrow\A$ restricts to an equivalence
$\B_\proj\simeq\Add(T)$, where $\B_\proj\subset\B$ is the full subcategory
of projective objects in $\B$. Moreover, $\Phi$ is the unique right
exact extension of the additive embedding functor
$\B_\proj\simeq\Add(T)\rarrow\A$.

 The following lemma is inspired by, and should be compared with,
the results of the theory developed in the papers~\cite{AC,Ba0}
(cf.\ Corollary~\ref{tilting-objects-and-tilting-modules}
and Remark~\ref{cotilting-modules-remark}).

\begin{lem} \label{tilting-cotilting-classes-lemma}
\textup{(a)} The full subcategory\/ $\E\subset\A$ consists precisely
of all the objects\/ $E\in\A$ for which there exists an exact sequence
of the form
\begin{equation} \label{tilting-class-generated}
 T^{(I_n)}\lrarrow \dotsb\lrarrow T^{(I_2)}\lrarrow T^{(I_1)}\lrarrow E
 \lrarrow0
\end{equation}
in the abelian category~$\A$, where $I_1$,~\dots, $I_n$ are some sets.
\par
\textup{(b)} The full subcategory\/ $\E\subset\B$ consists precisely
of all the objects\/ $E\in\B$ for which there exists an exact sequence
of the form
\begin{equation} \label{cotilting-class-cogenerated}
 0\lrarrow E\lrarrow W^{I_1}\lrarrow W^{I_2}\lrarrow\dotsb\lrarrow
 W^{I_n}
\end{equation}
in the abelian category~$\B$, where $I_1$,~\dots, $I_n$ are some sets.
\end{lem}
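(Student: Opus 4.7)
The two parts of the lemma are formally dual under the swap $(\A,T) \leftrightarrow (\B,W)$, so I will focus on part~(a) and indicate the modifications for~(b). For the forward direction of~(a), start with $E\in\E = \A\cap\B$. Since $T$ is a projective generator of $\B$ by Proposition~\ref{tilting-heart}, the canonical morphism $T^{(I_1)}\to E$ with $I_1 = \Hom_\B(T,E)$ is an epimorphism in~$\B$, whose kernel $K_1$ lies in~$\E$ by Lemma~\ref{cotilting-class-resolving-finite-dim}(a). The resulting short exact sequence in~$\B$ corresponds to a distinguished triangle in $\D^\b(\A)$ whose three vertices all lie in~$\A$, which forces it to be short exact in~$\A$ as well. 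Iterating this construction $n-1$ more times produces $K_1, K_2, \dotsc, K_{n-1}\in\E$ and a surjection $T^{(I_n)}\twoheadrightarrow K_{n-1}$; splicing the short exact sequences (with the final map $T^{(I_n)}\to T^{(I_{n-1})}$ being the composition $T^{(I_n)}\twoheadrightarrow K_{n-1}\hookrightarrow T^{(I_{n-1})}$) gives the desired exact sequence $T^{(I_n)}\to\dotsb\to T^{(I_1)}\to E\to 0$ in~$\A$.

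For the reverse direction, assume the exact sequence is given, and set $Z_k = \ker\bigl(T^{(I_k)}\to T^{(I_{k-1})}\bigr)$ for $2\le k\le n$, $Z_1 = \ker(T^{(I_1)}\to E)$, and $Z_0 := E$; exactness breaks the sequence into short exact sequences $0\to Z_k\to T^{(I_k)}\to Z_{k-1}\to 0$ for each $k=1,\dotsc,n$. Condition~(ii) kills $\Ext^{>0}_\A(T, T^{(I_k)})$, so iterated dimension shifting yields
\[
\Ext^i_\A(T,E) \;\cong\; \Ext^{i+n-1}_\A(T, Z_{n-1}) \qquad\text{for all } i\ge 1.
\]
For $i\ge 2$ this vanishes by condition~(i). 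For $i=1$, the short sequence at index $k=n$, together with $\Ext^n_\A(T, T^{(I_n)})=0$ (from~(ii)) and $\Ext^{n+1}_\A(T, Z_n)=0$ (from~(i)), gives $\Ext^n_\A(T, Z_{n-1}) = 0$, and so $E\in\E$.

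Part~(b) is proved by the formally dual argument. Given $E\in\E\subset\B$, one embeds $E$ into a product $W^{I_1}$ in~$\A$ using that $W$ is an injective cogenerator of~$\A$; the cokernel in~$\A$ remains in~$\E$ by Lemma~\ref{tilting-class-coresolving-finite-dim}(a), and the corresponding distinguished triangle in $\D^\b(\A)$ with vertices in~$\B$ lifts the sequence to one that is short exact in~$\B$ as well. Iteration yields the coresolution $0\to E\to W^{I_1}\to\dotsb\to W^{I_n}$. The reverse direction is the dual dimension-shifting computation, now using~(ii*) to kill $\Ext^{>0}_\B(W^{I_k}, W)$ and~(i*) for the bound on the injective dimension of~$W$. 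The main technical point throughout is the compatibility of short exact sequences between $\A$ and~$\B$: one needs to verify at each iteration that the third vertex of the distinguished triangle in $\D^\b(\A)$ lies in $\E = \A\cap\B$, so that the sequence is simultaneously exact in both abelian categories; this is precisely the content of Lemmas~\ref{tilting-class-coresolving-finite-dim}(a) and~\ref{cotilting-class-resolving-finite-dim}(a), after which the remaining arguments are routine.
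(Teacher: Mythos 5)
Your proof is correct and takes essentially the same route as the paper's: for the forward direction, a projective resolution of $E$ computed in $\B$ whose syzygies remain in $\E$, with exactness transferred to $\A$ through distinguished triangles in $\D^\b(\A)$ (the paper packages the same transfer as applying the exact functor $\Phi$ to an exact sequence of the exact category $\E\subset\B$), and duality for part~(b). The only difference is in the converse direction, where you carry out the $\Ext$-vanishing by explicit dimension shifting while the paper invokes a general cited result on coresolving subcategories of bounded coresolution dimension; the underlying computation is the same, and your version has the small advantage of being self-contained.
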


\begin{proof}
 It suffices to prove part~(a).
 Since the full subcategory $\E\subset\A$ is coresolving,
the coresolution dimension of the object $A=\ker(T^{(I_n)}\to
T^{(I_{n-1})})\in\A$ does not exceed~$n$ by
Lemma~\ref{tilting-class-coresolving-finite-dim},
and the objects $T^{(I)}\in\A$ belong to~$\E$ by the condition~(ii),
it follows from (the dual of) \cite[Proposition 2.3(1)]{St}
or~\cite[Corollary~A.5.2]{Pcosh} that the object~$E$
in the exact sequence~\eqref{tilting-class-generated} belongs to~$\E$.

 Conversely, given an object $E\in\E\subset\A$, consider
the object $E=\Psi(E)\in\E\subset\B$.
 Since the object $T\in\B$ is a projective generator, an exact
sequence of the form
\begin{equation} \label{projective-resolution-e-b}
 0\lrarrow B\lrarrow
 T^{(I_n)}\lrarrow \dotsb\lrarrow T^{(I_2)}\lrarrow T^{(I_1)}\lrarrow
 E\lrarrow0
\end{equation}
exists in the abelian category~$\B$.
 By Lemma~\ref{cotilting-class-resolving-finite-dim}(a), we have
$T^{(I)}\in\E$ and $B\in\E$, and moreover, all the objects of
cocycles in the exact sequence~\eqref{projective-resolution-e-b}
belong to $\E$, so~\eqref{projective-resolution-e-b} is an exact
sequence in the exact category $\E\subset\B$.
 Applying to~\eqref{projective-resolution-e-b} the exact functor
$\Phi\:\E\rarrow\A$, we obtain the desired exact
sequence~\eqref{tilting-class-generated} in the abelian category~$\A$
(and in fact, even in the exact category
$\E\subset\A$; so the exact sequence~\eqref{tilting-class-generated}
can be chosen in such a way that the object
$A=\ker(T^{(I_n)}\to T^{(I_{n-1})})$ belongs to~$\E$).
\end{proof}

\begin{rem}
 The above argument also shows that any object $E$ of the exact
category $\E=\A\cap\B$ admits an arbitrarily long, and even
an infinite left resolution 
\begin{equation} \label{infinite-tilting-resolution}
 \dotsb\lrarrow T^{(I_i)}\lrarrow \dotsb\lrarrow T^{(I_2)}
 \lrarrow T^{(I_1)}\lrarrow E\lrarrow0
\end{equation}
by copowers of the tilting object $T$ in the abelian
category~$\A$. We obtain such a resolution simply by applying the functor $\Phi\:\B\rarrow\A$ to a corresponding projective resolution of $E$ in the abelian category $\B$.
\end{rem}

\begin{lem} \label{tilting-cotilting-classes-co-product-closed}
\textup{(a)} The full subcategory\/ $\E\subset\A$ is closed under
infinite coproducts in the abelian category\/~$\A$. \par
\textup{(b)} The full subcategory\/ $\E\subset\B$ is closed under
infinite products in the abelian category\/~$\B$.
\end{lem}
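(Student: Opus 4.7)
The plan is to deduce both parts directly from the alternative characterizations of $\E$ provided by Lemma~\ref{tilting-cotilting-classes-lemma}, together with exactness of coproducts in $\A$ and of products in $\B$.

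For part~(a), given a family $\{E_j\}_{j\in J}$ of objects of $\E\subset\A$, I would apply Lemma~\ref{tilting-cotilting-classes-lemma}(a) to choose, for each $j$, an exact sequence
\[
 T^{(I_n^j)}\lrarrow\dotsb\lrarrow T^{(I_1^j)}\lrarrow E_j\lrarrow 0
\]
in $\A$. Coproducts in $\A$ are exact, hence the coproduct of these sequences over $j$ is again exact; its terms are copowers of $T$ via the canonical isomorphisms $\coprod_j T^{(I_i^j)}\cong T^{(\coprod_j I_i^j)}$. The resulting exact sequence presents $\coprod_j E_j$ in the form required by Lemma~\ref{tilting-cotilting-classes-lemma}(a), whence $\coprod_j E_j\in\E$.

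Part~(b) is entirely dual. Starting from exact sequences
\[
 0\lrarrow E_j\lrarrow W^{I_1^j}\lrarrow\dotsb\lrarrow W^{I_n^j}
\]
provided by Lemma~\ref{tilting-cotilting-classes-lemma}(b), I would take their product over $j$. Products are exact in $\B$ (as noted right after the definition of cotilting objects in this section), and $\prod_j W^{I_i^j}\cong W^{\coprod_j I_i^j}$, so the product sequence has the form required by Lemma~\ref{tilting-cotilting-classes-lemma}(b), yielding $\prod_j E_j\in\E$.

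I anticipate no serious obstacle, since the heavy lifting is already done by Lemma~\ref{tilting-cotilting-classes-lemma}. It is worth emphasizing, however, that a direct attack via the $\Ext$\+vanishing definition of $\E$ would be delicate: in general $\Ext^i_\A(T,{-})$ need not commute with infinite coproducts of arbitrary objects, and the present lemma may be read as saying that this commutation does hold in positive degrees when the arguments are restricted to lie in~$\E$.
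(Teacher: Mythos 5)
Your proof is correct, and in fact the paper itself flags this route: its proof of part~(a) opens by noting that the assertion ``follows immediately from Lemma~\ref{tilting-cotilting-classes-lemma}(a)'', which is precisely your argument (exactness of coproducts in $\A$, the isomorphism $\coprod_j T^{(I_i^j)}\cong T^{(\coprod_j I_i^j)}$, and the ``consists precisely of'' characterization of~$\E$), and part~(b) dualizes exactly as you say. The paper nevertheless prefers a different, more structural argument: it identifies $\E=\A\cap{}^T\D^{\b,\le0}$ inside $\D^\b(\A)$, observes that coproducts of objects of $\A$ computed in $\A$ agree with their coproducts in $\D^\b(\A)$, and invokes the general fact that the aisle $\D^{\le0}$ of any t\+structure is closed under whatever coproducts exist in the ambient triangulated category. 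The trade-off is worth noting: your approach is more self-contained at the level of the abelian categories and makes transparent \emph{why} the closure holds (a finite $\Add(T)$\+resolution of a coproduct is the coproduct of resolutions), whereas the paper's t\+structure argument yields strictly more --- it shows that $\E$ is closed under coproducts computed in $\D^\b(\A)$ and, dually, under products computed in $\D^\b(\B)\simeq\D^\b(\A)$, which is what feeds the subsequent Remark~\ref{E-is-AB4-and-AB4*} (closure of $\E$ under \emph{both} coproducts and products in \emph{both} $\A$ and $\B$, with exactness of these (co)products of exact sequences in $\E$ even when products fail to be exact in $\A$ or coproducts in $\B$). Your concluding observation that a direct attack via $\Ext$\+vanishing would founder on the failure of $\Ext^i_\A(T,{-})$ to commute with coproducts in general is apt and correctly identifies why some detour is needed.
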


\begin{proof}
 It suffices to prove part~(a), as part~(b) is the dual assertion.
In fact, the assertion of part~(a) follows immediately from
Lemma~\ref{tilting-cotilting-classes-lemma}(a), but we prefer to give
a direct argument as well.

 Notice that for any t\+structure $(\D^{\le0},\.\D^{\ge0})$ on
a triangulated category $\D$ the full subcategory $\D^{\le0}
\subset\D$ is closed under infinite coproducts in $\D$ (those 
infinite coproducts of objects from $\D^{\le0}$ that exist in~$\D$).
 Furthermore, infinite coproducts of objects of $\A$ taken in $\A$ are
at the same time their coproducts in $\D^\b(\A)$, as it was explained
in the beginning of the proof of Proposition~\ref{tilting-heart}.
 Finally, one has
$$
 \E=\A\cap\B=\A\cap{}^T\D^{\b,\le0}\subset\D^\b(\A),
$$
because $\A\subset\D^{\b,\ge0}(\A)\subset{}^T\D^{\b,\ge0}$
according to~\eqref{d-ge-inclusions}.
 Since the full subcategory ${}^T\D^{\b,\le0}$ is closed under infinite
coproducts in $\D^\b(\A)$, we are done.
\end{proof}

\begin{rem} \label{E-is-AB4-and-AB4*}
The proof of Lemma~\ref{tilting-cotilting-classes-co-product-closed}
reveals in fact more: $\E$ is closed under infinite coproducts in $\D^\b(\A)$.
Dually, $\E$ is closed under infinite products in $\D^\b(\B) \simeq \D^\b(\A)$.
Thus, in particular, $\E$ is closed under both the infinite coproducts and
products in both $\A$ and~$\B$.
Moreover, both the infinite coproducts and infinite products of
exact sequences in $\E$ remain exact (since the same is true for triangles
by~\cite[Proposition 1.2.1]{Nee01}), despite the fact that products
may not be exact in $\A$ and coproducts may not be exact in $\B$.
\end{rem}

 Now we can prove the main results of the section.
 The case of the conventional unbounded derived categories
($\st=\varnothing$, i.~e., $\D(\A)\simeq\D(\E)\simeq\D(\B)$)
can be also found in~\cite[Theorem~1.7 and Proposition~2.3]{FMS}.

\begin{thm} \label{exotic-derived-resolution-equivalences}
\textup{(a)} For every derived category symbol\/ $\st=\b$, $+$, $-$,
$\varnothing$, $\abs+$, $\abs-$, $\co$, or\/~$\abs$, the triangulated
functor\/ $\D^\st(\E)\rarrow\D^\st(\A)$ induced by the exact
embedding functor\/ $\E\rarrow\A$ is an equivalence of triangulated
categories.
 The same is true for\/ $\st=\ctr$ whenever infinite products are exact
in the abelian category\/~$\A$. \par
\textup{(b)} For every derived category symbol\/ $\st=\b$, $+$, $-$,
$\varnothing$, $\abs+$, $\abs-$, $\ctr$, or\/~$\abs$, the triangulated
functor\/ $\D^\st(\E)\rarrow\D^\st(\B)$ induced by the exact
embedding functor\/ $\E\rarrow\B$ is an equivalence of triangulated
categories.
 The same is true for\/ $\st=\co$ whenever infinite coproducts are exact
in the abelian category\/~$\B$.
\end{thm}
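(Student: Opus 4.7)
The plan is to reduce all the cases listed in parts~(a) and~(b) to a single general framework for derived categories of exact subcategories of finite (co)resolution dimension with appropriate (co)product-closure properties. All the needed ingredients are already in place: by Lemmas~\ref{tilting-class-coresolving-finite-dim} and~\ref{cotilting-class-resolving-finite-dim}, $\E$ is coresolving in $\A$ and resolving in $\B$ with (co)resolution dimension bounded by~$n$, and the exact category structure inherited from $\A$ coincides with the one inherited from $\B$; by Lemma~\ref{tilting-cotilting-classes-co-product-closed} and Remark~\ref{E-is-AB4-and-AB4*}, $\E$ is in addition closed under both set-indexed products and coproducts in each of $\A$ and $\B$, and such (co)products of exact sequences in $\E$ remain exact.

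For the bounded symbol $\st=\b$ I would directly invoke the standard comparison result \cite[Proposition~13.2.2(ii)]{KS} (or \cite[Proposition~A.5.6]{Pcosh}), which gives both $\D^\b(\E)\simeq\D^\b(\A)$ and $\D^\b(\E)\simeq\D^\b(\B)$ using only the finite (co)resolution dimension. For $\st=+$ in part~(a), a Cartan--Eilenberg style coresolution of each term replaces any bounded-below complex over $\A$ by a quasi-isomorphic bounded-below complex over $\E$, the uniform bound~$n$ ensuring this can be carried out coherently; the dual argument gives $\st=-$ in part~(b). For $\st=-$ in~(a) and $\st=+$ in~(b), the bounded (co)resolution dimension again lets one truncate and bootstrap the preceding case.

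For the conventional unbounded case $\st=\varnothing$ and for the absolute and exotic cases $\st=\abs\pm,\abs,\co,\ctr$, I would appeal to the general comparison machinery for (co)resolving subcategories of Positselski \cite[Section~A.5]{Pcosh} and \cite[Appendix~A]{Pmgm}. The coderived case $\st=\co$ in~(a) is then automatic from the existence of exact coproducts in $\A$ (a consequence of the injective cogenerator, as recorded before Lemma~\ref{tria-Add-T-via-complexes}), and dually the contraderived case $\st=\ctr$ in~(b) is automatic from exact products in $\B$. The remaining two cases, $\st=\ctr$ in~(a) and $\st=\co$ in~(b), are covered precisely under the supplementary exactness hypothesis stated in the theorem.

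The main technical obstacle lies in the absolute and exotic cases: one must verify both that a complex over $\E$ which becomes absolutely acyclic in $\A$ (respectively $\B$) is already absolutely acyclic in~$\E$, and that the Verdier quotient $\D^{\abs}(\A)$ (respectively $\D^{\abs}(\B)$) is generated, as a triangulated category, by the essential image of $\D^{\abs}(\E)$. For both directions the decisive input is Remark~\ref{E-is-AB4-and-AB4*}: arbitrary (co)products of exact sequences in $\E$ remain exact, so the totalization of any short exact sequence of complexes over $\E$ is identified with its totalization in the ambient abelian category. Once this compatibility is established, the bounded equivalence bootstraps to the full list of symbols by standard truncation arguments and by passage to homotopy (co)limits whose required exactness is guaranteed by our closure hypotheses on~$\E$.
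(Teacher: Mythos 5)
Your overall strategy coincides with the paper's: both rest on the comparison result \cite[Proposition~A.5.6]{Pcosh} (with \cite[Proposition~13.2.2(ii)]{KS} covering the bounded and conventional cases), and the two verification tasks you isolate for the exotic symbols are precisely the two conditions the paper checks via \cite[Lemma~1.6]{PKoszul}: ($\alpha$)~every complex over $\B$ admits a morphism from a complex over $\E$ with cone in $\Acycl^\st(\B)$, and ($\beta$)~the equality $\Acycl^\st(\E)=\Hot^\st(\E)\cap\Acycl^\st(\B)$.

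However, your justification of the hard half of ($\beta$) for $\st=\abs+$, $\abs-$, $\abs$, $\co$, $\ctr$ does not go through as written. The compatibility of totalizations (a short exact sequence of complexes over $\E$ totalizes to the same object whether viewed in $\E$ or in $\B$) only yields the easy inclusion $\Acycl^\st(\E)\subseteq\Hot^\st(\E)\cap\Acycl^\st(\B)$; it says nothing about why a complex over $\E$ lying in the thick subcategory of $\Hot^\st(\B)$ generated by totalizations of short exact sequences over $\B$ must already lie in the corresponding thick subcategory of $\Hot^\st(\E)$. Nor can one ``bootstrap by standard truncation arguments,'' since absolute acyclicity is not detected by cohomology and the truncation functors do not preserve complexes over $\E$. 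The paper's actual argument is a dimension-shifting filtration: writing $\E_d\subset\B$ for the objects of $\E$-resolution dimension $\le d$ (so $\E_0=\E$ and $\E_n=\B$), one proves $\Hot^\st(\E_{d-1})\cap\Acycl^\st(\E_d)=\Acycl^\st(\E_{d-1})$ by showing that every complex in $\Acycl^\st(\E_d)$ is the cokernel of an admissible monomorphism of complexes from $\Acycl^\st(\E_{d-1})$; the decisive input is that a short exact sequence of complexes over $\B$ admits a finite resolution by short exact sequences of complexes over $\E$ (cf.\ the proof of \cite[Proposition~2.3]{St}). Remark~\ref{E-is-AB4-and-AB4*} (more precisely, Lemma~\ref{tilting-cotilting-classes-co-product-closed}) is an auxiliary ingredient needed only for the symbols $\co$ and $\ctr$, where the definition of (co/contra)acyclicity involves infinite (co)products and one must know that $\E$ is closed under them.
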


\begin{proof}
 The result of part~(b) is provided by~\cite[Proposition~A.5.6]{Pcosh},
and part~(a) is a dual assertion. 
 (The case $\star=\varnothing$ can be found
in~\cite[Proposition~13.2.6]{KS}.)
 However, we provide a more detailed argument concerning part~(b) for
the reader's convenience.

All the conventional or exotic derived categories of $\B$ are defined as
Verdier quotients $\D^\st(\B) = \Hot^\st(\B)/\Acycl^\st(\B)$, where
$\Hot^\st(\B)$ is the correspondingly bounded homotopy category of complexes
over $\B$ and $\Acycl^\st(\B)$ is a thick subcategory of acyclic complexes
which contains totalizations of all short exact sequences of complexes.
The derived categories for $\E$ are defined analogously,
$\D^\st(\E) = \Hot^\st(\E)/\Acycl^\st(\E)$.
In order to prove that the inclusion $\Hot^\st(\E) \subset \Hot^\st(\B)$
induces an equivalence $\D^\st(\E)\simeq\D^\st(\B)$, it suffices to show
(see~\cite[Lemma 1.6]{PKoszul}) that
\begin{enumerate}
\item[($\alpha$)] each $B^\bu \in \Hot^\st(\B)$ admits a morphism
$f\:E^\bu\rarrow B^\bu$ with $E^\bu\in\Hot^\st(\E)$ and such that
the cone of $f$ is in $\Acycl^\st(\B)$;
\item[($\beta$)] the equality $\Acycl^\st(\E) = 
\Hot^\st(\E)\cap\Acycl^\st(\B)$ holds.
\end{enumerate}
 Condition~($\alpha$) is satisfied by~\cite[Lemma A.3.3]{Pcosh}.
 Indeed, since the $\E$-resolution dimension of objects of $\B$ is
uniformly bounded by
Lemma~\ref{cotilting-class-resolving-finite-dim},
each $B^\bu \in \Hot^\st(\B)$ admits $f\:E^\bu\to B^\bu$
whose cone is in the smallest thick subcategory of $\Hot^\st(\B)$
generated by totalizations of suitably bounded short exact sequences
of complexes over $\B$.

 Regarding ($\beta$), for $\st=\b$, $+$, $-$, or $\varnothing$,
this is a direct consequence of the fact that the full subcategory $\E$
is resolving in~$\B$, and that the resolution dimension is bounded
by a finite constant again.
 The remaining cases $\abs+$, $\abs-$, $\co$, $\ctr$, and\/~$\abs$ are
more involved.
 Denoting by $\E_d\subset\B$ the full subcategory of all objects of
resolution dimension~$\le d$ with respect to~$\E$ (so that $\E_0=\E$
and $\E_n=\B$), one proves that $\Hot^\st(\E_{d-1})\cap\Acycl^\st(\E_d)
= \Acycl^\st(\E_{d-1})$ for all $1\le d\le n$.
 For this purpose, one shows that every complex from $\Acycl^\st(\E_d)$
is the cokernel of an admissible monomorphism of complexes from
$\Acycl^\st(\E_{d-1})$.
 The key observation is that a short exact sequence (of complexes)
in $\B$ has a finite resolution by short exact sequences (of complexes)
in~$\E$; see the proof of~\cite[Proposition 2.3]{St}.
 The assertion involving the contraderived category $\D^\ctr(\E)$
also uses Lemma~\ref{tilting-cotilting-classes-co-product-closed}(b).
 We refer to~\cite[Theorem 1.4]{EfimovP}
and~\cite[Theorem 7.2.2]{Psemi} for further technical details.
\end{proof}

\begin{cor} \label{conv-abs-derived-equivalence}
 For every derived category symbol\/ $\st=\b$, $+$, $-$,
$\varnothing$, $\abs+$, $\abs-$, or\/~$\abs$, there is a natural
triangulated equivalence of (conventional or absolute) derived
categories\/ $\D^\st(\A)\simeq\D^\st(\B)$
as in~\eqref{derived-equivalence} provided by the mutually inverse
derived functors\/ $\boR\Psi\:\D^\st(\A)\rarrow\D^\st(\B)$ and\/
$\boL\Phi\:\D^\st(\B)\rarrow\D^\st(\A)$.
 The similar equivalence of the coderived or contraderived categories
($\st=\co$ or~$\ctr$) holds whenever the infinite coproducts or
infinite products, respectively, are exact in both\/ $\A$ and\/~$\B$.
\end{cor}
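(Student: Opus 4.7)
The plan is to derive the corollary from Theorem~\ref{exotic-derived-resolution-equivalences} by composing its two equivalences through $\D^\st(\E)$ and then to identify the resulting composite with the derived functors $\boR\Psi$ and $\boL\Phi$. For $\st=\b$, $+$, $-$, $\varnothing$, $\abs+$, $\abs-$, and $\abs$, both parts of Theorem~\ref{exotic-derived-resolution-equivalences} apply without additional hypotheses and yield triangulated equivalences
\[
 \D^\st(\A)\xleftarrow{\sim}\D^\st(\E)\xrightarrow{\sim}\D^\st(\B).
\]
For $\st=\co$, part~(a) is unconditional but part~(b) requires infinite coproducts to be exact in~$\B$; combined with the automatic exactness of coproducts in~$\A$ (which follows from the existence of an injective cogenerator, as recalled in Section~\ref{tilting-t-structure-secn}), this is precisely the hypothesis stated in the corollary. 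The case $\st=\ctr$ is the symmetric one.

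The second step is to identify this composite equivalence with $\boR\Psi$ and $\boL\Phi$. The crucial observation is that the restrictions $\Psi|_\E$ and $\Phi|_\E$ are naturally isomorphic to the tautological inclusions $\E\hookrightarrow\B$ and $\E\hookrightarrow\A$: for any $E\in\E\subset\B\subset{}^T\D^{\b,\le0}$ one has $\Psi(E)=\tau^T_{\le0}(E)=E$, and dually $\Phi(E)=\tau^W_{\ge0}(E)=E$ since $\A\subset{}^W\D^{\b,\ge0}$. Combined with the fact that $\E$ is coresolving in~$\A$ with uniformly bounded coresolution dimension (Lemma~\ref{tilting-class-coresolving-finite-dim}), this makes $\E\subset\A$ an adapted class for the left exact functor~$\Psi$, so that $\boR\Psi$ exists at each allowable level~$\st$ and is computed by $\E$-coresolution. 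Consequently $\boR\Psi$ factors as
\[
 \D^\st(\A)\xleftarrow{\sim}\D^\st(\E)\hookrightarrow\D^\st(\B),
\]
where the left arrow is the inverse of the equivalence of Theorem~\ref{exotic-derived-resolution-equivalences}(a). Dually, $\boL\Phi$ factors as $\D^\st(\B)\xleftarrow{\sim}\D^\st(\E)\hookrightarrow\D^\st(\A)$ via part~(b). Both $\boR\Psi$ and $\boL\Phi$ are thus visibly equivalences, and they are mutually inverse because they arise from the common intermediate $\D^\st(\E)$ traversed in opposite directions.

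The main technical obstacle is to verify that $\boR\Psi$ and $\boL\Phi$ can indeed be computed via $\E$-replacement at each exotic derived category level. For the conventional cases $\st=\b,+,-,\varnothing$ this is the standard theory of adapted subcategories with finite (co)resolution dimension. For the $\abs$-levels and the $\co$/$\ctr$ cases, one invokes the techniques from the proof of Theorem~\ref{exotic-derived-resolution-equivalences}: short exact sequences of complexes in~$\A$ (respectively~$\B$) admit finite resolutions by short exact sequences of complexes in~$\E$, so $\E$-coresolution of a complex preserves the relevant acyclicity classes. This control, together with the uniform bound~$n$ on the (co)resolution dimension and Lemma~\ref{tilting-cotilting-classes-co-product-closed} for preservation of (co)products, suffices to define $\boR\Psi$ and $\boL\Phi$ on all listed derived categories and to identify them with the composite equivalences constructed above.
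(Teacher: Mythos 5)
Your proposal is correct and follows essentially the same route as the paper, whose entire proof is to compose the two equivalences of Theorem~\ref{exotic-derived-resolution-equivalences} through $\D^\st(\E)$; your additional identification of the composite with $\boR\Psi$ and $\boL\Phi$ via the fact that $\Psi|_\E$ and $\Phi|_\E$ are the tautological inclusions is exactly what the paper leaves implicit from the preceding discussion in Section~\ref{derived-equivalences-secn}. Your bookkeeping of the $\co$/$\ctr$ hypotheses (coproducts automatically exact in $\A$, products automatically exact in $\B$) is also accurate.
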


\begin{proof}
 Compare Theorem~\ref{exotic-derived-resolution-equivalences}(a) with
Theorem~\ref{exotic-derived-resolution-equivalences}(b).
\end{proof}

We conclude the section with a rather concrete example of a tilting equivalence in commutative algebra, which is closely related to more classical equivalences of Matlis and of Greenlees and May (see~\cite{Pmc,PMat}) and which we later extend in Example~\ref{Matlis-1-tilting-example} in the case of
commutative Noetherian rings of Krull dimension one.

Many examples of big $n$\+tilting and $n$\+cotilting modules are
also available in the literature; see~\cite[Chapters~13--17]{GTbook}.
In fact, one of them provides a more general (but less explicit from the computational point of view) way to assign a $1$\+tilting module 
to a multiplicative subset $S$ in a commutative domain;
see~\cite{Fac0,Fac1} and~\cite[Example~13.4]{GTbook}.
Some other examples of big $1$\+tilting objects in locally Noetherian
Grothendieck abelian categories are also discussed in the recent
paper~\cite{AK}.

\begin{ex} \label{torsion-modules-1-tilting-example}
Let $R$ be a commutative ring and $S\subset R$ be a multiplicative
subset such that all elements of $S$ are nonzero-divisors in $R$ and
the projective dimension of the $R$\+module $S^{-1}R$ does not
exceed~$1$.
The latter condition is satisfied for example if $S=\{1,s,s^2,\dotsc\}$
for $s\in R$, or, by~\cite[Corollaire II.3.2.7]{RG71}, for an arbitrary
multiplicative set $S$ if $R$ is Noetherian of Krull dimension
at most one.
	
An $R$\+module $M$ is said to be \emph{$S$\+torsion} if for every
element $x\in M$ there exists $s\in S$ such that $sx=0$ in~$M$.
The maximal $S$\+torsion submodule of an $R$\+module $M$ is denoted
by $\Gamma_S(M)$; and the full subcategory of all $S$\+torsion
$R$\+modules is denoted by $R\modl_{S\tors}\subset R\modl$.
The category $\A=R\modl_{S\tors}$ is a Grothendieck abelian category
with an injective cogenerator $W=\Gamma_S(\Hom_\Z(R,\mathbb Q/\Z))$.
	
We claim that the $R$\+module $T=S^{-1}R/R$ is a $1$\+tilting object in 
the abelian category $\A=R\modl_{S\tors}$. One way to see that is using the fact that $S^{-1}R \oplus S^{-1}R/R$ is a $1$\+tilting $R$\+module
by~\cite[Theorem 14.59]{GTbook}, but we can also give a more
direct argument.
Indeed, $T$ has projective dimension at most~$1$ in $R\modl$ and hence also in $\A$ (since $\Ext^1_R(T,-)$ is right exact on~$\A$).
This implies condition~(i).
Condition~(ii), i.~e., the equality
$\Ext^1_R(S^{-1}R/R,\.(S^{-1}R/R)^{(I)})=0$ for every set $I$,
follows quickly from the fact that
\[ \Ext^1_R(S^{-1}R,\.S^{-1}R^{(I)})=
\Ext^1_{S^{-1}R}(S^{-1}R,\.S^{-1}R^{(I)})=0 \]
(see also~\cite[exact sequence~(III)]{PMat}).
In order to prove condition~(iii), we use the fact that,
by \cite[Theorem 6.6(a)]{PMat}, the canonical functor $\D(\A)\rarrow
\D(R\modl)$ induced by the embedding $\A \subset R\modl$ is fully 
faithful and its essential image is $\D_\A(R\modl)$, the full 
subcategory of complexes with $S$\+torsion cohomology.
Suppose now that $X^\bu\in \D(\A)$ is such that
$\Hom_{\D(\A)}(T,X^\bu[i])=0$ for all $i\in\Z$ or, equivalently,
such that $\boR\Hom_R(T,X^\bu)=0$.
An application of $\boR\Hom_R(-,X^\bu)$ to the short exact sequence
$0 \rarrow R \rarrow S^{-1}R \rarrow T \rarrow 0$ reveals that
\[ X^\bu \cong \Hom_R(R,X^\bu) \cong \boR\Hom_R(S^{-1}R,X^\bu). \]
In particular, the cohomology of $X^\bu$ consists of $S^{-1}R$-modules.
As the cohomology of $X^\bu$ was assumed to be $S$\+torsion,
$X^\bu$ is acyclic, as required.
This proves the claim.
	
The main object of interest for us is the heart
$\B={}^T\D^{\b,\le0}\cap{}^T\D^{\b,\ge0}\subset\D^\b(\A)$
of the tilting t\+structure for~$T$.
We know so far that $\B_\proj = \Add(S^{-1}R/R) \subset \A \subset
R\modl$.
It turns out that $\B$ is equivalent to the category of so-called
\emph{$S$\+contramodule} $R$\+modules, which is easiest defined as
\[ R\modl_{S\ctra} = \{ C \in R\modl \mid \Hom_R(S^{-1}R,C)=0=\Ext_R^1(S^{-1}R,C) \} \subset R\modl. \]
This is a full abelian subcategory of $R\modl$ (see, e.~g.,
\cite[Proposition~1.1]{GL91}).
A more intrinsic way to view this category, which will be discussed in detail
in \S\ref{contramodules-over-top-rings-subsecn} and
Section~\ref{big-tilting-module-secn}, is as follows.
The ring
$$
\fR=\Hom_R(S^{-1}R/R,\.S^{-1}R/R)=\varprojlim_{s\in S}R/sR
$$
is naturally a complete separated topological ring with a base
$\{s\fR\mid s\in S\}$ of neighborhoods of $0$
(see \cite[Proposition~3.2 and Theorem~2.5]{PMat}).
As such, infinite families $(r_i)_{i \in I}$ of elements of $\fR$ which converge to zero in this topology are naturally summable, i.~e., $\sum_{i \in I} r_i \in \fR$ is naturally defined.
The $S$\+contramodule $R$\+modules are, roughly speaking, precisely those $R$\+modules $C$ for
which there is a similar and naturally unique way to define all infinite summations
$\sum_{i \in I} r_i c_i \in C$, where $c_i \in C$ for all $i \in I$.
	
Using results from~\cite{PMat}, one quickly sees that $\Hom_R(T,-)$ induces an equivalence from $\B$ to $R\modl_{S\ctra}$. Indeed, as both $\B$ and $R\modl_{S\ctra}$ have enough projective objects (see the discussion at the very end of~\cite[Section 3]{PMat}), it suffices to show that $\Hom_R(T,-)$ induces an equivalence between the full subcategories of projective objects. Since $\Hom_R(T,S^{-1}R^{(I)}) = 0 = \Ext^1_R(T,S^{-1}R^{(I)})$ for each set $I$, we have
\[ \Hom_R(T,T^{(I)}) \cong \Ext^1_R(T,R^{(I)}), \]
and retracts of such $R$-modules are precisely the projective objects of $R\modl_{S\ctra}$ by~\cite[Section~3]{PMat}. Hence the restriction
\[ \Hom_R(T,-)\colon \B_\proj = \Add(T) \rarrow (R\modl_{S\ctra})_\proj \]
is essentially surjective. To see that it is also fully faithful, one applies $-\otimes_R T$ to the morphisms $\delta_{S,R^{(I)}}\: R^{(I)} \to \Ext^1_R(T,R^{(I)}) \cong \Hom_R(T,T^{(I)})$ from~\cite[Lemma~1.6]{PMat}. Since $X\otimes_R T=0=\Tor_1^R(X,T)$ for each $S^{-1}R$-module $X$, one obtains an isomorphism
\[ T^{(I)} \overset{\simeq}\rarrow \Hom_R(T,T^{(I)}) \otimes_R T, \]
which can be directly checked to be the inverse of the canonical evaluation morphism. It follows that $-\otimes_R T\: (R\modl_{S\ctra})_\proj \to \B_\proj$ is an inverse equivalence to $\Hom_R(T,-)$.
	
Hence, we can directly apply Corollary~\ref{conv-abs-derived-equivalence} to recover the derived equivalences from \cite[Theorem~4.6 and Corollary~6.7]{PMat},
\[ \boR\Hom_R(T,-)\: \D^\st(R\modl_{S\tors}) \rightleftarrows \D^\st(R\modl_{S\ctra}) {\;\:\!\!\!-\otimes_R^\boL T} \]
as a special case of our tilting equivalences.
The corresponding $1$\+cotilting object $W\in R\modl_{S\ctra}$ is
computed as
$$
W=\Hom_R(S^{-1}R/R,\.\Gamma_S(\Hom_\Z(R,\mathbb Q/\Z)))=
\Hom_\Z(S^{-1}R/R,\.\mathbb Q/\Z).
$$
\end{ex}

\Section{Abelian Categories with a Projective Generator}
\label{projective-generator-secn}

In this section we discuss in detail cocomplete abelian categories
with a projective generator, as they form one end of our tilting-cotilting
correspondence from Section~\ref{tilting-cotilting-secn}
and their description as infinitary module-like structures,
which makes the discussion further much more concrete,
does not seem to be widely known.

\subsection{Modules and monads}
\label{modules-and-monads-subsecn}

Following the exposition in \cite[\S2.1]{Prev}, we 
first explain how ordinary modules over a (nontopological)
ring $R$ can be viewed as algebras over a monad
(see also \cite[Exercise VI.4.2]{MacLane}).
For a left $R$\+module $M$, we can define
for each $n\ge 0$ and elements $r_1,\dots,r_n\in R$
an $n$\+ary operation $M^n \rarrow M$ given by
\[ (m_1,\dots,m_n) \longmapsto \sum_{i=1}^n r_im_i \]
(using the convention that an empty sum evaluates to zero for $n=0$).
In fact, these are all the finitary operations on $M$ which can
be defined in general using the $R$\+module structure only. In our context,
it is useful to complement the picture by possibly infinitary operations
$M^X \rarrow M$ of the form $(m_x)_{x\in X} \mapsto \sum_{x\in X}r_xm_x$
with $\sum_{x\in X}r_xx\in R[X]$,
where $X$ is a set and $R[X]$ is the set of all formal $R$\+linear combinations
$\sum_{x\in X}r_xx$ such that only finitely many of the elements $r_x\in R$
are nonzero.
In fact, $R[X]$ can be viewed just as a different notation for the direct sum
$R^{(X)}$, and the assignment $X\mapsto R[X]$ is functorial: If $f\:X\to Y$
is a map of sets, then $R[f]\:R[X]\rarrow R[Y]$ sends $\sum_{x\in X}r_xx\in R[X]$
to $\sum_{x\in X}r_xf(x)\in R[Y]$ (with the obvious interpretation).

With this notation, an $R$\+module structure can be encoded by the map
$\alpha_M\:R[M]\rarrow M$, which sends each formal linear combination
$\sum_{m\in M}r_mm\in R[M]$ to its evaluation in $M$. In order to encode
the axioms for an $R$\+module in terms of $\alpha_M$, we will consider
two other maps $\mu_X\:R[R[X]]\rarrow R[X]$ and $\varepsilon_X\:X\rarrow R[X]$,
which are functorial in $X$. The map $\mu_X$ is
the ``opening of parentheses'' map, producing a formal linear
combination out of a formal linear combination of formal linear
combinations. This uses the multiplication in the ring $R$.
The map $\varepsilon_X$ interprets an element $x_0\in X$ as
the formal linear combination $\sum_xr_xx\in R[X]$ with $r_x=1$ for $x=x_0$
and $r_x=0$ for $x\ne x_0$.

Now we can be more precise: To specify a left $R$\+module structure on $M$ is the same
as to give a map $\alpha_M\:R[M]\rarrow M$ which satisfies the following axioms:
\begin{enumerate}
	\item The two maps $\alpha_M\circ\mu_M$ and $\alpha_M\circ R[\alpha_M]\:R[R[M]]\rarrow M$ coincide (this encodes the associativity and distributivity of the left $R$\+action).
	\item The maps $M \overset{\varepsilon_M}\rarrow R[M] \overset{\alpha_M}\rarrow M$ compose to the identity on $M$ (this encodes the unitality of the left $R$\+action).
\end{enumerate}

Needless to say, we have just interpreted the category of left $R$\+modules
as the Eilenberg--Moore category of the monad $(R[-]\:\Sets\rarrow\Sets, \mu, \varepsilon)$
on the category of sets (see \cite[\S\S VI.1, VI.2 and VI.8]{MacLane}).
Although this may seem as an overkill for defining ordinary modules, only a small
modification of the above will allow us to conveniently encode associative,
distributive and unital infinitary actions of topological rings.

\subsection{Contramodules over topological rings}
\label{contramodules-over-top-rings-subsecn}

As we have alluded in Example~\ref{torsion-modules-1-tilting-example}, one important and explicit source of abelian categories with coproducts and a projective generator are topological rings and contramodules over them~\cite{Prev,PR}. The category of contramodules can be viewed as a well-behaved (in particular, abelian) replacement of the (in general ill-behaved) category of complete separated topological modules. We again follow the exposition in \cite[\S2.1]{Prev} here.

To this end, let $\fR$ be a complete and separated topological associative ring
with a base of neighborhoods of zero formed by open right ideals
(a filter $\{\fU_j\}_{j\in J}$ of right ideals of $\fR$ makes $\fR$
a topological ring if and only if for each $j\in J$ and
$r\in\fR$, there exists $j'\in J$ such that $r\cdot \fU_{j'}\subset \fU_j$,
\cite[\S VI.4]{Sten}).
Then we denote by $\fR[[X]]$ the set of all the
infinite formal linear combinations $t=\sum_{x\in X}r_xx$ of elements of $X$ with
the coefficients $r_x\in\fR$ for which the family of elements $(r_x)_{x \in X}$
converges to zero in the topology on~$\fR$.
The latter condition means that for every open subset
$0\in \fU\subset\fR$ one must have $r_x\in\fU$ for all but a finite subset
of indices $x\in X$.

We in fact again obtain a functor $\fR[[-]]\:\Sets\rarrow\Sets$ where
given a map of sets $f\:X\rarrow Y$, the induced map
$\fR[[f]]\:\fR[[X]]\rarrow\fR[[Y]]$ sends $\sum_{x\in X}r_xx$ to
$\sum_{x\in X}r_xf(x)$. In more pedestrian terms, the coefficient
at each $y\in Y$ in the image satisfies $s_y=\sum_{x\in f^{-1}(y)} r_x$,
where we use infinite sums of converging families of elements
taken in the topology of~$\fR$.
As in~\S\ref{modules-and-monads-subsecn}, we denote by $\mu_X\:\fR[[\fR[[X]]]]\rarrow\fR[[X]]$ and
$\varepsilon_X\:X\rarrow\fR[[X]]$ the ``opening of parentheses'' map
and the map which interprets each $x\in X$ as a formal
linear combination with
only one nonzero coefficient, respectively.
The former of these is constructed using the multiplication
in the ring $\fR$ and the infinite summation in the topology
of $\fR$; the condition that open right ideals form a base of the
topology of $\fR$ guarantees the convergence. One can check that
$(\fR[[-]], \mu, \varepsilon)$ is again a monad on
$\Sets$.

A \emph{left\/ $\fR$\+contramodule} is, by definition, a map
$\alpha_\fC\:\fR[[\fC]]\rarrow \fC$ of sets which satisfies the direct analogues
of (1) and~(2) in~\S\ref{modules-and-monads-subsecn}:
\begin{enumerate}
\item the two maps $\alpha_\fC\circ\mu_\fC$ and $\alpha_\fC\circ \fR[[\alpha_\fC]]\:\fR[[\fR[[\fC]]]]\rarrow \fC$ coincide and
\item the maps $\fC\overset{\varepsilon_\fC}\rarrow \fR[[\fC]] \overset{\alpha_\fC}\rarrow \fC$ compose to the identity.
\end{enumerate}
In other words, the map $\alpha_\fC$ tells us how each infinite linear combination $\sum_{c\in\fC}r_cc$ with the coefficients $r_c$ converging to zero in $\fR$ evaluates in $\fC$, and we also require suitable distributivity, associativity and unitality.
Each left $\fR$\+module $C$ has a natural topology induced by the topology
of $\fR$, and when this topology is complete and separated, $C$ becomes naturally an $\fR$\+contramodule; but the converse is not true in general
(see \cite[Remarks~1.2.2--4]{Pweak} and~\cite[\S1.5]{Prev}).

A morphism $f\colon \fC\rarrow\fD$ of contramodules is a map which satisfies $\alpha_\fD\circ\fR[[f]] = f\circ\alpha_\fC$ (or in other words, $f(\sum_{x\in X} r_xc_x) = \sum_{x\in X} r_xf(c_x)$ for each set $X$, formal linear combination $\sum_{x\in X} r_xx\in\fR[[X]]$ and elements $c_x\in\fC$). We will denote the set of homomorphisms from $\fC$ to $\fD$ by $\Hom^\fR(\fC,\fD)$ and the category of all $\fR$\+contramodules by $\fR\contra$. Moreover, as clearly $\fR[X]\subseteq \fR[[X]]$ for each $X$, we have a forgetful functor
\[ U_\fR\:\fR\contra\lrarrow\fR\modl \]
from $\fR$\+contramodules to the category of ordinary $\fR$\+modules.
In particular, the underlying set of each contramodule is naturally an abelian group, and in fact the category of contramodules is abelian, where the kernels, images and cokernels are computed in the usual way at the level of the underlying $\fR$-modules.

For each set $X$, there is a natural left contramodule structure
on $\fR[[X]]$ given by $\alpha_{\fR[[X]]} = \mu_X$.
Such contramodules are called \emph{free}, since any map
of sets $f\:X\rarrow\fC$ to a contramodule $\fC$ uniquely
extends to a map of contramodules
\[
\fR[[X]]\rarrow\fC, \qquad
\sum_{x\in X}r_xx \longmapsto \sum_{x\in X}r_xf(x) \]
(this map is in fact equal to $\alpha_\fC\circ \fR[[f]]$).
The category $\fR\contra$ is cocomplete, since each contramodule has a presentation of the form $\fR[[Y]]\rarrow\fR[[X]]\rarrow\fC\rarrow 0$ and $\fR[[\coprod_{i\in I}X_i]]$ is a coproduct of $(\fR[[X_i]])_{i\in I}$ in $\fR\contra$ (cf.\ the proof of
Proposition~\ref{tilting-heart}). It is also complete and limits
of $\fR$-contramodules are created by the limits of the
the underlying sets, as with usual modules (in particular, $U_\fR$ preserves limits).
Assuming that the ring $\fR$ has a base of neighborhoods of zero
of cardinality less than~$\kappa$, the category $\fR\contra$ is
a locally $\kappa$\+presentable abelian category.
The free left $\fR$\+contramodule with one generator
$\fR=\fR[[\{0\}]]$ is a $\kappa$\+presentable
projective generator of $\fR\contra$.
The projective objects in $\fR\contra$ are precisely the direct
summands of free $\fR$\+contramodules.

So far, we know that the forgetful functor $U_\fR$ is exact, product-preserving and faithful, but it is not completely understood when it is full.
This holds for example for the adic completions of
Noetherian rings by centrally generated
ideals~\cite[Theorem~B.1.1]{Pweak}, \cite[Theorem~C.5.1]{Pcosh}.
More general results of this kind can be found in~\cite[Theorem~1.1]{Psm},
\cite[Section~3]{Pperp} and~\cite[Section 6]{Pflat}.
Using Theorem~\ref{kappa-ary-dual-gabriel-popescu},
we will, however, prove in \S\ref{morita-subsecn}
that the forgetful functor is always full
if we pass from $\fR$ to a (in a suitable sense)
Morita equivalent topological ring.

The category $\fR\contra$ has exact products, but in general neither exact coproducts nor even directed colimits.
However, it is still true that
for every family of projective objects $P_\alpha\in
\fR\contra$, the natural map $\coprod_\alpha P_\alpha\rarrow
\prod_\alpha P_\alpha$ is a monomorphism in this category.
This follows from the observation that the map $P^{(X)}\rarrow P^X$,
where $P=\fR$ denotes the standard projective generator of $\fR\contra$, is
a monomorphism for every set $X$, because the obvious map of sets
$\fR[[X]]\rarrow\fR^X$ is injective.

\subsection{Additive monads}
\label{additive-monads-subsecn}

We have seen in the previous subsection that the categories of contramodules over topological rings provide examples of 
abelian categories with coproducts and a projective generator, and that they can be viewed as the Eilenberg--Moore categories of monads.

Here we show that the latter generalizes to any abelian category
$\B$ with coproducts and a projective generator $P$.
We follow the introduction to~\cite{PR} (see also the references therein).
First of all, note that we have natural isomorphisms
\[ \Hom_\Sets(X,\Hom_\B(P,B)) = \Hom_\B(P,B)^X\cong \Hom_\B(P^{(X)},B) \]
for each $X\in\Sets$ and $B\in\B$, and thus the functor
$\Psi_P = \Hom_\B(P,-)\:\B\rarrow\Sets$ has a left adjoint $\Phi_P\:\Sets\rarrow\B$
given by $\Phi_P(X) = P^{(X)}$. The action of $\Phi_P$ on a morphism $f\:X\rarrow Y$
is specified by the equalities $\Phi_P(f)\iota_x = \iota'_{f(x)}$ for each $x\in X$,
where $\iota_x\:P\rarrow P^{(X)}$ and $\iota'_{f(x)}\:P\rarrow P^{(Y)}$ are
the coproduct inclusions.

Now we use the standard fact that each adjunction induces a monad
\cite[\S VI.1]{MacLane}. In our case, we obtain a monad $(\boT,\mu,\varepsilon)$
on $\Sets$, where
\[ \boT = \Psi_P\circ\Phi_P\: X \;\longmapsto\; \Hom_\B(P,P^{(X)}). \]
The monadic unit $\varepsilon$ is given by the unit of the adjunction
$X\rarrow\Hom_\B(P,P^{(X)})$ which sends $x\in X$ to the coproduct inclusion $\iota_x$.
The monadic composition $\mu\:\boT\circ\boT\rarrow\boT$ is obtained as
$\mu=\Psi_P\circ\zeta\circ\Phi_P$, where $\zeta_B\:P^{(\Hom_\B(P,B))}\rarrow B$
is the counit of the adjunction. More specifically, $\zeta_B$ is the obvious canonical
map and we have
\[ \mu_X=\Psi_P(\zeta_{P^{(X)}})\: \Hom_\B(P,P^{(\Hom_\B(P,P^{(X)}))})\rarrow \Hom_\B(P,P^{(X)}). \]

\begin{ex} \label{ordinary-mod-example}
In the case where $\B=R\modl$ and $P=R$, we obtain the monad
$(R[-],\mu,\varepsilon)$ from~\S\ref{modules-and-monads-subsecn}.
If $\fR$ is a topological ring, $\B=\fR\contra$ and $P=\fR$,
we precisely recover the monad $(\fR[[-]],\mu,\varepsilon)$
from~\S\ref{contramodules-over-top-rings-subsecn}.
\end{ex}

Our next aim it to prove
that the category $\B$ is equivalent to the
Eilenberg--Moore category $\boT\modl$ of this monad. 
We will use the term \emph{$\boT$\+module} for objects of the
Eilenberg--Moore category as we really view them as a generalization
of modules over a ring.

To this end, note that $\boT$-modules can be really interpreted as module-like
structures as follows (see also e.g.\ \cite[\S1]{PStAB5}).
For any set $X$, any fixed element $t\in\boT(X)$
induces an $X$\+ary operation on (the underlying sets of) all
$\boT$\+modules. More specifically, if $\alpha_M\:\boT(M)\rarrow M$ is
a $\boT$\+module and $t\in\boT(X)$, then $t$ acts on $M$ via
\begin{align*}
t_M\: M^X &\rarrow M, \\
m = (m_x)_{x\in X} \;&\longmapsto\; \alpha_M(\boT(m)(t)),
\end{align*}
where $m$~is viewed as a map of sets $X\rarrow M$ and $\boT(m)$ is the induced
map $\boT(X)\rarrow\boT(M)$. One can check that a map $f\: M\rarrow N$ between
$\boT$-modules is a homomorphism in $\boT\modl$ if and only if
\[
\begin{CD}
M^X @>{t_M}>> M   \\
@V{f^X}VV @VV{f}V \\
N^X @>{t_N}>> N
\end{CD}
\]
commutes for each set $X$ and $t\in\boT(X)$ (to prove the if-part,
put $X=M$ and $m=1_M \in M^M$).

The last paragraph is true for any monad, but for our particular monad $\boT\:X\longmapsto\Hom_\B(P,P^{(X)})$, the
elements $(\Delta\: P\rarrow P\oplus P)\in\boT(\{0,1\})$ (the diagonal map),
$(-1_P\:P\rarrow P)\in\boT(\{0\})$ and $(P\rarrow0)\in\boT(\varnothing)$ induce
on each $\boT$\+module operations $+\:M\times M\rarrow M$, $-\:M\rarrow M$ and
$0\in M$, respectively, which satisfy the axioms of an abelian group. Thus,
any $M\in\boT\modl$ carries a structure of an abelian group and it is not
difficult to check that $t_M\:M^X\rarrow M$ is a homomorphism of abelian groups
for each set $X$ and $t\in\boT(X)$. In particular, $\boT\modl$ is an additive category
and, in fact, it is abelian, where kernels, images and cokernels are computed
in the usual way at the level of underlying abelian groups
(see \cite[Lemma 1.1]{Pperp}).

Note also that $\boT(X)$ naturally carries the structure of a \emph{free} $\boT$\+module
$\alpha_{\boT(X)}=\mu_X\:\boT(\boT(X))\rarrow\boT(X)$ in that
any map of sets $f\:X\rarrow M$ to a $\boT$-module $M$ uniquely
extends to a map $\boT(X)\rarrow M$ in $\boT\modl$
(which is given by $\alpha_M\circ\boT(f)$). In particular, $\boT\modl$ has coproducts
(the argument is the same as in \S\ref{contramodules-over-top-rings-subsecn}),
$\boT(\{0\})$ is a projective generator, and $\B_\proj\subset\B$ consists precisely
of retracts of free $\boT$\+modules.

Now we prove the promised equivalence of $\B$ and $\boT\modl$
as a consequence of the following more general proposition.

\begin{prop} \label{copowers-kleisli}
For any category\/ $\A$ with set-indexed coproducts
and an object $M\in\A$, there is a monad $\boT\:\Sets\rarrow\Sets$ such that $\boT(X)=\Hom_\A(M,M^{(X)})$
and the full subcategory consisting of all
the objects $M^{(X)}$, $X\in\Sets$ in the category\/ $\A$ is 
equivalent to the full subcategory consisting of all the free
$\boT$\+modules\/ $\boT(X)$, $X\in\Sets$ in the category 
of\/ $\boT$\+modules\/ $\boT\modl$.
\end{prop}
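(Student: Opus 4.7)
The plan is to recognize this as an instance of the standard fact that any adjunction yields a monad, and that the Kleisli category (or equivalently, the full subcategory of free algebras in the Eilenberg--Moore category) embeds as a full subcategory of the Eilenberg--Moore category generated by the free objects. The statement is essentially the restriction of the classical Eilenberg--Moore comparison functor to the full subcategory on objects of the form $M^{(X)}$.

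First, I would construct the monad. The coproduct property of $M^{(X)}$ in $\A$ furnishes, for any set $X$ and any object $B\in\A$, a natural bijection
\[
 \Hom_\A(M^{(X)},B)\;\cong\;\Hom_\A(M,B)^X\;=\;\Hom_\Sets(X,\Hom_\A(M,B)).
\]
This exhibits $\Phi_M\: X\mapsto M^{(X)}$ as left adjoint to $\Psi_M=\Hom_\A(M,-)$, so $\boT=\Psi_M\circ\Phi_M$ acquires the structure of a monad on $\Sets$ with $\boT(X)=\Hom_\A(M,M^{(X)})$. Its unit $\varepsilon_X\:X\to\boT(X)$ is the adjunction unit, sending $x$ to the coproduct injection $M\to M^{(X)}$, and its multiplication $\mu_X=\Psi_M(\zeta_{M^{(X)}})$ is derived from the counit $\zeta$ of the adjunction.

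Second, I would invoke the Eilenberg--Moore comparison functor $K\:\A\rarrow\boT\modl$ associated to this adjunction: it sends an object $B$ to the $\boT$\+module $(\Psi_M(B),\,\Psi_M(\zeta_B))$. By construction $K\circ\Phi_M$ agrees (up to canonical isomorphism) with the free functor $F\:\Sets\rarrow\boT\modl$, so $K(M^{(X)})=(\boT(X),\mu_X)$ is precisely the free $\boT$\+module on~$X$. Thus $K$ restricts to a functor between the two full subcategories in question, and it is essentially surjective onto the free $\boT$\+modules.

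Third, one checks fully faithfulness. For arbitrary sets $X$, $Y$ the coproduct universal property gives
\[
 \Hom_\A(M^{(X)},M^{(Y)})\;\cong\;\Hom_\A(M,M^{(Y)})^X\;=\;\Hom_\Sets(X,\boT(Y)),
\]
while the free--forgetful adjunction in $\boT\modl$ gives
\[
 \Hom_{\boT\modl}(\boT(X),\boT(Y))\;\cong\;\Hom_\Sets(X,\boT(Y)).
\]
The map induced by $K$ must be shown to match these two identifications, i.e.\ to send a morphism $f\:M^{(X)}\to M^{(Y)}$ to the map $x\mapsto f\circ\iota_x$ in $\Hom_\Sets(X,\boT(Y))$.

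The main obstacle is precisely this last compatibility check: tracing through the definitions of $K$, the monadic unit/counit, and the coproduct adjunction to confirm that the two natural identifications of $\Hom$\+sets with $\Hom_\Sets(X,\boT(Y))$ are carried into one another by $K$. This is purely formal---an unwinding of the definitions of monadic multiplication and the free $\boT$\+module action---but it is the only nontrivial point; once it is verified, the restriction of $K$ becomes a fully faithful and essentially surjective functor between the full subcategory of $M^{(X)}$\+objects in $\A$ and the full subcategory of free $\boT$\+modules in $\boT\modl$, yielding the desired equivalence.
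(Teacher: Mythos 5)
Your proposal is correct and follows essentially the same route as the paper's proof: construct the monad from the adjunction $\Phi_M\dashv\Psi_M$, lift $\Psi_M$ to the Eilenberg--Moore comparison functor (which sends $M^{(X)}$ to the free module $\boT(X)$), and identify Hom sets via the chain $\Hom_\A(M^{(Y)},M^{(X)})=\Hom_\A(M,M^{(X)})^Y=\boT(X)^Y=\Hom_{\boT\modl}(\boT(Y),\boT(X))$. The paper simply writes this chain of identifications without spelling out the compatibility with the comparison functor that you flag as the remaining formal check, so your account is, if anything, slightly more explicit on that point.
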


\begin{proof}
As before, the functor $\Phi_M\:\Sets\rarrow\A$ assigning
the object $M^{(X)}$ to a set $X$ is left adjoint to
the functor $\Psi_M\:\A\rarrow\Sets$ assigning the set $\Hom_\A(M,N)$
to an object $N\in\A$. The monad $\boT_M\:\Sets\rarrow\Sets$ is
the composition of these two adjoint functors, $\boT=\Psi_M\circ\Phi_M$.
Hence, the functor $\Psi_M$ lifts naturally to a functor taking values
in the category of $\boT$\+modules---if $N\in\A$ and $\zeta_N\:M^{(\Hom_\A(M,N))}{}\rarrow N$ is the counit of adjunction, then the $\boT$\+module structure is given by $\Hom_\A(M,\zeta_N)\: \boT(\Hom_\A(M,N)) \rarrow \Hom_\A(M,N)$.
Moreover, the lifted functor $\Psi_M\:\A\rarrow\boT\modl$ takes the object $M^{(X)}$ to the free $\boT$\+module $\boT(X)$.
It remains to compare the sets of morphisms in the two categories
in order to conclude that the restriction of the functor $\Psi_M$
yields the desired equivalence:
\begin{equation*}
\Hom_\A(M^{(Y)},M^{(X)})=\Hom_\A(M,M^{(X)})^Y=\boT(X)^Y=
\Hom_\B(\boT(Y),\boT(X)).
\qedhere
\end{equation*}
\end{proof}

\begin{cor}\label{cor:B-as-Tmod}
Let $\B$ be an abelian category with set-indexed coproducts
and a projective generator $P$.
The functor $\Psi_P=\Hom_\B(P,-)\:\B\rarrow\boT\modl$ which
assigns to an object $B\in\B$ the $\boT$\+module
\[ \Hom_\B(P,\zeta_B)\: \boT(\Hom_\B(P,B)) = \Hom_\B(P,P^{(\Hom_\B(P,B))}) \rarrow \Hom_\B(P,B) \]
is an equivalence of categories.
\end{cor}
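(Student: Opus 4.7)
The plan is to verify that $\Psi_P$ is fully faithful and essentially surjective, bootstrapping from Proposition~\ref{copowers-kleisli}, which already identifies the full subcategory of copowers $P^{(X)} \subset \B$ with the full subcategory of free $\boT$\+modules $\boT(X) \subset \boT\modl$. To extend this comparison to the full categories, I would exploit the fact that every object of $\B$ has a presentation by copowers of~$P$ (since $P$ is a projective generator), and dually every $\boT$\+module has a presentation by free $\boT$\+modules.

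First I would record that $\Psi_P = \Hom_\B(P,-)$ is exact on $\B$, because $P$ is projective, and that the underlying set/abelian group of a cokernel in $\boT\modl$ coincides with the cokernel of the underlying map of sets/abelian groups (as noted in the discussion preceding Proposition~\ref{copowers-kleisli}). Combined with Proposition~\ref{copowers-kleisli}, this yields the crucial fact that $\Psi_P$ sends a presentation $P^{(J)} \rarrow P^{(I)} \rarrow B \rarrow 0$ in $\B$ to a presentation $\boT(J) \rarrow \boT(I) \rarrow \Psi_P(B) \rarrow 0$ in $\boT\modl$.

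Faithfulness is immediate from $P$ being a generator: if $f\:B\rarrow C$ is nonzero, some $g\:P\rarrow B$ satisfies $fg \neq 0$, and then $\Psi_P(f)(g) = fg \neq 0$. For fullness, given a $\boT$\+module map $\phi\:\Psi_P(B) \rarrow \Psi_P(C)$, choose a presentation $P^{(J)} \overset{g}\rarrow P^{(I)} \overset{\pi}\rarrow B \rarrow 0$. The composite $\phi\circ\Psi_P(\pi)\:\boT(I) \rarrow \Psi_P(C)$ is a $\boT$\+module morphism from a free $\boT$\+module, so by Proposition~\ref{copowers-kleisli} (adjunction) it equals $\Psi_P(\tilde f)$ for a unique $\tilde f\:P^{(I)}\rarrow C$ in~$\B$. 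Since $\Psi_P(\tilde f\circ g) = \phi\circ\Psi_P(\pi g) = 0$ and $\Psi_P$ is faithful on morphisms out of $P^{(J)}$ (this is tautological: $\Hom_\B(P^{(J)},C)=\Hom_\B(P,C)^J$), we get $\tilde f g = 0$, so $\tilde f$ factors as $f\circ\pi$ for some $f\:B\rarrow C$. As $\Psi_P(\pi)$ is an epimorphism in $\boT\modl$ (by exactness of $\Psi_P$), the identity $\Psi_P(f)\circ\Psi_P(\pi) = \phi\circ\Psi_P(\pi)$ forces $\Psi_P(f)=\phi$.

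For essential surjectivity, given a $\boT$\+module $M$, pick a presentation $\boT(J) \overset{h}\rarrow \boT(I) \rarrow M \rarrow 0$ by free $\boT$\+modules. By Proposition~\ref{copowers-kleisli}, $h = \Psi_P(g)$ for a unique $g\:P^{(J)}\rarrow P^{(I)}$; setting $B = \coker(g)$ in $\B$ and applying the exactness fact from the first step gives $\Psi_P(B) \cong M$. The main conceptual hurdle is ensuring that cokernels in $\boT\modl$ really are computed at the underlying-set level so that the exact $\Psi_P$ transports cokernel diagrams in $\B$ to cokernel diagrams in $\boT\modl$; this is the only nontrivial ingredient beyond Proposition~\ref{copowers-kleisli}, and it is supplied by the earlier description of $\boT\modl$ as an abelian category whose kernels, images and cokernels are inherited from underlying abelian groups.
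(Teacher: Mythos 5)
Your argument is correct, and it rests on the same foundation as the paper's proof, namely Proposition~\ref{copowers-kleisli} together with the exactness of $\Hom_\B(P,-)$ and the fact that (co)kernels in $\boT\modl$ are computed on underlying abelian groups. The difference is in how the comparison is propagated from copowers of $P$ to all of $\B$: the paper first passes to idempotent completions to identify $\B_\proj$ with $\boT\modl_\proj$ and then invokes the abstract principle that an abelian category with enough projectives is determined by its full subcategory of projectives (citing \cite[Proposition~IV.1.2]{ARS96}), whereas you unpack that principle by hand, checking faithfulness, fullness and essential surjectivity directly via presentations by copowers of $P$ on one side and by free $\boT$\+modules on the other. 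Your route is more self-contained and even bypasses the intermediate step of adjoining direct summands, since every object already admits a presentation by copowers rather than by arbitrary projectives; the paper's route is shorter on the page but outsources the presentation argument to the cited reference. One small point of hygiene: in the fullness and essential surjectivity steps you appeal to Proposition~\ref{copowers-kleisli} for the identification $\Hom_{\boT\modl}(\boT(I),\Psi_P(C))\cong\Hom_\B(P^{(I)},C)$, but that proposition as stated only compares morphisms \emph{between} copowers and free modules; what you actually need is the freeness of $\boT(I)$ (any map of sets $I\rarrow N$ extends uniquely to a $\boT$\+module map $\boT(I)\rarrow N$) combined with $\Hom_\B(P^{(I)},C)=\Hom_\B(P,C)^I$, together with the routine check that this bijection is implemented by $\Psi_P$. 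Both ingredients are available in the discussion preceding the corollary, so this is a matter of citation rather than a gap.
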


\begin{proof}
By adjoining direct summands on both sides of the equivalence from Proposition~\ref{copowers-kleisli}, we infer that $\Psi_P$ restricts to
an equivalence $\B_\proj\simeq\boT\modl_\proj$. Since an abelian category with enough projectives
is determined by its subcategory of projective objects (see e.g.~\cite[Proposition IV.1.2]{ARS96}) and
since $\Psi_P$ is exact, it follows that it is an equivalence $\B\rarrow\boT\modl$.
\end{proof}

\begin{rem}
The $\boT$\+module structure on $\Hom_\B(P,B)$ is easier to describe through
the operations induced by $t\in\boT(X)=\Hom_\B(P,P^{(X)})$, which are simply given by
\begin{align*}
\Hom_\B(P,B)^X=\Hom_\B(P^{(X)},B)&\rarrow\Hom_\B(P,B), \\
(m\:P^{(X)}\to B)&\longmapsto m\circ t.
\end{align*}
Morally the operations can be viewed as 
infinite $\Hom_\B(P,P)$-linear combinations of elements of $M$, but unlike in the case
of contramodules in~\S\ref{contramodules-over-top-rings-subsecn}, $t\in\Hom_\B(P,P^{(X)})$
might not be determined by its components $t_x\:P\rarrow P$ since the
canonical map $P^{(X)}\rarrow P^X$ might not be injective in $\B$
(e.g.\ in $\B=(k\modl)^\rop$ for a field $k$).
\end{rem}


 We have seen that any abelian category $\B$ with coproducts
and a projective generator $P$ is equivalent to $\boT\modl$,
where $\boT$ is a monad on sets. Moreover, $\boT$ is an \emph{additive monad}
in that there exist a binary operation $+\in\boT(\{0,1\})$,
a unary operation $-\in\boT(\{0\})$ and a constant
$0\in\boT(\varnothing)$ in the monad $\boT$ satisfying
the usual axioms of an abelian group and commuting with all
the other operations in~$\boT$, \cite[\S10]{Wr}
(such an additive structure in $\boT$ must be unique
by the well-known Eckmann-Hilton argument).
 The arguments above show the converse as well, however:
For any additive monad $\boT\:\Sets\rarrow\Sets$, the category
$\boT\modl$ is abelian, has set-indexed coproducts, and $P = \boT(\{0\})$
is a projective generator. We summarize the discussion in the following
proposition.

\begin{prop}
There exists a bijective correspondence between
\begin{enumerate}
\renewcommand{\theenumi}{\alph{enumi}}
\item equivalence classes of abelian categories\/ $\B$ with coproducts
and a chosen projective generator $P\in\B$, and
\item isomorphisms classes of additive monads $(\boT,\mu,\varepsilon)$
on $\Sets$.
\qed
\end{enumerate}
\end{prop}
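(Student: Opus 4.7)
The plan is to assemble the correspondence from the ingredients already developed in Sections~\ref{modules-and-monads-subsecn}--\ref{additive-monads-subsecn}. The forward map sends a pair $(\B,P)$ to the monad $\boT_P = \Hom_\B(P,P^{(-)})$ built from the adjunction $\Phi_P\dashv\Psi_P$ described before Proposition~\ref{copowers-kleisli}, while the backward map sends an additive monad $\boT$ to the pair $(\boT\modl,\boT(\{0\}))$ consisting of the Eilenberg--Moore category and its canonical free generator.

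First I would verify that each assignment lands in the advertised class. For the forward direction, the ambient abelian group structure on each $\Hom_\B(P,P^{(X)})$ endows $\boT_P$ with the additive operations: take $+\in\boT_P(\{0,1\})=\Hom_\B(P,P\oplus P)$ to be the diagonal $\Delta$, $-\in\boT_P(\{0\})=\Hom_\B(P,P)$ to be $-1_P$, and $0\in\boT_P(\varnothing)=\Hom_\B(P,0)$ to be the zero map. These satisfy the abelian group axioms, and commutativity with every other operation $t\in\boT_P(X)$ reduces to the statement that morphisms in $\B$ are compatible with the biproduct structure. For the backward direction, the text already explains that an additive monad $\boT$ makes $\boT\modl$ into an abelian category with set-indexed coproducts, and that $\boT(\{0\})$ is a projective generator via the natural isomorphism $\Hom_{\boT\modl}(\boT(\{0\}),M)\cong M$ (the underlying set), which identifies the corepresenting functor with the exact and faithful forgetful functor.

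Next I would show the two constructions are mutually inverse. Starting from $(\B,P)$, Corollary~\ref{cor:B-as-Tmod} provides an equivalence $\Psi_P\:\B\overset{\simeq}\rarrow\boT_P\modl$, and by construction $\Psi_P(P)=\Hom_\B(P,P)=\boT_P(\{0\})$, so the distinguished generator is preserved. Conversely, starting from an additive monad $\boT$ and setting $(\B,P)=(\boT\modl,\boT(\{0\}))$, the reconstructed monad at a set $X$ is
\[
\boT_P(X)=\Hom_{\boT\modl}\bigl(\boT(\{0\}),\boT(\{0\})^{(X)}\bigr)
=\Hom_{\boT\modl}\bigl(\boT(\{0\}),\boT(X)\bigr)\cong\boT(X),
\]
where the last isomorphism is the free--forgetful adjunction for $\boT$. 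This bijection is natural in $X$, giving an isomorphism of functors, and one checks that it intertwines $\varepsilon$ and $\mu$ with the corresponding data for $\boT_P$, because the latter are tautologically induced by the free--forgetful adjunction that defined $\boT$ in the first place.

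The main technical obstacle is bookkeeping: one must check that all of the natural identifications in the previous paragraph intertwine the monad structure maps and that the reconstruction of $(\B,P)$ from its monad transports $P$ to the canonical generator, rather than just producing an abstractly equivalent category. Both checks reduce to the universal property of free $\boT$\+modules, so they are routine; the one mildly subtle point is that the additive structure on $\boT_P$ is intrinsic (not extra data), which follows from the classical Eckmann--Hilton argument already invoked in~\S\ref{additive-monads-subsecn}.
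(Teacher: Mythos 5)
Your proposal is correct and follows essentially the same route as the paper, which states this proposition with an immediate \qed because the two assignments $(\B,P)\mapsto\boT_P$ and $\boT\mapsto(\boT\modl,\boT(\{0\}))$ and their mutual inverseness (via Corollary~\ref{cor:B-as-Tmod} and the free--forgetful adjunction) are exactly the content of the preceding discussion in \S\ref{additive-monads-subsecn}. Your explicit verification of the additive structure on $\boT_P$ and of the identification $\Hom_{\boT\modl}(\boT(\{0\}),\boT(\{0\})^{(X)})\cong\boT(X)$ just makes that implicit argument precise.
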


\begin{rem} \label{rem:tilting-contra}
Proposition~\ref{copowers-kleisli} and the discussion above allow us
to conveniently restate the derived equivalences
from Section~\ref{derived-equivalences-secn} in a form
which is more concrete and looks more classical.
If $\A$ is an abelian category with products and an injective cogenerator
and $T\in\A$ is an $n$\+tilting object,
then functor $\Psi_T=\Hom_\A(T,-)$ identifies with the functor
$\Psi\colon\A\rarrow\B$ of Section~\ref{derived-equivalences-secn}.
Indeed, both the functors $\Psi\:\A\rarrow\B$ and
$\Hom_A(T,{-})\:\A\rarrow\B$ are left exact, so it suffices to show
that they coincide on the full subcategory of injective
objects $\A_\inj\subset\A$.
For this purpose, it suffices to construct an isomorphism between
the restrictions of the two functors to the exact subcategory
$\E\subset\A$.
Both the functors are exact on this subcategory, so the question
reduces to checking that they coincide on the full subcategory of
projective objects $\Add(T)\subset\E$, which we know from the construction.
The derived equivalences from
Corollary~\ref{conv-abs-derived-equivalence} then take the form
\[ \boR\Hom_\A(T,-)\:\D^\st(\A)\overset{\sim}\lrarrow\D^\st(\B). \]
\end{rem}

\subsection{Categories of models of algebraic theories}

Under a mild technical condition, we also obtain a connection
between categories of modules over additive monads on one hand
and ordinary module categories on the other hand,
which is in some sense dual to the usual Gabriel--Popescu theorem for
Grothendieck categories. This will be useful in the context of contramodules
over topological rings in the next section.

 Let $\kappa$ be a regular cardinal.
 A projective generator $P$ in an abelian category $\B$ with coproducts
is called \emph{abstractly\/ $\kappa$\+small}
if every morphism $P\rarrow P^{(X)}$ in $\B$ factorizes through
the natural embedding $P^{(Z)}\rarrow P^{(X)}$ of the coproduct of
copies of $P$ over some subset $Z\subset X$ of cardinality less
than~$\kappa$.
Equivalently, one may require that the functor $\boT\: P\longmapsto
\Hom_\B(P,P^{(X)})$ preserve $\kappa$\+filtered colimits, or that
the object $P\in\B$ be $\kappa$\+presentable. 
The existence of an abstractly small projective generator in the category $\B$
is also equivalent to $\B$ being locally presentable with enough projective objects.
 The Eilenberg--Moore categories of monads on $\Sets$ which preserve
$\kappa$\+filtered colimits are also called the \emph{categories
of models of\/ $\kappa$\+ary algebraic theories}~\cite{Wr}
(cf.~\cite{La}, where finitary algebraic theories are discussed).
 We call such a theory \emph{additive} if the monad $\boT$ is
additive.

\begin{ex} \label{monad-not-k-small}
The existence of an abstractly $\kappa$\+small generator is not for
free. Let $k$ be a field and consider $\B = (k\modl)^\rop$.
This is an abelian category with a projective generator $k$ and
the corresponding additive monad $\boT\:\Sets\rarrow\Sets$ is
given by $\boT(X) = \Hom_k(k^X,k)$. However, $k$ is certainly not
abstractly $\kappa$\+small in $\B$.
\end{ex}

 In order to stress the parallel between Grothendieck categories
and the categories of models of\/ $\kappa$\+ary algebraic theories,
we recall the classical Gabriel--Popescu theorem
\cite[\S X.4]{Sten}.

\begin{thm} \label{gabriel-popescu}
 Let\/ $\A$ be a Grothendieck abelian category and $G\in\A$ be
a generator in\/~$\A$.
 Denote by $S$ the ring\/ $\Hom_\A(G,G)^\rop$.
 Then the functor\/ $\A\rarrow S\modl$ assigning to an object
$A\in\A$ the left $S$\+module\/ $\Hom_\A(G,A)$ is fully faithful,
and has an exact left adjoint functor.
\end{thm}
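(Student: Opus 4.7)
The plan is to construct the left adjoint $\Phi\:S\modl\rarrow\A$ explicitly, verify the adjunction and fully faithfulness of $\Psi$, and then isolate exactness of $\Phi$ as the essential step. For the construction, I would define $\Phi$ on free modules by $\Phi(S^{(X)}):=G^{(X)}$ and extend it colimit-preservingly: given $M\in S\modl$ with a free presentation $S^{(Y)}\rarrow S^{(X)}\rarrow M\rarrow 0$, set $\Phi(M):=\coker(G^{(Y)}\rarrow G^{(X)})$, where the morphism between copowers of $G$ is determined by the presentation matrix via the identification $\Hom_S(S,S^{(X)})=S^{(X)}=\Hom_\A(G,G^{(X)})$. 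Standard checks show that $\Phi$ is well-defined, functorial, and right exact.

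The adjunction $\Hom_\A(\Phi(M),A)\cong\Hom_S(M,\Psi(A))$ holds tautologically on free modules, since
\[
\Hom_\A(G^{(X)},A)\cong\Hom_\A(G,A)^X\cong\Hom_S(S^{(X)},\Psi(A)),
\]
and then extends to arbitrary $M$ by applying $\Hom_\A(-,A)$ to the presentation and invoking the five lemma. Fully faithfulness of $\Psi$ amounts to the counit $\varepsilon_A\:\Phi\Psi(A)\rarrow A$ being an isomorphism. Its epimorphic nature is immediate from $G$ being a generator: the canonical evaluation $G^{(\Hom_\A(G,A))}\rarrow A$ is an epimorphism and factors through~$\varepsilon_A$. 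Monomorphicity of $\varepsilon_A$ is seen by applying $\Hom_\A(G,-)$, which annihilates $\ker\varepsilon_A$ by construction, and using that $G$ detects zero objects in~$\A$.

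The main obstacle is the exactness of $\Phi$. As a left adjoint, $\Phi$ already preserves all colimits, so only preservation of monomorphisms remains to be verified. The classical route is to consider the class
\[
\cT=\{\,M\in S\modl\mid\Phi(M)=0\,\},
\]
prove that $\cT$ is a hereditary torsion class (a Serre subcategory closed under coproducts) in $S\modl$, and identify the induced functor $S\modl/\cT\rarrow\A$ as an equivalence. Exactness of $\Phi$ then follows from the fact that Gabriel localization at a localizing subcategory is an exact functor. The delicate point is that, given a monomorphism $0\rarrow M'\rarrow M$ and a candidate element of $\ker(\Phi(M')\rarrow\Phi(M))$, one must exhibit a sufficiently large submodule of $S^{(X)}$ that witnesses the vanishing; this is precisely where the AB5 axiom of the Grothendieck category $\A$ enters in an essential way, via the fact that in $\A$ filtered colimits commute with finite limits. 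This step distinguishes Grothendieck categories from merely cocomplete abelian ones with a generator.
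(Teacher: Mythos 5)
First, a remark on the comparison itself: the paper does not prove Theorem~\ref{gabriel-popescu}. It is recalled as the classical Gabriel--Popescu theorem with a citation to Stenstr\"om's book, so there is no in-paper argument to measure your proposal against; I am assessing it on its own terms.

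There is a genuine gap at the central point. Your construction of $\Phi$ on free modules, the extension via presentations, the adjunction isomorphism, and the observation that the counit $\varepsilon_A\:\Phi\Psi(A)\rarrow A$ is an epimorphism (because $G$ is a generator) are all fine. But the claim that $\Hom_\A(G,\ker\varepsilon_A)=0$ holds ``by construction'' is not correct, and this is exactly the hard part of the theorem: it is equivalent to fullness of $\Psi$. Writing $\Phi\Psi(A)=\coker(G^{(J)}\rarrow G^{(I)})$ with $I=\Hom_\A(G,A)$, one has $\ker\varepsilon_A=N/\mathrm{im}(G^{(J)})$ with $N=\ker(G^{(I)}\rarrow A)$; since $G$ is not projective, a morphism $G\rarrow\ker\varepsilon_A$ need not lift to $N$, and nothing formal forces it to vanish. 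Concretely, fullness amounts to showing that for any $S$\+linear map $\phi\:\Hom_\A(G,A)\rarrow\Hom_\A(G,B)$, the induced morphism $q\:G^{(I)}\rarrow B$ kills $\ker p$, where $p\:G^{(I)}\rarrow A$ is the canonical epimorphism; equivalently, $qu=0$ for every $u\:G\rarrow G^{(I)}$ with $pu=0$. If $G$ were abstractly finitely small this would follow from $S$\+linearity of~$\phi$ applied to the finitely many components of~$u$, but in general $u$ does not factor through a finite subcoproduct, and one must write $G^{(I)}$ as the filtered union of its finite subcoproducts, pull this filtration back along~$u$, and invoke AB5 --- this is the key lemma of Stenstr\"om, Ch.~X, \S4. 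Without AB5 the theorem is false, so some such argument is unavoidable; your proposal locates the AB5 input only in the exactness of $\Phi$, whereas it is already needed for fully faithfulness. Finally, the exactness sketch via the class $\cT$ is logically backwards: identifying $S\modl/\cT\rarrow\A$ as an equivalence and then quoting exactness of Gabriel localization presupposes essentially the full statement of the theorem rather than providing a route to it. In the standard treatments one proves the key lemma first and derives both fullness and exactness of $\Phi$ from it; the quotient-category description is then a corollary.
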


\begin{cor}
 Any Grothendieck category $\A$ can be presented
as a reflective full subcategory\/ $\A\subset S\modl$ in
the category of modules over an associative ring $S$ such that
the reflection functor $S\modl\rarrow\A$ is exact.

 Conversely, any reflective full subcategory in $S\modl$ which
is abelian as a category and for which the reflection functor is
exact is a Grothendieck category.
\end{cor}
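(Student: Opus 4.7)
The forward direction is an immediate rephrasing of Theorem~\ref{gabriel-popescu}: given a Grothendieck category $\A$ with generator $G$, setting $S=\Hom_\A(G,G)^\rop$, the functor $\Hom_\A(G,{-})\:\A\rarrow S\modl$ is fully faithful with an exact left adjoint, which realizes $\A$ as a reflective full subcategory of $S\modl$ with exact reflection.

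For the converse, let $i\:\A\rarrow S\modl$ be a fully faithful inclusion of an abelian subcategory with exact left adjoint $L\:S\modl\rarrow\A$. I plan to verify the three defining properties of a Grothendieck abelian category: existence of set-indexed colimits, the existence of a generator, and the exactness of filtered colimits (axiom AB5). Cocompleteness is immediate because $L$, being a left adjoint, preserves colimits: for any diagram $D\:I\rarrow\A$, the object $L(\colim_{S\modl}iD)$ serves as the colimit of $D$ in~$\A$. A generator is supplied by $G=L(S)$, where $S$ is the free module of rank one: via the adjunction isomorphism $\Hom_\A(G,X)\cong\Hom_S(S,iX)$, the claim that $G$ generates $\A$ reduces to the fact that $S$ generates $S\modl$ combined with the faithfulness of $i$.

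The main step is AB5, and I intend to prove it in its equivalent form that filtered colimits of monomorphisms in $\A$ are again monomorphisms. Given a filtered system of monomorphisms $(A_\alpha\hookrightarrow B_\alpha)_\alpha$ in $\A$, the embedding $i$ preserves them (being a right adjoint, it preserves kernels), so we obtain a filtered system of monomorphisms in $S\modl$. AB5 in $S\modl$ delivers a monomorphism $\colim_{S\modl}iA_\alpha\rarrow\colim_{S\modl}iB_\alpha$. Applying the exact functor $L$, and using that $L$ preserves colimits together with the fact that the counit $Li\rarrow\mathrm{id}_\A$ is an isomorphism (because $i$ is fully faithful), one reads off the desired monomorphism $\colim_\A A_\alpha\rarrow\colim_\A B_\alpha$ in~$\A$.

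The only genuine subtlety---more a bookkeeping matter than a real obstacle---is the identification, used in the last paragraph, of colimits in $\A$ with $L$ applied to the corresponding colimits in $S\modl$ of the $i$-transported diagram; this is precisely the combination of $L$ being a left adjoint and the counit isomorphism $Li\cong\mathrm{id}_\A$. With these observations all three properties are verified, proving that $\A$ is Grothendieck.
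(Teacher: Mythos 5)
Your proof is correct. The paper states this corollary without proof, treating it as an immediate consequence of Theorem~\ref{gabriel-popescu}, and your argument supplies exactly the standard details: the forward direction is a restatement of that theorem, while for the converse you correctly obtain cocompleteness and a generator $L(S)$ from the reflection, and deduce AB5 by transporting a filtered system of monomorphisms into $S\modl$ (where the inclusion preserves them as a right adjoint), invoking AB5 there, and applying the exact, colimit-preserving reflection together with the counit isomorphism $Li\cong\mathrm{id}_\A$.
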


 The following theorem is a ``dual-analogous'' version of
Theorem~\ref{gabriel-popescu} for the categories of models of
additive $\kappa$\+ary algebraic theories.
 In the nonadditive context, its result goes back to
Isbell~\cite[\S2.2]{Isb} (see also~\cite[Remark~1.3]{Ros}).

\begin{thm} \label{kappa-ary-dual-gabriel-popescu}
 Let\/ $\B$ be a cocomplete abelian category with an abstractly
$\kappa$\+small projective generator~$P$.
 Let $Y$ be a set such that the successor cardinal of the cardinality
of $Y$ is greater or equal to\/~$\kappa$.
 Set $Q=P^{(Y)}$, and denote by $S$ the ring\/ $\Hom_\B(Q,Q)^\rop$. 
 Then the functor\/ $\B\rarrow S\modl$ assigning to an object
$B\in\B$ the left $S$\+module\/ $\Hom_\B(Q,B)$ is exact, fully faithful,
and has a left adjoint functor\/~$\Delta$.
\end{thm}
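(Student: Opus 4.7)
The plan is to construct the left adjoint $\Delta$, verify exactness of $\Psi_Q=\Hom_\B(Q,-)$, and then establish fully faithfulness by showing the counit $\epsilon_B\:\Delta\Psi_Q(B)\to B$ is an isomorphism for every $B\in\B$. The construction of $\Delta$ is forced by adjunction: one must have $\Delta(S^{(I)})=Q^{(I)}$, since $\Hom_\B(Q^{(I)},B)=\Hom_\B(Q,B)^I=\Hom_S(S^{(I)},\Psi_Q(B))$, and for a general left $S$\+module $M$ one picks a free presentation $S^{(J)}\to S^{(I)}\to M\to 0$ and sets $\Delta(M)=\coker(Q^{(J)}\to Q^{(I)})$; the adjunction identity extends from free modules to all of $S\modl$ by right exactness of $\Delta$ together with left exactness of $\Hom_S(-,\Psi_Q(B))$. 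Exactness of $\Psi_Q$ is immediate, since $\Hom_\B(P^{(Y)},-)=\Hom_\B(P,-)^Y$ is a $Y$\+fold product of exact functors with values in abelian groups, hence exact.

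For fully faithfulness, the key input is that $Q$ is $|Y|^+$\+presentable: since $P$ is $\kappa$\+presentable and $|Y|^+\ge\kappa$, the functor $\Hom_\B(P,-)$ preserves $|Y|^+$\+filtered colimits, and because $|Y|$\+indexed products commute with $|Y|^+$\+filtered colimits in $\Sets$, so does $\Hom_\B(Q,-)=\Hom_\B(P,-)^Y$. The category $\B$ is locally $\kappa$\+presentable thanks to the abstractly $\kappa$\+small projective generator $P$, hence also locally $|Y|^+$\+presentable, so every $B\in\B$ is a $|Y|^+$\+filtered colimit of $|Y|^+$\+presentable objects $B_\alpha$, each admitting a presentation $Q^{(J_\alpha)}\to Q^{(I_\alpha)}\to B_\alpha\to 0$ with $|I_\alpha|,|J_\alpha|\le|Y|$. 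Since $\Delta$ preserves all colimits (being a left adjoint) and $\Psi_Q$ preserves $|Y|^+$\+filtered ones, the composition $\Delta\Psi_Q$ preserves $|Y|^+$\+filtered colimits; so it suffices to show $\epsilon_{B_\alpha}$ is an isomorphism for every such $B_\alpha$, which by the five-lemma further reduces to showing that $\epsilon_{Q^{(I)}}$ is an isomorphism whenever $|I|\le|Y|$.

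For $|I|$ finite, $\Psi_Q(Q^{(I)})=\Hom_\B(Q,Q)^{|I|}=S^{(I)}$ and $\Delta(S^{(I)})=Q^{(I)}$ by construction, so $\epsilon_{Q^{(I)}}$ is literally the identity up to canonical identifications. The subtle case is $|I|$ infinite, where $\Psi_Q(Q^{(I)})$ is in general strictly larger than the free module $S^{(I)}$; but an infinite $|I|$ with $|I|\le|Y|$ forces $|Y|$ to be infinite, so a bijection $Y\times I\cong Y$ provides an isomorphism $\varphi\:Q^{(I)}\cong Q$ in $\B$, and by naturality of the counit it is enough to verify that $\epsilon_Q$ is an isomorphism---and indeed $\epsilon_Q\:\Delta(S)=Q\to Q$ is the identity, by the triangle identities of the adjunction combined with $\Psi_Q(Q)=S$. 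I expect the main obstacle to be exactly this infinite-$|I|$ case: the fact that $\Psi_Q(Q^{(I)})\ne S^{(I)}$ as $S$\+modules could at first appear to obstruct the argument, but it does not, since the counit is natural, and the hypothesis $|Y|^+\ge\kappa$ is precisely what supplies the iso $Q^{(I)}\cong Q$ needed to handle the copowers that arise in presentations of $|Y|^+$\+presentable objects.
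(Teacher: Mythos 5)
Your argument is correct, but it reaches the conclusion by a genuinely different route than the paper. The paper proves full faithfulness ``by hand'' at the level of the monad $\boT$: identifying $\Hom_\B(Q,C)$ with $C^Y$, it shows that any $S$\+module map $f\:C^Y\rarrow D^Y$ must commute with the idempotents $p_y$ and the transposition elements $s_{y'y''}$ of $S$, hence is of the form $g^Y$ for a single map of sets $g\:C\rarrow D$, and then verifies that $g$ respects every $X$\+ary operation of $\boT$ by using abstract $\kappa$\+smallness to assume $|X|<\kappa$ and the hypothesis $|Y|^+\ge\kappa$ to embed $X$ into $Y$. You instead invoke the standard criterion that a right adjoint is fully faithful iff the counit $\epsilon\:\Delta\circ\Hom_\B(Q,{-})\rarrow 1_\B$ is an isomorphism, and check this on a generating family: the cardinality hypothesis enters for you through the $|Y|^+$\+presentability of $Q$ and the collapse $Q^{(I)}\cong Q$ for infinite $|I|\le|Y|$. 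Both proofs are driven by the same numerology, but the paper's is elementary and exhibits the preimage morphism explicitly, needing nothing from accessibility theory, whereas yours is shorter and more conceptual at the price of leaning on the local $\kappa$\+presentability of $\B$ (which the paper asserts but does not prove in this section) and on the structure of $|Y|^+$\+presentable objects. Two small points to tighten: (1) a $|Y|^+$\+presentable object is in general only a \emph{retract} of an object with a presentation $Q^{(J)}\rarrow Q^{(I)}\rarrow B\rarrow 0$, $|I|,|J|\le|Y|$; this is harmless since a retract of an isomorphism is an isomorphism, but it should be said. (2) You could in fact bypass local presentability altogether: every object of $\B$ admits \emph{some} presentation by copowers of $Q$, and for arbitrary $I$ the abstract $|Y|^+$\+smallness of $Q$ exhibits both $Q^{(I)}$ and $\Hom_\B(Q,Q^{(I)})$ as $|Y|^+$\+filtered colimits over the subsets $I_0\subset I$ with $|I_0|\le|Y|$, reducing $\epsilon_{Q^{(I)}}$ directly to your base case; this removes the only external input your argument requires.
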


\begin{cor} \label{kappa-ary-dual-gabriel-popescu-cor}
 The category of models of any additive $\kappa$\+ary
algebraic theory can be presented as a reflective full abelian subcategory
in the category of modules over an associative ring $S$ such that
the embedding functor\/ $\B\rarrow S\modl$ is exact.

Conversely, any reflective full abelian subcategory in $S\modl$ with
an exact embedding functor is a cocomplete abelian category with
a projective generator (and hence the category of models of
an additive\/ $\kappa$\+ary algebraic theory if it has an
abstractly $\kappa$\+small such generator).
\end{cor}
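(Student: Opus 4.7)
The forward direction is immediate from Theorem~\ref{kappa-ary-dual-gabriel-popescu}. Given $\B = \boT\modl$ for an additive $\kappa$-ary monad $\boT$, its canonical projective generator $P = \boT(\{0\})$ is abstractly $\kappa$-small (equivalently, $\boT$ preserves $\kappa$-filtered colimits, which is precisely the definition of being $\kappa$-ary). Choosing any set $Y$ such that the successor of $|Y|$ is at least $\kappa$ and putting $Q = P^{(Y)}$, $S = \Hom_\B(Q,Q)^\rop$, the theorem exhibits $\B$ as a reflective subcategory of $S\modl$ via an exact fully faithful embedding with left adjoint~$\Delta$.

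For the converse, let $i\:\B\rarrow S\modl$ be an exact fully faithful embedding realizing $\B$ as a reflective subcategory, with reflector $\rho\:S\modl\rarrow\B$. I would first show $\B$ is cocomplete by the standard trick: for any small diagram $D$ in $\B$, the object $\rho(\colim_{S\modl} i\circ D)$ is a colimit of $D$ in $\B$, as one checks from $\rho\dashv i$ together with full faithfulness of~$i$. Next, I would show that $G := \rho(S)$ is a projective generator of $\B$. By adjunction there is a natural isomorphism $\Hom_\B(G,-)\cong\Hom_S(S,i(-))=i(-)$ of functors $\B\rarrow\Sets$; since $i$ is exact, $\Hom_\B(G,-)$ is exact, so $G$ is projective, and since $i$ is faithful, $iB=0$ forces $B=0$, so $G$ generates. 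This establishes the first assertion of the converse.

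Finally, under the additional hypothesis that $\B$ possesses some abstractly $\kappa$-small projective generator $P$, I would apply Corollary~\ref{cor:B-as-Tmod} to $P$, obtaining an equivalence $\B\simeq\boT\modl$ for the additive monad $\boT(X)=\Hom_\B(P,P^{(X)})$. By the equivalent reformulation of abstract $\kappa$-smallness recorded just before the statement, $\boT$ preserves $\kappa$-filtered colimits and is therefore an additive $\kappa$-ary algebraic theory, so $\B$ is its category of models.

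There is no real obstacle here: the argument is a packaging of formal adjunction manipulations with the results already established in this section. The mildly delicate point is noticing that the hypothesis used is exactness of $i$ (not of the reflector~$\rho$, which is generally not assumed exact), and this enters precisely through the isomorphism $\Hom_\B(G,-)\cong i(-)$ that transports exactness of $i$ into projectivity of the generator~$G=\rho(S)$.
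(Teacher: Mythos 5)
Your proposal is correct and follows essentially the same route as the paper: the forward direction is read off from Theorem~\ref{kappa-ary-dual-gabriel-popescu}, cocompleteness of $\B$ is obtained by applying the reflector to colimits computed in $S\modl$, and $\Delta(S)$ is identified as a projective generator via the adjunction isomorphism $\Hom_\B(\Delta(S),-)\cong i(-)$ together with exactness and faithfulness of the embedding (the paper phrases this as ``$\Delta$ is left adjoint to an exact functor,'' which is the same computation). The only difference is that you spell out the details the paper leaves implicit, including the final appeal to Corollary~\ref{cor:B-as-Tmod} for the parenthetical claim.
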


\begin{rem}
 If we assume Vop\v{e}nka's principle, the latter corollary get sharper.
This is because then a full subcategory in a locally presentable category is
reflective if and only if it is closed under limits, and every such
full subcategory is locally presentable and accessibly
embedded~\cite[Corollary~6.24 and Theorem~6.9]{AR}.

Hence then an additive category $\B$ is the category of models of
an additive\/ $\kappa$\+ary algebraic theory for some $\kappa$
if and only if it is a full exact abelian subcategory of $S\modl$ for some ring $S$
and it is closed under products.
\end{rem}

\begin{proof}[Proof of Theorem~\ref{kappa-ary-dual-gabriel-popescu}]
 One may view the functor $\Hom_\B(Q,-)\:\B\rarrow S\modl$ as the restricted
Yoneda functor which sends $X\in\B$ to $\Hom_\B(-,X)$ restricted to the one-object
full subcategory $\{Q\}\subset\B$. Then the fact that $\Hom_\B(Q,-)$ has a left
adjoint $\Delta\:S\modl\rarrow\B$ is just an additive version of~\cite[Proposition 1.27]{AR}.
To obtain a more concrete description of $\Delta$, one computes that
$$
 \Hom_S(S^{(X)},\Hom_\B(Q,B)) = \Hom_\B(Q,B)^X
 = \Hom_\B(Q^{(X)},B)
$$
for any set $X$, and hence the functor $\Delta$ is defined on the full
subcategory of free $S$\+modules in $S\modl$ by the rule
$\Delta(S^{(X)})=Q^{(X)}$. To compute the functor $\Delta$ on an arbitrary
$S$\+module, one presents it as the cokernel of a morphism of free
$S$\+modules and uses the fact that left adjoint functors preserve
cokernels.

 Obviously, $B\longmapsto\Hom_\B(Q,B)$ is exact since $Q$ is projective.
 To prove that the functor is fully faithful,
let us identify the category $\B$ with the category of modules over
the monad $\boT\:\Sets\rarrow\Sets$, $X\longmapsto\Hom_\B(P,P^{(X)})$.
Hence $P$ identifies with the free $\boT$\+module $\boT(\{0\})$ with one generator.
 Consider two objects $C,D\in\B$, whose underlying sets admit canonical
identification $C=\Hom_\B(P,C)$ and $D=\Hom_\B(P,D)$.
 Since then $Q = P^{(Y)} = \boT(Y)$ is a free $\boT$\+module, we also have
identifications $\Hom_\B(Q,C)=C^Y$ and $\Hom_\B(Q,D)=D^Y$.
 (Notice that the forgetful functor $\Hom_\B(P,{-})\:\B\rarrow\Sets$
preserves products, so our notation is unambiguous.)

 Let $f\:C^Y\rarrow D^Y$ be a morphism in the category $S\modl$
(recall that $S=\Hom_\B(Q,Q)^\rop$).
 For every element $y\in Y$, denote by $p_y\in S$ the idempotent
morphism $P^{(Y)}\rarrow P^{(Y)}$ acting as the identity on
the $y$\+indexed component $P$ in $P^{(Y)} = Q$ and by zero on
the $y'$\+indexed components for all $y\ne y'\in Y$.
 Let $p_y^*\:C^Y\rarrow C^Y$ and $p_y^*\:D^Y\rarrow D^Y$ denote
the induced maps on the underlying sets $C^Y=\Hom_\B(Q,C)$
and $D^Y=\Hom_\B(Q,D)$. These maps are also the idempotent
projectors for the $y$\+indexed components in $C^Y$ and~$D^Y$, and
they coincide with the $S$\+module action of $p_y$ on $C^Y$ and $D^Y$,
respectively. 
 Since $f$ is an $S$\+module map, we have equalities $fp_y^*=p_y^*f$
for all $y\in Y$, which in turn means that
there exist maps $g_y\:C\rarrow D$, one for each element
$y\in Y$, such that the map $f\:C^Y\rarrow D^Y$ is the product of
the family of maps~$g_y$, that is $f=\prod_{y\in Y} g_y$.

 For every pair of elements $y'\ne y''\in Y$, denote by $s_{y'y''}
\in S$ the involutive automorphism $P^{(Y)}\rarrow P^{(Y)}$ permuting
the $y'$\+indexed component $P$ in $P^{(Y)}$ with the $y''$\+indexed
component and acting by the identity on all the other components.
 Let $s_{y'y''}^*\:C^Y\rarrow C^Y$ and $s_{y'y''}^*\:D^Y\rarrow D^Y$
denote the induced maps describing the $S$-module action of $s_{y'y''}$.
 These are also simply the maps permuting the $y'$\+indexed
component with the $y''$\+indexed one in the Cartesian powers.
 We again have an equality $fs_{y'y''}^*=s_{y'y''}^*f$, which in turn
means that the maps $g_{y'}$ and $g_{y''}\:C\rarrow D$ are equal.
 Hence our morphism $f\:C^Y\rarrow D^Y$ is the direct power
of some map $g\:C\rarrow D$, i.~e., $f=g^Y$.

 It remains to show that the map of sets $g\:C\rarrow D$, which we obtained
from the $S$\+module morphism $f\:C^Y\rarrow D^Y$, is
a morphism in the category~$\B$.
 For this purpose, it suffices to check that $g$~commutes with
all the operations in the monad~$\boT$, that is for every set $X$
and every element $t\in\boT(X)=\Hom_\B(P,P^{(X)})$ the induced
maps $t^*\:C^X\rarrow C$ and $t^*\:D^X\rarrow D$ form a commutative
square with the maps $g^X\:C^X\rarrow D^X$ and $g\:C\rarrow D$.
 Since the object $P$ is abstractly $\kappa$\+small, one can assume
that the cardinality of the set $X$ is smaller than~$\kappa$.
 By assumption on the cardinality of the set $Y$, this means that
$X$ can be embedded into~$Y$.
 Choosing such an embedding, we can assume that $X=Y$.

 Choose an element $y\in Y$; and consider the composition
$q\:P^{(Y)}\rarrow P^{(Y)}$ of the projection onto the $y$\+indexed
component $P^{(Y)}\rarrow P$ with the morphism $t\:P\rarrow P^{(Y)}$.
 The induced map $q^*\:C^Y\rarrow C^Y$ is the composition of
the map $t^*\:C^Y\rarrow C$ with the embedding of the $y$\+indexed
component $C\rarrow C^Y$; and the induced map $q^*\:D^Y\rarrow D^Y$
is described similarly.
 Thus the equation $fq^*=q^*f$ for the $S$\+module map $f=g^Y$ implies
the desired equation $gt^*=t^*g^Y$.
\end{proof}

\begin{proof}[Proof of Corollary~\ref{kappa-ary-dual-gabriel-popescu-cor}]
 The first part is a direct consequence of
Theorem~\ref{kappa-ary-dual-gabriel-popescu}.

 Conversely, let $\B$ be a full exact abelian subcategory
in $S\modl$ with the reflection functor $\Delta\:S\modl\rarrow\B$.
 Then $\B$ is cocomplete, as one can compute colimits in $\B$
by applying the functor~$\Delta$ to the colimit of the same diagram
computed in $S\modl$.
 Besides, the category $\B$ has a generator $P=\Delta(S)$ which is
projective since $\Delta$ is left adjoint to an exact functor.
\end{proof}

 The following property of the exact fully faithful functor
$\B\rarrow S\modl$ is also worth noting.

\begin{lem}
 In the setting of
Theorem~\textup{\ref{kappa-ary-dual-gabriel-popescu}},
the functor\/ $\Hom_\B(Q,{-})\:\B\rarrow S\modl$ takes
the projective objects of\/ $\B$ to flat $S$\+modules.
\end{lem}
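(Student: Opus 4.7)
My plan is to exhibit $\Hom_\B(Q,B)$ as a direct summand of a filtered colimit of free $S$\+modules, for each projective $B\in\B$. The first step will be a reduction to the case $B=Q^{(X)}$: since $P$ is a direct summand of $Q=P^{(Y)}$ whenever $Y\ne\emptyset$ (and for $Y=\emptyset$ we have $Q=0$, $S=0$, and the claim is trivial), each $P^{(I)}$ is a direct summand of $Q^{(I)}$, so every projective object of\/ $\B$ is a direct summand of some $Q^{(I)}$; since direct summands of flat $S$\+modules are flat, it suffices to treat the copowers of~$Q$.

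Next, I will write $Q^{(X)}$ as the directed colimit $\colim_{X'\in\fI}Q^{(X')}$ along split coproduct inclusions, where $\fI$ is the directed poset of subsets $X'\subset X$ of cardinality at most $|Y|$, and argue that the canonical map
\[
 \colim_{X'\in\fI}\Hom_\B(Q,Q^{(X')})\lrarrow\Hom_\B(Q,Q^{(X)})
\]
in $S\modl$ is an isomorphism. Injectivity is automatic from the split-monic nature of the transition morphisms, but surjectivity will be the main obstacle. Given $f\:Q=P^{(Y)}\rarrow Q^{(X)}=P^{(X\times Y)}$ written componentwise as $f=(f_y)_{y\in Y}$, abstract $\kappa$\+smallness of $P$ lets me factor each $f_y$ through $P^{(W_y)}$ with $W_y\subset X\times Y$ of cardinality less than $\kappa$; taking $X'$ to be the union over $y\in Y$ of the projections of the $W_y$ onto $X$, I must check that $|X'|\leq|Y|$. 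The argument splits into two cases: if $|Y|\geq\kappa$ the estimate follows from $|Y|\cdot\kappa=|Y|$; if instead $|Y|<\kappa$, then the hypothesis on the successor cardinal forces $\kappa=|Y|^+$, and regularity of $\kappa$ keeps the sum $\sum_{y\in Y}|W_y|$ strictly below $\kappa=|Y|^+$, again giving $|X'|\leq|Y|$.

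To close the proof, I will show that each term $\Hom_\B(Q,Q^{(X')})$ is free of rank one as a left $S$\+module. Since the successor-cardinal hypothesis forces $Y$ to be infinite whenever $\kappa$ is infinite, for nonempty $X'\in\fI$ one has $|X'\times Y|=|Y|$, so any bijection $X'\times Y\cong Y$ induces an isomorphism $Q^{(X')}=P^{(X'\times Y)}\cong P^{(Y)}=Q$ in\/ $\B$; under the precomposition action this yields $\Hom_\B(Q,Q^{(X')})\cong \Hom_\B(Q,Q)=S$ as left $S$\+modules, while the case $X'=\emptyset$ contributes the zero module. Combining everything, $\Hom_\B(Q,Q^{(X)})$ is a filtered colimit of free $S$\+modules, and in particular is flat, completing the argument. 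The delicate cardinal-arithmetic estimate in the previous paragraph is the step I expect to require the most care.
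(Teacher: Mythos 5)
Your proof is correct and follows essentially the same route as the paper's: both exhibit the Hom-module as a filtered colimit of free $S$\+modules by writing the relevant copower as a colimit of subcopowers isomorphic to $Q$ and using smallness of $Q$ to commute $\Hom_\B(Q,{-})$ past the colimit. The only cosmetic differences are that the paper reduces to copowers of $P$ and invokes $\lambda^+$\+presentability of $Q$ as a black box, whereas you reduce to copowers of $Q$ and unfold the presentability claim into an explicit factorization-plus-cardinal-arithmetic argument, which checks out.
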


\begin{proof}
 Any projective object in $\B$ is a direct summand of a copower of
the projective generator $P$; so it suffices to show that
the $S$\+modules $\Hom_\B(Q,P^{(X)})$ are flat for all sets~$X$.
 Let $\lambda$~denote the cardinality of the set~$Y$; by assumption,
the successor cardinality~$\lambda^+$ of the cardinal~$\lambda$
is greater or equal to~$\kappa$.
 Without loss of generality, we can assume that the cardinality of~$X$
is not smaller than~$\lambda$.
 Then we have
$$
 P^{(X)}=\varinjlim\nolimits_{Z\subset X}^{|Z|=\lambda}P^{(Z)},
$$
where the $\lambda^+$\+filtered colimit is taken over all the subsets
$Z\subset X$ of the cardinality equal to~$\lambda$.
 The object $Q\in\B$ is $\lambda^+$\+presentable, so we have
$$
 \Hom_\B(Q,P^{(X)})=\varinjlim\nolimits_Z\Hom_\B(Q,P^{(Z)}).
$$
 Now all the objects $P^{(Z)}\in\B$ are isomorphic to $Q$, so
the $S$\+module $\Hom_\B(Q,P^{(Z)})$ is free (with one generator)
and the $S$\+module $\Hom_\B(Q,P^{(X)})$ is a filtered colimit
of free $S$\+modules.
\end{proof}

\Section{Morita Theory for Topological Rings}
\label{morita-theory-secn}

The aim of this section is to develop basic machinery which we need to manipulate contramodule categories in an analogous way to module categories. This allows us to express Morita equivalences and tilting adjunctions of categories rather explicitly in terms of Hom and contratensor (a contramodule analogue of the tensor) functors.

\subsection{Finite topology on endomorphism rings}
\label{finite-topology-subsecn}

We start with a key result which says that
the endomorphism ring of a module over an ordinary ring always carries a natural structure of a topological ring and hence we can consider contramodules over it. In fact, we will find it useful to state the result for left $\cR$\+modules, where $\cR$ is a small preadditive category (recall that in this case, an $\cR$\+module is a covariant additive functor $\cR\rarrow\Z\modl$, and the case of modules over a ring $R$ is recovered by viewing $R$ as a category with one object whose endomorphism ring is $R$).

The topology on the ring $\fS$ in the theorem appeared under the name
\emph{finite topology} in the literature; see~\cite[Section~IX.6]{Jac},
\cite[Section~107]{Fuc}, \cite{SM}, \cite{Niel} (it is implicit in
Jacobson's density theorem~\cite[Section~IX.13]{Jac}).

\begin{thm} \label{addm-theorem}
Let $\cR$ be a small preadditive category, $M$ be a left $\cR$\+module, and\/
$\Add(M)\subset \cR\modl$ be the full additive subcategory formed
by the direct summands of infinite direct sums of copies of
the object $M$ in the category of left $\cR$\+modules.
Then there exists a complete, separated topological associative
ring\/ $\fS$ with a base of neighborhoods of zero formed by open
right ideals such that the category\/ $\Add(M)$ is equivalent to
the category of projective left\/ $\fS$\+contramodules.
\end{thm}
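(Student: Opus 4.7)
The plan is to put a topology on $\fS=\Hom_\cR(M,M)^\rop$ so that the additive monad on $\Sets$ attached to $M\in\cR\modl$ via Proposition~\ref{copowers-kleisli} coincides with the contramodule monad $\fS[[-]]$ of~\S\ref{contramodules-over-top-rings-subsecn}, after which the desired equivalence follows by taking direct summands on both sides.

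First I would equip $\fS$ with the \emph{finite topology}, in which a basic open neighborhood of zero is $\fU_F=\{s\in\fS\mid s|_F=0\}$ for some finite subset $F\subset M$. Because multiplication in $\fS$ is reverse composition, each $\fU_F$ is a right ideal of $\fS$, and these sets form a base of neighborhoods of zero for a ring topology on $\fS$ (one checks the left-translation axiom by noting $s\cdot\fU_{s(F)}\subseteq\fU_F$). Separatedness amounts to $\bigcap_F\fU_F=0$, and completeness is checked by assembling any compatible system in $\varprojlim_F\fS/\fU_F$ into a single $\cR$\+linear endomorphism of~$M$: well-definedness, additivity, and $\cR$\+linearity all follow by taking $F$ large enough to contain the finitely many relevant elements of~$M$.

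Next I would apply Proposition~\ref{copowers-kleisli} to $M\in\cR\modl$ to obtain a monad $(\boT,\mu,\varepsilon)$ on $\Sets$ with $\boT(X)=\Hom_\cR(M,M^{(X)})$ and an equivalence between the full subcategory of copowers of $M$ in $\cR\modl$ and the full subcategory of free $\boT$\+modules in $\boT\modl$. The crux is to identify $\boT$ with $\fS[[-]]$ as a monad: decomposing a morphism $t\:M\rarrow M^{(X)}$ into its components $s_x=\pi_x\circ t\in\fS$ gives a natural bijection $\boT(X)\cong\fS[[X]]$, because the requirement that $t$ factor through the coproduct forces $(s_x(m))_{x\in X}$ to have finite support for every $m\in M$, which is exactly the zero-convergence of $(s_x)$ in the finite topology. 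The units match on the nose, and the multiplications match by writing $\mu_X(t)$ as the composition $M\rarrow M^{(\boT(X))}\rarrow M^{(X)}$ and observing that the resulting $x$\+component is the ``opening of parentheses'' formula $\sum_u r_u\cdot u_x$ defining the contramodule monad $\fS[[-]]$; the requisite double convergence is automatic from the finiteness of the supports of the intermediate maps.

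Once $\boT\cong\fS[[-]]$ as monads, the two Eilenberg--Moore categories coincide, so $\boT\modl=\fS\contra$; by~\S\ref{contramodules-over-top-rings-subsecn} the projective objects of $\fS\contra$ are precisely the retracts of free contramodules $\fS[[X]]$. Passing to direct summands of both sides of the equivalence from Proposition~\ref{copowers-kleisli} therefore identifies $\Add(M)$ with the full subcategory of projective left $\fS$\+contramodules, as required. The main obstacle is the monad-multiplication compatibility: the bookkeeping is not conceptually deep, but one has to track zero-convergence through two nested layers of $\boT$/$\fS[[-]]$ and verify that the resulting composition in $\fS$ agrees with the topological summation defining $\mu_X\:\fS[[\fS[[X]]]]\rarrow\fS[[X]]$.
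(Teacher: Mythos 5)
Your proposal is correct and follows essentially the same route as the paper's proof: the finite topology you define via annihilators of finite subsets coincides with the paper's topology via annihilators of finitely generated $\cR$\+submodules, and your identification of $\Hom_\cR(M,M^{(X)})$ with $\fS[[X]]$ as the same subset of $\fS^X$ (finite support on each element versus zero-convergence) is exactly the paper's key step. The completeness argument via $\varprojlim_E\Hom_\cR(M,M)/\Ann(E)$ and the final passage to direct summands also match the paper.
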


\begin{proof}
Consider the ring $\Hom_\cR(M,M)$ and endow it with the topology in
which the base of neighborhoods of zero is formed by the annihilator
ideals $\Ann(E)\subset\Hom_\cR(M,M)$ of finitely generated $\cR$\+submodules $E\subset M$.
To be precise, the annihilator
$$
\Ann(E)=\Hom_\cR(M/E,M)=\{\,f\in\Hom_\cR(M,M)\mid f(E)=0\,\}
$$
is a left ideal in the ring $\Hom_\cR(M,M)$.
In view of the exact sequence
$$
0\lrarrow\Hom_\cR(M/E,M)\lrarrow\Hom_\cR(M,M)\lrarrow\Hom_\cR(E,M),
$$
the quotient module $\Hom_\cR(M,M)/\Ann(E)$ is identified with the set
of all $\cR$\+module morphisms $E\rarrow M$ that can be extended to
$\cR$\+module morphisms $M\rarrow M$.
Since the datum of an $\cR$\+module morphism $M\rarrow M$ is equivalent
to that of a compatible system of $\cR$\+module morphisms
$E\rarrow M$ defined for all the finitely generated submodules
$E\subset M$, we have an isomorphism
$$
\Hom_\cR(M,M)\.\cong\.\varprojlim\nolimits_{E\subset M}
\Hom_\cR(M,M)/\Ann(E),
$$
that is $\Hom_\cR(M,M)$ is a complete and separated topological ring.
	
Let $\fS=\Hom_\cR(M,M)^\rop$ be the topological ring opposite to
$\Hom_\cR(M,M)$; so $\fS$ is a complete, separated topological ring
with a base of the topology formed by open right ideals.
Consider the functor $\boT\:\Sets\rarrow\Sets$ assigning to every set
$X$ the set $\Hom_\cR(M,M^{(X)})$ of all $\cR$\+module morphisms from $M$ to
the direct sum of $X$ copies of~$M$.
Since $\boT=\Psi_M\circ\Phi_M$, where 
$\Psi_M=\Hom_\R(M,-)\:\cR\modl\rarrow\Sets$ and 
$\Phi_M\:\Sets\rarrow\cR\modl$ is the left adjoint of 
$\Psi_M$ which assigns the object $M^{(X)}$ to a set $X$,
the functor $\boT$ has a natural structure of a monad on the category of sets (see~\S\ref{additive-monads-subsecn}).
We claim that this monad is isomorphic to the monad~$\fS[[-]]$.
	
Indeed, $\Hom_\cR(M,M^{(X)})$ is a subset in $\Hom_\cR(M,M)^X$ and
$\fS[[X]]$ is a subset in $\fS^X$; first of all, it is claimed that
this is the same subset.
In other words, an $X$\+indexed family of $\cR$\+module morphisms
$f_x\:M\rarrow M$ corresponds to an $\cR$\+module morphism $M\rarrow M^{(X)}$ if and only if it converges to zero in the topology of
$\Hom_\cR(M,M)$, that is if and only if for every finitely-generated
submodule $E\subset M$ one has $f_x(E)=0$ for all but a finite
subset of indices $x\in X$.
This is obvious; and checking that the unit and composition
operations in our two monads $\boT\:X\longmapsto\Hom_\cR(M,M^{(X)})$ and
$X\longmapsto\fS[[X]]$ are the same is straightforward.
	
In view of Proposition~\ref{copowers-kleisli}, the assignment of the $\cR$\+module $M^{(X)}$ to
a free $\fS$\+contra\-module $\fS[[X]]$ establishes an isomorphism
between the category of direct sums of copies of $M$ in
$\cR\modl$ and the category of free left $\fS$\+contramodules.
Adjoining direct summands to the categories on both sides, we
obtain an equivalence between $\Add(M)$ and the category of projective
left $\fS$\+contramodules.
\end{proof}

\begin{rem}
 The following classical fact is a particular case of
Theorem~\ref{addm-theorem}: for any associative ring $R$ and
a \emph{finitely generated} left $R$\+module $M$, the category
$\Add(M)$ is equivalent to the category of projective left modules
over the ring $S=\Hom_R(M,M)^\rop$.
 A standard reference is~\cite{Dr}, where a version of this result
is mentioned without proof.
 A proof can be found in~\cite[Theorem~32.1]{KMT}.
 Moreover, the same assertion holds for any \emph{self-small} left
$R$\+module, i.~e., a left $R$\+module $M$ such that every
homomorphism $M\rarrow M^{(X)}$ factorizes through
$M^{(Z)}\subset M^{(X)}$ for some finite subset $Z$ in a set~$X$
(cf.\ Remark~\ref{dually-slender} below).
\end{rem}

\subsection{Contratensor product}
\label{contratensor-subsecn}


If $M$ is an $R$\+$S$\+bimodule over associative rings, the functor
$\Hom_R(M,-)$ has a left adjoint $M\otimes_S-$. In the world of contramodules,
the same role is played by the contratensor product. However, the correct object
to tensor a left contramodule with is a right discrete module.

Let $\fS$ be a complete, separated topological ring with a base of
neighborhoods of zero formed by open right ideals.
A right $\fS$\+module $L$ is called \emph{discrete} if 
the action map $L\times\fS\rarrow L$ is continuous in the given topology
of $\fS$ and the discrete topology of $L$, or in other words, if
the annihilator ideal of every element in $L$ is open in~$\fS$.
Discrete right $\fS$\+modules form a full subcategory $\discr\fS$ 
of the category of right $\fS$\+modules
which is closed under submodules, quotients
and arbitrary direct sums, \cite[Proposition VI.4.2]{Sten}.
In particular, $\discr\fS$ is a Grothendieck abelian category.

For any discrete right $\fS$\+module $L$ and every abelian group $V$
the group $\fC=\Hom_\Z(L,V)$ has a natural left $\fS$\+contramodule
structure with the monadic action map $\fS[[\fC]]\rarrow\fC$ defined
by the rule
\begin{equation} \label{hom-contramodule}
\Biggl(\sum_{x\in X}s_xf_x\Biggr)(l)=\sum_{x\in X}f_x(ls_x),
\end{equation}
where
the family of elements $s_x\in\fS$ converges to zero
in the topology of $\fS$ and $l\in L$.
The infinite sum in the left-hand side (representing an element
of~$\fC$) is a symbolic notation for the monadic action map that we are
defining, while the sum on the right-hand side is finite because
the right $\fS$\+module $L$ is discrete.

Let $L$ be a discrete right $\fS$\+module and $\fC$ be a left
$\fS$\+contramodule.
The \emph{contratensor product} $L\ocn_\fS\fC$ is the abelian group
constructed as the quotient group of the group $L\ot_\Z\fC$ by
the subgroup generated by all elements of the form
\begin{equation} \label{contratensor-construction}
\sum_{x\in X}ls_x\ot c_x - l\ot\sum_{x\in X}s_xc_x, 
\end{equation}
where $l\in L$ is an element, $X$ is a set, $s_x\in\fS$ is a family
of elements converging to zero in the topology of $\fS$, and
$c_x\in\fC$ is an arbitrary family of elements.
The sum in the left-hand summand is finite because the right
$\fS$\+module $L$ is discrete, while the infinite sum in the
right-hand summand is a symbolic notation for the monad action
map $\fS[[\fC]]\rarrow\fC$.

The contratensor product is a right exact functor of two arguments
$$
\ocn_\fS\:\discr\fS\times\fS\contra\rarrow\Z\modl.
$$
For any discrete right $\fS$\+module $L$ and any set $X$, there is
a natural isomorphism of abelian groups
\begin{equation} \label{contratensor-with-free}
L\ocn_\fS\fS[[X]]\cong L^{(X)}.
\end{equation}
For any discrete right $\fS$\+module $L$, any left
$\fS$\+contramodule $\fC$, and any abelian group~$V$ there is
a natural isomorphism
\begin{equation} \label{contratensor-group-adjunction}
\Hom_\Z(L\ocn_\fS\fC,\>V)\cong\Hom^\fS(\fC,\.\Hom_\Z(L,V)),
\end{equation}
(we recall that $\Hom^\fS$ denotes the group of morphisms in
the category of left $\fS$\+contramodules $\fS\contra$).

More generally, let $\cR$ be a small preadditive category and
let $L\:\cR\rarrow\Z\modl$ be a left $\cR$\+module endowed with
an right action $\fS\rarrow\Hom_\cR(L,L)^\rop$, and assume that
the action of $\fS$ makes each $L(r)$, $r\in\cR$, a discrete $\fS$-module.
If $\cR$ has only one object with endomorphism ring $R$, then $L$
is none other than an $R$\+$\fS$\+bimodule which is discrete on the right.
Let further $V$ be a left $\cR$\+module and $\fC$ be a left $\fS$\+contramodule.
Then $\Hom_\cR(L,V)$ has a left contramodule structure given by
the formula~\eqref{hom-contramodule} for each $r\in\cR$ and $l\in L(r)$.
Furthermore, the contratensor product defines a left $\cR$\+module
$\cR\rarrow\Z\modl$, $r\mapsto L(r)\ocn_\fS\fC$, which we denote by
$L\ocn_\fS\fC\in\cR\modl$.
Then there is a natural isomorphism of abelian groups
\begin{align} \label{contratensor-module-adjunction}
\Hom_\cR(L\ocn_\fS\fC,\>V)\cong\Hom^\fS(\fC,\.\Hom_\cR(L,V))
\end{align}
which sends a homomorphism of left $\cR$\+modules
$(f_r\: L(r)\ocn_\fS\fC\rarrow V(r))_{r\in\cR}$ to the morphism
of left $\fS$\+contramodules which assigns to each $c\in\fC$
the homomorphism of left $\cR$\+modules $(f_r(-\ocn c))_{r\in R}$.

In other words, the contratensor product functor
$$
L\ocn_\fS{-}\:\fS\contra\lrarrow \cR\modl
$$
is left adjoint to the Hom functor
$$
\Hom_\cR(L,{-})\:\cR\modl\lrarrow\fS\contra.
$$
We can now refine Theorem~\ref{addm-theorem} as follows.

\begin{prop} \label{prop:hom-contratensor-m}
Let\/ $\cR$ be a small preadditive category, $M\in\cR\modl$ and
let\/ $\fS$ be the topological ring\/ $\Hom_A(M,M)^\rop$ as in
Theorem~\ref{addm-theorem}. Then the right action of $\fS$ makes
all $M(r)$, $r\in\cR$ discrete\/ $\fS$\+modules and the pair
of adjoint functors
$$
M\ocn_\fS{-}\:\fS\contra\lrarrow\cR\modl
\quad\textrm{and}\quad
\Hom_\cR(M,{-})\:\cR\modl\lrarrow\fS\contra
$$
restricts to a pair of inverse equivalences between the subcategories\/
$\fS\contra_\proj\subset\fS\contra$ and\/ $\Add(M)\subset\cR\modl$.
\end{prop}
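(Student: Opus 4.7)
The plan is to proceed in three steps: first verify discreteness, then identify the adjoint pair on the ``free'' generating objects and match it with the equivalence of Theorem~\ref{addm-theorem}, and finally extend from free objects to direct summands.

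\textbf{Discreteness.} For each $r\in\cR$ and $m\in M(r)$, the element $m$ generates a cyclic (hence finitely generated) $\cR$\+submodule $E\subset M$, realised as the image of the $\cR$\+module morphism $\cR(r,-)\rarrow M$ sending $\mathrm{id}_r$ to~$m$. Every $f\in\Ann(E)\subset\fS$ satisfies $f_r(m)=0$, i.e., $m\cdot f=0$ for the right $\fS$\+action on $M(r)$. Since $\Ann(E)$ is open in $\fS$ by construction, the annihilator of $m$ is open and $M(r)$ is discrete.

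\textbf{Behaviour on generating objects.} On the contramodule side, formula~\eqref{contratensor-with-free} applied to each discrete $\fS$\+module $L=M(r)$ yields $M(r)\ocn_\fS\fS[[X]]\cong M(r)^{(X)}$; naturality in $r$ assembles these into an isomorphism $M\ocn_\fS\fS[[X]]\cong M^{(X)}$ in $\cR\modl$, functorially in $X$. On the module side, the computation carried out in the proof of Theorem~\ref{addm-theorem} identifies the monad $\boT(X)=\Hom_\cR(M,M^{(X)})$ with $\fS[[-]]$ as monads, so in particular $\Hom_\cR(M,M^{(X)})\cong\fS[[X]]$ as $\fS$\+contramodules, compatibly with morphisms $M^{(X)}\rarrow M^{(Y)}$. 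Hence $M\ocn_\fS{-}$ sends free $\fS$\+contramodules to copowers of $M$, and $\Hom_\cR(M,-)$ sends copowers of $M$ to free $\fS$\+contramodules.

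\textbf{Unit, counit, and extension to summands.} The adjunction isomorphism~\eqref{contratensor-module-adjunction} supplies a unit $\eta_\fC\colon\fC\rarrow\Hom_\cR(M,M\ocn_\fS\fC)$ and counit $\varepsilon_N\colon M\ocn_\fS\Hom_\cR(M,N)\rarrow N$. For $\fC=\fS[[X]]$, the previous step rewrites $\eta_\fC$ as an endomorphism of $\fS[[X]]$; evaluating at the generators $x\in X$ (which correspond to the coproduct inclusions $M\rarrow M^{(X)}$) shows that this endomorphism is the identity. Dually, for $N=M^{(X)}$, the counit $\varepsilon_N$ rewrites as an endomorphism of $M^{(X)}$ that fixes each coproduct inclusion, hence is the identity. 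Because both functors are additive, they preserve retracts, and both $\fS\contra_\proj$ and $\Add(M)$ are by definition closed under direct summands; the unit and counit are therefore isomorphisms on all of $\fS\contra_\proj$ and $\Add(M)$, yielding the claimed pair of inverse equivalences.

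The main obstacle is the bookkeeping in the second and third steps: one must verify that the monad isomorphism $\Hom_\cR(M,M^{(X)})\cong\fS[[X]]$ established in Theorem~\ref{addm-theorem} is literally the unit of the present $(M\ocn_\fS{-},\.\Hom_\cR(M,-))$\+adjunction, and not merely an abstractly parallel isomorphism. This amounts to tracing through the definitions of the contratensor product, the monad multiplication, and the adjunction~\eqref{contratensor-module-adjunction} on the generating object $\fS[[X]]$; once the diagram chase is done, the remaining assertions follow formally.
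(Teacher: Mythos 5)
Your proposal is correct and takes essentially the same route as the paper's own proof: discreteness of the $M(r)$ from the definition of the topology on $\fS$, identification of $\Hom_\cR(M,{-})$ on $\Add(M)$ with the equivalence of Theorem~\ref{addm-theorem}, and the isomorphism~\eqref{contratensor-with-free} to see that $M\ocn_\fS{-}$ carries free contramodules to copowers of~$M$. Your explicit check that the unit and counit are identities on generators (and hence isomorphisms on retracts) just spells out what the paper leaves implicit in the phrase ``follows directly by identifying it with the equivalence constructed in Theorem~\ref{addm-theorem}.''
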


\begin{proof}
The right $\fS$\+modules $M(r)$, $r\in\cR$ are discrete by the definition.
The fact that $\Hom_\cR(M,{-})$ restricts to an equivalence
$\Add(M)\simeq\fS\contra$ follows directly by identifying it with
the equivalence constructed in Theorem~\ref{addm-theorem}.
On the other hand, the isomorphism~\eqref{contratensor-with-free}
shows that the functor $M\ocn_\fS{-}$ takes $\fS[[X]]$ to $M^{(X)}$
and, hence, that $M\ocn_\fS{-}$ restricts to an equivalence
$\fS\contra\simeq\Add(M)$.
\end{proof}

\subsection{Morita equivalence for contramodules}
\label{morita-subsecn}

Now we can answer the question when precisely two topological rings
have equivalent contramodule categories. This will also allow us
to explain nice consequences of Theorem~\ref{kappa-ary-dual-gabriel-popescu}
for topological rings.

We first recall necessary results from~\cite[\S6]{PR} and \cite[\S\S4 and~5]{Pflat}.
Let $\fR$ be a complete, separated topological ring with a base of
neighborhoods of zero formed by open right ideals. Given
a left $\fR$\+contramodule $\fC$ and an open right ideal $\fU\subset\fR$,
we denote by $\fU\tim\fC\subset\fC$ the additive subgroup formed by the elements
of the form $\{\sum_{x\in X}u_xc_x\mid u_x\in\fU, c_x\in\fC \}$, where the sequences
$u_x\in\fU$ converge to zero in the topology of $\fR$. These subgroups make $\fC$
a topological group and we can consider the completion morphism
$\lambda_{\fR,\fC}\:\fC\rarrow\varprojlim_{\fU}\fC/(\fU\tim\fC)$. The contramodule
$\fC$ is called \emph{complete} if $\lambda_{\fR,\fC}$ is surjective and \emph{separated}
if $\lambda_{\fR,\fC}$ is injective.

\begin{ex} \label{ex:projective complete separated}
All projective $\fR$-contramodules are complete and separated. This follows more or less by definition,
since for each set $X$, the subgroup $\fU\tim\fR[[X]]$ consists precisely of those formal linear combinations
$\sum_{x\in X}s_xx\in\fR[[X]]$ with $s_x\in\fU$ for each $x\in X$. In particular,
$\fR[[X]]/(\fU\tim\fR[[X]]) = (\fR/\fU)[X]$, where the symbol stands for the set of formal sums
$\sum_{x\in X}\overline{s}_xx$ with $\overline{s}_x\in(\fR/\fU)$ and nonzero only for finitely many
indices $x\in X$. Since $\fR$ itself is complete and separated, it follows that
\[
\lambda_{\fR,\fR[[X]]}\:\fR[[X]]\rarrow\varprojlim\nolimits_{\fU}\fR[[X]]/(\fU\tim\fR[[X]]) = \varprojlim\nolimits_{\fU}(\fR/\fU)[X]
\]
is an isomorphism.
\end{ex}

Now we denote by $\cR$ the full subcategory
$\{\fR/\fU \mid \fU\subset\fR \textrm{ open right ideal} \} \subset \discr\fR$
and by $\Pi\subseteq\cR$ the nonfull subcategory with the same objects, but
only the projection morphisms $\fR/\fU\twoheadrightarrow \fR/\fU'$ whenever
$\fU\subset\fU'$.
Then we can consider the functor $CT\:\fR\contra\rarrow\cR\modl$ which sends
$\fC\in\fR\contra$ to the left $\cR$\+module
\[ CT(\fC)\: \fR/\fU\longmapsto \fR/\fU\ocn_\fR\fC = \fC/(\fU\tim\fC). \]
We will need the following results.

\begin{prop} \label{prop:F-systems}
The functor $CT\:\fR\contra\rarrow\cR\modl$ has a right adjoint functor
$PL\:\cR\modl\rarrow\fR\contra$, which is computed at the level of the underlying
groups as $PL(M) = \varprojlim_\Pi M(\fR/\fU)$, and
$\lambda_{\fR,\fC}\:\fC\rarrow PL(CT(\fC))$ is the
unit of this adjunction for each $\fC\in\fR\contra$ (in particular, it is
a homomorphism of left $\fR$\+contramodules). 
\end{prop}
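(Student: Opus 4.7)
The plan is to prove the proposition in three steps: explicitly build $PL$, verify the adjunction $CT \dashv PL$, and read off the unit. First, for $M \in \cR\modl$, I would take the underlying abelian group of $PL(M)$ to be $\varprojlim_\Pi M(\fR/\fU)$ and define a contra-action as follows. Given $\sum_{x \in X} r_x m^{(x)} \in \fR[[PL(M)]]$ with $(r_x) \to 0$ and $m^{(x)} = (m^{(x)}_\fU)_\fU$, for each open right ideal $\fU$ all but finitely many $r_x$ lie in $\fU$; for each $x$ with $r_x \notin \fU$, continuity of left multiplication by $r_x$ produces an open right ideal $\fV_x$ with $r_x\fV_x \subset \fU$, giving a morphism $\rho_x\:\fR/\fV_x \rarrow \fR/\fU$ in $\cR$ that sends $\bar{1}$ to $\bar{r}_x$. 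I would then set
\begin{equation*}
\alpha_{PL(M)}\big(\textstyle\sum_x r_x m^{(x)}\big)_\fU \;:=\; \sum_{x:\,r_x\notin\fU} M(\rho_x)\big(m^{(x)}_{\fV_x}\big) \;\in\; M(\fR/\fU).
\end{equation*}
Independence of the choice of $\fV_x$ follows by passing to common refinements and using functoriality of $M$ on projections; compatibility with morphisms in $\Pi$ uses that for $\fU\subset\fU'$ the composite $\pi\rho_x$ is the zero morphism in $\cR$ whenever $r_x\in\fU'$; both contramodule axioms (unit against $\varepsilon$ and associativity against $\mu$) reduce to functoriality of $M$ on compositions and identities in $\cR$.

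Next, for the adjunction, a contra-homomorphism $f\:\fC \rarrow PL(M)$ projects to a family $f_\fU\:\fC \rarrow M(\fR/\fU)$ compatible with $\Pi$. The intertwining of contra-actions has two consequences: applied to sums $\sum_x u_x c_x$ with $u_x \in \fU$ it shows that each $f_\fU$ vanishes on $\fU\tim\fC$ (since the $\fU$-component computed from Step~1 is then an empty finite sum), whence $f_\fU$ descends to $\bar f_\fU\:CT(\fC)(\fR/\fU) \rarrow M(\fR/\fU)$; applied to the single-term formal combination $r\cdot c$ for $r\in\fR$ and $c\in\fC$, it gives, for every morphism $\rho\:\fR/\fV \rarrow \fR/\fU$ in $\cR$ with $\bar{1}\mapsto\bar{r}$, the commutative square
\begin{equation*}
\begin{CD}
\fC/(\fV\tim\fC) @>{\bar f_\fV}>> M(\fR/\fV) \\
@V{[c]\mapsto [rc]}VV @VV{M(\rho)}V \\
\fC/(\fU\tim\fC) @>{\bar f_\fU}>> M(\fR/\fU).
\end{CD}
\end{equation*}
This is exactly the naturality of $(\bar f_\fU)_\fU$ as a morphism $CT(\fC) \rarrow M$ in $\cR\modl$. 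Conversely, given any such natural transformation $g$, the rule $f(c)_\fU := g_\fU([c]_\fU)$ produces an element of $PL(M)$, and the identity $[\alpha_\fC(\sum r_xc_x)]_\fU = \sum_{r_x\notin\fU}[r_xc_x]_\fU$ in $\fC/(\fU\tim\fC)$ together with naturality of $g$ recovers the contra-action intertwining. Thus $\Hom^\fR(\fC, PL(M)) \cong \Hom_\cR(CT(\fC), M)$ naturally.

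For the unit, chasing the identity of $CT(\fC)$ through the bijection of Step~2 shows that the unit $\fC \rarrow PL(CT(\fC))$ sends $c$ to the coherent family $([c]_\fU)_\fU \in \varprojlim_\fU \fC/(\fU\tim\fC)$, which is by definition the completion map $\lambda_{\fR,\fC}$. The fact that $\lambda_{\fR,\fC}$ is a contramodule homomorphism is automatic, being the component of an adjunction unit.

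The hardest part will be the verification of the contramodule axioms in Step~1, especially associativity against the monad multiplication $\mu$: unfolding a double formal sum and its opened-parenthesis image requires simultaneously choosing compatible ideals $\fV_x$ at the inner and outer levels, shrinking further as needed, and invoking functoriality of $M$ on the resulting composable pairs in $\cR$. Once this bookkeeping is established, Steps~2 and~3 proceed as essentially mechanical unfoldings of definitions.
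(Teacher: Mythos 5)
Your construction is correct, and it supplies a complete argument where the paper gives none: the paper's ``proof'' of this proposition is only a citation to \cite[Propositions~4.2(b) and~5.2]{Pflat} and \cite[Lemma~6.2(a,c)]{PR}, so a direct verification of this kind is exactly what is needed to make the statement self-contained. Your definition of the contra-action on $\varprojlim_\Pi M(\fR/\fU)$ is the right one, and the two delicate points are handled correctly. First, the index-set bookkeeping in the associativity axiom closes up because $\fU$ is a \emph{right} ideal: if $s_y\in\fU$ then $s_yr_{yx}\in\fU$, and if $s_y\notin\fU$ but $r_{yx}\in\fW_y$ with $s_y\fW_y\subset\fU$ then again $s_yr_{yx}\in\fU$, so the ``diagonal'' and ``iterated'' sums have the same set of nonzero terms and agree term by term via $M(\sigma_y\tau_{yx})=M(\rho_{yx})$ and the independence of the choice of ideal. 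Second, in the adjunction you correctly observe that a contramodule map $f\:\fC\rarrow PL(M)$ gives compatibility of the $\bar f_\fU$ with \emph{all} morphisms of $\cR$ (not only the projections in $\Pi$) by testing $f$ against the single-term combinations $r\cdot c$, since every morphism $\fR/\fV\rarrow\fR/\fU$ in $\cR$ is $\bar s\mapsto\overline{rs}$ for some $r$ with $r\fV\subset\fU$; and conversely the identity $[\alpha_\fC(\sum_x r_xc_x)]=\sum_{r_x\notin\fU}[r_xc_x]$ in $\fC/(\fU\tim\fC)$, which rests on splitting the formal sum into its finitely many terms with $r_x\notin\fU$ and a remainder lying in $\fU\tim\fC$, recovers the intertwining from naturality of~$g$. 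Chasing $\mathrm{id}_{CT(\fC)}$ through the bijection then identifies the unit with $\lambda_{\fR,\fC}$ as claimed. The only presentational remark is that one should note explicitly that the terms with $r_x\in\fU$ may be freely included in or omitted from your defining formula, since the corresponding morphism $\fR/\fV_x\rarrow\fR/\fU$ is zero; this is implicit in your compatibility argument but is worth stating once, as it is used repeatedly.
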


\begin{proof}
We refer to~\cite[Propositions 4.2(b) and 5.2]{Pflat} or
to~\cite[Lemma 6.2(a,c)]{PR}.
\end{proof}

\begin{cor} \label{cor:F-systems}
The restriction of $CT\:\fR\contra\rarrow\cR\modl$ to the full subcategory $\fR\contra_\cs$ of
complete and separated contramodules is fully faithful.
So is the restriction of $PL\:\cR\modl\rarrow\fR\contra$ to the essential image
of $CT\:\fR\contra_\cs\rarrow\cR\modl$.
\end{cor}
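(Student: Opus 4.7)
The plan is to deduce the corollary from the adjunction $CT \dashv PL$ via standard abstract nonsense, using that the unit $\eta_\fC = \lambda_{\fR,\fC}$ is an isomorphism precisely when $\fC \in \fR\contra_\cs$. This is really the general categorical fact that any adjunction restricts to an equivalence between the full subcategory of objects on which the unit is invertible and the full subcategory of objects on which the counit is invertible.

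For the first statement, let $\fC$, $\fC' \in \fR\contra_\cs$. By Proposition~\ref{prop:F-systems}, the adjunction gives a natural isomorphism
\[
\Hom_{\cR\modl}(CT(\fC),CT(\fC'))\;\cong\;\Hom^\fR(\fC,PL(CT(\fC'))),
\]
under which the natural map $\Hom^\fR(\fC,\fC')\rarrow\Hom_{\cR\modl}(CT(\fC),CT(\fC'))$ induced by $CT$ corresponds to post-composition with the unit $\eta_{\fC'}=\lambda_{\fR,\fC'}\:\fC'\rarrow PL(CT(\fC'))$. Since $\fC'$ is complete and separated by assumption, $\lambda_{\fR,\fC'}$ is bijective, so post-composition with it is a bijection on Hom-groups. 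Hence $CT|_{\fR\contra_\cs}$ is fully faithful.

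For the second statement, we need the counit $\varepsilon_M\:CT(PL(M))\rarrow M$ to be an isomorphism for every $M$ in the essential image of $CT|_{\fR\contra_\cs}$. Write $M\cong CT(\fC)$ with $\fC \in \fR\contra_\cs$; by the triangle identity,
\[
\varepsilon_{CT(\fC)}\circ CT(\eta_\fC)=\mathrm{id}_{CT(\fC)}.
\]
Since $\eta_\fC=\lambda_{\fR,\fC}$ is an isomorphism, so is $CT(\eta_\fC)$, and hence $\varepsilon_{CT(\fC)}$ is an isomorphism as well. Then, for any $M'$ in the same essential image, the adjunction isomorphism $\Hom^\fR(PL(M),PL(M'))\cong\Hom_{\cR\modl}(CT(PL(M)),M')$ identifies the natural map $\Hom_{\cR\modl}(M,M')\rarrow\Hom^\fR(PL(M),PL(M'))$ induced by $PL$ with pre-composition by $\varepsilon_M$, which is a bijection because $\varepsilon_M$ is an isomorphism. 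Thus $PL$ restricted to the essential image of $CT|_{\fR\contra_\cs}$ is fully faithful.

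There is no real obstacle here: once Proposition~\ref{prop:F-systems} is in hand, both assertions are formal consequences of the adjunction and the triangle identities, applied to the subcategory characterized by the unit being invertible. The only mildly subtle point is the bookkeeping of which objects need to be complete and separated: for $CT$ to be fully faithful on a pair $(\fC,\fC')$ it is enough that $\fC'$ be in $\fR\contra_\cs$, so restricting to $\fR\contra_\cs$ on both sides (as the statement does) is certainly sufficient.
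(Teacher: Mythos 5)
Your proof is correct and follows essentially the same route as the paper's: the paper likewise observes that the unit $\lambda_{\fR,\fC}$ is invertible on $\fR\contra_\cs$, deduces invertibility of the counit on the essential image via the triangle identities, and then invokes the standard characterization of fully faithful adjoints (citing Mac~Lane, Theorem~IV.3.1) where you spell out the Hom-set bookkeeping explicitly. No gaps.
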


\begin{proof}
If $\fC\in\fR\contra$ is complete and separated, then $\lambda_{\fR,\fC}$ is
an isomorphism, and so is, by
the triangular identities~\cite[Theorem IV.1.1]{MacLane},
the adjunction counit $CT(PL(M))\rarrow M$ for $M=CT(\fC)$.
The rest follows from~\cite[Theorem IV.3.1]{MacLane}.
\end{proof}

As an immediate consequence, we also obtain the following result.

\begin{cor} \label{cor:bicontramodule}
Let\/ $\fC\in\fR\contra$ be such that the coproduct\/ $\fC^{(X)}$ is
a complete and separated left\/ $\fR$\+contramodule for each set~$X$.
Then\/ $\fS=\Hom^\fR(\fC,\fC)^\rop$ with the base of neighborhoods
of zero formed by the open right ideals
\[ \fV_{F,\.\fU} = \{ f\:\fC\rarrow\fC \mid f(F)\subset\fU\tim\fC \} \subset \fS, \]
where $F$ runs over the finite subsets of\/ $\fC$ and\/ $\fU$ runs over
the open right ideals of\/ $\fR$, is a complete, separated
topological ring. Moreover, the category\/ $\Add(\fC)\subset\fR\contra$
is equivalent to the category of left projective\/ $\fS$\+contramodules.
\end{cor}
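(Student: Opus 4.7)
The plan is to deduce this from Theorem~\ref{addm-theorem} applied to the small preadditive category $\cR$ introduced before Proposition~\ref{prop:F-systems} and to the left $\cR$\+module $M = CT(\fC)$.

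The first step is to observe that $\Add(\fC) \subset \fR\contra_\cs$. By hypothesis, $\lambda_{\fR,\fC^{(X)}}$ is an isomorphism for every set~$X$, and since $\lambda_{\fR,-}$ is a natural transformation, for any retract $\fD$ of $\fC^{(X)}$ the morphism $\lambda_{\fR,\fD}$ is a retract of $\lambda_{\fR,\fC^{(X)}}$; a standard diagram chase then produces an explicit two-sided inverse, so $\lambda_{\fR,\fD}$ is itself an isomorphism. Hence every object of $\Add(\fC)$ is complete and separated.

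Next, by Proposition~\ref{prop:F-systems} the functor $CT$ is a left adjoint and so preserves coproducts, and by Corollary~\ref{cor:F-systems} its restriction to $\fR\contra_\cs$ is fully faithful. Writing $M = CT(\fC) \in \cR\modl$, these facts combine to give an additive equivalence $\Add(\fC) \simeq \Add(M)$ sending $\fC^{(X)}$ to $M^{(X)}$. Applying Theorem~\ref{addm-theorem} to the pair $(\cR,M)$ furnishes a complete, separated topological ring $\fS_0 = \Hom_\cR(M,M)^\rop$, whose topology has a basis of open right ideals given by the annihilators of finitely generated $\cR$\+submodules of $M$, together with an equivalence between $\Add(M)$ and the category of projective left $\fS_0$\+contramodules. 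Fully faithfulness of $CT$ on $\Add(\fC)$ then yields a ring identification $\fS_0 = \Hom_\cR(M,M)^\rop = \Hom^\fR(\fC,\fC)^\rop = \fS$.

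The final, and main, step is to match the finite topology on $\fS_0$ with the topology on $\fS$ generated by the neighborhoods $\fV_{F,\fU}$. A morphism $f \in \fS$, viewed as an endomorphism of $M$, annihilates a class $\bar c \in M(\fR/\fU) = \fC/(\fU\tim\fC)$ precisely when $f(c) \in \fU\tim\fC$ for some (equivalently, any) lift $c \in \fC$. Consequently, for any finite subset $F = \{c_1,\dotsc,c_k\} \subset \fC$ and any open right ideal $\fU \subset \fR$, the set $\fV_{F,\fU}$ coincides with the annihilator of the $\cR$\+submodule of $M$ generated by the images of the $c_i$ in $M(\fR/\fU)$, and hence is open in the finite topology. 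Conversely, an arbitrary finitely generated $\cR$\+submodule of $M$ is generated by finitely many $\bar c_i \in M(\fR/\fU_i)$, and its annihilator contains $\fV_{F,\fU_1\cap\dotsb\cap\fU_k}$ with $F = \{c_1,\dotsc,c_k\}$. The two families of open right ideals are therefore mutual refinements, so they define the same topology, upgrading $\fS_0 = \fS$ to an identification of topological rings and concluding the proof. The main obstacle is this last topological comparison; the categorical reduction through $CT$ is essentially formal once the completeness-separatedness condition is seen to pass to direct summands.
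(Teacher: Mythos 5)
Your proposal is correct and follows essentially the same route as the paper: identify $\fS$ with $\Hom_\cR(M,M)^\rop$ for $M=CT(\fC)$ via Corollary~\ref{cor:F-systems}, invoke Theorem~\ref{addm-theorem} for the completeness/separatedness and the equivalence $\Add(M)\simeq\fS\contra_\proj$, and then check that the ideals $\fV_{F,\.\fU}$ and the annihilators of finitely generated $\cR$\+submodules of $M$ are mutually cofinal. Your explicit verification that completeness and separatedness pass to direct summands (so that $\Add(\fC)\subset\fR\contra_\cs$) is a worthwhile detail that the paper leaves implicit.
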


\begin{proof}
By Corollary~\ref{cor:F-systems}, we have an isomorphism of rings
$\fS=\Hom^\fR(\fC,\fC)^\rop \cong \Hom_\cR(M,M)^\rop$, where $M=CT(\fC)$.
Under this isomorphism, the right ideal $\fV_{F,\fU}$ is carried over to the
right ideal $\{f\:M\rarrow M\mid f(E)=0\}\subset \Hom_\cR(M,M)^\rop$,
where $E\subset M$ is the $\cR$\+submodule generated by the cosets
of elements of $F$ in $\fR/\fU$. The latter ideal is clearly open
in $\Hom_\cR(M,M)^\rop$ in the topology of Theorem~\ref{addm-theorem}.

Conversely, let $E\subset M$ be an $\cR$\+submodule generated by
finitely many elements $y_i\in\fR/\fU_i$, $i=1,\dots,n$. If we lift
each $y_i$ to an element $x_i\in\fR$ and put $F=\{x_1,\dots,x_n\}$
and $\fU=\fU_1\cap\cdots\cap\fU_n$, one checks that the open right ideal in
$\Hom_\cR(M,M)^\rop$ consisting of the morphisms $f\:M\rarrow M$ vanishing
on $E$ contains the image of $\fV_{F,\.\fU}$. Hence,
$\Hom^\fR(\fC,\fC)^\rop \cong \Hom_\cR(M,M)^\rop$ is an isomorphism of
topological rings.

Finally, the equivalence $\Add(\fC)\simeq\fS\contra_\proj$
is obtained by composing the equivalence
$CT\:\Add(\fC)\rarrow\Add(M)$ given by Corollary~\ref{cor:F-systems}
with the equivalence $\Hom_\cR(M,-)\:\Add(M)\rarrow\fS\contra_\proj$
established in Theorem~\ref{addm-theorem} and
Proposition~\ref{prop:hom-contratensor-m}.
\end{proof}

\begin{rem}
More explicitly, the equivalence of
Corollary~\ref{cor:bicontramodule} assigns to $\fD\in\Add(\fC)$
the set $\Hom^\fR(\fC,\fD)$ with the $\fS$\+contramodule structure given
by the formula $(\sum_{x\in X}s_xf_x)(c)=\sum_{x\in X}f_x(cs_x)$
for each family of elements $s_x\in\fS$ converging to zero
in the topology of $\fS$ and $c\in \fC$.
The infinite sum in the left-hand side is a symbolic notation for
the monadic action map that we are
defining, while the sum on the right-hand side is convergent
in the topology on $\fD$ with the base of open neighborhoods of zero
given by $\fU\tim\fD$, where $\fU$
runs over open right ideals of $\fR$.
The inverse equivalence assigns to $\fP\in\fS\contra_\proj$
the left $\fR$\+contramodule
$\varprojlim_\fU \fC/(\fU\tim\fC)\ocn_\fS\fP$.
\end{rem}

We summarize the results in the following theorem.

\begin{thm} \label{morita-thm}
Let\/ $\fR$ be a complete, separated topological ring with a base of
neighborhoods of zero formed by open right ideals, $\fP\in\fR\contra$
be a projective generator, and\/ $\fS=\Hom^\fR(\fP,\fP)^\rop$.
Then\/ $\fS$ admits the structure of a complete, separated topological
ring with a base of neighborhoods of zero formed by open right ideals
such that there is an equivalence of abelian categories\/
$\fR\contra\simeq\fS\contra$ taking the projective contramodule\/ $\fP$
to the free contramodule with one generator\/ $\fS\in\fS\contra$.
\end{thm}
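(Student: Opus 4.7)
The plan is to deduce the theorem from Corollary~\ref{cor:bicontramodule} and then extend the resulting equivalence of projective subcategories to one of the whole abelian categories, using the standard fact that an abelian category with enough projectives is determined by its full subcategory of projective objects.

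First I would verify the hypothesis of Corollary~\ref{cor:bicontramodule} for $\fC=\fP$. Since $\fP$ is projective in $\fR\contra$, so is every coproduct $\fP^{(X)}$. By Example~\ref{ex:projective complete separated}, every projective $\fR$\+contramodule is complete and separated, so in particular $\fP^{(X)}$ is complete and separated for each set~$X$.

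Second, I would apply Corollary~\ref{cor:bicontramodule}. It produces the structure of a complete, separated topological ring on $\fS=\Hom^\fR(\fP,\fP)^\rop$ with a base of neighborhoods of zero formed by the open right ideals $\fV_{F,\fU}$, and an equivalence of additive categories $\Hom^\fR(\fP,-)\:\Add(\fP)\simeq\fS\contra_\proj$. Tracing through the construction in (the remark following) Corollary~\ref{cor:bicontramodule}, together with the identification in Theorem~\ref{addm-theorem} and Proposition~\ref{prop:hom-contratensor-m}, the object $\fP\in\Add(\fP)$ corresponds to $\Hom^\fR(\fP,\fP)=\fS$, viewed as the free left $\fS$\+contramodule with one generator.

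Third, I would promote this equivalence to one of the full abelian categories. Since $\fP$ is a projective generator of $\fR\contra$, we have $(\fR\contra)_\proj=\Add(\fP)$; similarly, $(\fS\contra)_\proj$ consists of the direct summands of free $\fS$\+contramodules, i.e.\ of $\Add(\fS)$. Both $\fR\contra$ and $\fS\contra$ are cocomplete abelian categories with enough projectives, so by \cite[Proposition IV.1.2]{ARS96} (applied exactly as in the proofs of Proposition~\ref{tilting-heart} and Corollary~\ref{cor:B-as-Tmod}) each of them is determined by its full subcategory of projective objects; consequently the equivalence $\Add(\fP)\simeq\fS\contra_\proj$ extends uniquely to an equivalence of abelian categories $\fR\contra\simeq\fS\contra$. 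Under this extension, $\fP$ still corresponds to the free $\fS$\+contramodule $\fS$, as recorded in the previous step.

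No serious obstacle arises: the content is a bookkeeping assembly of Corollary~\ref{cor:bicontramodule}, Example~\ref{ex:projective complete separated}, and the Freyd-style reconstruction of an abelian category from its projectives. The only point requiring care is the verification of the completeness and separatedness hypothesis on $\fP^{(X)}$, which is where the assumption that $\fP$ is projective (and not merely a generator) is actually used.
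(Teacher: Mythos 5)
Your proposal is correct and follows essentially the same route as the paper: the paper's proof likewise verifies the hypothesis of Corollary~\ref{cor:bicontramodule} via Example~\ref{ex:projective complete separated}, applies the corollary to obtain $\fR\contra_\proj\simeq\fS\contra_\proj$, and then extends to the full abelian categories exactly as in the proof of Corollary~\ref{cor:B-as-Tmod}. Your explicit remark that $(\fR\contra)_\proj=\Add(\fP)$ because $\fP$ is a projective \emph{generator} is a correct and welcome piece of bookkeeping that the paper leaves implicit.
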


\begin{proof}
In view of Example~\ref{ex:projective complete separated}, 
it is an immediate consequence of Corollary~\ref{cor:bicontramodule}
that $\fR\contra_\proj\simeq\fS\contra_\proj$ with the corresponding
topology on $\fS$. Then also $\fR\contra\simeq\fS\contra$
(see the proof of Corollary~\ref{cor:B-as-Tmod}).
\end{proof}

\begin{ex} \label{morita-expl}
Suppose that $\fP=\fR[[Y]]$ for a nonempty set $Y$. Then $\fS$ is the ring
of matrices over $\fR$ with rows and columns indexed by $Y$ and with
every row converging to zero in the topology of $\fR$. The equivalence
$\fR\contra\rarrow\fS\contra$ sends $\fC\in\fR\contra$ to
$\Hom^\fR(\fP,\fC)=\fC^Y$ with the left $\fS$-contramodule structure given by
$\sum_{x\in X}s_xd_x = (\sum_{x\in X,y\in Y}r^x_{zy}c^x_y)_{z\in Y}\in\fC^Y$,
where $s_x = (r^x_{zy})_{z,y\in Y}\in\fS$ is a collection of matrices over
$\fR$ which converges to zero in $\fS$ and $d_x=(c^x_y)_{y\in Y}\in\fC^Y$.
\end{ex}

We conclude by discussing an interpretation of
Theorem~\ref{kappa-ary-dual-gabriel-popescu} for topological rings.
The point is that up to Morita equivalence, the category of contramodules
is a full subcategory of the category of ordinary modules, and
the contratensor product reduces to an ordinary tensor product.

To this end, let $\fR$ be a topological ring as above 
and $Y$ be a set such that
the cardinality of $Y$ is greater than or equal to the cardinality of
a base of neighborhoods of zero in $\fR$.
By setting $\fS=\Hom^\fR(\fR[[Y]],\fR[[Y]])^\rop$
with the topology from Corollary~\ref{cor:bicontramodule},
we obtain functors
\[ \fR\contra \simeq \fS\contra \overset{\subset}\lrarrow \fS\modl, \]
where the forgetful functor is fully faithful by
Theorem~\ref{kappa-ary-dual-gabriel-popescu}. Regarding the tensor product,
we have following observation.

\begin{lem} \label{contratensor-vs-tensor}
Let\/ $\fS$ be a complete, separated topological ring with a base
of neighborhoods of zero formed by open right ideals.
Suppose that the forgetful functor\/ $\fS\contra\rarrow\fS\modl$ is
fully faithful.
Then the natural surjection of abelian groups
$$
N\ot_\fS\fC\lrarrow N\ocn_\fS\fC
$$
is an isomorphism for every discrete right\/ $\fS$\+module $N$ and
left\/ $\fS$\+contramodule\/~$\fC$.
\end{lem}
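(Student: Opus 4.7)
The plan is to prove the isomorphism by a Yoneda argument, comparing the adjunctions satisfied by the two functors. The contratensor product fits into the adjunction~\eqref{contratensor-group-adjunction},
\[
\Hom_\Z(N\ocn_\fS\fC,\>V)\cong\Hom^\fS(\fC,\.\Hom_\Z(N,V)),
\]
for every abelian group~$V$, while the usual tensor product satisfies the classical
\[
\Hom_\Z(N\ot_\fS\fC,\>V)\cong\Hom_\fS(\fC,\.\Hom_\Z(N,V)),
\]
where the left $\fS$\+module structure on $\Hom_\Z(N,V)$ is induced from the right $\fS$\+module structure on~$N$. The key observation is that the left $\fS$\+module structure on $\Hom_\Z(N,V)$ underlying the $\fS$\+contramodule structure defined by~\eqref{hom-contramodule} is exactly this classical $\fS$\+module structure: restricting the formula $(\sum_x s_xf_x)(n)=\sum_xf_x(ns_x)$ to the one-term family $s_0=s$, $s_x=0$ for $x\ne0$, gives $(sf)(n)=f(ns)$.

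Next I would trace the natural surjection $\pi\:N\ot_\fS\fC\twoheadrightarrow N\ocn_\fS\fC$ through these adjunctions. Applying $\Hom_\Z(-,V)$ to $\pi$ yields an injection $\Hom_\Z(N\ocn_\fS\fC,V)\hookrightarrow\Hom_\Z(N\ot_\fS\fC,V)$, and a routine unwinding (a $\Z$\+linear map $N\otimes_\Z\fC\to V$ factors through $N\ot_\fS\fC$ iff the associated map $\fC\to\Hom_\Z(N,V)$ is $\fS$\+linear, and through $N\ocn_\fS\fC$ iff that map is a contramodule morphism) identifies this injection, under the two adjunctions above, with the inclusion
\[
\Hom^\fS(\fC,\.\Hom_\Z(N,V))\hookrightarrow\Hom_\fS(\fC,\.\Hom_\Z(N,V))
\]
induced by the forgetful functor $U_\fS\:\fS\contra\rarrow\fS\modl$.

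By the hypothesis that $U_\fS$ is fully faithful, this inclusion is an equality for every $V$. Hence $\Hom_\Z(\pi,V)$ is an isomorphism for every abelian group $V$, and by the Yoneda lemma $\pi$ itself is an isomorphism.

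I do not anticipate a serious obstacle: the only point that requires care is the compatibility check in the previous paragraph, namely verifying that the natural surjection $\pi$ corresponds under the two adjunctions precisely to the forgetful inclusion between the two Hom-groups. Once this identification is made explicit (which only uses the definitions of $\ot_\fS$, $\ocn_\fS$, and of the contramodule structure~\eqref{hom-contramodule}), the conclusion is immediate from the fully-faithfulness assumption.
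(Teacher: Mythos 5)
Your proposal is correct and is essentially the paper's own proof: the paper likewise compares the two adjunction isomorphisms $\Hom_\Z(N\ocn_\fS\fC,V)\cong\Hom^\fS(\fC,\Hom_\Z(N,V))$ and $\Hom_\Z(N\ot_\fS\fC,V)\cong\Hom_\fS(\fC,\Hom_\Z(N,V))$ and concludes from full faithfulness of the forgetful functor. Your write-up merely makes explicit the compatibility check and the Yoneda step that the paper leaves implicit.
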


\begin{proof}
Compare the two natural isomorphisms
\begin{align*}
\Hom_\Z(N\ocn_\fS\fC,\>V)&\cong\Hom^\fS(\fC,\.\Hom_\Z(N,V)) \\
\intertext{and}
\Hom_\Z(N\ot_\fS\fC,\>V)&\cong\Hom_\fS(\fC,\.\Hom_\Z(N,V)),
\end{align*}
which hold for every abelian group~$V$
(see~\eqref{contratensor-group-adjunction}).
\end{proof}

\Section{Big Tilting Modules}
\label{big-tilting-module-secn}

Here we specialize the tilting theory which we have developed
to module categories.
We obtain rather concrete derived equivalences which relate directly
to the more classical tilting results in the literature.

To start with, let $\fA$ be a topological ring with a base of neighborhoods
of zero formed by open \emph{left} ideals, and let $M\in\fA\discrl$
be a discrete left $\fA$\+module.
It is interesting here to consider even the case where $\fA$
carries the discrete topology (so it is just an ordinary ring).

If we equip the endomorphism ring $\fR=\Hom_\fA(M,M)^\rop$ with the finite
topology (Theorem~\ref{addm-theorem}), then
\begin{enumerate}
\item $\fR$ is a complete separated
topological ring with a base of neighborhoods of zero formed by open right
ideals,
\item $M$ is an $\fA$\+$\fR$\+bimodule, and
\item $M$ is a discrete module both over $\fA$ and
(by Proposition~\ref{prop:hom-contratensor-m}) over $\fR$.
\end{enumerate}

We also know that there is an adjunction
\begin{equation}\label{eq:discr-bimodule}
M\ocn_\fR{-}\:\fR\contra \;\rightleftarrows\; \fA\discrl \;\:\!\!\Hom_\fA(M,{-}).
\end{equation}
Up to the Morita equivalence (i.e., passing from $M$ to $M^{(Y)}$ and from $\fR$
to $\fS=\Hom_\fA(M^{(Y)},M^{(Y)})^\rop\cong\Hom^\fR(\fR[[Y]],\fR[[Y]])^\rop$,
see Theorem~\ref{morita-thm} and Example~\ref{morita-expl}),
the category $\fR\contra$ is a full subcategory of $\fR\modl$.
Indeed, this follows from Theorem~\ref{kappa-ary-dual-gabriel-popescu}
if $Y$ is an infinite set of cardinality greater than or equal to
the minimal cardinality of a set of generators of $M$.
The adjunction~\eqref{eq:discr-bimodule}
is then, by Lemma~\ref{contratensor-vs-tensor},
simply a restriction of the usual adjunction
\[
M\otimes_\fR{-}\:\fR\modl \;\rightleftarrows\; \fA\modl \;\:\!\!\Hom_\fA(M,{-}).
\]

Suppose now that $T\in\fA\discrl$ is an $n$\+tilting object in the sense
of Section~\ref{tilting-t-structure-secn} and again that
$\fR=\Hom_\fA(T,T)^\rop$. 
If we pass to the derived functors of the adjunction~\eqref{eq:discr-bimodule},
we obtain, in view of Remark~\ref{rem:tilting-contra} and
Theorem~\ref{addm-theorem}, the equivalences from
Corollary~\ref{conv-abs-derived-equivalence} in the form
\begin{equation}\label{eq:discr-tilting}
T\ocn^\boL_\fR{-}\:\D^\st(\fR\contra) \;\rightleftarrows\; \D^\st(\fA\discrl) \;\:\!\boR\!\Hom_\fA(T,{-}).
\end{equation}

To recover a corresponding $n$\+cotilting object in $\fR$\+contra,
note that the maximal discrete left $\fA$\+submodule $W$ of
$\Hom_\Z(\fA,\boQ/\Z)$ is an injective cogenerator of $\fA\discrl$.
The $n$\+cotilting object which we look for is then, 
by Theorem~\ref{tilting-cotilting-thm},
\[ \Hom_\fA(T,W) = \Hom_\fA(T,\Hom_\Z(\fA,\boQ/\Z)) = \Hom_\Z(T,\boQ/\Z). \]
The $\fR$\+contramodule structure on $\Hom_\Z(T,\boQ/\Z)$ is given by
the rule~\eqref{hom-contramodule} in~\S\ref{contratensor-subsecn}.

\medskip

In certain cases, we can do even better in that some of the
canonical functors $\D^\st(\fR\contra)\rarrow\D^\st(\fR\modl)$
and $\D^\st(\fA\discrl)\rarrow\D^\st(\fA\modl)$ are fully faithful
and that the corresponding equivalence~\eqref{eq:discr-tilting}
is simply a restriction of the standard adjunction
\begin{equation}\label{eq:derived-adj}
T\otimes^\boL_\fR{-}\:\D^\st(\fR\modl) \;\rightleftarrows\; \D^\st(\fA\modl) \;\:\!\boR\!\Hom_\fA(T,{-}).
\end{equation}

One such situation occurs when $A$ is an ordinary ring with discrete
topology (so $A\discrl=A\modl$). This observation
goes back to~\cite{Ba-eq,BMT}.
A tilting object $T\in A\modl$ is then,
by Corollary~\ref{tilting-objects-and-tilting-modules},
precisely an infinitely generated tilting module, whose
definition goes back to the papers~\cite{AC,Ba0}. That is,
$T$ is required to satisfy the conditions~\textup{(i)}
(finite projective dimension) and~\textup{(ii)}
(no self\+extensions) of Section~\ref{tilting-t-structure-secn}
and the free left $A$\+module $A$ must have a finite coresolution
$$
 0\lrarrow A\lrarrow T^0\lrarrow\dotsb\lrarrow T^r\lrarrow0
$$
by $A$\+modules $T^i$ belonging to the subcategory
$\Add(T)\subset A\modl$.

The tilting $A$\+module $T$ is called \emph{good}
(\cite[Section~13.1]{GTbook}, \cite{AC}, \cite{Ba0})
if the $A$\+modules $T^i$ in the above coresolution
can be chosen to be direct summands
of \emph{finite} direct sums of copies of~$T$.
 Obviously, replacing an arbitrary $n$\+tilting $A$\+module $T$ with
the $A$\+module $T^{(Y)}$ for a large enough set $Y$ produces a good
$n$\+tilting module with the same associated tilting t\+structure.
The key facts about good tilting modules are summarized in the following
lemma.

\begin{lem} \label{lem:good-tilting-module-hom}
 Let $T\in A\modl$ be a good $n$\+tilting module and\/ $\fR=\Hom_A(T,T)^\rop$.
 Then the right\/ $\fR$\+module $T$ is perfect, i.e., it admits a finite
resolution (of length~$n$) by finitely generated projective right\/ $\fR$\+modules.
 If\/ $\E=\bigcap_{i>0}\ker\Ext^i_A(T,-)\subset A\modl$ is the tilting
class corresponding to~$T$ (recall Theorem~\ref{tilting-cotorsion-pair-thm}
and Remark~\ref{rem:tilting-class}) and\/
$\F=\bigcap_{i>0}\ker\Tor_i^\fR(T,-)\subset\fR\modl$,
then the adjunction
\[ 
T\otimes_\fR{-}\:\fR\modl \;\rightleftarrows\; 
A\modl \;\:\!\!\Hom_\A(T,{-})
\]
restricts to an adjunction 
\[ 
T\otimes_\fR{-}\:\F \;\rightleftarrows\; 
\E \;\:\!\!\Hom_\A(T,{-}).
\]
Moreover, both the restricted functors are exact and 
the adjunction counit $T\ot_\fR\Hom_A(T,E)\rarrow E$
is an isomorphism for each $E\in\E$.
\end{lem}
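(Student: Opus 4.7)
My plan consists of three stages.

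\emph{Stage 1 (perfectness of $T_\fR$).} I apply $\Hom_A(-,T)$ to the good coresolution $0\to A\to T^0\to\dotsb\to T^n\to 0$, whose terms $T^j$ are direct summands of \emph{finite} powers of~$T$. Condition~(ii) (applied to a singleton) gives $\Ext^i_A(T,T)=0$ for $i>0$, hence $\Ext^i_A(T^j,T)=0$. A dimension-shift along the short exact sequences $0\to Z^j\to T^j\to Z^{j+1}\to 0$ formed by the cocycles $Z^j$ of the coresolution, started from $Z^n=T^n$, propagates the vanishing to all $\Ext^i_A(Z^j,T)=0$ for $i>0$. Hence the dual sequence $0\to\Hom_A(T^n,T)\to\dotsb\to\Hom_A(T^0,T)\to T\to 0$ is exact, and since $\Hom_A(T^m,T)\cong\fR^m$ as a right $\fR$-module for $m$ finite, each $\Hom_A(T^j,T)$ is finitely generated projective over~$\fR$. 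This is the desired resolution of $T_\fR$ of length~$n$.

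\emph{Stage 2 (counit on $\E$ and $\Hom_A(T,-)\colon\E\to\F$).} For each $X$ appearing in the coresolution, the evaluation map
\[
\mathrm{ev}_X\colon\Hom_A(X,T)\ot_\fR\Hom_A(T,E)\lrarrow\Hom_A(X,E),\qquad f\ot g\longmapsto g\circ f,
\]
is an isomorphism: it is so for $X=T^m$ with $m$ finite (both sides are $\Hom_A(T,E)^m$) and extends to finite summands. Tensoring the resolution from Stage~1 with $\Hom_A(T,E)$ and applying $\mathrm{ev}_{T^j}$ identifies the resulting complex with $\Hom_A(T^\bullet,E)$ in degrees $0,\dots,n$; its homology is $\Tor^\fR_*(T,\Hom_A(T,E))$ with $H_0=T\ot_\fR\Hom_A(T,E)$. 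On the other hand, applying $\Hom_A(-,E)$ directly to the coresolution yields the same complex $\Hom_A(T^\bullet,E)$ together with an augmentation to~$E$, and is exact since a dimension-shift (now with $E\in\E$ and $Z^n=T^n\in\E$) gives $\Ext^i_A(Z^j,E)=0$ for $i>0$. Comparing, we read off both $\Tor^\fR_i(T,\Hom_A(T,E))=0$ for $i>0$ (so $\Hom_A(T,E)\in\F$) and a natural isomorphism $T\ot_\fR\Hom_A(T,E)\cong E$; a diagram chase on the two augmentations shows it is the adjunction counit.

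\emph{Stage 3 ($T\ot_\fR-\colon\F\to\E$; exactness; main difficulty).} Given $M\in\F$, pick a free resolution $Q_\bullet\to M$ in $\fR\modl$. Since $\Tor^\fR_{>0}(T,M)=0$, the complex $T\ot_\fR Q_\bullet\to T\ot_\fR M\to 0$ is exact in $A\modl$, with each $T\ot_\fR Q_j\cong T^{(I_j)}\in\E$ by condition~(ii). Letting $K_0=T\ot_\fR M$ and $K_j$ the $j$\+th syzygy, the short exact sequences $0\to K_{j+1}\to T\ot_\fR Q_j\to K_j\to 0$ give $\Ext^i_A(T,K_j)\cong\Ext^{i+1}_A(T,K_{j+1})$ for $i\ge 1$; iterating $n$ times and invoking condition~(i) yields $\Ext^i_A(T,T\ot_\fR M)=0$, so $T\ot_\fR M\in\E$. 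Exactness of the two restricted functors follows immediately from the long exact sequences of $\Tor^\fR(T,-)$ and $\Ext^*_A(T,-)$, respectively, since the relevant $\Tor^\fR_1$ and $\Ext^1_A$ vanish on the endpoints of a SES in $\F$ or~$\E$. The only delicate step is the compatibility check in Stage~2 identifying the abstract isomorphism on $H_0$ with the adjunction counit; everything else is standard dimension-shifting on the hypotheses~(i) and~(ii).
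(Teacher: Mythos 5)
Your proof is correct and follows the same route as the paper's: the paper simply cites \cite[Proposition~1.4]{BMT} for the perfectness of $T_\fR$ and \cite[Lemma~1.5\,(1--2)]{BMT} for the counit isomorphism and the vanishing of $\Tor_i^\fR(T,\Hom_A(T,E))$, and your Stages 1--2 are precisely the standard arguments behind those citations (dualizing the good coresolution of $A$, then comparing $P_\bullet\ot_\fR\Hom_A(T,E)$ with $\Hom_A(T^\bullet,E)$ via evaluation). Your Stage 3 likewise matches the paper's argument, except that where the paper invokes Lemma~\ref{tilting-cotilting-classes-lemma}(a) to conclude $T\ot_\fR F\in\E$ from the $\Add(T)$\+resolution, you reprove that implication directly by dimension shifting on conditions (i)--(ii), which is a perfectly valid (and self-contained) substitute.
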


\begin{proof}
The right $\fR$\+module $T$ is perfect by~\cite[Proposition~1.4]{BMT}.
Furthermore, we have $T\ot_\fR\Hom_A(T,E)\cong E$ and
$\Tor_i^\fR(T,\Hom_A(T,E))=0$ for each $E\in\E$ and $i>0$
by~\cite[Lemma~1.5\,(1--2)]{BMT}.
In particular, $\Hom_A(T,{-})$ restricts to an exact functor $\E\rarrow\F$.
On the other hand, each $F\in\F$ admits a free resolution
\[ \fR^{(I_n)}\rarrow\cdots\rarrow\fR^{(I_2)}
\rarrow\fR^{(I_1)}\rarrow F\rarrow0 \]
in $\fR\modl$ and all the syzygies of $F$ are in $\F$ again
by simple dimension shifting.
If we apply $T\ot_\fR{-}$, we obtain an exact sequence
\[ T^{(I_n)}\rarrow\cdots\rarrow T^{(I_2)}\rarrow T^{(I_1)}
\rarrow T\ot_\fR F\rarrow0 \]
in $A\modl$, so $T\ot_\fR F\in\E$ by
Lemma~\ref{tilting-cotilting-classes-lemma}~(a).
\end{proof}

The following proposition then extends results from \cite{BMT}.

\begin{prop} \label{prop:good-tilting-module}
 Let $T\in A\modl$ be a good $n$\+tilting $A$\+module and\/ $\B\subset
\D^\b(A\modl)$ be the tilting heart; so\/ $\B\simeq\fR\contra$, where\/
$\fR=\Hom_A(T,T)^\rop$.
For every derived category symbol\/ $\st=\b$, $+$, $-$,
$\varnothing$, $\abs+$, $\abs-$, $\ctr$, or\/~$\abs$, 
the forgetful functor\/ $\fR\contra\rarrow\fR\modl$ induces
a fully faithful functor\/ $\D^\st(\fR\contra)\rarrow\D^\st(\fR\modl)$
of (conventional or absolute) derived categories.
In particular, the functor
\[ \boR\!\Hom_A(T,-)\:\D^\st(A\modl)\lrarrow\D^\st(\fR\modl) \]
is fully faithful and restricts to an equivalence 
\[ \boR\!\Hom_A(T,-)\:\D^\st(A\modl)\lrarrow\D^\st(\fR\contra). \]
\end{prop}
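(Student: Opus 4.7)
The plan is to first establish fully faithfulness of the derived forgetful functor\/ $\D^\st(\fR\contra)\rarrow\D^\st(\fR\modl)$; the remaining assertions of the proposition then follow formally by composing this functor with the equivalence $\boR\Hom_A(T,{-})\:\D^\st(A\modl)\simeq\D^\st(\fR\contra)$ already provided by Corollary~\ref{conv-abs-derived-equivalence} together with Remark~\ref{rem:tilting-contra}.

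To this end, I would factor the derived forgetful functor through the exact subcategory $\F=\bigcap_{i>0}\ker\Tor_i^\fR(T,{-})\subset\fR\modl$. The first step is to verify that $\F$ is a resolving subcategory of $\fR\modl$ of resolution dimension at most~$n$: closure under extensions and kernels of epimorphisms is immediate from the long exact sequence of $\Tor$, $\F$ contains all projective $\fR$\+modules, and the perfectness of $T$ as a right $\fR$\+module (Lemma~\ref{lem:good-tilting-module-hom}) gives $\Tor_i^\fR(T,{-})=0$ for $i>n$, so that the $n$\+th syzygy of any projective resolution in $\fR\modl$ lies in~$\F$. The general resolution-equivalence argument underlying Theorem~\ref{exotic-derived-resolution-equivalences}(b) (namely, \cite[Proposition~A.5.6]{Pcosh}) then applies verbatim to the inclusion $\F\subset\fR\modl$ and yields a triangulated equivalence $\D^\st(\F)\simeq\D^\st(\fR\modl)$ for every $\st$ in the list (the $\ctr$ case is fine since products are exact in $\fR\modl$).

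Next, Lemma~\ref{lem:good-tilting-module-hom} supplies mutually quasi-inverse exact equivalences $\Hom_A(T,{-})\:\E\rightleftarrows\F\:T\ot_\fR{-}$ between the tilting class $\E\subset\A$ and the class $\F\subset\fR\modl$, which induce a triangulated equivalence $\D^\st(\E)\simeq\D^\st(\F)$. On the other hand, $\E$ sits as a resolving subcategory of bounded resolution dimension inside $\B\simeq\fR\contra$ by Lemma~\ref{cotilting-class-resolving-finite-dim}, so Theorem~\ref{exotic-derived-resolution-equivalences}(b) applied inside $\fR\contra$ gives $\D^\st(\E)\simeq\D^\st(\fR\contra)$. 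Composing these three equivalences produces a fully faithful triangulated functor
$$
\D^\st(\fR\contra)\simeq\D^\st(\E)\simeq\D^\st(\F)\hookrightarrow\D^\st(\fR\modl).
$$

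The main obstacle is then to verify that this composite coincides with the functor induced by the forgetful functor $\fR\contra\rarrow\fR\modl$. Both functors are exact, so it suffices to compare them on the projective generators $\fR[[X]]$ of $\fR\contra$. Chasing through the chain of equivalences, the composite sends $\fR[[X]]$ first to $T^{(X)}\in\Add(T)\subset\E$ via $T\ocn_\fR{-}$, and then to $\Hom_A(T,T^{(X)})$ as an $\fR$\+module via $\Hom_A(T,{-})\:\E\rarrow\F$. By Theorem~\ref{addm-theorem} and the discussion of contramodule structures on Hom\+groups in \S\ref{contratensor-subsecn}, this $\fR$\+module is precisely the underlying $\fR$\+module of the free contramodule $\fR[[X]]$, which matches the value of the forgetful functor. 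Once this identification is pinned down on projectives and extended by exactness, the fully faithfulness of the derived forgetful functor follows from the factorization, and the proof is complete.
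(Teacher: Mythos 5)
Your overall architecture (resolving subcategories $\F\subset\fR\modl$ and $\E\subset\fR\contra$ of bounded resolution dimension, the resolution equivalences $\D^\st(\F)\simeq\D^\st(\fR\modl)$ and $\D^\st(\E)\simeq\D^\st(\fR\contra)$, and the identification of the composite with the forgetful functor on the projectives $\fR[[X]]$) matches the paper's proof. But the middle step contains a genuine error: Lemma~\ref{lem:good-tilting-module-hom} does \emph{not} supply mutually quasi-inverse equivalences $\E\rightleftarrows\F$. It only asserts that the adjunction restricts, that both restricted functors are exact, and that the \emph{counit} $T\ot_\fR\Hom_A(T,E)\rarrow E$ is invertible on $\E$; this makes $\Hom_A(T,{-})\:\E\rarrow\F$ fully faithful but in general far from essentially surjective. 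Indeed, if it were an equivalence, your chain would make the derived forgetful functor $\D^\st(\fR\contra)\rarrow\D^\st(\fR\modl)$ an \emph{equivalence}, equivalently $\boR\Hom_A(T,-)\:\D^\st(A\modl)\rarrow\D^\st(\fR\modl)$ an equivalence, which fails for non-classical good tilting modules (the point of~\cite{Ba-eq,BMT}). Concretely, $\fR^{(\omega)}$ (coproduct in $\fR\modl$) lies in $\F$ but is not of the form $\Hom_A(T,E)$: the unit $\fR^{(\omega)}\rarrow\Hom_A(T,T\ot_\fR\fR^{(\omega)})=\Hom_A(T,T^{(\omega)})=\fR[[\omega]]$ is a proper inclusion whenever $T$ is not self-small. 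Note also the internal inconsistency in your write-up: a composite of three equivalences cannot be merely ``fully faithful.''

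Weakening your claim to ``$\Hom_A(T,{-})\:\E\rarrow\F$ is fully faithful'' does not repair the argument, because a fully faithful exact embedding of exact categories need not induce a fully faithful functor on derived categories. The paper's fix is to lift the \emph{adjunction} of Lemma~\ref{lem:good-tilting-module-hom} to an adjunction $T\ot^\boL_\fR{-}\:\D^\st(\fR\modl)\simeq\D^\st(\F)\rightleftarrows\D^\st(\E)\simeq\D^\st(A\modl)\;\:\!\boR\Hom_A(T,{-})$ and to observe that the derived counit remains invertible; this yields full faithfulness of $\boR\Hom_A(T,-)\:\D^\st(A\modl)\rarrow\D^\st(\fR\modl)$, and then full faithfulness of the forgetful functor $\D^\st(\fR\contra)\rarrow\D^\st(\fR\modl)$ follows by composing with the equivalence $\D^\st(A\modl)\simeq\D^\st(\fR\contra)$ of Corollary~\ref{conv-abs-derived-equivalence}. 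You should replace your middle step by this adjunction argument.
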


\begin{proof}
Let $\E=\bigcap_{i>0}\ker\Ext^i_A(T,-)\subset A\modl$ and
$\F=\bigcap_{i>0}\ker\Tor_i^\fR(T,-)\subset\fR\modl$ as above.
Then $\D^\st(\E)\simeq\D^\st(A\modl)$
by Theorem~\ref{exotic-derived-resolution-equivalences}.
Moreover, the same arguments imply that
$\D^\st(\F)\simeq\D^\st(\fR\modl)$. We only need that
$\F$ has similar closure properties in $\fR\modl$ to those
mentioned in Lemma~\ref{cotilting-class-resolving-finite-dim}:
it is closed under summands, infinite products, extensions, kernels
of epimorphisms, and contains all projective $\fR$\+modules.
Finally, since any $n$\+th syzygy of an $\fR$\+module is in $\F$,
each $Y\in\fR\modl$ admits an exact sequence
\[
 0 \lrarrow F_n \lrarrow \dotsb \lrarrow F_1\lrarrow F_0
 \lrarrow Y \lrarrow 0,
\]
where $F_0$, $F_1$,~\dots, $F_n \in \F$.

Now we observe that the adjunction between $\E$ and $\F$
from Lemma~\ref{lem:good-tilting-module-hom} lifts to an adjunction
\begin{equation} \label{eq:adj-modules}
 T\otimes^\boL_\fR{-}\:\D^\st(\fR\modl)\simeq\D^\st(\F)
 \;\rightleftarrows\;
 \D^\st(\E)\simeq\D^\st(A\modl) \;\:\!\boR\!\Hom_A(T,{-}).
\end{equation}
(we may use the characterization of adjunctions
in~\cite[Theorem~IV.2~(v)]{MacLane} here).
Since the counit $T\ot^\boL_\fR\boR\!\Hom_A(T,E^\bu)\rarrow E^\bu$
is also an isomorphism for each $E^\bu\in\D^\st(\E)$
by Lemma~\ref{lem:good-tilting-module-hom}, the functor
$\boR\!\Hom_A(T,-)\:\D^\st(A\modl)\rarrow\D^\st(\fR\modl)$
is fully faithful.
In view of the equivalence $\boR\!\Hom_A(T,-)\:\D^\st(A\modl)
\rarrow\D^\st(\fR\contra)$
given by Corollary~\ref{conv-abs-derived-equivalence}, also the canonical
functor $\D^\st(\fR\contra)\rarrow\D^\st(\fR\modl)$ is fully faithful.
\end{proof}

\begin{rem}
There are various other situations where~\eqref{eq:discr-tilting}
arises as the restriction of~\eqref{eq:derived-adj} for some choices
of the symbol $\st$.

On one hand, Theorem~\ref{kappa-ary-dual-gabriel-popescu} has
been improved in~\cite[Theorem 4.10]{Pperp}. It implies that for any
topological ring $\fR$ with a base of neighborhoods
of zero formed by open right ideals and a set $Y$ of the cardinality
not smaller than that of a base of neighborhoods of zero in $\fR$,
the Morita equivalent topological ring
$\fS=\Hom^\fR(\fR[[Y]],\fR[[Y]])$ has the property that the forgetful functor
\[ \D^-(\fS\contra)\lrarrow\D^-(\fS\modl) \]
is fully faithful. In this case, the contratensor and usual tensor
product over $\fS$ are identified in the sense
of Lemma~\ref{contratensor-vs-tensor}.

The forgetful functor $\D^\st(\fA\discrl)\lrarrow\D^\st(\fA\modl)$
is also known to be fully faithful in some cases.
This happens for $\st=\b$, $+$, $-$, or $\varnothing$ if:
\begin{enumerate}
\item $\fA$ is a commutative ring, $\fI\subset\fA$ is a weakly proregular
finitely generated ideal and the powers of $\fI$ form a base of neighborhoods
of zero (see \cite[Corollary~1.4]{Pmc}), or
\item $\fA$ is a commutative ring, $S\subset\fA$ is a multiplicative
subset satisfying certain conditions and $\{\fA s\mid s\in S\}$ is a base
of neighborhoods of zero (see \cite[Theorem 6.6~(a)]{PMat}).
\end{enumerate}

In both these situations, there are examples
of tilting equivalences in our sense. These are constructed
using \cite[Theorem 19.1]{BP-properfect}
from certain naturally occurring flat ring epimorphisms originating in $\fA$
(namely, the localization $S^{-1}\fA$ in case~(2), and
the epimorphism corresponding to the specialization closed
subset $V(\fI)\subset\operatorname{Spec}\fA$ in the sense of~\cite[Theorem 1.2]{AHMSVT}, provided that it exists,
in case~(1)).
In fact, Example~\ref{torsion-modules-1-tilting-example} is a special case of situation~(2).
\end{rem}

We conclude the section by presenting an explicit example
which illustrates Proposition~\ref{prop:good-tilting-module}.
It extends Example~\ref{torsion-modules-1-tilting-example} in a different direction
in the case of commutative Noetherian rings of Krull dimension one.

\begin{ex}\label{Matlis-1-tilting-example}
Let $R$ be a commutative Noetherian ring of Krull dimension~$1$ and let $S \subset R$ be a multiplicative subset consisting of nonzero-divisors. Then $S^{-1}R$ is a flat $R$\+module, hence its projective dimension is at most $1$ by~\cite[Corollaire II.3.2.7]{RG71}. In particular $T = S^{-1}R\oplus S^{-1}R/R$ is a good $1$-tilting module by~\cite[Theorem 14.59]{GTbook}.
 
Let now $P=\bigcup_{s\in S}V(s) = \{\fp\mid\fp\cap S\ne\varnothing\}$ be the set of all primes that intersect~$S$. Since $S$ contains nonzero-divisors only, $P$ must consist of maximal ideals only (if $\fp\in\Spec R$ is not maximal, it is an associated prime of the $R$\+module $R$ and hence all its elements are zero-divisors). Moreover, it follows that an $R$\+module is $S$-torsion (i.~e., $S^{-1}M=0$) if and only if the support of $M$ is contained in $P$, and in such a case
\[ M \cong \bigoplus_{\fp\in P} M_\fp \]
(see for example~\cite[Lemma 13.1]{Pcadj}). In particular, $S^{-1}R/R \cong \bigoplus_{\fp\in P}S^{-1}R_\fp/R_\fp$ and, by~\cite[Lemma 13.5]{Pcadj}, for each $\fp\in P$ there exists $s_\fp\in S$ such that $S^{-1}R_\fp = R_\fp[s_\fp^{-1}]$.

We know by the previous discussion that there are derived equivalences
\begin{equation} \label{derived-equivalences-for-Matlis-1-tilting}
\boR\Hom_R(T,-)\: \D^\st(R\modl) \lrarrow \D^\st(\fR\contra)
\end{equation}
for the topological ring $\fR=\Hom_R(T,T)^\rop$ and various choices of the symbol $\st$. It turns out that $\fR$ has a nice description as a certain $2\times 2$ triangular matrix ring. To this end, we have $\Hom_R(S^{-1}R,S^{-1}R)=S^{-1}R$, $\Hom_R(S^{-1}R/R,S^{-1}R)=0$, and
\[ \Hom_R(S^{-1}R/R,S^{-1}R/R)=\prod_{\fp\in P}\Hom_R(S^{-1}R_\fp/R_\fp,S^{-1}R_\fp/R_\fp)=\prod_{\fp\in P}\widehat{R_\fp}, \]
since for each $\fp\in P$, we have $\Hom_R(R_\fp[s_\fp^{-1}]/R_\fp,R_\fp[s_\fp^{-1}]/R_\fp)=\varprojlim R/s_\fp^n R$, which is the usual adic completion of $R_\fp$ (see Example~\ref{torsion-modules-1-tilting-example} and~\cite[Proposition 3.2]{PMat}). If we denote $\fA_S=\Hom_R(S^{-1}R,S^{-1}R/R)$, then as rings
\begin{equation}\label{topological-ring-for-Matlis-1-tilting}
\fR = \left(\begin{matrix} S^{-1}R & \fA_S \\ 0 & \prod_{\fp\in P}\widehat{R_\fp} \end{matrix}\right).
\end{equation}

A closer look at $\fA_S$ reveals that it is a version of the ring of finite adeles. To see this, let us apply $\Hom_R(-,S^{-1}R/R)$ to the short exact sequence $0\to R\to S^{-1}R\to S^{-1}R/R\to 0$. We obtain a short exact sequence
\[ 0 \rarrow \prod_{\fp\in P}\widehat{R_\fp} \rarrow \fA_S \rarrow S^{-1}R/R \rarrow 0. \]
We claim that $\fA_S\cong(\prod_{\fp\in P}\widehat{R_\fp})\otimes_R S^{-1}R$, which in fact gives $\fA_S$ a natural ring structure. To see this, express $S^{-1}R=\varinjlim_{s\in S}R_s$, where $R_s=R$ for each~$s$ and the maps in the direct system are given by $t\cdot-\:R_s\to R_{st}$ for $s,t\in S$. Then
\[ \fA_S=\Hom_R(S^{-1}R,S^{-1}R/R)=\varprojlim_{s\in S}\Hom_R(R_s,S^{-1}R/R). \]
The latter inverse system can be identified with the inverse system
$\bigl(\fA'\big/s\prod_{\fp\in P}\widehat{R_\fp}\bigr)_{s\in S}$, where
$\fA'=S^{-1}\prod_{\fp\in P}\widehat{R_\fp}$ and the maps are simply projections.
It follows that $\fA_S\cong\fA'$, which proves the claim.

 To summarize, given a commutative Noetherian ring $R$ of Krull
dimension~$1$ and a multiplicative set of nonzero-divisors in it, we have
derived equivalences~\eqref{derived-equivalences-for-Matlis-1-tilting},
where $\st$~can be one of $\b$, $+$, $-$, $\varnothing$,
$\abs+$, $\abs-$, $\ctr$, or\/~$\abs$.
 The ring structure of the topological ring $\fR$ has an explicit
description via~\eqref{topological-ring-for-Matlis-1-tilting},
and the topology is also the obvious one: $S^{-1}R$ is discrete,
$\prod_{\fp\in P}\widehat{R_\fp}$ carries the product topology
of the adic topologies, and the adelic ring
$\fA_S=S^{-1}\prod_{\fp\in P}\widehat{R_\fp}$ has a canonical
locally pseudo-compact topology (in the sense of~\cite[\S4.3]{Gab}).
\end{ex}

\Section{Tilting Hearts Which are Categories of Contramodules}
\label{weakly-finitely-generated-secn}

It turns out that there are several cases where the tilting equivalences from Corollary~\ref{conv-abs-derived-equivalence} are in fact of the form~\eqref{eq:discr-tilting}, but this is not readily visible, since $\A$ and $\B$ are not \emph{a priori} given as categories of discrete modules or contramodules. These will be discussed in the final section.

Here we focus on the following subproblem: Suppose that
we are in the situation of the tilting-cotilting correspondence
and $T\in\A$ is a tilting object. We know that $\B=\boT\modl$ for some additive monad $\boT$ by Corollary~\ref{cor:B-as-Tmod}. Under what circumstances does there exist a topological ring $\fR$ such that $\B=\fR\contra$?

Although a full characterization of additive monads originating from topological
rings does not seem to be available, we aim at obtaining useful sufficient criteria.

%
%

\subsection{Hearts which are models of algebraic theories}

First we give an easy sufficient condition for the category $\B$ to be at least the category of models of an additive\/ $\kappa$\+ary algebraic theory.
It is based on the following refinement of Proposition~\ref{copowers-kleisli}.

\begin{prop} \label{addm-locally-presentable}
 Let\/ $\A$ be a locally presentable abelian category, $M\in\A$ be
an object, and\/ $\Add(M)\subset\A$ be the full additive subcategory
formed by the direct summands of all coproducts of copies of
the object $M$ in the category\/~$\A$.
 Then there exists a (naturally defined) category of models\/ $\B$
of an additive\/ $\kappa$\+ary algebraic theory such that
the category\/ $\Add(M)$ is equivalent to the full subcategory\/
$\B_\proj\subset\B$ of projective objects in\/~$\B$.
\end{prop}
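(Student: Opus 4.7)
The plan is to simply refine the construction of Proposition~\ref{copowers-kleisli} by checking that the resulting monad is $\kappa$\+accessible in the locally presentable setting. First, since $\A$ is locally presentable, there exists a regular cardinal $\kappa$ such that the object $M\in\A$ is $\kappa$\+presentable, i.\,e., the functor $\Hom_\A(M,{-})\:\A\rarrow\Sets$ preserves $\kappa$\+filtered colimits.

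Next, apply Proposition~\ref{copowers-kleisli} to produce the monad
$$ \boT\:\Sets\rarrow\Sets, \qquad X\longmapsto\Hom_\A(M,M^{(X)}), $$
together with the equivalence between the full subcategory of objects $M^{(X)}$ in $\A$ and the full subcategory of free $\boT$\+modules $\boT(X)$ in $\boT\modl$. The monad $\boT$ is additive: the abelian group structure on $\boT(X)=\Hom_\A(M,M^{(X)})$ coming from the additive structure of $\A$ commutes with all operations of the monad, exactly as in the discussion of additive monads in~\S\ref{additive-monads-subsecn}, so $\boT$ admits the required binary operation $+\in\boT(\{0,1\})$, unary operation $-\in\boT(\{0\})$, and constant $0\in\boT(\varnothing)$.

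The key verification is that $\boT$ preserves $\kappa$\+filtered colimits of sets. Given a $\kappa$\+filtered colimit $X=\colim_{i\in I} X_i$ in $\Sets$, the left adjoint $\Phi_M\:\Sets\rarrow\A$, \ $X\mapsto M^{(X)}$ preserves all colimits, so $M^{(X)}=\colim_{i\in I} M^{(X_i)}$ in~$\A$, and this colimit is $\kappa$\+filtered. Since $M$ is $\kappa$\+presentable, we obtain
$$ \boT(X)=\Hom_\A(M,\colim\nolimits_{i\in I} M^{(X_i)})=\colim\nolimits_{i\in I}\Hom_\A(M,M^{(X_i)})=\colim\nolimits_{i\in I}\boT(X_i). $$
Hence $\boT$ is an additive $\kappa$\+ary algebraic theory in the sense of~\cite{Wr}, and we set $\B=\boT\modl$.

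Finally, we identify $\Add(M)$ with $\B_\proj$. The category $\B_\proj$ consists, as recalled in~\S\ref{additive-monads-subsecn}, precisely of the direct summands of free $\boT$\+modules $\boT(X)$. The equivalence from Proposition~\ref{copowers-kleisli} between the full subcategory $\{M^{(X)}\mid X\in\Sets\}\subset\A$ and the full subcategory $\{\boT(X)\mid X\in\Sets\}\subset\B$ extends to an equivalence between the categories obtained by adjoining direct summands on both sides, yielding $\Add(M)\simeq\B_\proj$ as required. I do not expect any serious obstacle; the only point requiring a moment's care is that the additive structure of the monad and the preservation of $\kappa$\+filtered colimits are each a direct consequence of, respectively, the additivity and the local presentability hypotheses on~$\A$.
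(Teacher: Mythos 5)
Your proof is correct and follows essentially the same route as the paper: invoke Proposition~\ref{copowers-kleisli} to get the additive monad $\boT\:X\mapsto\Hom_\A(M,M^{(X)})$ and the equivalence on free objects, adjoin direct summands to obtain $\Add(M)\simeq\B_\proj$, and then use the $\kappa$\+presentability of $M$ (available since $\A$ is locally presentable) to see that the theory is $\kappa$\+ary. The only cosmetic difference is that you verify directly that $\boT$ preserves $\kappa$\+filtered colimits, whereas the paper checks the equivalent condition that the projective generator is abstractly $\kappa$\+small; both amount to the same thing.
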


\begin{proof}
 Notice first of all that any locally presentable category is
complete and cocomplete~\cite[Remark~1.56]{AR}.
 Consider the additive monad $\boT\:X\longmapsto\Hom_\A(M,M^{(X)})$
on the category of sets as in Proposition~\ref{copowers-kleisli}.
 Let $\B$ be the category of $\boT$\+modules; then $\B$
is an abelian category with a natural projective generator $P$
corresponding to the free $\boT$\+module with one generator
and there is an equivalence of additive
categories $\Add(M)\simeq\B_\proj$ taking the object $M\in\Add(M)$
to the object $P\in\B_\proj$ and the object $M^{(X)}\in\Add(M)$ to
the object $P^{(X)}\in\B_\proj$.

 It remains to show that the object $P\in\B$ is abstractly
$\kappa$\+small for some cardinal~$\kappa$.
 Indeed, let $\kappa$ be the presentability rank of the object
$M\in\A$.
 Then, in particular, the object $M\in\A$ is abstractly $\kappa$\+small,
i.~e., every morphism $M\rarrow M^{(X)}$ in $\A$ factorizes through
the natural embedding $M^{(Z)}\rarrow M^{(X)}$ for some subset of
indices $Z\subset X$ of the cardinality smaller than~$\kappa$.
 In view of the equivalence of categories $\Add(M)\simeq\B_\proj$,
the desired assertion follows.
\end{proof}

\begin{cor} \label{tilting-a-locally-presentable}
 Suppose that $\A$ is a locally presentable abelian category
with an injective cogenerator.
If $T\in\A$ is an $n$\+tilting object and\/ $\B$ is the heart of the
tilting t\+structure on\/ $\D^\b(\A)$, then $\B$ is the category
of models of an additive\/ $\kappa$\+ary algebraic theory for some $\kappa$.
\end{cor}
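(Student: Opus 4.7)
The plan is to identify the monad coming from Corollary~\ref{cor:B-as-Tmod}, applied to the abelian category $\B$ with projective generator $T$, with the monad constructed in Proposition~\ref{addm-locally-presentable} applied to $M = T$ viewed as an object of the locally presentable category~$\A$. Once these two monads are seen to coincide, the two descriptions of $\boT\modl$ match up and the conclusion is immediate.

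First I would invoke Proposition~\ref{tilting-heart}, which gives two crucial facts: $T$ is a projective generator of~$\B$, and for every set~$X$ the coproduct $T^{(X)}$ computed in~$\B$ agrees with the coproduct computed in $\D(\A)$ and in~$\A$. Combined with the fully faithful embedding $\A \hookrightarrow \D^\b(\A) \supset \B$, this yields a natural identification
\[
\Hom_\B(T, T^{(X)}) \;=\; \Hom_{\D^\b(\A)}(T, T^{(X)}) \;=\; \Hom_\A(T, T^{(X)})
\]
of sets, functorial in~$X$, and compatible with the monadic unit and multiplication on both sides. Hence the additive monad $X \longmapsto \Hom_\B(T, T^{(X)})$ of Corollary~\ref{cor:B-as-Tmod} is isomorphic, as a monad on $\Sets$, to the additive monad $\boT\: X \longmapsto \Hom_\A(T, T^{(X)})$ considered in Proposition~\ref{addm-locally-presentable}.

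Next, I would apply Corollary~\ref{cor:B-as-Tmod} to the abelian category~$\B$ (which has set-indexed coproducts and the projective generator~$T$ by Proposition~\ref{tilting-heart}) to obtain an equivalence $\B \simeq \boT\modl$. On the other hand, Proposition~\ref{addm-locally-presentable} applied to $M = T \in \A$ produces a cardinal~$\kappa$, namely the presentability rank of~$T$ in~$\A$, such that the free $\boT$-module of rank~$1$ is abstractly $\kappa$-small in $\boT\modl$; equivalently, the monad $\boT$ preserves $\kappa$-filtered colimits, so that $\boT\modl$ is the category of models of an additive $\kappa$-ary algebraic theory in the sense of~\cite{Wr} (as recalled in Section~\ref{projective-generator-secn}). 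Transporting this property across $\B \simeq \boT\modl$ gives the conclusion.

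I do not anticipate any serious obstacle: the content is genuinely the matching of two monads, which is forced by the fact that copowers of $T$ are computed identically in $\A$ and in~$\B$. The only point that requires care is to verify that the presentability rank of~$T$ in~$\A$ does transport to an abstract $\kappa$-smallness of the projective generator in~$\B$, but this is exactly what is checked in the last paragraph of the proof of Proposition~\ref{addm-locally-presentable} via the equivalence $\Add(T) \simeq \B_\proj$.
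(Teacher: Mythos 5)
Your proposal is correct and follows essentially the same route as the paper: the paper's one-line proof combines Remark~\ref{rem:tilting-contra} (which, via Proposition~\ref{tilting-heart}, identifies $\B_\proj$ with $\Add(T)\subset\A$ and hence the monad $X\mapsto\Hom_\B(T,T^{(X)})$ with $X\mapsto\Hom_\A(T,T^{(X)})$) with Proposition~\ref{addm-locally-presentable}, which is exactly the matching of monads you carry out explicitly. Your version just spells out the identification of copowers and Hom-sets that the paper leaves implicit.
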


\begin{proof}
This immediately follows by combining Remark~\ref{rem:tilting-contra} with Proposition~\ref{addm-locally-presentable}.
\end{proof}

 In other words, we have shown that $\B$ is a locally presentable
category whenever the category $\A$ is locally presentable.
 We do not know whether the converse assertion is true.

If we wish to get closer to contramodules over topological rings, note that $\fR\contra$ has the additional property that 
for every family of projective objects $P_\alpha$ the canonical morphism\/
$\coprod_\alpha P_\alpha\rarrow\prod_\alpha P_\alpha$ is a monomorphism (see the final paragraph of \S\ref{contramodules-over-top-rings-subsecn}). This necessary condition is also transferred well by the tilting equivalences.

\begin{prop} \label{tilting-a-grothendieck}
Suppose that $\A$ is an abelian category with set-indexed products, an injective cogenerator and an $n$\+tilting object $T$. If the canonical morphism $\coprod_\alpha T_\alpha\rarrow\prod_\alpha T_\alpha$ is a monomorphism in $\A$ for every family
of objects $T_\alpha\in\Add(T)$ (e.g.\ if $\A$ is a Grothendieck category), then the canonical morphism\/ $\coprod_\alpha P_\alpha\rarrow\prod_\alpha P_\alpha$ is a monomorphism for every family
of projective objects $P_\alpha\in\B$
in the heart\/ $\B$ of the tilting t\+structure given by $T$.
\end{prop}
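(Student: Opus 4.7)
The plan is to exploit the fact that the functor $\Psi=\Hom_\A(T,-)\:\A\rarrow\B$ introduced in Section~\ref{derived-equivalences-secn} is a right adjoint and therefore preserves arbitrary limits; in particular it preserves products and monomorphisms. For the parenthetical assertion about Grothendieck categories, I would argue that in any AB5 category the canonical map $\coprod_\alpha M_\alpha\rarrow\prod_\alpha M_\alpha$ is the filtered colimit, over the finite subsets $F$ of the index set, of the biproduct inclusions $\bigoplus_{\alpha\in F}M_\alpha\hookrightarrow\prod_\alpha M_\alpha$; the AB5 axiom ensures that such a filtered colimit of monomorphisms is a monomorphism, so in particular the hypothesis holds for all families in $\Add(T)$.

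For the main assertion, I would begin by recalling from Proposition~\ref{tilting-heart} that every projective object $P_\alpha\in\B$ lies in $\Add(T)\subset\A\cap\B=\E$, and that coproducts of objects of $\Add(T)$ taken in $\B$ agree with those computed in $\A$. Further, by the discussion preceding Lemma~\ref{tilting-cotilting-classes-lemma} in Section~\ref{derived-equivalences-secn}, the restriction of $\Psi$ to the exact category $\E$ is just the inclusion $\E\subset\B$; in particular $\Psi(P_\alpha)=P_\alpha$, and $\Psi$ carries the coproduct $\coprod_\alpha P_\alpha$ formed in $\A$ (which agrees with the coproduct formed in $\B$) to itself. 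Since $\Psi$ preserves products, it also identifies the product of the $P_\alpha$ in $\B$ with $\Psi$ applied to their product in $\A$.

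The remaining technical point is to verify that the natural map from coproduct to product of the $P_\alpha$ computed in $\B$ is precisely the image under $\Psi$ of the analogous map computed in $\A$. This is a straightforward universal property check: for each index $\beta$, composing either map with the projection onto the $\beta$-th factor returns the canonical map from the coproduct which is the identity on $P_\beta$ and zero on all other summands, and this property is preserved by $\Psi$ since $\Psi$ is additive and fixes the $P_\alpha$. Once this identification is in place, the hypothesis guarantees that the map in $\A$ is a monomorphism, and $\Psi$, being a right adjoint, takes monomorphisms to monomorphisms, finishing the proof. No step looks like a serious obstacle; the only mild subtlety is to keep straight that products in $\B$ and in $\A$ need not coincide, which is why the argument is routed through $\Psi$ rather than carried out directly inside $\A$.
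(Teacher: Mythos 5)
Your argument is correct, but it is packaged differently from the paper's. The paper works entirely inside the exact category $\E=\A\cap\B$: it observes that both $\coprod_\alpha P_\alpha$ and $\prod_\alpha P_\alpha$ (computed in $\A$) lie in $\E$, uses Lemma~\ref{tilting-class-coresolving-finite-dim}(a) to conclude that the cokernel of the monomorphism $i$ also lies in $\E$, so that $0\to\coprod_\alpha P_\alpha\to\prod_\alpha P_\alpha\to\coker(i)\to0$ is a short exact sequence in $\E\subset\A$, hence also a short exact sequence in $\B$; Remark~\ref{E-is-AB4-and-AB4*} then identifies the two ends with the coproduct and product taken in $\B$. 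You instead never form the cokernel: you apply the left exact right adjoint $\Psi=\Hom_\A(T,-)$ to the canonical map, using that $\Psi$ preserves products and monomorphisms, that $\Psi|_\E$ is the inclusion $\E\subset\B$, and that coproducts of objects of $\Add(T)$ agree in $\A$ and $\B$ (Proposition~\ref{tilting-heart}); your universal-property check that $\Psi(i)$ is again the canonical map is the right thing to verify and goes through. The two proofs rest on the same underlying facts about $\E$, but yours trades the exact-structure bookkeeping (and the appeal to Remark~\ref{E-is-AB4-and-AB4*} for products) for the adjunction formalism of Section~\ref{derived-equivalences-secn}, which arguably makes the identification of the product in $\B$ more automatic; the paper's version is more self-contained within Sections~\ref{tilting-cotorsion-pair-secn}--\ref{derived-equivalences-secn} and generalizes immediately to statements about exactness of the resulting short exact sequence. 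Your treatment of the parenthetical Grothendieck case (the canonical map as a filtered colimit of split inclusions, monic by AB5) is exactly the paper's argument.
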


\begin{proof}
Consider the category $\E=\A\cap\B$ studied in Section~\ref{derived-equivalences-secn} and let $i\:\coprod_\alpha P_\alpha\rarrow\prod_\alpha P_\alpha$
be a morphism as in the statement.
Since $T_\alpha:=P_\alpha\in\B_\proj=\Add(T)$ for each $\alpha$, both ends of $i$ actually
belong to $\E$ by Lemma~\ref{tilting-cotilting-classes-co-product-closed}(a) and hence we
have a short exact sequence
\[ 0\rarrow\coprod_\alpha P_\alpha\rarrow\prod_\alpha P_\alpha
\rarrow\prod_\alpha P_\alpha\Big/\coprod_\alpha P_\alpha\rarrow0 \]
in $\E\subset\A$ by Lemma~\ref{tilting-class-coresolving-finite-dim}(a). 
Thus, the same exact sequence must exist in $\B$. The conclusion follows from Remark~\ref{E-is-AB4-and-AB4*} as the coproduct and the product displayed is actually also the coproduct and product in $\B$, respectively.

If $\A$ is a Grothendieck category, then $i$ is a filtered
colimit of split inclusions (see~\cite[Corollary~III.1.3]{Mit}), hence always a monomorphism in $\A$.
\end{proof}

%
%
%
%
%
%

\subsection{Locally weakly finitely generated abelian categories}
\label{lwfg-subsecn}

Now we attack the problem more directly and seek an answer to the following question: If $\A$ is an abelian category and $M\in\A$, can we find a natural topology on $\Hom_\A(M,M)^\rop$ with a base of neighborhoods of zero formed by right ideals? For module categories, we presented such a topology in Theorem~\ref{addm-theorem}. Here we generalize this result to a broad class of abelian categories including for instance all locally finitely presentable Grothendieck abelian categories.

 Let $\C$ be a cocomplete abelian category and $\lambda$ be a regular
cardinal.
 Let us call an object $C\in\C$ \emph{weakly\/ $\lambda$\+generated}
if every morphism $C\rarrow\coprod_{x\in X} D_x$ from $C$ into
the coproduct of a family of objects $D_x$, \,$x\in X$ in $\C$
factorizes through the natural embedding $\coprod_{z\in Z} D_z\rarrow
\coprod_{x\in X}D_x$ of the coproduct of a subfamily indexed by
a subset $Z\subset X$ of the cardinality smaller than~$\lambda$.
 Any quotient object of a weakly $\lambda$\+generated object in $\C$
is weakly $\lambda$\+generated.
 The class of all weakly $\lambda$\+generated objects in $\C$ is also
closed under extensions and $\lambda$\+small coproducts.
 Any $\lambda$\+generated object in the sense
of~\cite[Definition~1.67]{AR} is weakly $\lambda$\+generated.
 A weakly $\omega$\+generated object (where $\omega$~denotes
the countable cardinal) is called \emph{weakly finitely generated}.

\begin{rem} \label{dually-slender}
 Weakly finitely generated objects of the category of left $R$\+modules
$\C=R\modl$ are known as \emph{dually slender} (or ``small'')
modules~\cite{CT94,EGT,Zem}.
 An associative ring $R$ is said to be \emph{left steady} if all
dually slender left $R$\+modules are finitely generated.
 All left Noetherian rings, left perfect rings, and countable
commutative rings are left steady.
 On the other hand, the free associative algebra with two generators
$R=k\{x,y\}$ over a field~$k$ is \emph{not} steady; in fact, there
is a proper class of dually slender $R$\+modules, as any injective
$R$\+module is dually slender~\cite[Lemma~3.2]{Zem}.
\end{rem}

 A cocomplete abelian category $\C$ is called \emph{locally weakly
$\lambda$\+generated} if it is generated by its weakly $\lambda$\+generated
objects, that is, for any object $M\in\C$, the minimal subobject of $M$
containing the images of all morphisms into $M$ from weakly
$\lambda$\+generated objects of $\C$ coincides with~$M$.
 Since the class of all weakly $\lambda$\+generated objects of $\C$
is closed under quotients (and also $\lambda$\+small coproducts), this
condition simply means that every object in $\C$ is
the ($\lambda$\+directed) union of its weakly $\lambda$\+generated
subobjects.
 (One could add the condition that $\C$ has a set of generators to
this definition; and then it would follow that $\C$ has a set of weakly
$\lambda$\+generated generators;
cf.\ Remark~\ref{lwfg-single-generator-remark} below;
but we will not need this.)
 A locally weakly $\omega$\+generated category is called \emph{locally
weakly finitely generated}.

 It turns out that locally weakly finitely generated
abelian categories automatically satisfy Grothendieck's axiom~Ab5,
i.~e., have exact functors of filtered colimits.

\begin{prop} \label{lwfg-exact-direct-lim}
Let $(E_i)_{i\in I}$ be a direct system of subobjects of $M\in\C$,
where $\C$ is a locally weakly finitely generated category.
 Then the colimit\/ $\colim_{i\in I}E_i$ is again a subobject of $M$,
i.~e., the colimit map\/ $\colim_{i\in I}E_i\rarrow M$ is monic.
\end{prop}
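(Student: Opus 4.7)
The plan is to show that the canonical morphism $f\: E \to M$, where $E = \colim_i E_i$, is a monomorphism; writing $K = \ker(f)$, the task reduces to proving $K = 0$. Denote by $p\: \coprod_i E_i \to E$ the canonical epimorphism into the colimit, with cocone components $\phi_j\: E_j \to E$, and by $q\: \coprod_i E_i \to M$ the morphism induced by the subobject inclusions $E_i \hookrightarrow M$, so that $q = f p$.

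The first step is the following factorization lemma. For any weakly finitely generated object $G$ and any morphism $\alpha\: G \to \coprod_i E_i$ with $q\alpha = 0$, one has $p\alpha = 0$. Indeed, weak finite generatedness forces $\alpha$ to factor through some finite subcoproduct $\coprod_{i \in F} E_i$; by directedness of $I$ we choose $j \in I$ with $j \geq i$ for all $i \in F$, and the transition maps $\psi_{ij}\: E_i \to E_j$ assemble to a further factorization of $\alpha$ through some $\alpha'\: G \to E_j$. The identities $(E_i \hookrightarrow M) = (E_j \hookrightarrow M) \psi_{ij}$ then rewrite $q\alpha$ as $(E_j \hookrightarrow M)\alpha'$, and monicity of $E_j \hookrightarrow M$ forces $\alpha' = 0$; using the colimit identity $\phi_j \psi_{ij} = \phi_i$ exploited in the factorization of $\alpha$ in the first place, this gives $p\alpha = \phi_j \alpha' = 0$.

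The second step is to lift the kernel $K$ back into the coproduct. Form the pullback $Q = K \times_E \coprod_i E_i$ with projections $\pi_K\: Q \to K$ and $\pi_E\: Q \to \coprod_i E_i$. The projection $\pi_K$ is an epimorphism, since pullbacks of epimorphisms in an abelian category are epimorphisms; and the pullback identity $p\pi_E = (K \hookrightarrow E) \pi_K$ gives $q\pi_E = f(K \hookrightarrow E) \pi_K = 0$. By the hypothesis that $\C$ is locally weakly finitely generated, there is an epimorphism $\eta\: \coprod_\alpha G_\alpha \twoheadrightarrow Q$ with every $G_\alpha$ weakly finitely generated. Applying the factorization lemma to each composition $\pi_E \eta|_{G_\alpha}$ yields $p \pi_E \eta|_{G_\alpha} = 0$, hence $p \pi_E \eta = 0$; since $\eta$ is epi, $p\pi_E = (K \hookrightarrow E)\pi_K = 0$, and since $\pi_K$ is epi, the inclusion $K \hookrightarrow E$ itself is zero, whence $K = 0$.

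The conceptual heart of the argument is the combination of the pullback construction, which turns a statement about the kernel of a quotient map into a statement about the quotient map itself, with the local weak finite generation, which supplies enough test objects on which to apply the easy factorization lemma. Each individual step is then routine manipulation in an abelian category; the principal obstacle is recognizing that these two ingredients can be assembled into a proof at all.
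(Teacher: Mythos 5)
Your proof is correct and is essentially the paper's own argument: your ``factorization lemma'' is precisely the paper's central computation (factor a map from a weakly finitely generated object through a finite subcoproduct, push into an upper bound $E_j$ via the transition maps, and use that $E_j\hookrightarrow M$ is monic), and your pullback $Q=K\times_E\coprod_i E_i$ is nothing other than $\ker(q)$, so the two proofs differ only in packaging. One small caution: the definition of a locally weakly finitely generated category only guarantees that the minimal subobject of $Q$ containing the images of all morphisms from weakly finitely generated objects is $Q$ itself, and since these morphisms may a priori form a proper class (the paper must add a set-existence hypothesis for exactly this reason in Theorem~\ref{lwfg-addm-theorem}), you should not assert a set-indexed epimorphism $\coprod_\alpha G_\alpha\twoheadrightarrow Q$; your argument survives verbatim if you instead note that each image $\mathrm{im}(\pi_E\gamma)$ is contained in the subobject $\ker(p\pi_E)\subseteq Q$ and invoke minimality to conclude $\ker(p\pi_E)=Q$.
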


\begin{proof}
 Put $N=\colim_{i\in I}E_i$ and consider the cocone
$(f_{E_i}\:E_i\to N)$.
 We denote by $f\:\coprod_{i\in I}E_i\rarrow N$ and
$p\:\coprod_{i\in I}E_i\rarrow M$ the canonical morphisms.
 We have to show that the epimorphism~$f$ annihilates the kernel
of the morphism~$p$.
 Since $p$ factors through $f$ by the universal property of the colimit, 
this will mean that the kernels of $p$ and $f$ are equal and
the induced map $N\rarrow M$ is injective, as desired.

 Let $b\:B\rarrow\coprod_{i\in I}E_i$ be a morphism from a weakly
finitely generated object $B$ with the image lying in the kernel of~$p$.
 It suffices to show that $fb=0$ for every such~$b$.
 The morphism~$b$ factorizes through the coproduct of a finite subset
of objects $\coprod_{j=1}^m E_j\subset\coprod_{i\in I} E_i$.
 Denote by~$b'$ the related morphism $B\rarrow\coprod_{j=1}^m E_j$.
 Choose $k\in I$ such that $E_j\subset E_k$ for all $1\le j\le m$,
and denote by $q\:\coprod_{j=1}^m E_j\rarrow E_k$ the natural morphism.
 Then $qb'=0$, because $pb=0$ and $E_k$ is a subobject in~$M$.
 Let $g\:\coprod_{j=1}^m E_j\rarrow N$ denote the morphism with
the components $f_{E_j}\:E_j\rarrow N$.
 Then $g=f_{E_k}q$, since the system of morphisms
$(f_{E_i}\:E_i\to N)_{i\in I}$ is compatible.
 Thus $fb=gb'=f_{E_k}qb'=0$.
\end{proof}

\begin{cor} \label{lwfg-ab5}
 In any locally weakly finitely generated abelian category,
the functors of filtered colimit are exact.
\end{cor}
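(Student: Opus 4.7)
The plan is to deduce Corollary~\ref{lwfg-ab5} from Proposition~\ref{lwfg-exact-direct-lim} via the classical equivalence, in any cocomplete abelian category, between the assertion of that Proposition and Grothendieck's axiom~Ab5; see, e.g., \cite[Proposition~V.1.1]{Sten} or~\cite[Proposition~VII.6.2]{Mit}.

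First, since colimits commute with colimits, the filtered colimit functor preserves cokernels. Applied to a filtered diagram of short exact sequences $0\rarrow A_i\rarrow B_i\rarrow C_i\rarrow0$ in~$\C$, it yields a right exact sequence
\[
\colim_i A_i\overset{f}{\lrarrow}\colim_i B_i\lrarrow\colim_i C_i\lrarrow0,
\]
so the corollary reduces to showing that $f$ is a monomorphism.

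Second, I would use Proposition~\ref{lwfg-exact-direct-lim} to derive the following distributive law: for any subobject $N\subseteq M$ in $\C$ and any directed system $(M_j)_j$ of subobjects of $M$ with union $\widetilde M:=\bigcup_j M_j$, one has $N\cap\widetilde M=\bigcup_j(N\cap M_j)$. The argument is to compare the two natural monomorphisms $\colim_j\bigl(M_j/(N\cap M_j)\bigr)\hookrightarrow M/N$ (monic by a further application of Proposition~\ref{lwfg-exact-direct-lim} to the directed system of subobjects $M_j/(N\cap M_j)$ of~$M/N$) and $\widetilde M/(N\cap\widetilde M)\hookrightarrow M/N$, both of which have image $(\widetilde M+N)/N$ in~$M/N$ and therefore coincide as subobjects of~$M/N$.

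Third, applying the distributive law with $N=\ker f$ and the directed system $M_i=\operatorname{Im}(\eta_i\colon A_i\rarrow A)$ in $A=\colim_i A_i$, and noting that $\bigcup_i M_i=A$ by Proposition~\ref{lwfg-exact-direct-lim}, reduces the task to showing $\ker f\cap M_i=0$ for every~$i$. Unraveling the definitions, this is equivalent to the equality $\ker\eta_i=f_i^{-1}(\ker\theta_i)$ inside~$A_i$, where $\theta_i\colon B_i\rarrow B=\colim_j B_j$ is the colimit coprojection. The non-trivial inclusion $f_i^{-1}(\ker\theta_i)\subseteq\ker\eta_i$ is the main obstacle; it is obtained from the analogous description $\ker\xi_i=\bigcup_{j\ge i}\ker\phi_{ij}$ of the kernel of a coprojection for any filtered system $(X_j)$ with transition maps $\phi_{ij}$, which is itself a further consequence of Proposition~\ref{lwfg-exact-direct-lim} (compare the canonical monomorphisms $\colim_j\bigl(X_i/\ker\phi_{ij}\bigr)\hookrightarrow X$ and $X_i/\bigcup_j\ker\phi_{ij}\hookrightarrow X$, both having image $\operatorname{Im}\xi_i$). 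The full argument is as in the cited classical references.
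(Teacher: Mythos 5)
Your proof takes essentially the same route as the paper, which disposes of the corollary in one line by citing \cite[Proposition~III.1.2 and Theorem~III.1.9]{Mit} for the classical equivalence between the conclusion of Proposition~\ref{lwfg-exact-direct-lim} and axiom~Ab5; your first two steps (right exactness for free, and the distributive law $N\cap\widetilde M=\bigcup_j(N\cap M_j)$ deduced from the Proposition) correctly reproduce part of that classical argument. The one place where your added detail goes astray is the parenthetical sketch of $\ker\xi_i=\bigcup_{j\ge i}\ker\phi_{ij}$: the monicity of $X_i/\bigcup_j\ker\phi_{ij}\rarrow X$ is precisely the statement to be proven, and the objects $X_i/\ker\phi_{ij}\cong\operatorname{Im}\phi_{ij}$ are subobjects of the varying $X_j$ rather than of a fixed ambient object, so Proposition~\ref{lwfg-exact-direct-lim} does not apply to them directly; the standard (non-circular) argument presents $\colim_j X_j$ as a cokernel, writes the relevant image as a directed union over finite subdiagrams, and uses your distributive law together with a finite diagram chase, exactly as in the references you and the paper both cite.
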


\begin{proof}
 According to~\cite[Proposition~III.1.2 and Theorem~III.1.9]{Mit},
this is an equivalent reformulation of
Proposition~\ref{lwfg-exact-direct-lim}.
\end{proof}

%
%


\begin{rem} \label{lwfg-single-generator-remark}
Suppose that $\C$ is a locally weakly finitely generated category with
a generator $G$ and let $\cR$ be the full subcategory of $\C$ formed 
by all the quotient objects of finite coproducts of weakly finitely 
generated subobjects of $G$.
Then $\cR$ is essentially small by~\cite[Proposition IV.6.6]{Sten}
and the restricted Yoneda functor
\begin{align*}
h_\cR\:\C&\rarrow\cR^\rop\modl,\\
X&\longmapsto\Hom_\C(-,X)|_\cR,
\end{align*}
has a left adjoint $\Delta\:\cR^\rop\modl\rarrow\C$ by~\cite[Proposition 1.27]{AR}. Here, $\cR^\rop\modl$
stands for the category of \emph{$\cR^\rop$-modules}, i.~e., of additive functors $\cR^\rop\to\Z\modl$.

It turns out that the counit of adjunction $\varepsilon\:\Delta\circ h_\cR\rarrow 1_\C$
is a natural equivalence and hence $h_\cR$ is fully faithful. Indeed, for any object $M\in\C$, the morphism~$\varepsilon_M$
admits an explicit description.
If $D_M\:\cR/M\to\C$ is the canonical diagram
of $M$ (see~\cite[Definition 0.4]{AR}), then $\varepsilon_M$ is simply the colimit morphism
$\colim_{(g\:E\to M)\in\cR/M}E\to M$. Since any morphism $E\to M$ with $E\in\cR$ factors through its image $F\in\cR$ as
$E\to F\subseteq M$, the direct  system of all subobjects of $M$ 
belonging to $\cR$ is cofinal in~$D_M$.
As $M$ is the union of its subobjects belonging to $\cR$,
the morphism~$\varepsilon_M$ is an isomorphism by
Proposition~\ref{lwfg-exact-direct-lim}, as required.

In particular, $\C$ identifies with a coproduct-closed full reflective subcategory of $\cR^\rop\modl$ in this case, and $\C$ is automatically complete.
\end{rem}

\begin{exs} \label{exs:lwfg}
 Any locally finitely presentable (Grothendieck) abelian category is
locally weakly finitely generated.
 In particular, any locally Noetherian or locally coherent
Grothendieck category is locally weakly finitely generated.

 On the other hand, a Grothendieck abelian category in general does
\emph{not} need to be locally weakly finitely generated.
 Here is a counterexample: let $\boQ$ be the set of all rational
numbers, viewed as a topological space with the topology induced
from its embedding into the real line.
 Let $k$ be a (discrete) field.
 Then the category of sheaves of $k$\+vector spaces over $\boQ$ is
not locally weakly finitely generated.

 Indeed, let us show that the constant sheaf $\underline{k}_\boQ$
with the stalk~$k$ over $\boQ$ has no nonzero weakly finitely
generated subobjects.
 For any nonzero subsheaf $F\subset\underline{k}_\boQ$, there
exists a nonempty open subset $U\subset\boQ$ such that $F$ contains
the constant section $\underline{1}_U$ of $\underline{k}_\boQ$
over~$U$.
 Let $D\subset U$ be an infinite discrete subset that is closed in
$\boQ$ (e.~g., a sequence of elements of $U$ converging to
an irrational number).
 Then the composition $F\rarrow\underline{k}_\boQ\rarrow
\underline{k}_D$ is a sheaf epimorphism (where the constant sheaf
$\underline{k}_D$ on $D$ is viewed as a sheaf over $\boQ$ using
the extension by zero).
 It remains to observe that the sheaf $\underline{k}_D$ is
the coproduct of an infinite collection of nonzero objects
(namely, the skyscraper sheaves at the points of~$D$).
\end{exs}

Now we get to the main observation in the subsection, which is a generalization of Theorem~\ref{addm-theorem} to any locally weakly finitely generated abelian
category~$\C$.
Let $M\in\C$ be a fixed object.
We will endow the ring $\fR=\Hom_\C(M,M)^\rop$ with the topology in
which a base of neighborhoods of zero is formed by the annihilator
ideals $\Ann(E)=\{\,f\in\Hom_\C(M,M)\mid f|_E=0\,\}\subset\fR$ of weakly finitely generated subobjects
$E\subset M$.
These are left ideals in $\Hom_\C(M,M)$ and right ideals in~$\fR$.

\begin{thm} \label{lwfg-addm-theorem}
Let\/ $\C$ be a locally weakly finitely generated abelian category
and $M\in\C$ be an object which is the direct union of a set of its weakly finitely generated subobjects (this is automatic if\/ $\C$ has a generator).
Then\/ $\fR=\Hom_\C(M,M)^\rop$ is a complete and separated
topological ring and the category\/ $\Add(M)$ formed by the direct summands of infinite coproducts of
copies of~$M$ is equivalent to the category\/
$\fR\contra_\proj$ of projective left\/ $\fR$\+contramodules.
%
\end{thm}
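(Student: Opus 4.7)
The plan is to mirror the proof of Theorem~\ref{addm-theorem}, replacing finitely generated submodules by weakly finitely generated subobjects throughout, and invoking Proposition~\ref{lwfg-exact-direct-lim} wherever the earlier argument relied on the elementary fact that a module is the directed union of its finitely generated submodules.  Under our hypotheses, the filtered system of wfg subobjects $E \subset M$ has colimit $M$ itself in~$\C$ (the colimit morphism is monic by Proposition~\ref{lwfg-exact-direct-lim}, and its image equals~$M$).  First I would verify that $\{\Ann(E)\}$, as $E$ ranges over wfg subobjects of~$M$, is a base of right-ideal neighborhoods of zero for a ring topology on~$\fR$: each $\Ann(E)$ is a left ideal in $\Hom_\C(M,M)$, hence a right ideal in~$\fR$; the family is filtered because $E_1 + E_2$ is a quotient of $E_1 \oplus E_2$ and therefore again wfg; and continuity of left multiplication by $f \in \Hom_\C(M,M)$ holds because the image $f(E) \subset M$ is a quotient of~$E$, hence wfg, and $\Ann(f(E)) \cdot f \subset \Ann(E)$.

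Separatedness of this topology is immediate from $M = \colim_E E$: a morphism $M \to M$ restricting to zero on every wfg subobject must vanish.  For completeness, I identify $\Hom_\C(M,M)/\Ann(E)$ with the image of the restriction map $\Hom_\C(M,M) \to \Hom_\C(E,M)$; an element of $\varprojlim_E \Hom_\C(M,M)/\Ann(E)$ is then a compatible system of morphisms $E \to M$ defined on all wfg subobjects, which glues uniquely to a morphism $M \to M$ by the colimit property.  Next I would identify the monad $\boT\:X \longmapsto \Hom_\C(M, M^{(X)})$ on $\Sets$ with the monad $\fR[[-]]$: given $g\:M \to M^{(X)}$, set $r_x = \pi_x \circ g$, where $\pi_x\:M^{(X)} \to M$ is the canonical retraction which is the identity on the $x$-th summand and zero on the others.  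For each wfg $E \subset M$, the restriction $g|_E$ factors through a finite subcoproduct $M^{(Z)} \subset M^{(X)}$ by the definition of weakly finitely generated, whence $r_x|_E = 0$ for $x \notin Z$; thus $(r_x)_{x \in X}$ converges to zero in~$\fR$.  Conversely, a zero-convergent family $(r_x)$ defines on each wfg $E$ a morphism $E \to M^{(X)}$ landing in a finite subcoproduct (determined by its finitely many nonzero components via the product-coproduct identification for finite sets), and these are compatible in~$E$, hence glue by $M = \colim_E E$ to a unique morphism $M \to M^{(X)}$.

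With the monad isomorphism $\boT \cong \fR[[-]]$ in place, Proposition~\ref{copowers-kleisli} furnishes an equivalence between the full subcategory of $\C$ on copowers $M^{(X)}$ and the full subcategory of $\fR\contra$ on free contramodules $\fR[[X]]$, and closing under direct summands on both sides yields the desired equivalence $\Add(M) \simeq \fR\contra_\proj$.  The main obstacle I anticipate is the verification that the bijection $\boT(X) \cong \fR[[X]]$ intertwines the ``opening of parentheses'' multiplications $\mu$: this amounts to interchanging convergent summation in~$\fR$ with gluing over wfg subobjects of~$M$, and must be checked first on each wfg subobject (where all sums are finite and the calculation reduces to the finite-coproduct case) and then propagated via Proposition~\ref{lwfg-exact-direct-lim} to a global equality of morphisms $M \to M^{(X)}$.
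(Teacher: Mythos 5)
Your overall strategy is the same as the paper's: verify that the annihilators $\Ann(E)$ of weakly finitely generated subobjects give a ring topology, prove completeness and separatedness, identify the monad $\boT\:X\mapsto\Hom_\C(M,M^{(X)})$ with $\fR[[-]]$ by checking injectivity into $\fR^X$ and characterizing the image as the zero-convergent families, and then invoke Proposition~\ref{copowers-kleisli}. Most of these steps are carried out correctly and match the paper.

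There is, however, one genuine gap, and it is located exactly at the point where this theorem ceases to be a routine transcription of Theorem~\ref{addm-theorem}. In the completeness argument you take an element of $\varprojlim_E\Hom_\C(M,M)/\Ann(E)$, i.~e.\ a compatible system of (extendable) morphisms $f_E\:E\to M$ indexed by \emph{all} weakly finitely generated subobjects $E\subset M$, and assert that it ``glues uniquely to a morphism $M\to M$ by the colimit property.'' But in a general locally weakly finitely generated abelian category the weakly finitely generated subobjects of $M$ may form a proper class, so ``the colimit over all wfg subobjects'' is not a legitimate set-indexed colimit and Proposition~\ref{lwfg-exact-direct-lim} (which is stated for a direct system indexed by a set) does not apply to it directly. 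The hypothesis only provides a \emph{set} $\cF'$ of wfg subobjects whose direct union is $M$; gluing over (the closure under finite sums of) $\cF'$ does produce a morphism $h\:M\to M$, but you must then still prove that $h|_E=f_E$ for \emph{every} wfg subobject $E$, including those not in your chosen set --- and this does not follow formally from compatibility with the members of $\cF'$ alone. The paper closes this gap by adjoining a second set $\cF''$, chosen using the fact that the fixed set $\Hom_\C(M,N)$ has only a set of subgroups of the form $\Hom_\C(M/E,N)$, so that every $\Ann(E)$ is realized by some $F''\in\cF''$; one then compares $f_E$, $f_{F''}$ and $f_{E+F''}$ to deduce $h|_E=f_E$. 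Some argument of this kind is indispensable in the stated generality (it is harmless in $R\modl$ or any well-powered category, which is why it does not appear in Theorem~\ref{addm-theorem}). The analogous gluing in your monad-identification step is unproblematic, since there it suffices to glue over the given set $\cF'$ and no statement about all wfg subobjects is required.
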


\begin{proof}
If $\C$ has a generator, the theorem follows immediately from Theorem~\ref{addm-theorem} and Remark~\ref{lwfg-single-generator-remark}.
We will now present a general and direct argument analogous to that for Theorem~\ref{addm-theorem}.

%
First of all, let $\fI=\Ann(E)$ be the annihilator of
a weakly finitely generated subobject $E\subset M$ and $r\:M\rarrow M$
be a morphism in the category~$\C$.
Then $r\fJ\subset\fI$ in~$\fR$, where $\fJ=\Ann(F)$ and
$F=rE\subset M$ is the weakly finitely generated quotient object of~$E$ which is the image of the composition $E\rarrow M
\overset r\rarrow M$.
Hence, the multiplication in $\fR$ is continuous and $\fR$ is a topological ring.

Next we claim that $\fR$ is complete and separated, that is the map
$\fR\rarrow\varprojlim_E\fR/\Ann(E)$ is bijective.
Here $\Ann(E)$ is nothing but the group of morphisms $\Hom_\C(M/E,M)$.
We will prove the following more general fact: For any object $N\in\C$,
the natural map
\[
\Hom_\C(M,N)\lrarrow\varprojlim\nolimits_E\Hom_\C(M,N)/\Hom_\C(M/E,N),
\]
where the projective limit is taken over all the weakly finitely
generated subobjects $E\subset M$, is an isomorphism.
Indeed, the exact sequence
$$
0\lrarrow\Hom_\C(M/E,N)\lrarrow\Hom_\C(M,N)\lrarrow\Hom_\C(E,N)
$$
shows that the quotient group $\Hom_\C(M,N)/\Hom_\C(M/E,N)$ is
isomorphic to the subgroup in $\Hom_\C(E,N)$ consisting of all
the morphisms $E\rarrow N$ that can be extended to a morphism
$M\rarrow N$. Hence, elements of the inverse limit
$\varprojlim\nolimits_E\Hom_\C(M,N)/\Hom_\C(M/E,N)$
identify with compatible systems of morphisms $E\rarrow N$ in $\C$
defined for all the weakly finitely generated subobjects $E\subset M$.

Now we encounter a difference from Theorem~\ref{addm-theorem} in that we are not guaranteed that $M$ does not have a proper class of such subobjects~$E$. However, we will show that $M$ is still a colimit of the class of all such subobjects. By showing this, we at once prove the claim as well.

To this end, fix $N\in\C$ and a compatible system of morphisms
$f_E\:E\rarrow N$ in defined for all the weakly finitely generated subobjects $E\subset M$.
Recall that we assume the existence of a set $\cF'$ of weakly finitely generated subobjects in $M$ such
that no proper subobject in $M$ contains all $F\in\cF'$.
Moreover, since there is only a set of subsets in the set $\Hom_\C(M,N)$
(as all our abelian categories have Hom sets),
we can form a set $\cF''$ of weakly finitely generated subobjects
in $M$ such that for each weakly finitely generated subobject
$E\subset M$ there exists $F''\in\cF''$ for which the two subgroups
$\Hom_\C(M/E,N)$ and $\Hom_\C(M/F'',N)$ in $\Hom_\C(M,N)$
coincide.
 Let $\cF$ denote the closure of $\cF'\cup\cF''$ with respect to
the operation of the passage to a finite sum of subobjects in~$M$.
 Then $\cF$ is still a set of weakly finitely generated subobjects
in~$M$, and $M$ is the direct union of all $F\in\cF$.
 Therefore, $M=\colim_{F\in\cF}F$ by Proposition~\ref{lwfg-exact-direct-lim}.
 Thus, there is a unique $h\:M\rarrow N$ such that $h|_F=f_F$ 
for all $F\in\cF$.
 It remains to verify that $h|_E=f_E$ for all weakly finitely generated subobjects $E\subset M$.
 Given $E$, consider $F''\in\cF$ related to $E$ as above.
 Then $E+F''$ is also a weakly finitely generated subobject in $M$, and
the three subgroups $\Hom_\C(M/E,N)$, \ $\Hom_\C(M/(E+F''),N)$, and
$\Hom_\C(M/F'',N)$ in $\Hom_\C(M,N)$ coincide.
 Hence it follows from the compatibility of the morphisms $f_E$, $f_{F''}$,
and $f_{E+F''}$ with respect to the restriction of morphisms to subobjects
that the equality $h|_{F''}=f_{F''}$ implies $h|_{E+F''}=f_{E+F''}$
and $h|_E=f_E$, as required.

%


 Finally, we show that the monad $\boT\:X\longmapsto\Hom_\C(M,M^{(X)})$
on the category of sets from Proposition~\ref{copowers-kleisli}
is isomorphic to the monad~$\fR[[-]]$.
 The natural morphisms $M^{(X)}\rarrow M^X$ are monomorphisms
in $\C$ by Corollary~\ref{lwfg-ab5}, so the induced 
maps of sets $\boT(X)\rarrow\prod_{x\in X}\boT(\{x\})$ are injective.
 Let us describe the image of this map.

 If a morphism $M\rarrow M^X$ factorizes through $M^{(X)}$, then
for every weakly finitely generated subobject $E\subset M$
the composition $E\rarrow M^X$ factorizes through the natural split
monomorphism $M^Z\rarrow M^X$ for some finite subset $Z\subset X$.
 Conversely, let $M\rarrow M^X$ be a morphism having this factorization
property with respect to all the weakly finitely generated subobjects
$E\subset M$.
 Then the composition of morphisms $E\rarrow M\rarrow M^X$ factorizes
through the monomorphism $M^{(X)}\rarrow M^X$.
 Let $\cE$ be a set of weakly finitely generated subobjects of $M$ such that
$M$ is the direct union of $E\in\cE$.
 Then $\coprod_{E\in\cE}E\rarrow M$ is an epimorphism, and it follows
that the morphism $M\rarrow M^X$ also factorizes through~$M^{(X)}$.

 We have shown that $\boT(X)$ as a subset in $\prod_{x\in X}
\boT(\{x\})=\fR^X$ consists precisely of all the $X$\+indexed families
of elements in $\fR$ that converge to zero in the topology of~$\fR$.
 According to the discussion in~\cite[Section~1.2]{PR}, it remains
to check that the ``summation map'' $\Sigma_X\:\boT(X)\rarrow\fR$
induced by the natural morphism $M^{(X)}\rarrow M$ is nothing but
the map of summation of converging to zero $X$\+indexed families
of elements in the topology of~$\fR$ (in the sense of the topological
limit of finite partial sums).
 This is easily demonstrated by restricting a morphism
$M\rarrow M^{(X)}$ in question to weakly finitely generated
subobjects $E\subset M$.
%
\end{proof}

\begin{rem}
 Let $\fR$ be the topological ring from Theorem~\ref{lwfg-addm-theorem}.
 Then there are two ways to bound the size of a set $Y$ for which
the topological ring $\fS=\Hom^\fR(\fR[[Y]],\fR[[Y]])^\rop$ has
the property that the forgetful functor $\fR\contra\simeq\fS\contra
\rarrow\fS\modl$ is fully faithful.
 On the one hand, this holds whenever $\fR$ has a base of neighborhoods
of zero of the cardinality not exceeding~$Y$.
 On the other hand, let $\lambda$ be the cardinality of a set $\cF'$ of
weakly finitely generated subobjects in $M$ such that $M$ is the sum of
its subobjects belonging to~$\cF'$.
 Then for every set $X$, a morphism $f\:M\rarrow M^{(X)}$, and
$E\in\cF'$, there exists a finite subset $Z_E\subset X$ such that
the image of the morphism $f|_E\:E\rarrow M^{(X)}$ is contained in
$M^{(Z_E)}\subset M^{(X)}$.
 Set $Z=\bigcup_{E\in\cF'}Z_E$; then the cardinality of $Z$ does not
exceed~$\lambda$ and the image of the morphism~$f$ is
contained in $M^{(Z)}\subset M^{(X)}$.
 So Theorem~\ref{kappa-ary-dual-gabriel-popescu} is applicable for any
set $Y$ of the cardinality greater or equal to~$\lambda$.
\end{rem}

\subsection{Closed functors}
\label{closed-functors-subsecn}

In the next section, we will also encounter a situation where the categories in question may not be locally weakly finitely generated, but endomorphism rings of objects still carry a natural topology. To this end, we will employ the notion of closed functors.

Let $\C$ be a locally weakly finitely generated abelian category and assume that each object is a direct union of a set of its weakly finitely generated subobjects
(this is true if $\C$ has a generator and in particular in the cases mentioned in Examples~\ref{exs:lwfg}). Theorem~\ref{lwfg-addm-theorem} implies that each $\Hom_\C(M,N)$ is a complete separated topological abelian group and the composition in $\C$ is continuous---if $M,N,L\in\C$, we can view the composition $\Hom_\C(M,N)\times\Hom_\C(N,L)\rarrow\Hom_\C(M,L)$ as the restriction of multiplication in the ring $\fR=\Hom_\C(M\oplus N\oplus L,M\oplus N\oplus L)^\rop$.

Let now $\A$ be an additive category and $F\:\A\rarrow\C$ be an additive functor. We say that $F$ is a \emph{closed} functor if the following conditions are satisfied:

\begin{enumerate}
\renewcommand{\theenumi}{\Roman{enumi}}
\item $\A$ is idempotent-complete and has all set-indexed coproducts;
\item the functor $F$ is faithful and preserves coproducts;
\item for any two objects $K$ and $L\in\A$, the image of the embedding
\[\Hom_\A(K,L)\rarrow \Hom_\C(F(K),F(L))\]
is a closed subset of $\Hom_\C(F(K),F(L))$.
\end{enumerate}

%
%

Thus, if $F\:\A\rarrow\C$ is a closed functor and $N\in\A$,
the ring $\fQ=\Hom_\A(N,N)^\rop$ is a closed subring of the topological ring\/
$\fR=\Hom_\C(F(N),F(N))^\rop$.
 We will endow the ring $\fQ=\Hom_\A(N,N)^\rop$ with the subspace topology.
 This makes $\fQ$ a complete, separated topological ring with a base of
neighborhoods of zero formed by open right ideals.

\begin{thm} \label{closed-functor-addm-theorem}
 Let\/ $\A$ be an idempotent-complete additive category endowed with
a closed additive functor $F\:\A\rarrow\C$ into a locally weakly finitely
generated abelian category\/~$\C$.
 Let $N\in\A$ be an object and $\Add(N)\subset\A$ be the full additive
subcategory formed by the direct summands of infinite coproducts of
copies of $N$ in~$\A$.
 Then the category\/ $\Add(N)$ is equivalent to the category\/
$\fQ\contra_\proj$ of projective left contramodules over the topological
ring\/ $\fQ=\Hom_\A(N,N)^\rop$.
%
\end{thm}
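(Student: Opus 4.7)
The plan is to apply Theorem~\ref{lwfg-addm-theorem} to the object $M = F(N) \in \C$ and then cut out the $\A$-side as a closed submonad. Set $\fR = \Hom_\C(F(N),F(N))^\rop$ with the finite topology of Theorem~\ref{lwfg-addm-theorem}, so that the monad $\boT_\C\:X \longmapsto \Hom_\C(F(N),F(N)^{(X)})$ on $\Sets$ is canonically isomorphic to $\fR[[-]]$ and $\Add_\C(F(N)) \simeq \fR\contra_\proj$. Hypothesis~(III) applied to $K=L=N$ makes $\fQ = \Hom_\A(N,N)^\rop$ a closed subring of $\fR$; with the subspace topology it becomes a complete separated topological ring whose open right ideals form a base of neighborhoods of zero (the traces $\Ann(E) \cap \fQ$ for weakly finitely generated subobjects $E \subset F(N)$).

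The central step is to identify the monad $\boT_\A\:X \longmapsto \Hom_\A(N,N^{(X)})$ on $\Sets$ with $\fQ[[-]]$. Since $F$ preserves coproducts and is faithful by~(II), the natural map $\boT_\A(X) \rarrow \boT_\C(X) \cong \fR[[X]]$ is injective. For each $y \in X$, the additive structure of $\A$ provides a morphism $\pi_y\:N^{(X)} \rarrow N$ (identity on the $y$\+th summand, zero elsewhere), and composition with any $\phi\:N \rarrow N^{(X)}$ extracts the $y$\+th component of the corresponding family in $\fR$; so this component lies in $\fQ$, whence $\boT_\A(X) \subseteq \fQ[[X]]$.

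For the reverse inclusion, given $(q_x)_{x \in X} \in \fQ[[X]]$ I would form the finite partial sums $\phi_Z = \sum_{x\in Z} \iota_x q_x \in \boT_\A(X)$ indexed by finite $Z \subset X$. Their images in $\boT_\C(X)$ converge to the element corresponding to $(q_x)$ in the topology on $\Hom_\C(F(N),F(N)^{(X)})$ with basis of neighborhoods of zero $\{f \mid f|_E = 0\}$ for weakly finitely generated $E \subset F(N)$: indeed, the convergence of $(q_x)$ to zero in $\fR$ provides for each such $E$ a finite $Z_E \subset X$ with $q_x|_E = 0$ for $x \notin Z_E$, so $\phi_Z$ enters and stays in the $E$\+neighborhood as soon as $Z \supset Z_E$. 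Since this topology is separated and the image of $F$ on hom-sets is closed by~(III), the limit $(q_x)$ lies in $\boT_\A(X)$. The monad structures on both sides now agree automatically, being restrictions along $\fQ \hookrightarrow \fR$ of the $\fR[[-]]$ monad structure on $\boT_\C$.

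Once $\boT_\A \cong \fQ[[-]]$ as monads, Proposition~\ref{copowers-kleisli} yields an equivalence between the full subcategory of copowers of $N$ in $\A$ and the full subcategory of free left $\fQ$\+contramodules in $\fQ\contra$. Passing to direct summands---legitimate by idempotent-completeness~(I) on the left and the description of projective $\fQ$\+contramodules as summands of free ones on the right---gives the desired equivalence $\Add(N) \simeq \fQ\contra_\proj$. The main obstacle is the convergence-and-closedness argument of the third paragraph: one must pin down precisely which topology on $\Hom_\C(F(N),F(N)^{(X)})$ witnesses both the convergence of the partial sums $\phi_Z$ and the closedness asserted by hypothesis~(III), and check that these coincide.
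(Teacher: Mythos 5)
Your proposal is correct and follows essentially the same route as the paper's proof: both reduce the statement to identifying the monad $X\longmapsto\Hom_\A(N,N^{(X)})$ with $\fQ[[-]]$, use faithfulness and preservation of coproducts to embed it into $\fR[[X]]\cong\Hom_\C(F(N),F(N)^{(X)})$, and then establish surjectivity onto $\fQ[[X]]$ by approximating a given converging family with morphisms supported on finite subsets (your partial sums $\phi_Z$ are the paper's morphisms $h_E$) and invoking the closedness hypothesis~(III); the topology witnessing both convergence and closedness is in both cases the one with base of neighborhoods of zero $\Ann(E)$ for weakly finitely generated $E\subset F(N)$, as furnished by Theorem~\ref{lwfg-addm-theorem}. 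The concluding passage to $\Add(N)\simeq\fQ\contra_\proj$ via Proposition~\ref{copowers-kleisli} and idempotent-completeness also matches the paper.
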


\begin{proof}
 We have to check that the monad $\boT\:X\longmapsto\Hom_\A(N,N^{(X)})$
on the category of sets from Proposition~\ref{copowers-kleisli}
is isomorphic to the monad~$\fQ[[-]]$.
 For any set $X$, the map
\begin{equation} \label{closed-functor-monad}
 \Hom_\A(N,N^{(X)})\lrarrow\Hom_\A(N,N)^X
\end{equation}
is injective, because $F(N^{(X)})=F(N)^{(X)}$, the functor $F$ is
faithful, and the map $\Hom_\C(F(N),F(N)^{(X)})\rarrow
\Hom_\C(F(N),F(N))^X$ is injective.
 In view of the arguments in the proof of
Theorem~\ref{lwfg-addm-theorem}, the question reduces to showing
that the image of the map~\eqref{closed-functor-monad} consists
precisely of all the $X$\+indexed families of elements of
the ring $\fQ$ converging to zero in the topology of~$\fQ$.

 Indeed, for any morphism $N\rarrow N^{(X)}$, the related
$X$\+indexed family of elements in $\Hom_\A(N,N)$ converges to zero
in the topology of $\fQ$ since, viewed as a family of elements of
the ring $\fR=\Hom_\C(F(N),F(N))^\rop$, it comes from a certain morphism
$F(N)\rarrow F(N)^{(X)}$, and therefore converges to zero in
the topology of~$\fR$.

 To prove the converse implication, we have to check that every
morphism $g\:F(N)\rarrow F(N)^{(X)}$ in $\C$ whose compositions with
the coordinate projection morphisms $F(N)^{(X)}\rarrow F(N)$
are images of some morphisms $h_x\:N\rarrow N$ under the functor $F$,
is itself the image of a certain morphism $f\:N\rarrow N^{(X)}$ under
the functor~$F$.
 Here we need to use the condition~(III) again.

 Let $E\subset F(N)$ be a weakly finitely generated subobject.
 Then the restriction of the morphism~$g$ to $E$ factorizes through
the natural embedding $F(N)^{(Z)}\rarrow F(N)^{(X)}$ corresponding
to some finite subset $Z\subset X$.
 The related morphism $E\rarrow F(N)^{(Z)}$ is the restriction of
the morphism $F(h_Z)$ to $E$, where $h_Z\:N\rarrow N^{(Z)}=N^Z$ is
the morphism with the components $h_z$, \,$z\in Z$.
 Composing the morphism $h_Z$ with the embedding $N^{(Z)}\rarrow
N^{(X)}$, we obtain a morphism $h_E\:N\rarrow N^{(X)}$ in
the category $\A$ whose image under the functor $F$ coincides with
the morphism~$g$ in the restriction to the subobject $E\subset F(N)$.

Now $\Hom_\C(F(N),F(N)^{(X)})$ is a complete separated abelian group
with the base of neighborhoods of zero formed by the subgroups
$\Ann(E)=\{\,g'\:F(N)\rarrow F(N)^{(X)}\mid g'|_E=0\,\}$, and we have
proved that for any $E$, there exists $h_E\:N\rarrow N^{(X)}$ in $\A$
such that $F(h_E)$ lies in the open neighborhood $g+\Ann(E)$ of $g$.
Since the image of $\Hom_\A(N,N^{(X)})$ under $F$
is closed in $\Hom_\C(F(N),F(N)^{(X)})$
and any open neighborhood of $g$ intersects this image, it follows
that $g$ itself is in the image. In other words, $g=F(f)$ for some
$f\:N\rarrow N^{(X)}$, as required.
%
\end{proof}

%
%

\Section{Examples}
\label{examples-secn}

The final section of the paper is devoted to nontrivial classes
of examples, which in fact motivated the tilting theory here.
The main observation in this context is that the tilting-cotilting
correspondence coincides under suitable homological
assumptions with the comodule-contramodule correspondence introduced
by the first-named author in \cite[\S0.2--3 and Chapters~5--6]{Psemi}.
Moreover, it turns out that, up to category equivalence,
most of these situations fit into the concrete framework discussed
in Section~\ref{big-tilting-module-secn}.

\subsection{Coalgebras over a field}
\label{coalgebras-subsecn}

Let $\cC$ be a coassociative, counital coalgebra over a field~$k$,
with comultiplication $\Delta\:\cC\rarrow\cC\otimes_k\cC$ and 
counit $\varepsilon\:\cC\rarrow k$.
We will consider (coassociative and counital) left
$\cC$\+comodules $\cM$ with the coaction $\cM\rarrow\cC\otimes_k \cM$.
It is known that any comodule is a direct union of its finite
dimensional subcomodules. In particular,
the category $\A=\cC\comodl$ of left
$\cC$\+comodules is a locally Noetherian (even locally finite)
Grothendieck abelian category and
the forgetful functor $\cC\comodl\rarrow k\modl$ is exact and
preserves coproducts. If $\cM$, $\cN\in\cC\comodl$, we will denote
the group of homomorphisms from $\cM$ to $\cN$ by $\Hom_\cC(\cM,\cN)$.

The vector space dual $\cC^*$ naturally carries the structure
of a $k$-algebra, with the multiplication of $f,g\in\cC^*$ given by
\[
(f\cdot g)(c) = \sum_{i=1}^m f(c_{2,i}) g(c_{1,i})
\quad \textrm{for each } c\in\cC \textrm{ and }
\Delta(c)=\sum_{i=1}^m c_{1,i}\otimes c_{2,i}.
\]
There is a natural functor $\cC\comodl\rarrow\cC^*\modl$ which sends
a comodule $\varpi\:\cM\rarrow\cC\otimes_k \cM$ to the module
$M=\cM$ with the action
given by
\[
\cC^*\otimes_k \cM \overset{\cC^*\otimes\varpi}\lrarrow
\cC^*\otimes_k \cC \otimes_k \cM \overset{\mathrm{ev}\otimes \cM}\lrarrow \cM.
\]
This functor is fully faithful and the essential image is closed
under submodules, quotients and direct sums~\cite[Theorem 2.1.3]{Sw}.
Thus, by \cite[Proposition VI.4.2]{Sten},
we can equip $\fA=\cC^*$ with a topology with a base of neighborhoods
of zero formed by left ideals such that $\cC\comodl\simeq\fA\discrl$.
 In fact, this is simply the canonical profinite-dimensional topology on
the dual vector space $\cC^*$ to a vector space~$\cC$; and
the topological ring $\fA$ even has a base of neighborhoods of zero
consisting of open two-sided ideals (namely, the annihilators of
finite-dimensional subcoalgebras in~$\cC$ \cite[Theorem~2.2.1]{Sw}).

On the other hand, we can consider over any coalgebra over a field the
category $\cC\contra$ of left \emph{$\cC$\+contramodules}
(see \cite[Sections~1.1--1.2]{Prev}, \cite[Section~0.2]{Psemi}
for details). As far as we are concerned here, 
the key fact from \cite[Section~2.3]{Prev}
is that there is a category equivalence
$\cC\contra\simeq\fR\contra$, where
$\fR=\Hom_\cC(\cC,\cC)^\rop=\cC^*$ equipped with the topology
given by Theorem~\ref{lwfg-addm-theorem} applied to
$\C=\cC\comodl$ and $M=\cC$.
Actually, $\fR=\cC^*=\fA$ is one and the same ring,
and the topologies on $\fR$ and $\fA$ coincide; but we will not
need to use this fact.

We will be interested in $T=\cC$ with the obvious
$\fA$-$\fR$-bimodule structure, which induces an adjunction
of the form~\eqref{eq:discr-bimodule}
in Section~\ref{big-tilting-module-secn}.
The coproducts of copies of the left $\cC$\+comodule $\cC$ are
called the \emph{cofree} left $\cC$\+comodules.
The cofree $\cC$\+comodules are injective objects in $\cC\comodl$,
and every injective $\cC$\+comodule is a direct summand of a cofree
one.
Hence $T\in\fA\discrl$ satisfies the condition~(ii) 
(no self-extensions) of the definition of a tilting object
from Section~\ref{tilting-t-structure-secn}.

Of course, the projective dimension of the left $\cC$\+comodule $\cC$
does not have to be finite.
The best one can say in general is that $\cC$ is
an $\infty$\+tilting object in the sense of~\cite{PStinfty};
see~\cite[Example~6.9]{PStinfty}.
The aspect we focus on is the so-called
\emph{comodule-contramodule correspondence} \cite[Sections~0.2.6--0.2.7]{Psemi},
which is a triangle equivalence
\[ \cC\ocn_\cC^\boL{-}\:\D^\ctr(\cC\contra) \;\rightleftarrows\; \D^\co(\cC\comodl) \;\:\!\boR\Hom_\cC(\cC,-). \]
That is, the \emph{coderived category} $\D^\co(\cC\comodl)$ is equivalent
to the \emph{contraderived category} $\D^\ctr(\cC\contra)$.
In view of the identifications $\cC\comodl\simeq\fA\discrl$ and
$\cC\contra\simeq\fR\contra$, this equivalence takes the form
\begin{equation} \label{comodule-contramodule-correspondence}
T\ocn^\boL_\fR{-}\:\D^\ctr(\fR\contra) \;\rightleftarrows\; \D^\co(\fA\discrl) \;\:\!\boR\!\Hom_\fA(T,{-}).
\end{equation}

In order to obtain from~\eqref{comodule-contramodule-correspondence}
derived equivalences
as in Corollary~\ref{conv-abs-derived-equivalence},
we need to enforce some homological finiteness conditions on $\cC$.
We say that the coalgebra $\cC$ is \emph{left Gorenstein} if
\begin{enumerate}
	\renewcommand{\theenumi}{\alph{enumi}}
	\item the left $\cC$\+comodule $\cC$ has finite projective dimension in $\cC\comodl$;
	\item the left $\cC$\+contramodule $\cC^*$ has finite injective dimension in $\cC\contra$.
\end{enumerate}
Equivalently, the first condition says that the injective cogenerator
$T\in\fA\discrl$ has finite projective dimension, while the second one
says that the projective generator $\fR\in\fR\contra$
has finite injective dimension.

The first condition also implies that the functor $\boR\!\Hom_\fA(T,{-})$
in~\eqref{comodule-contramodule-correspondence} has finite homological dimension.
Since adjunction~\eqref{contratensor-module-adjunction}
in~\S\ref{contratensor-subsecn} induces a natural isomorphism
$(T\ocn^\boL_\fR-)^*=\boR\!\Hom^\fR(-,T^*)$, the second condition
implies that the functor $T\ocn^\boL_\fR-$ has finite homological
dimension as well
(in the terminology of \cite[Section~3]{Pmc}, the right comodule $\cC$
has finite contraflat dimension).


To summarize, for any left Gorenstein coalgebra,
the triangle equivalences~\eqref{comodule-contramodule-correspondence} restrict
to equivalences
\[ T\ocn^\boL_\fR{-}\:\D^\b(\fR\contra) \;\rightleftarrows\; \D^\b(\fA\discrl) \;\:\!\boR\!\Hom_\fA(T,{-}) \]
between the bounded derived categories.
In view of Proposition~\ref{bounded-tilting-t-structure-implies-generation},
$\D^\b(\fA\modl)$ admits a tilting t\+structure and $T\in\fA\modl$ becomes
a tilting object. The tilting heart is the category $\fR\contra\simeq\cC\contra$
and the related cotilting object is $\fR=\cC^*$.
It also follows that for any left Gorenstein coalgebra $\cC$,
the projective dimension of the left $\cC$\+comodule
$\cC$ is equal to the contraflat dimension of the right
$\cC$\+comodule~$\cC$.
	

\subsection{Gorenstein locally Noetherian Grothendieck categories}
\label{Gorenstein-Grothendieck-subsecn}

The description of the situation where an injective cogenerator
becomes a tilting object generalizes from categories
of comodules over coalgebras to any locally Noetherian
Grothendieck categories. We will discuss this situation,
which also includes the case of module categories~\cite{AHT}, here.

To this end, we call a locally Noetherian Grothendieck category $\A$
\emph{Gorenstein} if

\begin{enumerate}
	\renewcommand{\theenumi}{g\arabic{enumi}}
	\item all injective objects in $\A$ have finite projective dimension;
	\item $\A$ has a generator of finite injective dimension.
\end{enumerate}

Note that for any locally Noetherian Grothendieck category $\A$
there exists an injective object $J\in\A$ such that the full additive
subcategory $\Add(J)\subset\A$ coincides with the full subcategory of 
injective objects $\A_\inj\subset\A$. Equivalently, this means that
$J$ contains every indecomposable injective in $\A$ as a direct summand.
Clearly, condition~(g1) is equivalent to requiring that $J$ has finite
projective dimension.

\begin{ex} \label{locally-noetherian-gorenstein-example}
If $A$ is a two-sided Noetherian ring, then the category $\A=A\modl$
is Gorenstein if and only if $A$ is an Iwanaga--Gorenstein ring
\cite[Example~2.3]{DASS17}.
\end{ex}

\begin{rem}
One can also prove (by a variation of the argument for~\cite[Lemma~2.6]{DASS17})
that a locally Noetherian Grothendieck category is Gorenstein
if and only if it is Gorenstein in the sense of \cite[Definition~2.18]{EEGR08},
introduced by Enochs, Estrada and Garc\'{\i}a-Rozas.
\end{rem}

Given any locally Noetherian Grothendieck category $\A$,
the additive category $\A_\inj$ is according to Theorem~\ref{lwfg-addm-theorem} or
\cite[Theorem~3.6]{Prev} equivalent
to the full subcategory of projective objects $\B_\proj\subset\B$
in the abelian category $\B=\fR\contra$ of left contramodules over
the topological ring $\fR=\Hom_\A(J,J)^\rop$.
This equivalence assigns the free left $\fR$\+contramodule with
one generator $\fR\in\fR\contra_\proj$ to our chosen injective object
$J\in\A_\inj$.
There are enough injective objects in $\A$ and projective objects
in~$\B$.
Moreover, both the full subcategories $\A_\inj\subset\A$ and
$\B_\proj\subset\B$ are closed under both the infinite products
and coproducts in the abelian categories $\A$ and~$\B$
\cite[Theorem~3.6]{Prev}.

As in \S\ref{coalgebras-subsecn}, without extra homological conditions
we do not obtain an equivalence of usual derived categories of $\A$ and~$\B$,
but we rather have triangulated equivalences
\begin{equation} \label{eq:loc-Noetherian-correspondence}
\D^\co(\A)\simeq\Hot(\A_\inj)\simeq
\Hot(\B_\proj)\simeq\D^\ctr(\B),
\end{equation}
according to~\cite[Proposition~A.3.1(b)]{Pcosh} and the assertion
dual to it.
In this sense, the object $T=W=J$ is again
an $\infty$\+tilting object in $\A$ in the terminology of~\cite{PStinfty};
see~\cite[Examples~6.3 and~6.4]{PStinfty}.

The following theorem characterizes the situation where $T\in\A$ is
actually a tilting object in the sense of Section~\ref{tilting-t-structure-secn}.
In view of Example~\ref{locally-noetherian-gorenstein-example}, it can be viewed
as a generalization of results in \cite[Section~3]{AHT}, and part (3)
generalizes the case of Gorenstein coalgebras from
\S\ref{coalgebras-subsecn}.

\begin{thm} \label{locally-noetherian-gorenstein-char-theorem}
Let $\A$ be a locally Noetherian Grothendieck category and $J\in\A$ be
such that $\Add(J)=\A_\inj$. Then the following
are equivalent:
	
\begin{enumerate}
\item $\A$ is Gorenstein;
\item $J$ is a tilting object of $\A$;
\item $J$ has finite projective	dimension in $\A$ and the topological
ring $\fR=\Hom_\A(J,J)^\rop$
has finite injective dimension in~$\B=\fR\contra$.
\end{enumerate}
\end{thm}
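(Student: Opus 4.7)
My plan is to prove the equivalence via the cycle $(2)\Rightarrow(1)\Rightarrow(2)$ together with $(2)\Leftrightarrow(3)$, where $(2)\Rightarrow(3)$ is a direct translation of the tilting-cotilting correspondence and $(3)\Rightarrow(2)$ is the most intricate direction, which I would route through $(3)\Rightarrow(1)$. For $(2)\Rightarrow(1)$: condition~(i) of tilting gives that $J$ has finite projective dimension, and since $\A_\inj=\Add(J)$ is closed under coproducts in the locally Noetherian category~$\A$, every injective is a summand of a copower of $J$ and inherits finite projective dimension, giving~(g1). For~(g2), Theorem~\ref{tilting-cotorsion-pair-thm} yields a cotorsion pair $(\L,\E)$ in which every object---and in particular any generator $G$ of $\A$---is a quotient of some $L\in\L$; by definition $L$ admits a finite coresolution by objects of $\Add(J)=\A_\inj$ and so has finite injective dimension, and such an $L$ is itself a generator of~$\A$.

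For $(1)\Rightarrow(2)$: conditions~(i) and~(ii) of tilting are immediate, (i) following from~(g1) and (ii) because $J^{(I)}$ remains injective in a locally Noetherian category. To verify~(iii) I would invoke the equivalent formulation~(3) of Theorem~\ref{tilting-cotorsion-pair-thm}. Given $X\in\A$, let $G$ be the generator of injective dimension $\le d$ furnished by~(g2) and take an epimorphism $G^{(I)}\twoheadrightarrow X$. Since coproducts of injective resolutions remain injective resolutions in a locally Noetherian Grothendieck category, $G^{(I)}$ has injective dimension $\le d$ with all terms in $\A_\inj=\Add(J)$. Padding with zeros places $G^{(I)}$ in $\L_{\max(d,n)}$, which equals $\L$ by Lemma~\ref{left-class-tilting-cotorsion-pair}(c).

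The direction $(2)\Rightarrow(3)$ is short: the first clause of~(3) is condition~(i), and for the second, Theorem~\ref{tilting-cotilting-thm} shows that $W=J$ is an $n$\+cotilting object in~$\B$, hence has injective dimension $\le n$ there. Under the equivalence $\B\simeq\fR\contra$ supplied by Theorem~\ref{lwfg-addm-theorem} and Corollary~\ref{cor:B-as-Tmod}, the projective generator $T=J\in\B$ corresponds to the free contramodule $\fR$, and since $T=W=J$ in this degenerate setup, $\fR$ has injective dimension $\le n$ in $\fR\contra$.

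The main obstacle is $(3)\Rightarrow(2)$. I would reduce this to $(3)\Rightarrow(1)$ and invoke $(1)\Rightarrow(2)$ already proven. Property~(g1) is immediate from the first clause of~(3); the delicate point is producing a generator of $\A$ of finite injective dimension from the hypothesis that the projective generator $\fR$ has finite injective dimension in $\B=\fR\contra$. My plan is to exploit the unconditional triangle equivalence $\D^\ctr(\B)\simeq\D^\co(\A)$ recalled in~\S\ref{Gorenstein-Grothendieck-subsecn} together with the identification $\B_\proj\simeq\A_\inj$ sending $\fR\leftrightarrow J$: the vanishing of $\Ext^i_\B(-,\fR)$ for $i>m$ should translate, via this equivalence, into a uniform bound on the length of suitable $\A_\inj$\+resolutions of objects of~$\A$, from which~(g1) will then let us extract a generator of finite injective dimension. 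The technically most demanding step will be to make this Ext\+comparison explicit and to pin down which objects of~$\A$ correspond to which $Y\in\B$ under the co/contraderived equivalence.
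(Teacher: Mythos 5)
Your handling of $(1)\Leftrightarrow(2)$ and of $(2)\Rightarrow(3)$ is correct and is essentially the paper's own argument written out in more detail: the paper disposes of $(1)\Leftrightarrow(2)$ by citing the equivalence of conditions (1) and (3) in Theorem~\ref{tilting-cotorsion-pair-thm} for $T=J$ (your two paragraphs are exactly what that citation unpacks to), and of $(2)\Rightarrow(3)$ by noting that the degenerate choice $T=W=J$ makes the cotilting object of the heart coincide with the projective generator~$\fR$.

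The genuine gap is in $(3)\Rightarrow(2)$. What you propose there is a plan whose decisive step is left open, and the plan aims at the wrong intermediate target. You want to extract (g2) --- a generator of $\A$ of finite injective dimension --- directly from the finiteness of the injective dimension of $\fR$ in $\B$, via ``a uniform bound on the length of suitable $\A_\inj$\+resolutions of objects of~$\A$.'' Neither reading of that phrase works: a uniform bound on injective \emph{co}resolutions of all objects of $\A$ would assert finite global dimension, which fails already for $k[x]/(x^2)\modl$ (a Gorenstein category); and left resolutions by injectives govern the image of $\Phi$ on the class $\{Y\in\B\mid\boL_i\Phi(Y)=0,\ i>0\}$, not injective dimension, so they cannot produce (g2) either. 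Moreover, under $\D^\ctr(\B)\simeq\D^\co(\A)$ an object $Y\in\B$ corresponds to an unbounded-above complex of injectives in $\A$ that need not lie in $\D^\b(\A)$, let alone in $\A$, until one has already bounded the homological dimension of $\Phi$ --- so the ``Ext-comparison'' you defer is where the entire content lives. The paper's mechanism is the isomorphism of left exact functors $\Hom_\A(\Phi({-}),J)\cong\Hom^\fR({-},\fR)$ on $\B$ (they agree on free contramodules), which, since $J$ is an injective cogenerator of $\A$, gives $\Ext^i_\B(Y,\fR)\cong\Hom_\A(\boL_i\Phi(Y),J)$ and hence $\boL_i\Phi=0$ for $i$ beyond the injective dimension of $\fR$; combined with the bound on $\boR^*\Psi$ from $\mathrm{pd}_\A J<\infty$, the co/contraderived equivalence restricts to $\D^\b(\A)\simeq\D^\b(\B)$, and Proposition~\ref{bounded-tilting-t-structure-implies-generation} together with Corollary~\ref{bounded-cotilting-t-structure}(b) yields condition~(iii) directly. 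Note that even with this machinery, (g2) is obtained as a consequence of (2) via $(2)\Rightarrow(1)$, not the other way around: your proposed order $(3)\Rightarrow(1)\Rightarrow(2)$ runs against the grain of what the correspondence actually provides, and I see no way to reach (1) from (3) that does not first establish (2).
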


\begin{proof}
(1)~$\Longleftrightarrow$~(2) follows directly from the equivalence
between~(1) and~(3) in Theorem~\ref{tilting-cotorsion-pair-thm} applied
to $T=J\in\A$.
	
(2)~$\Longrightarrow$~(3): If $T$ is tilting, $\fR$ is cotilting
according to Corollary~\ref{tilting-cotilting-correspondence-cor}.
By the very definition, $J$ has finite projective dimension and
$\fR$ finite injective dimension.

(3)~$\Longrightarrow$~(2):
The additive embedding functor $\A_\inj\simeq\B_\proj\rarrow\B$
can be uniquely extended to a left exact functor
$\Psi\:\A\rarrow\B$, which can be computed as taking an object
$N\in\A$ to the left $\fR$\+contramodule $\Hom_\A(J,N)$.
When the projective dimension of the object $J\in\A$ is finite,
(the right derived functor $\boR^*\Psi$ of) the functor $\Psi$ has
finite homological dimension.
	
Similarly, the additive embedding functor $\B_\proj\simeq\A_\inj
\rarrow\A$ can be uniquely extended to a right exact functor
$\Phi\:\B\rarrow\A$.
The functor taking every left $\fR$\+contramodule $\fC$ to
the abelian group $\Hom_\A(\Phi(\fC),J)$ takes, in particular,
the free $\fR$\+contramodule $\fR[[X]]$ to the abelian group
$\Hom_\A(J^{(X)},J)=\fR^X$, so this is nothing but the functor
of homomorphisms $\Hom^\fR({-},\fR)$ in the category of left
$\fR$\+contramodules.
Both the functors $\Hom_\A(\Phi({-}),J)$ and $\Hom^\fR({-},\fR)$
are left exact, so they are isomorphic as functors on the whole
abelian category $\B=\fR\contra$.
It follows that when the injective dimension of the object
$\fR\in\B$ is finite, (the left derived functor $\boL_*\Phi$ of)
the functor $\Phi$ has finite homological dimension.

Hence, if~(3) holds, the
equivalences~\eqref{eq:loc-Noetherian-correspondence}
restrict to equvalences
\[ \boL\Phi\:\D^\b(\A) \;\rightleftarrows\; \D^\b(\B) \;\:\!\boR\Psi \]
%
%
By Proposition~\ref{bounded-tilting-t-structure-implies-generation}
and Corollary~\ref{bounded-cotilting-t-structure}(b), it follows
that $T=J$ is an $n$\+tilting object in $\A$ and $W=\fR$ is
an $n$\+cotilting object in~$\B$.
(Cf.\ the discussion of the injective tilting module over
a Noetherian Gorenstein ring in~\cite{AHT}
and~\cite[Example~13.8 and Theorem~17.12]{GTbook}.)
\end{proof}

\subsection{Corings and semialgebras}
\label{corings-and-semialgebras-subsecn}

The tilting equivalences from~\S\ref{coalgebras-subsecn}
also generalize in a different way, where we obtain
relatively concrete
(in the sense of Section~\ref{big-tilting-module-secn})
equivalences for corings over general rings and
semialgebras over general coalgebras.

Let $A$ be an associative ring and $\cC$ be an $A$\+$A$\+bimodule
endowed with a coassociative, counital coring structure with
the comultiplication map $\Delta\:\cC\rarrow\cC\ot_A\cC$ and the counit
map $\cC\rarrow A$\, \cite{BW}, \cite[Section~1.1]{Psemi},
\cite[Section~2.5]{Prev}.
Assume that $\cC$ is a flat right $A$\+module; then the category
of left $\cC$\+comodules $\A=\cC\comodl$ is a Grothendieck abelian
category with an injective cogenerator $W=\cC\ot_AJ$, where $J$ is
an injective cogenerator of the category of left $A$\+modules.
We also have the following proposition which makes it possible to
apply Theorem~\ref{closed-functor-addm-theorem} to $\A$.

\begin{prop} \label{prop:closed-functor-corings}
Let $A$ be an associative ring and $\cC$ a coring over~$A$.
Then the additive category\/ $\A=\cC\comodl$ admits a closed functor to
a locally weakly finitely generated abelian category.
\end{prop}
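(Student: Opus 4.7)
The plan is to construct the required closed functor by taking $F\:\cC\comodl\rarrow A\modl$ to be the forgetful functor. The category $A\modl$ is locally finitely presentable and hence locally weakly finitely generated, so the task reduces to verifying conditions~(I)--(III) of~\S\ref{closed-functors-subsecn} for~$F$.

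Conditions~(I) and~(II) will be routine. Since $\cC$ is flat as a right $A$\+module (an assumption in force since the paragraph preceding the proposition), the category $\A=\cC\comodl$ is a Grothendieck abelian category, hence in particular cocomplete and idempotent-complete, which yields~(I). For~(II), the forgetful functor is evidently faithful, and it preserves coproducts because the coproduct of a family of comodules $(\cM_i)_{i\in I}$ is computed as the coproduct $\coprod_i\cM_i$ of underlying $A$\+modules equipped with the coaction obtained from the $\varpi_{\cM_i}$ through the canonical isomorphism $\cC\ot_A\coprod_i\cM_i\cong\coprod_i\cC\ot_A\cM_i$.

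The substantive step is~(III): for any $K$, $L\in\cC\comodl$, the inclusion $\Hom_\cC(K,L)\subset\Hom_A(F(K),F(L))$ must be closed in the finite topology of Theorem~\ref{lwfg-addm-theorem}. I will deduce this by presenting $\Hom_\cC(K,L)$ as the equalizer, inside $\Hom_A(K,L)$, of the pair of maps
\[
\alpha,\beta\:\Hom_A(K,L)\rightrightarrows\Hom_A(K,\cC\ot_AL), \qquad \alpha(\phi)=\varpi_L\circ\phi, \quad \beta(\phi)=(\cC\ot\phi)\circ\varpi_K,
\]
and then checking that both $\alpha$ and $\beta$ are continuous into a separated target. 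Continuity of $\alpha$ is automatic, since $\phi|_E=0$ for a finitely generated submodule $E\subset K$ immediately gives $(\varpi_L\circ\phi)|_E=0$. For $\beta$, given a finitely generated $E\subset K$, the submodule $\varpi_K(E)\subset\cC\ot_AK$ is itself finitely generated; the flatness of $\cC$ over $A$ lets me write $\cC\ot_AK=\colim_{E'}\cC\ot_AE'$ as the filtered union over the finitely generated submodules $E'\subset K$, so $\varpi_K(E)\subset\cC\ot_AE'$ for some such $E'$, and then $\phi|_{E'}=0$ forces $\beta(\phi)|_E=0$. Since the equalizer of two continuous homomorphisms of topological abelian groups into a separated target is closed, condition~(III) will follow.

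The only genuinely delicate point I anticipate is the continuity of $\beta$, where the flatness of $\cC$ as a right $A$\+module is essential; the remainder of the argument is a straightforward unfolding of definitions.
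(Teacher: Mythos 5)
Your proof is correct, and at its core it rests on the same observation as the paper's: the coaction of any element of $K$ involves only finitely many elements of $K$, so whether an $A$\+linear map satisfies the comodule compatibility at a given element is controlled by its restriction to a finitely generated submodule. The packaging differs. The paper exploits this pointwise to show the complement of the image is open: a witness $m$ with $(1_\cC\ot_A f)(\varpi_K(m))\ne\varpi_L(f(m))$, together with the finitely many elements appearing in $\varpi_K(m)$, produces a basic neighbourhood of $f$ disjoint from $\Hom_\cC(K,L)$. You instead exhibit $\Hom_\cC(K,L)$ as the equalizer of the continuous homomorphisms $\alpha$ and $\beta$ into a separated target, which is arguably the cleaner formulation. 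Two remarks. First, the paper's proof of this proposition does not use flatness of $\cC$ over $A$ (the flatness hypothesis in the surrounding text is what makes $\cC\comodl$ abelian, but the statement is phrased for an arbitrary coring, and condition~(I) only needs an idempotent-complete additive category with coproducts). Your continuity argument for $\beta$ is the one place where you genuinely invoke flatness, and it can be excised: instead of realizing $\cC\ot_AK$ as a filtered union of the subobjects $\cC\ot_AE'$, choose tensor representatives $\varpi_K(e_j)=\sum_i c_{ij}\ot k_{ij}$ for a finite generating set $e_1,\dots,e_r$ of $E$ and let $E'$ be generated by the $k_{ij}$; then $(\cC\ot_A\phi)\circ\varpi_K$ vanishes on $E$ whenever $\phi$ vanishes on $E'$, with no injectivity of $\cC\ot_AE'\rarrow\cC\ot_AK$ needed. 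Second, the topology of Theorem~\ref{lwfg-addm-theorem} on $\Hom_A(K,L)$ is generated by annihilators of weakly finitely generated (dually slender) submodules, which may be strictly finer than the finite topology you work with; this is harmless, since a set closed in the coarser finitely-generated topology is a fortiori closed in any finer one, and the paper's own argument makes the same implicit reduction.
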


\begin{proof}
%
The locally weakly finitely generated category in question will be simply
the category $\C=A\modl$ of left $A$\+modules. This is clearly
a locally weakly finitely generated abelian category, and the
forgetful functor $F\:\cC\comodl\rarrow A\modl$ is faithful
and preserves coproducts. 


To check condition~(III) from \S\ref{closed-functors-subsecn}, we will
prove that the complement of the image of
$F\:\Hom_\cC(\cM,\cN)\rarrow\Hom_A(\cM,\cN)$,
where $\cM$, $\cN$ are any fixed $\cC$\+comodules with coactions
$\varpi_\cM\:\cM\rarrow\cC\ot_A\cM$ and $\varpi_\cN\:\cN\rarrow\cC\ot_A\cN$, respectively,
is open. To this end, let $f\:\cM\rarrow \cN$ be a morphism of left $A$\+modules
which is not a $\cC$\+comodule homomorphism. Thus, there is $m\in \cM$ such that
$(1_\cC\ot_A f)(\varpi_\cM(m))\ne\varpi_\cN(f(m))$. Let $E\subset \cM$
be the $A$\+submodule generated by $m$ and a finite set of elements
$m_1$,~\dots, $m_n\in \cM$ such that $\varpi_\cM(m)=\sum_{i=1}^nc_i\ot m_i$
with some elements $c_i\in\cC$. Then $(1_\cC\ot_Ag)(\varpi_\cM(m))\ne\varpi_\cN(g(m))$
for any $g\:\cM\rarrow \cN$ from the open neighborhood 
$\fU = \{ g\in\Hom_A(\cM,\cN)\mid f|_E = g|_E \}$ of $f$ in $\Hom_A(\cM,\cN)$.
\end{proof}

Assume that $\cC$ is a projective left $A$\+module.
One can define the category $\cC\contra$ of \emph{left\/
$\cC$\+contramodules} \cite[Section~III.5]{EM},
\cite[Section~4]{Bar}, \cite[Section~3.1]{Psemi},
\cite[Section~2.5]{Prev} and, under the assumption just made,
$\cC\contra$ is abelian.

It turns out that there is an equivalence
$\cC\contra\simeq\fR\contra$, where the ring/group of
left $\cC$\+comodule/left $A$\+comodule homomorphisms
$\fR=\Hom_\cC(\cC,\cC)^\rop=\Hom_A(\cC,A)$ is endowed with
the topology given by Theorem~\ref{closed-functor-addm-theorem}.
To see this, we observe that the three full additive subcategories formed by
\begin{itemize}
\item the cofree left $\cC$\+comodules $\cC^{(X)}$ in
the category $\cC\comodl$,
\item the free left $\cC$\+contramodules
$\Hom_\cC(\cC,\cC^{(X)})=\Hom_A(\cC,A^{(X)})$ in the category of left
$\cC$\+contramodules $\cC\contra$ \cite[Section~3.1.2]{Psemi}, and
\item the free left $\fR$\+contramodules $\fR[[X]]$ in the category
$\fR\contra$
\end{itemize}
are naturally equivalent.
The equivalence between the first and the second full subcategories
is a simple form of the co-contra
correspondence~\cite[Section~5.1.3]{Psemi}, \cite[Section~3.4]{Prev},
and an equivalence between the first and the third ones follows from
Theorem~\ref{closed-functor-addm-theorem}.

From this point on, we assume that $\cC$ is a flat right and a projective
left $A$\+module.
These assumptions imply that the cofree left $\cC$\+comodule $T=\cC$
satisfies the condition~(ii) from the definition of a tilting object.
Indeed, one has
$$
\Ext^i_\cC(\cC,\cC^{(I)})\cong\Ext_A^i(\cC,A^{(I)})=0
\quad\text{for $i>0$.}
$$
In order to make it a tilting object in $\A=\cC\comodl$, we will
again introduce a homological finiteness condition.

We say that the coring $\cC$ is \emph{left Gorenstein} if
\begin{enumerate}
	\renewcommand{\theenumi}{\alph{enumi}}
	\item the comodule $T = \cC$ has finite projective dimension in $\cC\comodl$;
	\item the contramodule $W=\Hom_\cC(\cC,\>\cC\ot_AJ)=\Hom_A(\cC,J)$, where $J$ is an injective cogenerator in $A\modl$, has finite injective dimension in $\cC\contra$.
\end{enumerate}

To see that $T$ is indeed tilting in $\A=\cC\comodl$ in this case,
we start with what is going to be the adjunction between $\A$
and the tilting heart $\B=\fR\contra$.
The adjoint functors are given by the assignments $\cM\longmapsto
\Psi_\cC(\cM)=\Hom_\cC(T,\cM)$ and $\fP\longmapsto\Phi_\cC(\fP)=
T\ocn_\fR\fP$ between $\cC\comodl\ni \cM$ and $\fR\contra\ni\fP$
\cite[Sections~5.1.1--2]{Psemi}.
When we denote by $\boR^i\Psi_\cC$ the right derived functors of
the left exact functor $\Psi_\cC = \Hom_\cC(T,{-})$ (constructed by
applying $\Psi_\cC$ to a right injective resolution of
a $\cC$\+comodule $\cM$) and by $\boL_i\Phi_\cC$ the left derived functors
of the right exact functor $\Phi_\cC = T\ocn_\fR{-}$ (constructed by applying
$\Phi_\cC$ to a left projective resolution of an $\fR$\+contramodule~$\fP$),
we can again reformulate (a) and (b) equivalently by requiring that both 
the derived functors $\boR^*\Psi_\cC$ and $\boL_*\Phi_\cC$ have finite
homological dimensions. 
The argument immediately below will show that these two homological
dimensions again coincide; for the time being, let us denote the larger
of them by~$n$.

If we now denote by $\E_\A\subset\cC\comodl$ the full subcategory formed
by all the left $\cC$\+comodules $\cM$ such that
$\boR^i\Psi_\cC(\cM)=0$ for all $i>0$, and by $\E_\B\subset\fR\contra$
the full subcategory formed by all the left $\fR$\+contramodules $\fP$
such that $\boL_i\Phi_\cC(\fP)=0$ for all $i>0$,
then $\E_\A$ is a coresolving subcategory in $\A=\cC\comodl$ with
every object of $\A$ having coresolution dimension~$\le n$, while
$\E_\B$ is a resolving subcategory in $\B=\fR\contra$ with
every object of $\B$ having resolution dimension~$\le n$.
The composition of the two adjoint functors $\Phi_\cC\Psi_\cC$
restricted to the full subcategory of injective left $\cC$\+comodules
$\A_\inj\subset\A$ is the identity functor, while the composition
$\Psi_\cC\Phi_\cC$ restricted to the full subcategory of projective
left $\fR$\+contramodules is the identity
functor~\cite[Section~5.1.3]{Psemi}, \cite[Section~3.4]{Prev}.
Arguing as in~\cite[proof of Theorem~5.3]{Psemi}
(cf.~\cite[Example~6.1]{PStinfty}), one easily shows
that the functors $\Psi_\cC$ and $\Phi_\cC$ take $\E_\A$ into $\E_\B$
and $\E_\B$ into $\E_\A$, and establish an equivalence between
these two exact categories.
	
Hence we obtain a derived equivalence $\D^\b(\cC\comodl)\simeq
\D^\b(\fR\contra)$ and,
by applying
Proposition~\ref{bounded-tilting-t-structure-implies-generation},
one concludes that $T=\cC$ is an $n$\+tilting object in
the Grothendieck abelian category $\A=\cC\comodl$.
The tilting heart is the abelian category $\B=\fR\contra$, and
the related $n$\+cotilting object is the left $\fR$\+contramodule
$W=\Hom_A(T,J)$.

In order to present this tilting equivalence in the form
of Section~\ref{big-tilting-module-secn}, we need a stronger assumption
on the coring $\cC$; namely that $\cC$ is a projective right $A$\+module.
In this case, the group of right $A$\+module homomorphisms
$\fA=\Hom_{A^\rop}(\cC,A)$ has a ring structure given by the homomorphism
of $A$\+$A$\+bimodules
\[
\fA\otimes_A \fA \rarrow \Hom_{A^\rop}(\cC,\Hom_{A^\rop}(\cC,A)) \cong
\Hom_{A^\rop}(\cC\otimes_A\cC,A) \rarrow \Hom_{A^\rop}(\cC,A)=\fA,
\]
where the first map sends $f\ot g$ to the homomorphism
of right $A$\+modules $c\mapsto f\cdot g(c)$. More explicitly,
the multiplication of $f$, $g\in\fA$ is given by
\[
(f\cdot g)(c) = \sum_{i=1}^m f(g(c_{1,i})\cdot c_{2,i})
\quad \textrm{for each } c\in\cC \textrm{ and }
\Delta(c)=\sum_{i=1}^m c_{1,i}\otimes c_{2,i}.
\]
Furthermore, each left $\cC$\+comodule $\varpi\:\cM\rarrow\cC\ot_A \cM$
acquires a left $\fA$\+module structure via
$fm = \sum_{i=1}^n f(c_{-1,i})m_{0,i}$ for each $f\in\fA$ and
$m\in \cM$, where $\varpi(m)=\sum_{i=1}^nc_{-1,i}\ot m_{0,i}$. 
As in~\S\ref{coalgebras-subsecn}, this construction is functorial
and the resulting functor $\cC\comodl\rarrow\fA\modl$ is exact and fully
faithful, and its essential image is closed under coproducts, submodules
and quotient modules. Hence, using \cite[Proposition VI.4.2]{Sten},
we can again equip $\fA$ with a topology with a base of neighborhoods
of zero formed by left ideals such that $\cC\comodl\simeq\fA\discrl$.
We summarize our findings in the following proposition.

\begin{prop} \label{coring-prop}
Let $A$ be an associative ring and $\cC$ be a coring over $A$ such
that $\cC$ is projective both as a left and a right $A$\+module.
The the abelian groups\/ $\fA=\Hom_{A^\rop}(\cC,A)$ and\/
$\fR=\Hom_A(\cC,A)$ have structures of topological rings with
bases of neighborhoods of zero formed by open left ideals for\/ $\fA$
and open right ideals for\/~$\fR$. There are equivalences
$\cC\comodl\simeq\fA\discrl$ and $\cC\contra\simeq\fR\contra$.

If, moreover, $\cC$ is left Gorenstein, then $T=\cC$ is naturally
an\/ $\fA$\+$\fR$\+bimodule, discrete from either side, which is
a tilting object in\/ $\fA\discrl$ and induces
tilting equivalences in the form~\eqref{eq:discr-tilting}
from Section~\ref{big-tilting-module-secn}. \qed
\end{prop}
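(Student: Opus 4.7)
The plan is to verify the three assertions by assembling machinery already developed in the preceding discussion together with the abstract results from Sections~\ref{morita-theory-secn}--\ref{weakly-finitely-generated-secn}. For the topology on $\fA = \Hom_{A^\rop}(\cC, A)$, I will invoke the fully faithful, exact, coproduct-preserving functor $\cC\comodl\rarrow\fA\modl$ constructed just before the statement, whose essential image is closed under submodules and quotients; \cite[Proposition~VI.4.2]{Sten} then equips $\fA$ with a unique topology having a base of open left ideals such that this essential image is precisely $\fA\discrl$, yielding $\cC\comodl\simeq\fA\discrl$. For $\fR = \Hom_A(\cC, A) \cong \Hom_\cC(\cC, \cC)^\rop$, Proposition~\ref{prop:closed-functor-corings} shows that the forgetful functor $\cC\comodl\rarrow A\modl$ is a closed functor into the locally weakly finitely generated abelian category $A\modl$, so Theorem~\ref{closed-functor-addm-theorem} applied with $N = \cC$ endows $\fR$ with the claimed topological ring structure and also yields an equivalence $\Add(\cC)\simeq\fR\contra_\proj$.

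Next I would promote this to the full equivalence $\cC\contra\simeq\fR\contra$. The discussion immediately before the proposition already identified the three additive subcategories consisting of cofree left $\cC$\+comodules, free left $\cC$\+contramodules, and free left $\fR$\+contramodules (via the closed-functor identification combined with the co-contra correspondence \cite[\S5.1.3]{Psemi}). Since under our projectivity hypothesis on $\cC$ as a left $A$\+module both $\cC\contra$ and $\fR\contra$ are abelian categories with enough projectives, and since an abelian category with enough projectives is determined by its subcategory of projectives (compare Corollary~\ref{cor:B-as-Tmod}), the equivalence extends from the projective parts to the full abelian categories.

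For the bimodule assertion, the left $\fA$\+action on $T = \cC$ is the restriction along $\cC\comodl\hookrightarrow\fA\modl$ and is automatically discrete since $\cC\in\fA\discrl$, while the right $\fR$\+action is tautological from $\fR = \Hom_\cC(\cC,\cC)^\rop$; discreteness on the right follows from the closed-functor topology on $\fR$, in which the annihilator of any finitely generated $A$\+submodule of $\cC$ is open, so each individual element of $\cC$ has open right annihilator in $\fR$. The tilting property of $T = \cC$ in $\cC\comodl$ under the left Gorenstein hypothesis was already established earlier in this subsection by deriving the adjunction $(\Psi_\cC, \Phi_\cC)$ restricted to the exact subcategories $\E_\A\subset\cC\comodl$ and $\E_\B\subset\fR\contra$, obtaining a bounded derived equivalence $\D^\b(\cC\comodl)\simeq\D^\b(\fR\contra)$, and invoking Proposition~\ref{bounded-tilting-t-structure-implies-generation}. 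Transporting this through $\cC\comodl\simeq\fA\discrl$ and $\cC\contra\simeq\fR\contra$ makes $T$ an $n$\+tilting object in $\fA\discrl$ with heart $\fR\contra$, and the derived equivalences supplied by Corollary~\ref{conv-abs-derived-equivalence} take the shape~\eqref{eq:discr-tilting} since, by construction in~\S\ref{contratensor-subsecn}, the contratensor product $T\ocn_\fR(-)$ is the left adjoint of the contramodule-valued Hom from $\fA\discrl$ to $\fR\contra$.

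The main obstacle is of a largely bookkeeping nature: the hard analytic work is distributed across the preceding discussion and the cited theorems, so what remains is confirming that the ring-theoretic and topological identifications obtained from different viewpoints actually coincide---in particular that the topology on $\fR$ inherited via the closed-functor construction of Theorem~\ref{closed-functor-addm-theorem} is the same one for which $\Add(\cC)\simeq\fR\contra_\proj$ sends $\cC^{(X)}$ to $\fR[[X]]$, and that the adjoint pair $(\Psi_\cC, \Phi_\cC)$ used earlier to derive the equivalence matches the pair $(\Hom_\fA(T,-),\,T\ocn_\fR(-))$ of Section~\ref{big-tilting-module-secn} under the category equivalences.
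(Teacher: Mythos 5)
Your proposal is correct and follows essentially the same route as the paper, which states the proposition with a \qed precisely because the preceding discussion in \S\ref{corings-and-semialgebras-subsecn} (the Stenstr\"om argument for the topology on $\fA$, Proposition~\ref{prop:closed-functor-corings} with Theorem~\ref{closed-functor-addm-theorem} for $\fR$, the identification of the three categories of free/cofree objects, and the derived equivalence via $\E_\A$, $\E_\B$ and Proposition~\ref{bounded-tilting-t-structure-implies-generation}) constitutes the proof. Your closing remark about checking that the various identifications are mutually compatible is exactly the right bookkeeping concern, and it is handled the same way in the paper's discussion.
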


\begin{rem}
 The projectivity assumption on the right $A$\+module $\cC$ in
the proposition can be weakened a bit.
 A right $A$\+module $P$ is called \emph{locally projective}~\cite{ZH} if,
for any surjective right $A$\+module morphism $f\:M\rarrow N$ and a finitely
generated submodule $F\subset P$, for any morphism $g\:P\rarrow N$
there exists a morphism $h\:P\rarrow M$ such that $g|_F=fh|_F$.
 A right $A$\+module $P$ is locally projective if and only if, for any
left $A$\+module $M$, the natural map of abelian groups $P\ot_AM\rarrow
\Hom_A(\Hom_{A^\rop}(P,A),M)$ is injective~\cite[42.10]{BW}.

 For any coring $\cC$ over $A$ one can define the ring structure on
$\fA=\Hom_{A^\rop}(\cC,A)$ as above and a natural ring homomorphism
$A\rarrow\fA$.
 The above rule also defines a left $\fA$\+module structure on any left
$\cC$\+comodule~$\cM$.
 The resulting functor $\cC\comodl\rarrow\fA\modl$ is fully faithful if and
only if the right $A$\+module $\cC$ is locally projective~\cite[19.3]{BW}.
 In this case, all the above considerations apply and the category
$\cC\comodl$ is identified with the category of discrete left $\fA$\+modules
for a certain topology of left ideals on~$\fA$.
 Algebraically, those left $\fA$\+modules $M$ which come from left
$\cC$\+comodules $\cM$ are distinguished by the condition that the image
of the action map $M\rarrow\Hom_A(\fA,M)$ is contained in
the left $A$\+submodule $\cC\ot_AM\subset\Hom_A(\fA,M)$.

 Locally projective modules are also known as ``flat strict Mittag-Leffler
modules''~\cite[\S II.2.3]{RG71}, \cite{Az}, \cite[Section~3]{HT},
``trace modules'' or ``universally torsionless modules''~\cite{Gar}.
 All locally projective modules are flat.
 All pure submodules of locally projective modules are locally projective;
hence, in particular, any syzygy module of a flat module is locally projective.
\end{rem}

\begin{rem}
Interesting special cases of equivalences as in Proposition~\ref{coring-prop}
have been studied for instance in representation theory
of finite-dimensional algebras.
The corresponding corings are called \emph{bocses} in this context.
The so-called Ringel's duality for finite-dimensional
quasi-hereditary algebras was presented as special case
of the proposition in \cite[\S4.3 and 4.4]{Kuelshammer}.
\end{rem}

\medskip

Another related notion leading to tilting equvivalences
is the one where the roles of the algebra and coalgebra are swapped.
This concept among others naturally arises in the study
of locally profinite groups (see Example~\ref{group-ring-example})
and locally linearly compact Lie algebras (see~\cite[Appendix~D]{Psemi}).

To start with, let $\cC$ be a coassociative, counital coalgebra
over a field~$k$ as in \S\ref{coalgebras-subsecn}.
The category of $\cC$\+$\cC$\+bicomodules with the functor ${-}\oc_\cC{-}$
of cotensor product over~$\cC$ is a monoidal category, and we can consider
a monoid $(\bcS$, $\bcS\oc_\cC\bcS \to \bcS$, $\cC \to \bcS)$ in this category.
Such $\bcS$ is by definition called a semiassociative, semiunital
\emph{semialgebra} over the coalgebra~$\cC$ \cite[Section~0.3]{Psemi}, \cite[Section~2.6]{Prev}.
	
A \emph{left semimodule} $\bcM$ over $\bcS$ is a left module object
over the monoid $\bcS$ in the left module category of left
$\cC$\+comodules over the monoidal category of $\cC$\+$\cC$\+bicomodules
(i.~e., $\bcM$ is simply a left $\cC$\+comodule endowed with an associative,
unital left action $\bcS \oc_\cC \bcM \to \bcM$).
Assume that the semialgebra $\bcS$ is an injective right
comodule over the coalgebra~$\cC$.
Then the category of left $\bcS$\+semimodules $\bcS\simodl$
is a Grothendieck abelian category. Again, we show that
Theorem~\ref{closed-functor-addm-theorem} applies.

\begin{prop} \label{prop:closed-functor-semialgebras}
Let\/ $\cC$ be a coassociative counital coalgebra over a field
and let\/ $\bcS$ a semialgebra over\/~$\cC$.
Then the additive category\/ $\A=\bcS\simodl$ admits a closed
functor to a locally weakly finitely generated abelian category.
\end{prop}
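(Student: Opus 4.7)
The plan is to follow essentially the same strategy as in the proof of Proposition~\ref{prop:closed-functor-corings}. I would use the forgetful functor $F\:\bcS\simodl \rarrow \cC\comodl$ as the closed functor in question. The target $\cC\comodl$ is a locally finite Grothendieck abelian category (every comodule is the union of its finite-dimensional subcomodules), and so in particular it is locally weakly finitely generated in the sense of \S\ref{lwfg-subsecn}.

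First I would verify conditions~(I) and~(II). The assumption that $\bcS$ is injective as a right $\cC$\+comodule guarantees that $\bcS\simodl$ is a Grothendieck abelian category, so it has set-indexed coproducts and is idempotent-complete. The forgetful functor $F$ is obviously faithful (a semimodule morphism is a comodule morphism that additionally respects the $\bcS$\+action) and preserves coproducts (as coproducts of semimodules are computed on the underlying comodule level, with the induced action).

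The main step, as in the coring case, is condition~(III). Fix two left $\bcS$\+semimodules $\bcM$, $\bcN$ with action maps $\alpha_\bcM\:\bcS\oc_\cC\bcM\rarrow\bcM$ and $\alpha_\bcN\:\bcS\oc_\cC\bcN\rarrow\bcN$, and let $f\:\bcM\rarrow\bcN$ be a $\cC$\+comodule map which is not a semimodule map. I would show that the complement of the image of $\Hom_\bcS(\bcM,\bcN)\rarrow\Hom_\cC(\bcM,\bcN)$ is open at~$f$. By assumption there exists an element $x\in\bcS\oc_\cC\bcM$ such that $f(\alpha_\bcM(x))\ne\alpha_\bcN((1_\bcS\oc f)(x))$. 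Since $\bcS\oc_\cC\bcM\subset\bcS\ot_k\bcM$, the element $x$ is represented as a finite sum $x=\sum_{i=1}^n s_i\ot m_i$, and I would pick a finite-dimensional subcomodule $E\subset\bcM$ containing all the elements $m_1,\dots,m_n$ together with the element $\alpha_\bcM(x)\in\bcM$. Such $E$ exists because $\bcM$ is the direct union of its finite-dimensional subcomodules. Then for any $g\in\Hom_\cC(\bcM,\bcN)$ lying in the open neighborhood $\fU=\{g\mid g|_E=f|_E\}$ of~$f$ in $\Hom_\cC(\bcM,\bcN)$ (in the topology from Theorem~\ref{lwfg-addm-theorem}), one has $g(m_i)=f(m_i)$ for all~$i$ and $g(\alpha_\bcM(x))=f(\alpha_\bcM(x))$, so
\[
 g(\alpha_\bcM(x))=f(\alpha_\bcM(x))\ne\alpha_\bcN\Bigl(\sum s_i\ot f(m_i)\Bigr)=\alpha_\bcN\Bigl(\sum s_i\ot g(m_i)\Bigr),
\]
and hence $g$ is not a semimodule map either. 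This shows the complement of the image is open, so the image is closed, verifying condition~(III).

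There is no serious obstacle here; the argument is nearly mechanical once one realizes that the only subtlety is choosing $E$ to absorb both the ``input'' data $m_i$ for~$x$ and the ``output'' data $\alpha_\bcM(x)$, so that all finitely many values of $f$ intervening in the failure of the semimodule condition on the single element~$x$ are captured by $f|_E$.
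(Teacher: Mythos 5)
Your proposal is correct and follows essentially the same route as the paper's proof: the closed functor is the forgetful functor to $\cC\comodl$, and condition~(III) is verified by lifting the failure of the semimodule condition at a single element $x\in\bcS\oc_\cC\bcM$ to a finite-dimensional subcomodule $E$ containing both the tensor components $m_1,\dots,m_n$ of $x$ and the value $\alpha_\bcM(x)$, exactly as in the paper (which invokes~\cite[Corollary~2.1.4]{Sw} for the finite-dimensionality of~$E$). The only difference is cosmetic: you additionally spell out conditions~(I) and~(II), which the paper dispatches in one line.
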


\begin{proof}
We choose $\C=\cC\comodl$ to be the locally weakly finitely
generated category. The
forgetful functor $F\:\bcS\simodl\rarrow\cC\comodl$ is clearly
faithful and preserves coproducts. 

We only need to check condition~(III) from \S\ref{closed-functors-subsecn}
for $F$. Let $\bcK$ and $\bcL$ be left $\bcS$\+semimodules with
the semiactions $\sigma_\bcK\:\bcS\oc_\cC\bcK\rarrow\bcK$ and 
$\sigma_\bcL\:\bcS\oc_\cC\bcL\rarrow\bcL$, respectively, and let
$h\:\bcK\rarrow\bcL$ a morphism of left $\cC$\+comodules which is not
a morphism of semimodules. We will show that $h$ has an open neighborhood
in $\Hom_\cC(\bcK,\bcL)$ which does not intersect the image of $F$.

To this end, we know that there exists $t\in\bcS\oc_\cC\bcK$ such
that $h(\sigma_\bcK(t))\ne\sigma_\bcL((1_\bcS\oc_\cC h)(t))$, and
that there exists a finite set of
elements $v_1$,~\dots, $v_m\in\bcK$ such that the image of~$t$ under
the natural embedding $\bcS\oc_\cC\bcK\rarrow\bcS\ot_k\bcK$ can be
presented in the form of a tensor $\sum_{i=1}^m s_i\ot v_i$ with some
elements $s_i\in\bcS$. Let $\cE\subset\bcK$ be the smallest left
$\cC$\+subcomodule containing $v_1$,~\dots, $v_m$ and $\sigma_\bcK(t)$.
Then $\cE$ is finite dimensional over~$k$ by \cite[Corollary 2.1.4]{Sw}.
In particular, $\cE$ is certainly weakly finitely generated in $\C$
and the desired open neighborhood of $h$ is
$\{ g\in\Hom_\cC(\bcK,\bcL) \mid g|_\cE=h|_\cE \}$.
\end{proof}

%

If we now apply Theorem~\ref{closed-functor-addm-theorem} and use
the fact that the semialgebra $\bcS$ is naturally a left semimodule
over itself, we obtain an equivalence between
the category $\Add(\bcS)\subset\bcS\simodl$
and the category of projective contramodules
$\fR\contra_\proj$ over the topological ring
$\fR=\Hom_\bcS(\bcS,\bcS)^\rop$. The equivalence is given by
the functor $\Hom_\bcS(\bcS,{-})\:\bcS\simodl\rarrow\fR\contra$.

Assume that $\bcS$ is an injective left $\cC$\+comodule.
Then one can define the category $\bcS\sicntr$ of
\emph{left $\bcS$\+semicontramodules}, it is an abelian category,
and the forgetful functor from it to the category of
$k$\+vector spaces is exact~\cite[Section~0.3.5]{Psemi}.
%
Moreover, the $k$\+vector space $\Hom_\bcS(\bcS,\bcS)=\Hom_\cC(\cC,\bcS)$ has
a natural structure of a left semicontramodule over
the semialgebra~$\bcS$ \cite[Section~6.1.3]{Psemi}, and
the three full additive subcategories formed by
\begin{itemize}
\item the semifree left $\bcS$\+semimodules $\bcS^{(X)}$ in
the category $\bcS\simodl$,
\item the free left $\bcS$\+semicontramodules
$\Hom_\bcS(\bcS,\bcS^{(X)})=\Hom_\cC(\cC,\bcS^{(X)})$
in the abelian category $\bcS\sicntr$, and
\item the free left $\fR$\+contramodules $\fR[[X]]$ in the category
$\fR\contra$
\end{itemize}
are naturally equivalent.
%
%
The equivalence between the first and the second full subcategories
is a form of the \emph{semimodule-semicontramodule
correspondence}~\cite[Sections~0.3.7 and~6.2]{Psemi},
\cite[Proposition~3.5(b)]{Prev}, and an equivalence between the first
and the third ones was constructed above.
It follows that the abelian categories $\bcS\sicntr$ and
$\fR\contra$ are equivalent.
	

For any semialgebra $\bcS$ over a coalgebra $\cC$ over a field~$k$
satisfying the left and right injectivity assumption,
the semifree left $\bcS$\+semimodule $T=\bcS$ satisfies
the condition~(ii) from the definition of a tilting object.
Indeed, one has
$$
\Ext^i_\bcS(\bcS,\bcS^{(I)})\cong
\Ext^i_\cC(\cC,\bcS^{(I)})=0 \quad\text{for $i>0$},
$$
because the class of injective left $\cC$\+comodules is closed under
infinite direct sums, so $\bcS^{(I)}$ is an injective left
$\cC$\+comodule.
	
Again, the projective dimension of the left $\bcS$\+semimodule
$\bcS$ does not have to be finite, so $T=\bcS$
is in general an $\infty$\+tilting object in the abelian
category $\bcS\simodl$ in the sense of~\cite[Example~6.10]{PStinfty},
exactly as $\cC$ was \S\ref{coalgebras-subsecn}.
A version of the tilting t\+structure exists on
the so-called \emph{semiderived category} $\D^\si(\bcS\simodl)$ of
left $\bcS$\+semimodules, in place of the conventional (bounded or
unbounded) derived category; the heart of this t\+structure is
the abelian category $\B=\bcS\sicntr\simeq\fR\contra$ \cite[Sections~0.3.4]{Psemi}.
Moreover, in general there is an equivalence of the semiderived categories
$\D^\si(\bcS\simodl)\simeq\D^\si(\bcS\sicntr)$, which is called
the \emph{derived semimodule-semicontramodule
correspondence} in~\cite[Sections~0.3.7 and~6.3]{Psemi}.
	
If the coalgebra $\cC$ is left Gorenstein in the
sense of~\S\ref{coalgebras-subsecn}, however, then
$\bcS$ has finite projective dimension in $\bcS\simodl$,
i.~e., condition~(i) of the definition of tilting object is satisfied
by $T = \bcS$.
Moreover, the equivalence of the semiderived categories in
this case takes acyclic complexes to acyclic complexes, and
therefore descends to an equivalence of the conventional unbounded
derived categories $\D(\bcS\simodl)\simeq\D(\bcS\sicntr)$, which
further restricts to an equivalence of the bounded derived
categories $\D^\b(\bcS\simodl)\simeq\D^\b(\bcS\sicntr)$.
	
Thus, if the coalgebra $\cC$ is left Gorenstein,
then $\bcS$ is an $n$\+tilting object in the abelian category
$\A=\bcS\simodl$
by Proposition~\ref{bounded-tilting-t-structure-implies-generation}.
Similarly, the left $\bcS$\+semicontramodule $W=\bcS^*=\Hom_k(\bcS,k)$ is
an $n$\+cotilting object in the abelian category $\B=\bcS\sicntr\simeq\fR\contra$
in this case; it corresponds to a certain natural choice of
an injective cogenerator $W\in\bcS\simodl$.

\begin{rem}
The tilting equivalences for a semialgebra $\bcS$ over a left Gorenstein
coalgebra $\cC$ such that $\bcS$ is left and right injective as a $\cC$-comodule
can be also described as in~\eqref{eq:discr-tilting}
in Section~\ref{big-tilting-module-secn}.
This is since one can prove that $\bcS\simodl\simeq\fA\discrl$,
where the underlying group of $\fA$ is the group of right $\cC$\+comodule
homomorphisms $\Hom_{\cC^\rop}(\cC,\bcS)$. Then $T=\bcS$ is naturally an
$\fA$\+$\fR$\+bimodule, discrete from either side, which is tilting
in $\fA\discrl$.
%
\end{rem}

Now we give a particular class of examples where the tilting equivalences
for semialgebras apply; namely, in the study locally profinite groups
(otherwise known as locally compact, totally disconnected topological groups).

\begin{ex} \label{group-ring-example}
Let $H$ be a profinite group and $k$ be a field (of possibly
finite characteristic).
Then the $k$\+vector space $\cC=k(H)$ of locally constant functions
$H\rarrow k$ has a natural structure of coassociative, counital
coalgebra over~$k$.
It can be constructed as the inductive limit $k(H)=\varinjlim_U k(H/U)$
over the open normal subgroups $U\subset H$ of the coalgebras
$k(F)=k[F]^*$ dual to the group algebras of the finite quotient groups
$F=H/U$ of the group~$H$.
The (left or right) $k(H)$\+comodules are the \emph{discrete
$H$\+modules} over~$k$, that is $k$\+vector spaces endowed with
an action of $H$ such that the stabilizer of every vector is an open
subgroup in~$H$.
	
Furthermore, let $G$ be a locally profinite group and $H\subset G$ be
a compact open subgroup.
Then the $k$\+vector space $\bcS=k(G)$ of compactly supported
locally constant functions $G\rarrow k$ has a natural structure of
semiassociative, semiunital semialgebra over~$\cC$.
The (left or right) $\bcS$\+semimodules are the \emph{smooth
$G$\+modules} over~$k$, which means, once again, $k$\+vector spaces
endowed with an action of $G$ such that the stabilizer of every vector is
an open subgroup in~$G$.
One should be careful: the vector space $\bcS$ does \emph{not} depend
on the choice of a subgroup $H$ in the given group $G$, the semialgebra
structure on $\bcS$ \emph{depends} on this choice, and the category
of $\bcS$\+semimodules again does \emph{not} depend on it
\cite[Sections~E.1.2--E.1.3]{Psemi}, \cite[Example~2.6]{Prev}.

The category of (left or right) $\bcS$\+semicontramodules does not depend
on the choice of a compact open subgroup $H$ in $G$ either.
Another name for $\bcS$\+semicontramodules is
\emph{$G$\+contramodules} over~$k$.
These can be described as $k$\+vector spaces $\fP$ endowed with
a map assigning an element of $\fP$ to every $\fP$\+valued measure of
a certain kind on the group~$G$ \cite[Section~1.8]{Prev},
\cite[Section~2]{Psm}.
Let us denote the abelian category of smooth $G$\+modules over~$k$
by $G\smooth_k=\bcS\simodl$ and the abelian category of
$G$\+contramodules over~$k$ by $G\contra_k=\bcS\sicntr$.

The special case when $k$~is a field of characteristic~$p$ and $G$ is
a $p$\+adic Lie group (such as, e.~g., the group
$\operatorname{GL}_N(\mathbb Q_p)$ of invertible $N\times N$ matrices
with rational $p$\+adic entries) is of particular interest.
In this case, for a small enough compact open subgroup $H\subset G$
(in fact, for any compact $p$\+adic Lie group $H$ without $p$\+torsion
elements), the coalgebra $\cC=k(H)$ has finite homological dimension
(equal to the dimension~$n$ of the $p$\+adic Lie group $G$ or~$H$, e.~g.,
$n=N^2$ for $G=\operatorname{GL}_N(\mathbb Q_p)$;
see~\cite[Section~3]{Koh} or~\cite[Section~0.11]{Psm} and
the references therein).
Thus, in this case the discrete $G$\+module $T=\bcS$ of compactly
supported locally constant $k$\+valued functions on $G$ is
an $n$\+tilting object in $G\smooth_k$ and the $G$\+contramodule
$W=\bcS^*$ is an $n$\+cotilting object in $G\contra_k$.
We refer to~\cite[Example~4.2]{Pmc} and the paper~\cite{Psm}
for further details.
\end{ex}

\bigskip

\end{document}